\documentclass[10pt]{article}
\usepackage{amssymb}
\usepackage[dvipsnames]{xcolor}

\usepackage{amsmath, amssymb, amsthm, mathtools}
\usepackage{tikz}
\usetikzlibrary{arrows.meta}
\usetikzlibrary{positioning}
\usepackage{caption, subcaption, float}
\usepackage[makeroom]{cancel}
\usepackage{xcolor}
\usepackage{enumitem}
\DeclareSymbolFont{symbolsC}{U}{txsyc}{m}{n}
\DeclareMathSymbol{\strictif}{\mathrel}{symbolsC}{74}

\newtheorem{theorem}{Theorem}[section]

\newtheorem{lemma}[theorem]{Lemma}

\newtheorem*{lemma*}{Lemma}

\theoremstyle{definition}
\newtheorem{definition}[theorem]{Definition}

\newtheorem{proposition}[theorem]{Proposition}

\newtheorem{corollary}[theorem]{Corollary}

\theoremstyle{remark}
\newtheorem{remark}[theorem]{Remark}

\theoremstyle{remark}

\newcommand\restr[2]{{
\left.\kern-\nulldelimiterspace 
#1 
\vphantom{\big|} 
\right|_{#2} 
}}

\newcommand{\A}{\mathfrak{A}}

\newcommand{\B}{\mathfrak{B}}
\newcommand{\C}{\mathfrak{C}}

\newcommand{\F}{\mathcal{F}}

\newcommand{\II}{\mathbb{I}}

\newcommand{\subal}{\mathbb{S}}
\newcommand{\prodal}{\mathbb{P}}

\newcommand{\homal}{\mathbb{H}}

\newcommand{\NN}{\mathbb{N}}

\let\phi\varphi
\title{Stable Canonical Rules for Intuitionistic Modal Logic}
\author{Cheng Liao}
\date{}
\begin{document}

\maketitle

\begin{abstract}
This paper develops stable canonical rules for intuitionistic modal logics, \textcolor{black}{which} were first introduced for superintuitionistic logics and transitive normal modal logics in \cite{ncsl} and \cite{nrule} respectively. We first prove that every intuitionistic modal multi-conclusion consequence relation is axiomatizable by stable canonical rules. \textcolor{black}{This allows us to assume, without loss of
generality, that rules considered by us are stable canonical ones.} The idea turns out to be useful. In particular, using stable canonical rules, we get an alternative proof of the Blok-Esakia theorem for intuitionistic modal logics which was first proved in \cite{zfon} and generalize it to \textcolor{black}{multi-conclusion} consequence relations. We also prove the Dummett-\textcolor{black}{Lemmon} conjecture for intuitionistic modal multi-conclusion consequence relations, which, as far as we know, is a new result.
\end{abstract}

\section{Introduction}
\label{sec:sample1}


In the study of non-classical logics, semantic methods in general and uniform axiomatization \textcolor{black}{techniques} in particular, play a central role. One such important method was developed by Zakharyaschev in 1980s. He introduced \textit{canonical formulas} for superintuitionistic logics\footnote{These are extensions of the intuitionistic propositional calculus.} and proved that every superintuitionistic logic can be axiomatized by canonical formulas \cite{Zphd}.
The brief idea is as follows: for every formula, using a variant of selective \textcolor{black}{filtration}, one can obtain a finite set of finite refutation patterns (i.e., a finite intuitionistic frame with a set of parameters). \textcolor{black}{Canonical formulas syntactically encode these finite
refutation patterns in such a way that the conjunction of them is
equivalent to the original formula.} Zakharyaschev \cite{zc1,zc2} then also developed canonical formulas for transitive normal modal logics in a series of papers and proved that every normal extension of \textbf{K4} can be axiomatized by modal canonical formulas. Following the same idea, Je$\check{\text{r}}$$\Acute{\text{a}}$bek \cite{can} generalized the result to multi-conclusion rules and showed that every normal modal multi-conclusion consequence relation over \textbf{K4} is axiomatizable by \textit{canonical rules}. It turns out that canonical formulas and rules offer a uniform method to study superintuitionistic and modal logics, and are thus quite useful. For example, using canonical formulas, Zakharyaschev proved the Dummett-Lemmon conjecture stating that a superintuitionistic logic is Kripke complete iff its least modal companion is \cite{zcss}, and with canonical rules, Je$\check{\text{r}}$$\Acute{\text{a}}$bek gave an alternative proof of decidability of admissibility in the intuitionistic propositional calculus \cite{can}. 

However, the mechanisms of developing canonical formulas and rules are model-theoretic and \textcolor{black}{are} quite involved. Bezhanishvili \textit{et al.} \cite{nin,nmo,nrule} developed an algebraic approach to canonical formulas and rules via duality. \textcolor{black}{They showed} that from the algebraic perspective, Zakharyaschev's canonical formulas for superintuitionistic logics encode the $\lor$-free reducts of Heyting algebras \textcolor{black}{which are locally finite}, and the so-called \textit{closed domain condition} encoded in the formulas \textcolor{black}{corresponds to} the preservation of joins of certain elements \cite{nin}.

Besides, the algebraic perspective raises a natural question: can we develop canonical formulas and rules based on the $\rightarrow$-free reducts of Heyting algebras which are also locally finite? This idea \textcolor{black}{led} to the study of stable canonical formulas and rules, \textcolor{black}{by G.Bezhanishvili, N. Bezhanishvili, Iemhoff and their collaborators first} \cite{nloc,gn,nrule,ncsl}, which \textcolor{black}{encode these $\rightarrow$-free reducts, and} are alternatives to Zakharyaschev's canonical formulas and Je$\check{\text{r}}$$\Acute{\text{a}}$bek's canonical rules. Since stable canonical formulas and rules encode finite refutation patterns constructed by taking filtrations instead of selective filtrations, they can apply to non-transitive logics where Zakharyaschev's canonical formulas and Je$\check{\text{r}}$$\Acute{\text{a}}$bek's canonical rules do not apply.\footnote{More details can be found in \cite{jphd}.} In particular, it was proved in \cite{nrule} that  every normal modal multi-conclusion consequence relation is axiomatizable by stable canonical rules, which partially answered the question about developing canonical formulas for extensions of \textbf{K} proposed in \cite[Problem 9.5]{ChagrovZakharyashev}.

Although the research on stable canonical formulas and rules is still in its infancy, quite \textcolor{black}{some} effort has already been put into the extension of stable canonical formulas and rules to different settings and development of related theories. For example, in her PhD thesis \cite{jphd}, Ilin gave a thorough analysis of \textit{stable logics} which \textcolor{black}{enjoy \textcolor{black}{the finite model property} and can be axiomatized by a certain type of stable formulas}. Melzer \cite{mmaster} developed the stable canonical formulas for the lax logic while Cleani \cite{amaster, an} generalized stable canonical formulas and rules to the setting of bi-superintuitionistic logics, and also \textcolor{black}{used} them to prove the Blok-Esakia theorem and the Dummett-Lemmon conjecture \textcolor{black}{in that setting}.

On the other hand, compared to \textcolor{black}{superintuitionistic} logics and classical modal logics, the area of  intuitionistic modal logics (i.e. superintuitionistic logics plus modal operators) is \textcolor{black}{much more involved and less well-understood}. Intuitionistic modal logics were introduced by Fisher Servi \cite{fish}, whose basic aim was to define modalities from an intuitionistic point of view. These logics are applicable to a wide variety of situations, ranging from computer science \cite{cs} to epistemic logics \cite{ap}. However, the study of intuitionistic modal logics is far from an easy combination of the study of classical modal logics and of superintuitionistic logics. In fact, there still remain some fundamental philosophical and technical questions. \textcolor{black}{For example, unlike in classical modal logics, $\Box$ and $\Diamond$ are not dual to each other in intuitionistic modal logics. \textcolor{black}{A natural question is: what counts as a reasonable relation between these two operators? We still do not} know much about the finite model property of intuitionistic modal logics\footnote{See \cite{sphd} for an example, and one more recent result can be found in \cite{FD}.}} when they have both $\Box$ and $\Diamond$. Thus, \textcolor{black}{this area may still benefit from some new uniform and effective methods.}

Considering the above situation, in this paper we \textcolor{black}{will} develop stable canonical rules for intuitionistic modal logics. These on the one hand 
 will deepen our \textcolor{black}{understanding} of the theory of stable canonical formulas and rules and verify its wide applicability, and on the other hand may provide us with a uniform method to study intuitionistic modal logics and thus \textcolor{black}{pave} the way for further study. 

In particular, we prove that every intuitionistic modal multi-conclusion consequence relation is axiomatizable by stable canonical rules. Following the main proof strategy of \cite{amaster, an}, we give an alternative proof of the Blok-Esakia theorem for intuitionistic modal logics \cite{zfint} and generalize it to multi-conclusion consequence relations, \textcolor{black}{establishing} that the lattice of intuitionistic modal multi-conclusion consequence relations is isomorphic to the lattice of extensions of the bimodal multi-conclusion consequence relation $(\textbf{Grz}\otimes \textbf{K})\oplus \textbf{Mix}^R$. By adjusting the proof strategy of \cite{amaster, an}, we give a proof of the Dummett-Lemmon conjecture for intuitionistic modal multi-conclusion consequence relations \textcolor{black}{stating that an intuitionistic modal multi-conclusion consequence relation is Kripke complete if and only if its least modal companion is,} which, as far as we know, is a new result.\footnote{\textcolor{black}{The original proof in \cite{amaster} contains a gap which is fixed in \cite{an}. Our proof is different from that of \cite{an} in some important aspect, and can also fill the gap when restricted to the setting of superintuitionistic logics.}} 

The structure of this paper is as follows. Section 2 is devoted to general preliminaries which are needed \textcolor{black}{throughout} the paper. In Section 3, we develop stable canonical rules for intuitionistic modal logics. Using duality theory, a dual description of stable canonical rules for intuitionistic modal logics is also given. Section 4 is about the application of stable canonical rules. We first introduce the G\"odel translation for intuitionistic modal logic, and then use stable canonical rules for bimodal logics to prove the Blok-Esakia theorem. Then with the Blok-Esakia theorem and stable canonical rules for intuitionistic modal logics, we prove the Dummett-Lemmon conjecture for intuitionistic modal multi-conclusion consequence relations. \textcolor{black}{In Section 5, we conclude and discuss} possible directions for future work.

\section{Preliminaries}

In this section, we present \textcolor{black}{basic} notations, definitions and facts that will be used throughout the paper. We assume familiarity with standard set-theore\-tic notations, elementary lattice theory and some fundamental concepts from topology and first-order logic. Familiarity with the categorical notion of duality is also expected but \textcolor{black}{is} not necessary.

\subsection{Ordered Sets}

We begin with the following notations and definitions related to ordered sets.

\begin{definition}[Maximal and passive elements]

Let $X$ be a set, $R$ be a transitive binary relation on $X$ and $U\subseteq X$. \textcolor{black}{We say that $x\in U$ is an \textit{$R$-maximal} element of $U$ if for any $y\in U$, $Rxy$ implies that $x=y$.} We define $\max_R(U)$ as the set of all $R$-maximal elements of $U$.

An element $x\in U$ is called $R$-\textit{passive} in $U$ if for all $y\in X\setminus U (\text{or } \Bar{U})$, if $Rxy$, then there is no $z\in U$ such that $Ryz$\footnote{\textcolor{black}{Intuitively speaking, it says that there is no $R$-path starting from $x$ that first goes out of $U$ and then goes back to $U$.}}. The set of all $R$-passive elements of $U$ is denoted as $pas_R(U)$.

\end{definition}

\begin{definition}

For any reflexive and transitive (binary) relation $R$ on a set $X$, a subset $C\subseteq X$ is an \textit{$R$-cluster} if it is an equivalence class under the relation $\backsim_R $ where $x\backsim_R y$ iff $xRy$ and $yRx$. 

An $R$-cluster is \textit{proper} if it contains more than one element. 

For any $U\subseteq X$, $U$ is said to \textit{cut an $R$-cluster $C$} if $U\cap C\not=\emptyset$ and $C\setminus U\not=\emptyset$.

\end{definition}

\begin{remark}

As usual, for any equivalence relation $R$ on a set $X$, we use $[x]$ (or $[x]_R$) to denote the equivalence class of $x$ where $x\in X$. 

By definition, $U$ does not cut an $R$-cluster $C$ if it either contains $C$ or is disjoint from $C$. 
    
\end{remark}

\begin{definition}[Upsets and downsets]
    Let $(A, \leq)$ be a poset (partially ordered set) and let $B \subseteq A$. We call $B$ \textit{upwards closed} or an \textit{upset} if $x \in B$, $y \in A$ and $x \leq y$ imply $y \in B$. If $C \subseteq A$, we write ${\uparrow}C$ for the least upset that contains $C$, namely $\{y \in A \mid \exists x \in C: x\leq y \}$. And we use $Up(A)$ to denote the set of all upsets of $(A, \leq)$.
    
    Similarly, we call $B$ \textit{downwards closed} or a \textit{downset} if $x \in B$, $y \in A$ and $y \leq x$ imply $y \in B$. If $C \subseteq A$, we define ${\downarrow}C=\{ y \in A \mid \exists x \in C: y\leq x \}$.
\end{definition}

\subsection{Universal Algebra}

We then recall a few facts about universal algebra. \textcolor{black}{Everything in this subsection can be found in \cite{BurrisSankappanavar}.}

\begin{definition}[Algebra]
    Let $A$ be a non-empty set, $\tau: \F \to \NN$ be a signature 
    \textcolor{black}{where $\F$ is a set of function symbols}
     and $F = \{ f^A \mid f \in \F, f^A: A^{\tau(f)} \to A \}$, we call $(A, F)$ an \textit{algebra} in the signature $\tau$ (or simply $\tau$-\textit{algebra}). If the context is clear, we will \textcolor{black}{denote by} $A$ both the algebra and the underlying set (also called the \textit{carrier}).
\end{definition}

\begin{remark}
    For convenience, we will use the same notations for function symbols and their corresponding interpretations.
\end{remark}

\vspace{1mm}
Let $(A,F)$ be a $\tau$-algebra. \textcolor{black}{Then} $(A,F')$ is called the \textit{reduct} of $(A,F)$ if $F'\subseteq F$. Because of the above remark, we may also write a reduct of $(A,F)$ as $(A,F)|_{\tau'}$ or simply $A|_{\tau'}$ where $\tau'\subseteq \tau$.

Let $\mathcal{K}$ be a class of $\tau$-algebras, we then introduce the following class operators:
    \begin{align*}
    \II(\mathcal{K}) &\coloneqq \{ A \mid A \text{ is isomorhpic to some } B \in \mathcal{K} \};  \\
            \textcolor{black}{\homal(\mathcal{K})} &\textcolor{black}{\coloneqq \{ A \mid A \text{ is a homomorphic image of some } B \in \mathcal{K} \};} \\
        \subal(\mathcal{K}) &\coloneqq \II(\{ A \mid A \text{ is a subalgebra of some } B \in \mathcal{K} \}); \\
                \textcolor{black}{\prodal(\mathcal{K})} &\textcolor{black}{\coloneqq \II(\{ A \mid A \text{ is a product of some } \{B_i\}_{i \in I} \subseteq \mathcal{K} \});}\\
        \prodal_u(\mathcal{K}) &\coloneqq \II(\{ A \mid A \text{ is an ultraproduct of some } \{B_i\}_{i \in I} \subseteq \mathcal{K} \}).
    \end{align*}

Let $\tau$ be an arbitrary signature. A class of $\tau$-algebras is a \textit{universal class} if it is the class of models of some set of universal sentences\footnote{``Universal" in the sense of first-order logic. See \cite[Def. 5.6]{ml}.}.

\begin{definition}
If $\mathcal{K}$ is a class of $\tau$-\textcolor{black}{algebras}, we write $Uni(\mathcal{K})$ for the least universal class that contains $\mathcal{K}$. $Uni(\mathcal{K})$ is also called the universal class $\textit{generated by}$ $\mathcal{K}$.
\end{definition}

The following is a useful characterisation of $Uni$ in terms of the operators we have just defined.

\begin{theorem}\cite[Thm. 2.20]{BurrisSankappanavar}
\label{thm:univClassSPu}
    Universal classes are closed under $\subal$ and $\prodal_u$. Furthermore:
    \begin{equation*}
        Uni(\mathcal{K}) = \subal \prodal_u(\mathcal{K}).
    \end{equation*}
\end{theorem}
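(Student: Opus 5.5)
The plan is to prove the two claims separately. First, universal classes are closed under $\subal$ and $\prodal_u$. Second, $Uni(\mathcal{K})$ coincides with $\subal\prodal_u(\mathcal{K})$, which I will get by proving both inclusions.

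For closure under $\subal$: universal sentences are preserved under substructures. If $B\models \forall \bar x\,\phi(\bar x)$ with $\phi$ quantifier-free and $A\leq B$, then every tuple from $A$ is a tuple from $B$. Quantifier-free formulas are evaluated identically in $A$ and $B$, because the operations of $A$ are the restrictions of those of $B$. Isomorphic copies clearly satisfy the same sentences.

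For closure under $\prodal_u$: \L o\'s's theorem says that an ultraproduct $\prod B_i/U$ satisfies a sentence iff the set of $i$ with $B_i$ satisfying it lies in $U$. If every $B_i$ satisfies each axiom, that index set is all of $I$, which is in $U$. Hence every universal class is closed under both operators. In particular, $Uni(\mathcal{K})\supseteq\subal\prodal_u(\mathcal{K})$, since $Uni(\mathcal{K})$ is a universal class containing $\mathcal{K}$.

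For the reverse inclusion, take $A\in Uni(\mathcal{K})$, so $A$ satisfies every universal sentence true in all members of $\mathcal{K}$. The goal is to embed $A$ into an ultraproduct of members of $\mathcal{K}$. I will use the positive/negative diagram method.

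\textbf{Step 1 (finitely generated pieces).} For each finite $S\subseteq A$, let $\delta_S(\bar x)$ be the conjunction of all atomic and negated atomic formulas in variables $\bar x$ indexed by $S$ that hold in $A$ under the assignment $x_a\mapsto a$, restricted to terms of depth at most $|S|$. This keeps $\delta_S$ a finite formula.

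\textbf{Step 2 (realizing each piece in $\mathcal{K}$).} If no member of $\mathcal{K}$ satisfied $\exists\bar x\,\delta_S$, then $\forall\bar x\,\neg\delta_S$ would be a universal sentence true in $\mathcal{K}$ but false in $A$, a contradiction. So there is $B_S\in\mathcal{K}$ and a tuple $\bar b_S$ realizing $\delta_S$.

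\textbf{Step 3 (the ultraproduct).} Index by $I=\{S\subseteq A: S \text{ finite}\}$. Take an ultrafilter $U$ on $I$ containing every cone $\{S: S\supseteq S_0\}$; these cones have the finite intersection property. Define $h:A\to\prod_S B_S/U$ by letting $h(a)$ be the class of the sequence whose $S$-th coordinate is $b_{S,a}$ when $a\in S$, and arbitrary otherwise.

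\textbf{Step 4 (verification).} Any basic fact involving finitely many elements is eventually listed in $\delta_S$: for example, $f(a_1,\dots,a_n)=a$, or $a\neq a'$. By \L o\'s's theorem such a fact therefore holds of the $h$-images. So $h$ is an injective homomorphism, that is, an embedding, and $A\in\subal\prodal_u(\mathcal{K})$.

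The main obstacle is the bookkeeping in Steps 1 and 4. One must make sure each equation $f(\bar a)=a$ and each inequality is captured by some finite $\delta_S$, using a single variable per element so that composite terms reduce to basic equations among named elements. It is simplest to let $\delta_S$ consist exactly of the formulas $f(x_{a_1},\dots,x_{a_n})=x_a$ with all of $a_1,\dots,a_n,a\in S$, together with $x_a\neq x_{a'}$ for distinct $a,a'\in S$. This avoids term depth altogether, and $S$ grows along the ultrafilter.
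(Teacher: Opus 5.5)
The paper gives no proof of this statement; it only cites Burris--Sankappanavar. Your argument is the standard one behind that citation: \L o\'s's theorem for the closure properties, then an embedding of $A$ into an ultraproduct indexed by a directed set of finite fragments of its diagram, using an ultrafilter that contains all cones. The verification in Steps 2--4 is sound.

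There is, however, one step that fails in the generality in which the paper states the theorem. The signature $\tau:\mathcal{F}\to\mathbb{N}$ is arbitrary, so $\mathcal{F}$ may be infinite. In that case, for a fixed finite $S\subseteq A$ there can be infinitely many true formulas $f(x_{a_1},\dots,x_{a_n})=x_a$ with $a_1,\dots,a_n,a\in S$. Your first version has the same problem, since there can be infinitely many terms of bounded depth. So your claim that $\delta_S$ is a finite formula is false. Then $\forall\bar x\,\neg\delta_S$ is not a first-order sentence, and the contradiction in Step 2 is unavailable.

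The repair is routine. Index by pairs $(S,F_0)$ with $S\subseteq A$ and $F_0\subseteq\mathcal{F}$ both finite, and let $\delta_{S,F_0}$ mention only symbols from $F_0$. Take an ultrafilter containing all cones $\{(S,F_0)\mid S\supseteq S_1,\ F_0\supseteq F_1\}$. Equivalently, index by finite subsets of the atomic and negated-atomic diagram of $A$. Each basic fact $f(a_1,\dots,a_n)=a$ or $a\neq a'$ involves at most one function symbol and finitely many elements, so it is recorded on a cone. The rest of your argument then goes through verbatim. For the finite signatures $i_\Box$ and $bi$ that the paper actually uses, your proof is already complete as written.
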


\textcolor{black}{We now introduce another important class of algebras called \textit{variety}.} 

\begin{definition}[Variety]
Let $\tau$ be an arbitrary signature,  a class of $\tau$-algebras $V$ is a \textit{variety} if it is closed under $\homal$, $\subal$ and $\prodal$. 

Let $\mathcal{K}$ be a class of $\tau$-algebras, we write $Var(\mathcal{K})$ for the least variety which contains $\mathcal{K}$. $Var(\mathcal{K})$ is also called the variety \textit{generated by} $\mathcal{K}$.
\end{definition}

Similarly to universal classes, varieties are defined by special formulas called \textit{equations}. 

\begin{definition}[Equation]
    A first-order sentence $\phi$ is called an \textit{equation} if it is of the form $\textcolor{black}{\forall v_1...\forall v_n(\sigma(v_1,...,v_n) =\tau(v_1,...,v_n))}$, where $\sigma(v_1,...,v_n)$ and $\tau(v_1,...,v_n)$ are terms \textcolor{black}{whose free variables are among $\{v_1,...,v_n\}$}.
\end{definition}

This following result is called Birkhoff's Theorem.

\begin{theorem}\cite[Thm. 11.9]{BurrisSankappanavar}
\label{2.2.18}
Let $\tau$ be an arbitrary signature and Let $V$ be a class of  $\tau$-algebras. Then $V$ is a variety if and only if $V$ is definable by equations, namely there is a set of equations $\Phi$ such that:
    \begin{equation*}
        V = \{ A \text{ is an algebra in } \tau \mid A \vDash \Phi \}.
    \end{equation*}
\end{theorem}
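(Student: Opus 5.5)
The easy direction comes first. Suppose $V=\{A \mid A\vDash\Phi\}$ for some set of equations $\Phi$. I will check closure under each operator in turn.

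\emph{Homomorphic images.} Let $h\colon B\to A$ be surjective and let $\forall\bar v(\sigma=\tau)\in\Phi$ hold in $B$. Given $\bar a$ in $A$, pick preimages $\bar b$. A term induction gives $h(t^B(\bar b))=t^A(h(\bar b))$ for every term $t$. Applying $h$ to $\sigma^B(\bar b)=\tau^B(\bar b)$ then yields $\sigma^A(\bar a)=\tau^A(\bar a)$.

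\emph{Subalgebras.} Terms are computed inside the subalgebra, and a universal statement restricted to a subset stays true.

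\emph{Products.} Terms are computed coordinatewise, so two tuples are equal iff all their coordinates are. Closure under isomorphic copies is immediate. Hence $V$ is a variety.

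For the converse, let $V$ be a variety. Put $\Phi=\{\text{equations true in every member of }V\}$ and show that every $\tau$-algebra $A$ with $A\vDash\Phi$ lies in $V$. The plan uses free algebras. Fix a set of variables $X$ with $|X|\ge|A|$ and let $T(X)$ be the term algebra. Let $\theta_V$ be the intersection of all congruences $\theta$ on $T(X)$ with $T(X)/\theta\in\subal(V)$, and define $F_V(X)=T(X)/\theta_V$.

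\emph{Key step 1: $F_V(X)\in V$.} Map $F_V(X)$ into $\prod_\theta T(X)/\theta$ by $[t]\mapsto([t]_\theta)_\theta$. This map is injective by the definition of $\theta_V$. So $F_V(X)\in\subal\prodal\subal(V)\subseteq V$. The index set is a set, since congruences on $T(X)$ form a set.

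\emph{Key step 2.} $(s,t)\in\theta_V$ holds iff $V\vDash s=t$. For the forward direction, take $B\in V$ and an assignment $X\to B$. It extends to a homomorphism $T(X)\to B$, whose image lies in $\subal(V)$ and whose kernel contains $\theta_V$, so $s$ and $t$ agree in $B$. The converse is immediate.

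\emph{Key step 3.} Choose a surjection $X\to A$ and extend it to a homomorphism $g\colon T(X)\to A$. If $(s,t)\in\theta_V$, then $s=t$ is in $\Phi$ by Step 2. Since $A\vDash\Phi$, we get $g(s)=g(t)$, so $\theta_V\subseteq\ker g$. Therefore $g$ factors through $F_V(X)$ as a surjective homomorphism onto $A$. Thus $A\in\homal(V)=V$.

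The main obstacle is Step 1. One has to ensure the product is over a set rather than a proper class, and this is handled by indexing over congruences rather than over algebras. One also has to check that every equation holding in $V$ is captured by $\theta_V$, which is exactly Step 2. The remainder is routine term induction.
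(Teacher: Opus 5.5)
Your proof is correct. It is essentially the standard free-algebra argument from Burris--Sankappanavar, which the paper cites without proof: equations are preserved under $\homal$, $\subal$, $\prodal$, and conversely any $A\vDash\Phi$ is a homomorphic image of $T(X)/\theta_V\in\subal\prodal\subal(V)$. One detail is worth stating explicitly: when $X$ is large, an identity $s=t$ with $s,t\in T(X)$ counts as a member of $\Phi$ only after renaming its finitely many variables into the language's stock of variables, which is harmless.
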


\subsection{Deductive Systems}

Now we introduce two types of deductive systems which will be explored throughout the paper. The following presentation is mainly based on \cite{Iem}.

\begin{definition}
 The \textit{set of formulas in signature $v$ over a set of variables $X$} (denoted by $Form_v(X)$) is the least set containing $X$ such that for any $f\in v$, we have that $\varphi_1,...,\varphi_n\in Form_v(X)$ \textcolor{black}{implies that} $f(\varphi_1,...,\varphi_n)\in Form_v(X)$ where $f$ is of arity $n$.   
\end{definition}

Let $Prop$ be a fixed countably infinite set of variables, we write $Form_v$ for $Form_v(Prop)$. A \textit{substitution} $s$ is \textcolor{black}{a} map from $Prop$ to $Form_v$, which can be recursively extended to a map $\Bar{s}$ from $Form_v$ to $Form_v$ in the obvious way.

First, we define what a \textit{logic} is.

\begin{definition}
A \textit{logic} over $Form_v$ is a set $L\subseteq Form_v$ such that if $\varphi\in L$, then $\Bar{s}(\varphi)\in L$ for every substitution $s$.    
\end{definition}

\begin{remark}
By the above definition, classical propositional logic is a logic over $Form_{\land,\lor,\neg}$.     
\end{remark}

\begin{definition}[Multi-conclusion rule]
A \textit{multi-conclusion rule} in signature $v$ over a set of variables $X$ is a pair $\Gamma/\Delta$ of finite subsets of $Form_v(X)$.    
\end{definition}

\begin{remark}
 In case $\Delta=\{\psi\}$, we simply write $\Gamma/\psi$ for $\Gamma/\Delta$, similarly if $\Gamma=\{\varphi\}$.
\end{remark}
 
We write $Rul_v(X)$ for the \textit{set of all multi-conclusion rules in signature $v$ over the set of variables $X$}, and we let $Rul_v$ stand for $Rul_v(Prop)$.   

As logics are defined over formulas, \textit{multi-conclusion consequence relations} are defined over multi-conclusion rules.

\begin{definition}[Multi-conclusion consequence relation]
    A \textit{multi-conclusion consequence relation} over $Rul_v$ is a set $S\subseteq Rul_v$ such that the following hold (\textcolor{black}{``;" means set-theoretic union}):

\begin{itemize}
    \item If $\Gamma/\Delta\in S$, then $\Bar{s}[\Gamma]/\Bar{s}[\Delta]\in S$ for all substitutions $s$.
    \item $\varphi/\varphi\in S$ \textcolor{black}{for all formulas $\varphi$}.
    \item If $\Gamma/\Delta\in S$, then $\Gamma;\Gamma'/\Delta;\Delta'\in S$ for any finite sets of formulas $\Gamma'$ and $\Delta'$.
    \item  $\Gamma/\Delta;\varphi\in S$ and $\Gamma;\varphi/\Delta\in S$, then $\Gamma/\Delta\in S$ (Cut). 
\end{itemize}

\end{definition}

If $L$ is a logic and $\Delta$ is a set of formulas, we write $L\oplus \Delta$ for the least logic extending $L$ that contains $\Delta$, and say that the logic is axiomatized over $L$ by $\Delta$. Similarly we define $S\oplus\Sigma$ where $S$ is a multi-conclusion consequence relation and $\Sigma$ is a set of multi-conclusion rules.

We then define the interpretation of formulas and rules over algebras in the same signature. Let $\mathfrak{A}$ be a $v$-algebra and $A$ be its carrier, a \textit{valuation} on $\mathfrak{A}$ is a map $V$ from $Prop$ to $A$, which can be recursively extended to a map $\Bar{V}$ from $Form_v$ to $A$ in the most obvious way.

In the following, every algebra we consider is assumed to have the top element (or the largest element, denoted by $1$). This will make the following definition of \textit{validity} simpler, and will not cause any problem as we only consider such algebras in this paper. For a more general definition of validity in algebraic semantics, one can consult \cite{Tool}.

\begin{definition}[Validity]

A rule $\Gamma/\Delta$ in signature $v$ is \textit{valid} on a $v$-algebra $\mathfrak{A}$ if for any valuation $V$ on $\mathfrak{A}$, if $\Bar{V}(\gamma)=1$ for any $\gamma\in \Gamma$, then $\Bar{V}(\delta)=1$ for some $\delta\in \Delta$ where $1$ is the top element of $\mathfrak{A}$. We denote this as $\mathfrak{A}\vDash \Gamma/\Delta$.

A formula $\varphi$ in signature $v$ is \textit{valid} on a $v$-algebra $\mathfrak{A}$ if the rule $/\varphi$ is valid on $\mathfrak{A}$, and we denote this by $\mathfrak{A}\vDash \varphi$.
    
\end{definition}

\begin{remark}
With the above definition, the notion of validity of a rule $\Gamma/\Delta$ on a class $\mathcal{K}$ of $v$-algebras and the notion of validity of a set of rules $S$ on a $v$-algebra $\mathfrak{A}$ are defined as usual, and we denote them as $\mathcal{K}\vDash \Gamma/\Delta$ and $\mathfrak{A}\vDash S$ respectively.

Then for any logic $L$, we say that $L$ is \textit{complete} w.r.t a class of algebras $\mathcal{K}$ if $\mathcal{K}\vDash \varphi$ implies that $\varphi\in L$. Similarly we define the completeness of a multi-conclusion consequence relation.
\end{remark}

Finally, we fix two useful notations. Let $\mathcal{A}_v$ be the class of all $v$-algebras and $S$ be a logic or a multi-conclusion consequence relation, we write $Alg(S)$ for the set of all $v$-algebras which validate $S$, i.e., $Alg(S)=\{\mathfrak{A}\in \mathcal{A}_v\mid \mathfrak{A}\vDash S\}$. Conversely, if $\mathcal{K}$ is a set of $v$-algebras, we define $Ru(\mathcal{K})=\{\Gamma/\Delta\in Rul_v\mid \mathcal{K}\vDash\Gamma/\Delta\}$ and $Th(\mathcal{K})=\{\varphi\in Form_v\mid \mathcal{K}\vDash\varphi\}$.

\subsection{Bimodal Logics and Intuitionistic Modal Logics}

After presenting the general theories of universal algebra and deductive systems, we now apply them in more concrete settings.

The \textit{bimodal signature} $bi=\{\land,\lor,\neg,\top,\bot,\Box_I,\Box_M\}$, and the set of \textit{bimodal formulas}  $Form_{bi}$ is then defined recursively as follows:

$$\varphi::=p\mid \top\mid \bot\mid \varphi\land \varphi\mid \varphi\lor\varphi\mid \neg\varphi\mid \Box_I\varphi\mid \Box_M\varphi$$

As usual, $\varphi\rightarrow\psi$ stands for $\neg \varphi\lor \psi$ for any bimodal formulas $\varphi$ and $\psi$.

\begin{remark}

\textcolor{black}{The subscript $I$} means ``intuitionistic" while \textcolor{black}{the subscript $M$} means ``modal". The reason for using these subscripts will only become clear later in this paper. Right now, we use them simply as a way to distinguish these two operators.
    
\end{remark}

\begin{definition}

A logic $L$ over $Form_{bi}$ is a \textit{bimodal logic} if the following hold:

\begin{itemize}
\item \textbf{CPC}\footnote{\textcolor{black}{It stands for classical propositional calculus.}} $\subseteq L$
\item $\Box_I(\varphi\land\psi)\leftrightarrow (\Box_I\varphi\land \Box_I\psi)\in L$ and $\Box_M(\varphi\land\psi)\leftrightarrow (\Box_M\varphi\land \Box_M\psi)\in L$
\item $\varphi\rightarrow \psi\in L$ implies $\Box_I\varphi \rightarrow \Box_I\psi \in L$ and $\Box_M\varphi \rightarrow \Box_M\psi \in L$ (Reg)

\item $\varphi\in L$ implies $\Box_I\varphi\in L$ and $\Box_M\varphi\in L$ (Nec)
\item $\varphi\rightarrow\psi \text{ and }\varphi\in L$ implies $\psi\in L$ (MP)

\end{itemize}
\end{definition}

We denote the least bimodal logic \textcolor{black}{by} $\textbf{K}\otimes\textbf{K}$. This notation is justified as if restricted to the signatures $\{\land,\lor,\neg,\top,\bot,\Box_I\}$ or $\{\land,\lor,\neg,\top,\bot,\Box_M\}$\footnote{In either case, we would prefer to use simply $\Box$ instead of $\Box_I$ or $\Box_M$.}, the least bimodal logic is just the least normal modal logic $\textbf{K}$. Then $\textbf{S4}\otimes\textbf{K}$ is  $(\textbf{K}\otimes\textbf{K})\oplus (\Box_I p\rightarrow p)\oplus (\Box_I p\rightarrow \Box_I\Box_I p)$ and $\textbf{Grz}\otimes \textbf{K}$ is $(\textbf{S4}\otimes\textbf{K})\oplus \Box_I(\Box_I (p\rightarrow \Box_I p)\rightarrow p)\rightarrow p$\footnote{When restricted to the signature $\{\land,\lor,\neg,\top,\bot,\Box_I\}$, $\textbf{S4}\otimes\textbf{K}$ is just $\textbf{S4}$, and $\textbf{Grz}\otimes\textbf{K}$ is just $\textbf{Grz}$. See \cite{ChagrovZakharyashev} for more information about these normal modal logics.}. 

Then we introduce \textit{bimodal multi-conclusion consequence relations} as follows:

\begin{definition}

A \textit{bimodal multi-conclusion consequence relation} is a multi-conclusion consequence relation $M$ over $Rul_{bi}$ satisfying the following conditions:

\begin{itemize}
    \item $/\varphi\in M$ whenever $\varphi\in \textbf{K}\otimes\textbf{K}$ 
    \item $\varphi/\Box_I\varphi\in M$ and $\varphi/\Box_M \varphi\in M$  
    \item $\varphi\rightarrow\psi, \varphi/\psi\in M$ 
    
\end{itemize}

\end{definition}

\begin{remark}
    For convenience, sometimes bimodal logics just mean bimodal deductive systems when whether they are logics or multi-conclusion consequence relations does not matter. This also applies to intuitionistic modal logics as well 
\end{remark}

\textcolor{black}{Elements in $Rul_{bi}$ are called \textit{bimodal multi-conclusion rules}.} If $L$ is a bimodal logic, then $\mathbf{NExt}(L)$ is the lattice of all bimodal logics extending $L$ with $\oplus$ as join and intersection as meet. Similarly we define $\mathbf{NExt}(M)$ where $M$ is a bimodal multi-conclusion consequence relation. Clearly, for any $L\in \mathbf{NExt}(\textbf{K}\otimes\textbf{K})$, there is a least bimodal multi-conclusion consequence relation $L^R$ containing all $/\varphi$ for $\varphi\in L$. In particular, we denote the one corresponding to $\textbf{K}\otimes\textbf{K}$ as $\textbf{K}\otimes\textbf{K}^R$ (the least bimodal multi-conclusion consequence relation) and the one corresponding to $\textbf{S4}\otimes\textbf{K}$ as $\textbf{S4}\otimes\textbf{K}^R$. Conversely, for any $M\in \mathbf{NExt}(\textbf{K}\otimes\textbf{K})$, $Taut(M)=\{\varphi\in Form_{bi}\mid /\varphi\in M\}$ is a bimodal logic.

The following proposition allows us to transfer results about multi-conclusion consequence relations to results about logics. The proof is routine.

\begin{proposition}
\label{2.4.4}
The mappings $(-)^R$ and $Taut(-)$ are mutually inverse complete lattice isomorphisms between  $\mathbf{NExt}(\textbf{K}\otimes\textbf{K})$ and the sublattice of $\mathbf{NExt}(\textbf{K}\otimes\textbf{K}^R)$ consisting of all bimodal multi-conclusion consequence relations $M$ such that $Taut(M)^R=M$.
\end{proposition}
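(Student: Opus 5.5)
The plan is to check directly that the two maps are well defined, monotone and mutually inverse on the indicated domains. Once we have an order isomorphism between two posets, preservation of arbitrary meets and joins (that is, being a complete lattice isomorphism) follows automatically.

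\emph{Well-definedness and monotonicity.} For $L\in\mathbf{NExt}(\textbf{K}\otimes\textbf{K})$, $L^R$ is by definition the least bimodal multi-conclusion consequence relation containing $\{/\varphi\mid\varphi\in L\}$. It exists because an intersection of bimodal multi-conclusion consequence relations is again one. Both maps are clearly monotone: $L\subseteq L'$ gives $L^R\subseteq L'^R$, and $M\subseteq M'$ gives $Taut(M)\subseteq Taut(M')$.

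Next we show $Taut(M)$ is a bimodal logic for every $M\in\mathbf{NExt}(\textbf{K}\otimes\textbf{K}^R)$.
\begin{itemize}
\item Closure under substitution comes from structurality of $M$.
\item It contains $\textbf{K}\otimes\textbf{K}$ by the first clause of the definition.
\item (MP): from $/\varphi\rightarrow\psi$, $/\varphi$ and $\varphi\rightarrow\psi,\varphi/\psi\in M$, weakening and two applications of Cut yield $/\psi$.
\item (Nec): same argument using $\varphi/\Box_I\varphi$ and $\varphi/\Box_M\varphi$.
\item (Reg) and the $\Box$-distribution axioms: these follow because $\textbf{K}\otimes\textbf{K}\subseteq Taut(M)$, together with (MP) and (Nec).
\end{itemize}
Finally, $Taut(M)^R=M$ holds trivially when $M$ lies in the sublattice, so $Taut$ maps that sublattice onto a set of logics, and $Taut(L^R)^R=L^R$ will follow once we show $Taut(L^R)=L$.

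\emph{Key step: $Taut(L^R)=L$.} The inclusion $L\subseteq Taut(L^R)$ is immediate. For the converse I will give an explicit description. Let $M_L$ be the set of rules $\Gamma/\Delta$ such that either $\Delta$ contains some $\delta$ with $\bigwedge\Gamma'\rightarrow\delta\in L$ for some finite $\Gamma'\subseteq \{\Box_{i_1}\cdots\Box_{i_k}\gamma\mid \gamma\in\Gamma,\ k\geq 0,\ i_j\in\{I,M\}\}$, or $\Gamma$ is ``$L$-inconsistent'' in the same sense (some such conjunction implies $\bot$ in $L$).

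We then check that $M_L$ is a bimodal multi-conclusion consequence relation containing all $/\varphi$ with $\varphi\in L$. Structurality, reflexivity, weakening and the three bimodal clauses are routine (using (Nec), (Reg) and the $\Box$-distribution over $\land$ in $L$).

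Cut is the main obstacle. Suppose $\Gamma/\Delta;\varphi$ and $\Gamma;\varphi/\Delta$ are both in $M_L$. If the first rule is witnessed by some $\delta\in\Delta$ or by inconsistency, we are done. Otherwise we have $\bigwedge\Gamma'\rightarrow\varphi\in L$. The second rule gives $\bigwedge\Gamma''\wedge\bigwedge\Phi\rightarrow\delta$, where $\Phi$ consists of iterated boxes of $\varphi$. Each $\Box_{\vec i}\varphi$ in $\Phi$ follows from $\Box_{\vec i}\bigwedge\Gamma'$ by (Reg), and boxes distribute over $\land$, so the $\Phi$ can be replaced by iterated boxes of elements of $\Gamma$. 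This handles Cut. (If the inconsistency clause turns out to be awkward, I would alternatively argue semantically: take the Lindenbaum algebra of $L$, note $Taut(L^R)\subseteq Th$ of it since it validates $L^R$, and conclude via completeness of $L$ w.r.t.\ its free algebra.)

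Since $L^R\subseteq M_L$, any $/\varphi\in L^R$ lies in $M_L$ with $\Gamma=\emptyset$, hence $\varphi\in L$ (the inconsistency case would give $\bot\in L$, in which case $L$ is everything anyway). Thus $Taut(L^R)=L$, and together with $Taut(M)^R=M$ on the sublattice the maps are mutually inverse monotone bijections, hence complete lattice isomorphisms.
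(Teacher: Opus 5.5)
Your proof is correct.

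The paper gives no argument beyond calling it routine. The usual route is semantic: $Ru(Alg(L))$ is a bimodal consequence relation containing each $/\varphi$ with $\varphi\in L$, so $L^R\subseteq Ru(Alg(L))$ and $Taut(L^R)\subseteq Th(Alg(L))=L$, the last equality via the Lindenbaum algebra. That is exactly your fallback.

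Your main route instead over-approximates $L^R$ by the explicit relation $M_L$ (``some conclusion is a global $L$-consequence of the premises''). The only real work is Cut, and your argument via (Reg), (Nec) and distribution of the boxes over $\wedge$ is right. When the second premise is witnessed by inconsistency, run the same argument with $\bot$ in place of $\delta$.

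The inconsistency clause is harmless but unnecessary. Without it, $M_L$ is still closed under Cut, and it then coincides with $L^R$: any $\Gamma/\delta$ meeting your first clause is derivable in $L^R$ from $\varphi/\Box_I\varphi$, $\varphi/\Box_M\varphi$, modus ponens and weakening. So your route costs a little more verification than the semantic two-liner, but it yields an explicit description of $L^R$ (for instance, $/\Delta\in L^R$ iff $\Delta\cap L\neq\emptyset$).

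One caveat concerns your final sentence. A monotone bijection with monotone inverse preserves the meets and joins computed \emph{inside} $\mathcal{S}=\{M\mid Taut(M)^R=M\}$. These need not be those of $\mathbf{NExt}(\mathbf{K}\otimes\mathbf{K}^R)$.
\begin{itemize}
\item Joins do agree. Since $L^R\subseteq M$ iff $L\subseteq Taut(M)$, the map $(-)^R$ is a left adjoint and preserves all joins, so $\mathcal{S}$ is closed under $\oplus$.
\item Meets do not. Take $\varphi_1\in L_1\setminus L_2$ and $\varphi_2\in L_2\setminus L_1$. Then $/\varphi_1,\varphi_2$ lies in $L_1^R\cap L_2^R$ by weakening. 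By your own description of $M_{L_1\cap L_2}$, it does not lie in $(L_1\cap L_2)^R=Taut(L_1^R\cap L_2^R)^R$.
\end{itemize}
Hence $\mathcal{S}$ is not closed under $\cap$. The word ``sublattice'' in the statement must therefore be read as ``subposet that is a complete lattice in its own right''. Under that reading your argument establishes exactly the claim.
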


Then we recall algebraic semantics for bimodal logics. 

\begin{definition}

A \textit{modal algebra} is a tuple $\mathfrak{A}=(A,\Box)$ where $A$ is a Boolean algebra, $\Box 1=1$ and $\Box (a\land b)=\Box a\land \Box b$ for any $a,b\in A$. 

A \textit{$K\otimes K$-algebra} (or \textit{bimodal algebra}) is a tuple $\A=(A,\Box_I,\Box_M)$ where $(A,\Box_I)$ and $(A,\Box_M)$ 
are both modal algebras.

For any bimodal logic $L$, we call a bimodal algebra an \textit{$L$-algebra} if it validates $L$.
\end{definition}

\begin{remark}
In particular,  a bimodal algebra $\A=(A,\Box_I,\Box_M)$ is an \textit{$S4\otimes K$-algebra} if $\Box_I a\leq a$ and $\Box_I a\leq \Box_I\Box_I a$ for any $a\in A$ (or equivalently, $\Box_I a\rightarrow a =1$ and $\Box_I a\rightarrow \Box_I \Box_I a=1$). A bimodal algebra $\A=(A,\Box_I,\Box_M)$ is a \textit{$Grz\otimes K$-algebra} if $\Box_I(\Box_I (a\rightarrow \Box_I a)\rightarrow a)\leq a$ for any $a\in A$\footnote{It is well known that every $Grz$-algebra is an $S4$-algebra as $\textbf{Grz}$ is an extension of $\textbf{S4}$. See \cite{ChagrovZakharyashev}. }.    
\end{remark}

Let $\mathbf{BMA}$ be the class of all bimodal algebras, by Theorem \ref{2.2.18}, $\mathbf{BMA}$ is a variety. Let $\textbf{Var}(\mathbf{BMA})$ and $\textbf{Uni}(\mathbf{BMA})$ denote the lattice of subvarieties and the
lattice of universal subclasses of $\mathbf{BMA}$ respectively,  we have the following result as usual. It says that there is a correspondence between varieties and logics, and a correspondence between universal classes and multi-conclusion consequence relations. Proofs of similar results for modal algebras and (unary) normal modal logics can be found in \cite[Thm. 2.5]{mul} and \cite[Thm. 7.56]{ChagrovZakharyashev}.

\begin{theorem}
\label{2.4.7}
The following maps \textcolor{black}{form} pairs of mutually inverse isomorphisms:

\begin{itemize}
    \item Alg: $\mathbf{NExt}(\textbf{K}\otimes\textbf{K})\rightarrow \textbf{Var}(\mathbf{BMA})$ and Th: $\textbf{Var}(\mathbf{BMA})\rightarrow \mathbf{NExt}(\textbf{K}\otimes\textbf{K})$
\item Alg: $\mathbf{NExt}(\textbf{K}\otimes\textbf{K}^R)\rightarrow \textbf{Uni}(\mathbf{BMA})$ and Ru: $\textbf{Uni}(\mathbf{BMA})\rightarrow \mathbf{NExt}(\textbf{K}\otimes\textbf{K}^R)$

\end{itemize}

\end{theorem}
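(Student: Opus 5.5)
The plan follows the standard proof for unary modal algebras (\cite[Thm. 2.5]{mul}, \cite[Thm. 7.56]{ChagrovZakharyashev}), adapted to two boxes. Both parts share one ingredient, the \emph{Lindenbaum--Tarski construction}. For $L\in\mathbf{NExt}(\textbf{K}\otimes\textbf{K})$, let $\varphi\equiv_L\psi$ iff $\varphi\leftrightarrow\psi\in L$. Because of (Reg), (Nec) and the conjunction axioms for $\Box_I,\Box_M$, both boxes are well defined on the quotient $Form_{bi}/{\equiv_L}$. They also preserve $1$ and finite meets, so the quotient is a bimodal algebra. By substitution closure it validates $L$, and under the canonical valuation $\varphi\notin L$ gives $[\varphi]\neq 1$. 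Hence $L=Th(Alg(L))$.

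\textbf{Part 1: logics and varieties.} First I check that $Alg$ and $Th$ are well defined. $Alg(L)$ is defined by the equations $\varphi=1$ for $\varphi\in L$, so it is a variety by Theorem~\ref{2.2.18}. For a variety $V$, $Th(V)$ contains $\textbf{CPC}$ and the box axioms, is closed under substitution, (MP), (Reg) and (Nec), and so lies in $\mathbf{NExt}(\textbf{K}\otimes\textbf{K})$. Both maps are antitone. The identity $Th(Alg(L))=L$ is the Lindenbaum argument above. For $Alg(Th(V))=V$, I use Birkhoff's theorem to write $V$ as defined by equations $\sigma=\tau$. Each such equation is equivalent in a bimodal algebra to $\sigma\leftrightarrow\tau=1$. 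So $V$ is defined by formulas valid in $V$, and therefore $Alg(Th(V))\subseteq V$; the reverse inclusion is trivial.

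\textbf{Part 2: consequence relations and universal classes.} For $M\in\mathbf{NExt}(\textbf{K}\otimes\textbf{K}^R)$, the class $Alg(M)$ is axiomatized by the universal sentences $\forall\bar p(\bigwedge_{\gamma\in\Gamma}\gamma=1\to\bigvee_{\delta\in\Delta}\delta=1)$, so it is a universal subclass of $\mathbf{BMA}$. Conversely, $Ru(\mathcal{K})$ satisfies the four closure conditions of a consequence relation and contains the bimodal rules, so it lies in $\mathbf{NExt}(\textbf{K}\otimes\textbf{K}^R)$. That $Alg(Ru(\mathcal{K}))=\mathcal{K}$ for a universal class $\mathcal{K}$ is shown as follows. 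Every universal sentence in the bimodal signature can be brought to prenex-conjunctive normal form as a conjunction of clauses $\forall\bar x(\bigwedge s_i=t_i\to\bigvee u_j=v_j)$. Replacing each equation $s=t$ by $s\leftrightarrow t=1$, each clause becomes validity of a rule. Hence $\mathcal{K}$ is axiomatized by rules valid on it.

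\textbf{Main step: $Ru(Alg(M))=M$.} This is the completeness step, and I expect it to be the main obstacle. Given $\Gamma/\Delta\notin M$, I build a countermodel in $Alg(M)$. By a Zorn/Lindenbaum argument using (Cut), monotonicity and finiteness of rules, I extend to a maximal pair $(T,F)$ of sets of formulas with $\Gamma\subseteq T$ and $\Delta\subseteq F$, such that no $\Gamma'/\Delta'\in M$ has $\Gamma'\subseteq T$ and $\Delta'\subseteq F$; then $T\cup F=Form_{bi}$. Next I define $\varphi\sim\psi$ iff $\varphi\leftrightarrow\psi\in T$. Because $T$ is closed under $M$-consequence in the appropriate sense (the rules $\varphi/\Box\varphi$ and MP, and congruence via $\Box$-regularity derived inside $M$), $\sim$ is a congruence. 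The quotient $\mathfrak{A}$ is then a bimodal algebra with $[\varphi]=1$ iff $\varphi\in T$. Under the canonical valuation, every $\gamma\in\Gamma$ evaluates to $1$ and every $\delta\in\Delta$ does not. Finally, to show $\mathfrak{A}\vDash M$: if some substitution instance had its premises in $T$ and all its conclusions in $F$, this would contradict the choice of $(T,F)$. Here I must check that valuations on $\mathfrak{A}$ correspond to substitutions, which holds because $\mathfrak{A}$ is a quotient of the term algebra. The delicate points are that the congruence property needs $\Box(\varphi\leftrightarrow\psi)$-style derivations available in $M$, and that maximality gives $T\cup F=Form_{bi}$.
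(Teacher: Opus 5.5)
Your proposal is correct and is essentially the paper's approach. The paper gives no separate proof and only points to the standard arguments for unary modal algebras (\cite[Thm.~2.5]{mul}, \cite[Thm.~7.56]{ChagrovZakharyashev}). Your Lindenbaum--Tarski construction, the Birkhoff and clausal-normal-form arguments for $Alg(Th(V))=V$ and $Alg(Ru(\mathcal{K}))=\mathcal{K}$, and the maximal-pair quotient $Form_{bi}/T$ giving $Ru(Alg(M))=M$ are exactly that standard argument, with the two boxes handled in parallel. You are also right that both maps are antitone, so ``isomorphism'' here means a dual lattice isomorphism.
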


\begin{corollary}
\label{combi}
 Every  bimodal logic (resp.  bimodal multi-conclusion consequence relation) is complete with respect to some variety (resp. universal class) of bimodal algebras.   
\end{corollary}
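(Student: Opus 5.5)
The corollary follows from Theorem \ref{2.4.7}, by taking the algebraic counterpart of the given system and applying the isomorphism to it.

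\emph{Logics.} Let $L\in \mathbf{NExt}(\textbf{K}\otimes\textbf{K})$. We consider $Alg(L)$. Validity of a formula is preserved under homomorphic images, subalgebras and products, so $Alg(L)$ is closed under $\homal$, $\subal$ and $\prodal$. Equivalently, it is axiomatized by the equations $\varphi=1$ for $\varphi\in L$, so by Theorem \ref{2.2.18} it is a variety. Theorem \ref{2.4.7} says $Th$ and $Alg$ are mutually inverse, so $Th(Alg(L))=L$. Now if $Alg(L)\vDash\varphi$, then $\varphi\in Th(Alg(L))=L$. Hence $L$ is complete with respect to the variety $Alg(L)$.

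\emph{Consequence relations.} Let $M\in \mathbf{NExt}(\textbf{K}\otimes\textbf{K}^R)$ and consider $Alg(M)$. A rule $\Gamma/\Delta$ holds in an algebra exactly when the universal sentence
\[
\forall \bar v\,\Big(\bigwedge_{\gamma\in\Gamma}\gamma=1\rightarrow\bigvee_{\delta\in\Delta}\delta=1\Big)
\]
holds. So $Alg(M)$ is the model class of a set of universal sentences, and is therefore a universal class, inside $\mathbf{BMA}$. By the second clause of Theorem \ref{2.4.7}, $Ru(Alg(M))=M$. So every rule valid on $Alg(M)$ lies in $M$, which is the required completeness.

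\emph{Main point to check.} The only real content is that the maps in Theorem \ref{2.4.7} are the maps $Alg$, $Th$ and $Ru$ defined earlier. This means $Alg(S)$ genuinely lands in $\textbf{Var}(\mathbf{BMA})$ or $\textbf{Uni}(\mathbf{BMA})$, which the closure arguments above confirm. Once that holds, the corollary is immediate from the fact that the maps are mutually inverse.
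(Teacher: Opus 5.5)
Your argument is correct and matches the paper, which reads the corollary directly off Theorem \ref{2.4.7}: $Th(Alg(L))=L$ with $Alg(L)$ a variety, and $Ru(Alg(M))=M$ with $Alg(M)$ a universal class. Your extra check that $Alg(L)$ and $Alg(M)$ really are a variety and a universal class inside $\mathbf{BMA}$ (via the equations $\varphi=1$ and the universal sentences encoding rules) only makes explicit what the theorem's statement already presupposes.
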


Similarly, we can define two deductive systems for intuitionistic modal logics and give the corresponding algebraic semantics.

The \textit{intuitionistic modal signature} $i_\Box=\{\land,\lor,\rightarrow,\top,\bot,\Box\}$, and the set of \textit{intuitionistic modal formulas} $Form_{i_\Box}$ is defined recursively as follows: $$\varphi::=p\mid \top\mid \bot\mid \varphi\land \varphi\mid \varphi\lor\varphi\mid \varphi\rightarrow\varphi\mid \Box\varphi$$

$\varphi\leftrightarrow\psi$ stands for $(\varphi\rightarrow \psi)\land(\psi\rightarrow \varphi)$.

\vspace{1mm}
Let $\textbf{IPC}$ denote the intuitionistic propositional calculus, \textit{intuitionistic modal logics} are defined as follows:

\begin{definition}
A logic $L$ over $Form_{i_\Box}$ is an \textit{intuitionistic modal logic} if the following hold:

\begin{itemize}
\item \textbf{IPC} $\subseteq L$
\item $\Box(\varphi\land\psi)\leftrightarrow (\Box\varphi\land \Box\psi)\in L$
\item $\varphi\rightarrow \psi\in L$ implies $\Box\varphi \rightarrow \Box\psi \in L$ (Reg)
\item $\varphi\in L$ implies $\Box\varphi\in L$ (Nec)
\item $\varphi\rightarrow\psi,\varphi\in L$ implies $\psi\in L$ (MP)

\end{itemize}

\end{definition}

We denote the least intuitionistic modal logic \textcolor{black}{by} \textbf{IntK$_\Box$}, and \textbf{IntS4$_\Box$} is just \textbf{IntK$_\Box$}$\oplus(\Box p\rightarrow p)\oplus (\Box p\rightarrow \Box\Box p)$.

\begin{definition}

An \textit{intuitionistic modal multi-conclusion consequence relation} is a multi-conclusion consequence relation $M$ over $Rul_{i_\Box}$ satisfying the following conditions:

\begin{itemize}
    \item $/\varphi\in M$ whenever $\varphi\in \textbf{IntK}_\Box$
    \item $\varphi/\Box\varphi\in M$
    \item $\varphi\rightarrow\psi, \varphi/\psi\in M$
    
\end{itemize}

\end{definition}

Elements in $Rul_{i_\Box}$  are called \textit{intuitionistic modal multi-conclusion rules.} We then define the notations $\mathbf{NExt}(L)$, $\mathbf{NExt}(M)$, $\textbf{IntK$^R_\Box$}$ and \textbf{IntS4$^R_\Box$} where $L$ is an intuitionistic logic and $M$ is an intuitionistic multi-conclusion consequence relation in the same way as we did for  bimodal logics and bimodal multi-conclusion consequence relations.

We have the following counterpart to Proposition \ref{2.4.4} as well.

\begin{proposition}
\label{2.5.3}
The mappings $(-)^R$ and $Taut(-)$ are mutually inverse complete lattice isomorphisms between  $\mathbf{NExt}(\textbf{IntK$_\Box$})$ and the sublattice of $\mathbf{NExt}(\textbf{IntK$^R_\Box$})$ consisting of all intuitionistic modal multi-conclusion consequence relations $M$ such that $Taut(M)^R=M$.    
\end{proposition}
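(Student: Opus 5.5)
The plan is to check directly that the two maps are well defined, monotone, and mutually inverse on the stated domains; complete-lattice preservation then follows formally. First, for $L\in\mathbf{NExt}(\textbf{IntK}_\Box)$, the set $L^R$ exists as the intersection of all intuitionistic modal multi-conclusion consequence relations containing $\{/\varphi\mid\varphi\in L\}$. Such relations are closed under arbitrary intersections, since each defining clause is a closure condition. Second, for $M\in\mathbf{NExt}(\textbf{IntK}^R_\Box)$, I check that $Taut(M)=\{\varphi\mid /\varphi\in M\}$ is an intuitionistic modal logic:
\begin{itemize}
\item closure under substitution is inherited from $M$;
\item $\textbf{IntK}_\Box\subseteq Taut(M)$ by the first clause of the definition;
\item (MP): from $/\varphi\rightarrow\psi$, $/\varphi$ and the rule $\varphi\rightarrow\psi,\varphi/\psi$, weakening plus two applications of Cut give $/\psi$;
\item (Nec): Cut $/\varphi$ against $\varphi/\Box\varphi$;
\item (Reg): from $/\varphi\rightarrow\psi$, use Nec to get $/\Box(\varphi\rightarrow\psi)$, then the $\textbf{IntK}_\Box$-theorem $\Box(\varphi\rightarrow\psi)\rightarrow(\Box\varphi\rightarrow\Box\psi)$ and MP. This theorem is derivable from the conjunction axiom and Reg inside $\textbf{IntK}_\Box$.
\end{itemize}
Both maps are clearly monotone.

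Next I show $Taut(L^R)=L$ for every logic $L$. The inclusion $L\subseteq Taut(L^R)$ is immediate. For the converse, the cleanest route is semantic: the rule set $\{\Gamma/\Delta\mid \Gamma\subseteq L\text{-provable}\Rightarrow\ldots\}$ is awkward, so instead I define $S_L=\{\Gamma/\Delta \mid \Delta\cap L\neq\emptyset \text{ or } \Gamma\not\subseteq L \text{ after some check}\}$. More concretely, let $S_L$ be the set of rules $\Gamma/\Delta$ such that for every substitution $s$, if $\bar s[\Gamma]\subseteq L$ then $\bar s[\Delta]\cap L\neq\emptyset$. I then verify that $S_L$ is an intuitionistic modal multi-conclusion consequence relation:
\begin{itemize}
\item substitution closure holds by composing substitutions;
\item reflexivity and weakening are trivial;
\item Cut holds because, given $s$ with $\bar s[\Gamma]\subseteq L$, either $\bar s(\varphi)\in L$ or not, and each case yields a member of $\bar s[\Delta]$ in $L$;
\item the three extra clauses follow from $L$ containing $\textbf{IntK}_\Box$ and being closed under MP and Nec.
\end{itemize}
Since $S_L$ contains every $/\varphi$ with $\varphi\in L$, we get $L^R\subseteq S_L$. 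Hence $/\varphi\in L^R$ implies $\varphi\in L$ (take $s$ the identity).

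Finally, on the sublattice of those $M$ with $Taut(M)^R=M$, the equation $(-)^R\circ Taut=\mathrm{id}$ holds by definition. Together with $Taut\circ(-)^R=\mathrm{id}$, this makes the maps mutually inverse order-isomorphisms. Every $L^R$ lies in that sublattice, because $Taut(L^R)^R=L^R$. An order isomorphism between complete lattices preserves all existing joins and meets, so it is a complete lattice isomorphism. The only subtle step is the soundness argument $L^R\subseteq S_L$, specifically verifying Cut for $S_L$; quantifying over substitutions handles it.
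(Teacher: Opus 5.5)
Your core argument is correct. The checks that $Taut(M)$ is a logic are right. $S_L$ is a consequence relation precisely because you quantify over all substitutions: with only the identity substitution it would not be closed under substitution.

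The paper gives no proof here; it defers to Proposition~\ref{2.4.4} as routine. The natural routine argument in the paper's framework is semantic. $Ru(Alg(L))$ is an intuitionistic modal multi-conclusion consequence relation containing every $/\varphi$ with $\varphi\in L$. Hence $L^R\subseteq Ru(Alg(L))$, and $Taut(L^R)\subseteq Th(Alg(L))=L$ by Theorem~\ref{2.5.6}. Your $S_L$ is exactly $Ru$ of a single algebra, the Lindenbaum--Tarski algebra of $L$: valuations into it are substitutions modulo $L$, and $[\chi]=1$ iff $\chi\in L$. So the two routes are the same idea, and yours has the advantage of not needing the completeness theorem.

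The step that needs repair is the last one. The general fact about order isomorphisms only shows that $\mathcal{S}=\{M\mid Taut(M)^R=M\}$, ordered by inclusion, is a complete lattice isomorphic to $\mathbf{NExt}(\textbf{IntK}_\Box)$. It does not show that $\mathcal{S}$ is a \emph{sublattice}, i.e.\ that its joins and meets are the $\oplus$ and $\cap$ of $\mathbf{NExt}(\textbf{IntK}^R_\Box)$.

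For joins this is true and takes one line: $L^R\subseteq M$ iff $L\subseteq Taut(M)$, so $(-)^R$ is left adjoint to $Taut$ and $(\bigoplus_i L_i)^R=\bigoplus_i L_i^R$.

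For meets it is false. Let $\varphi\in L_1\setminus L_2$ and $\psi\in L_2\setminus L_1$; for example $L_1=\textbf{IntK}_\Box\oplus\Box p$ and $L_2=\textbf{IntK}_\Box\oplus(\Box p\rightarrow p)$, with $\varphi=\Box p$ and $\psi=\Box p\rightarrow p$. By weakening, $\emptyset/\{\varphi,\psi\}\in L_1^R\cap L_2^R$.

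Now take $\mathfrak{A}_1\vDash L_1$ with $\mathfrak{A}_1\not\vDash\psi$, and $\mathfrak{A}_2\vDash L_2$ with $\mathfrak{A}_2\not\vDash\varphi$. In the example, these are the two-element Boolean algebra with $\Box x=1$ and with $\Box x=x$, respectively. Then $\mathfrak{A}_1\times\mathfrak{A}_2\vDash L_1\cap L_2$. A valuation $(V_1,V_2)$ with $V_1$ refuting $\psi$ and $V_2$ refuting $\varphi$ gives $\varphi$ the value $(1,V_2(\varphi))$ and $\psi$ the value $(V_1(\psi),1)$, neither of which is the top.

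Since $(L_1\cap L_2)^R\subseteq Ru(Alg(L_1\cap L_2))$, the rule is not in $(L_1\cap L_2)^R$. By your own result, $(L_1\cap L_2)^R=Taut(L_1^R\cap L_2^R)^R$. Hence $L_1^R\cap L_2^R\notin\mathcal{S}$, even though both $L_1^R$ and $L_2^R$ lie in $\mathcal{S}$.

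So you should say explicitly that in $\mathcal{S}$ the join is $\oplus$ but the meet is $(Taut(M)\cap Taut(M'))^R$ rather than $M\cap M'$. With ``sublattice'' read in this sense, your proof establishes the proposition. With intersection as the meet, the claim is false, so no proof could supply that part.
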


The algebraic semantics for intuitionistic modal logics is given by so-called \textit{modal Heyting algebras}. We start with the definition of \textit{Heyting algebras}.

\begin{definition}
A tuple $\A=(A,\land,\lor,\rightarrow,0,1)$ is a \textit{Heyting algebra} if $(A,\land,\lor,0,1)$ is a bounded distributive lattice such that for any $a,b,c\in A$, $c\land a\leq b$ iff $c\leq a\rightarrow b$.    
\end{definition}

\begin{definition}
A \textit{modal Heyting algebra} is a tuple $\mathfrak{A}=(A,\land,\lor,\rightarrow,0,1,\Box)$ where $(A,\land,\lor,\rightarrow,0,1)$ is a Heyting algebra such that $\Box 1=1$ and $\Box(a\land b)=\Box a\land \Box b$ for any $a,b\in A$. 
\end{definition}

For simplicity, we will write $\mathfrak{A}=(A,\Box)$ for $\mathfrak{A}=(A,\land,\lor,\rightarrow,0,1,\Box)$ where $A$ is assumed to be a Heyting algebra. 

Let $\mathbf{MHA}$ be the class of all modal Heyting algebras, by Theorem \ref{2.2.18}, $\mathbf{MHA}$ is a variety. Let $\textbf{Var}(\mathbf{MHA})$ and $\textbf{Uni}(\mathbf{MHA})$ denote the lattice of subvarieties and the
lattice of universal subclasses of $\mathbf{MHA}$ respectively. \textcolor{black}{Then} we have the following results. Proofs of similar results for Heyting algebras and superintuitionistic logics can be found in \cite[Thm. 7.56]{ChagrovZakharyashev} and \cite[Thm. 2.2]{can}.

\begin{theorem}
\label{2.5.6}
The following maps form pairs of mutually inverse isomorphisms:

\begin{itemize}
    \item Alg: $\mathbf{NExt}(\textbf{IntK}_\Box)\rightarrow \textbf{Var}(\mathbf{MHA})$ and Th: $\textbf{Var}(\mathbf{MHA})\rightarrow \mathbf{NExt}(\textbf{IntK}_\Box)$
\item Alg: $\mathbf{NExt}(\textbf{IntK$^R_\Box$})\rightarrow \textbf{Uni}(\mathbf{MHA})$ and Ru: $\textbf{Uni}(\mathbf{MHA})\rightarrow \mathbf{NExt}(\textbf{IntK$^R_\Box$})$

\end{itemize}

\end{theorem}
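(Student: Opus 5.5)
The plan is the standard algebraizability argument, carried out in two parallel parts, one for logics and varieties and one for consequence relations and universal classes.

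\emph{Step 1: Lindenbaum–Tarski soundness and completeness.} For $L\in\mathbf{NExt}(\textbf{IntK}_\Box)$ I will show $\mathrm{Alg}(L)$ is a variety. The class $\mathrm{Alg}(L)$ is defined by the equations $\varphi=1$ for $\varphi\in L$, so it is a variety by Theorem \ref{2.2.18}, and it is contained in $\mathbf{MHA}$ because the axioms of $\textbf{IntK}_\Box$ force the modal Heyting identities. Then I will prove $Th(\mathrm{Alg}(L))=L$. The inclusion $\supseteq$ is soundness, checked by induction on derivations: (MP), (Nec) and (Reg) preserve the property $\bar V(\varphi)=1$ for all $V$, using $\Box 1=1$ and monotonicity of $\Box$. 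For $\subseteq$ I will form the Lindenbaum algebra $Form_{i_\Box}/{\equiv_L}$, where $\varphi\equiv_L\psi$ iff $\varphi\leftrightarrow\psi\in L$. This relation is a congruence: compatibility with $\Box$ follows from (Reg) together with the $\Box$-conjunction axiom. The quotient is a modal Heyting algebra validating $L$, since $L$ is closed under substitution. Under the canonical valuation, $[\varphi]=1$ iff $\varphi\in L$.

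\emph{Step 2: varieties determined by their theories.} Conversely, for a variety $V\subseteq\mathbf{MHA}$ I will show $\mathrm{Alg}(Th(V))=V$. Every equation $\sigma=\tau$ is equivalent over Heyting algebras to $\sigma\leftrightarrow\tau=1$. Hence the equational theory of $V$ is captured by $Th(V)$, and Birkhoff's theorem gives $V=\mathrm{Alg}(Th(V))$. It remains to check that $Th(V)$ is an intuitionistic modal logic, which follows directly from closure of validity under the rules. Both maps are order-reversing, and being mutually inverse they are lattice (anti-)isomorphisms in the sense intended.

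\emph{Step 3: consequence relations and universal classes.} A rule $\Gamma/\Delta$ corresponds to the universal sentence $\forall\bar p\,(\bigwedge_{\gamma}\gamma=1\to\bigvee_{\delta}\delta=1)$. Conversely, every universal sentence in this signature is equivalent over $\mathbf{MHA}$ to a conjunction of such sentences. To see this, write it in clausal form, then replace each atom $s=t$ by $s\leftrightarrow t=1$ and each negated atom by moving it to the other side of the implication. Hence $\mathrm{Alg}(M)$ is a universal class, and $\mathrm{Alg}(Ru(\mathcal K))=\mathcal K$ for universal $\mathcal K$.

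The main obstacle is showing $Ru(\mathrm{Alg}(M))\subseteq M$, i.e.\ completeness for multi-conclusion consequence relations, where a single Lindenbaum algebra does not suffice. I would follow Jeřábek's argument \cite[Thm. 2.2]{can}. Suppose $\Gamma/\Delta\notin M$. Using finiteness of rules, Cut and weakening, extend to a maximal pair $(T,F)$ with $\Gamma\subseteq T$ and $\Delta\subseteq F$ such that no rule $\Gamma'/\Delta'\in M$ has $\Gamma'\subseteq T$ and $\Delta'\subseteq F$, via Zorn's lemma. Quotient $Form_{i_\Box}$ by $\varphi\sim\psi$ iff $\varphi\leftrightarrow\psi\in T$. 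This is a congruence because $M$ contains modus ponens and $\varphi/\Box\varphi$, so $T$ is closed under them, and its top class is exactly $T$. The resulting algebra validates $M$ by maximality and substitution invariance, and it refutes $\Gamma/\Delta$ under the canonical valuation. The final check is that $Ru(\mathcal K)$ is an intuitionistic modal consequence relation for any class $\mathcal K$, which is routine.
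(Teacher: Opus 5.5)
Your argument is correct and is the standard one the paper relies on: the paper gives no proof of its own and cites \cite[Thm.~7.56]{ChagrovZakharyashev} and \cite[Thm.~2.2]{can}. Those references use exactly your route, namely Lindenbaum--Tarski algebras plus Birkhoff's theorem for logics and varieties, and a maximal pair $(T,F)$ obtained by Zorn's lemma and Cut for consequence relations and universal classes. The one thing to correct is your justification that $\mathrm{Alg}(L)\subseteq\mathbf{MHA}$. Validity of all theorems of $\textbf{IntK}_\Box$ does \emph{not} force the modal Heyting identities on an arbitrary $i_\Box$-algebra: the two-element algebra with $\bot=0$, $\top=1$ and $\land,\lor,\rightarrow,\Box$ all constantly $1$ validates every theorem, since no theorem is a variable or $\bot$, yet $0\land 0\neq 0$. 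So $\mathrm{Alg}(L)$ and $\mathrm{Alg}(M)$ must simply be read as classes of modal Heyting algebras, which the codomains $\mathbf{Var}(\mathbf{MHA})$ and $\mathbf{Uni}(\mathbf{MHA})$ already presuppose.
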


\begin{corollary}
\label{comInt}
 Every intuitionistic modal logic (resp. intuitionistic modal multi-conclusion consequence relation) is complete with respect to some variety (resp. universal class) of modal Heyting algebras.   
\end{corollary}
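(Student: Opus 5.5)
The statement is Corollary \ref{comInt}, and I will derive it directly from Theorem \ref{2.5.6}, just as Corollary \ref{combi} follows from Theorem \ref{2.4.7}.

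\emph{Logics.} Let $L\in\mathbf{NExt}(\textbf{IntK}_\Box)$ and set $V=Alg(L)$. By the first item of Theorem \ref{2.5.6}, $V$ is a subvariety of $\mathbf{MHA}$. By the same item, $Th$ and $Alg$ are mutually inverse, so $Th(Alg(L))=L$. Hence if $V\vDash\varphi$, then $\varphi\in Th(V)=L$. This is exactly completeness of $L$ with respect to the variety $V$.

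\emph{Consequence relations.} Let $M\in\mathbf{NExt}(\textbf{IntK}^R_\Box)$ and set $U=Alg(M)$. By the second item of Theorem \ref{2.5.6}, $U\in\textbf{Uni}(\mathbf{MHA})$ and $Ru(Alg(M))=M$. So every rule $\Gamma/\Delta$ with $U\vDash\Gamma/\Delta$ lies in $Ru(U)=M$. Hence $M$ is complete with respect to the universal class $U$.

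There is no real obstacle here, since everything is packed into Theorem \ref{2.5.6}. The one point to check is that the inverse-isomorphism statement genuinely gives the identities $Th\circ Alg=\mathrm{id}$ and $Ru\circ Alg=\mathrm{id}$ on the respective lattices, and not merely a bijection between them. This holds because the two maps are stated to be mutual inverses. If one wanted to be independent of that theorem, the substantive step would be the Lindenbaum–Tarski construction: take the quotient of $Form_{i_\Box}$ by provable equivalence in $L$, check that it is a modal Heyting algebra validating $L$ and refuting every non-theorem, and for $M$ use the analogous quotients by $M$-theories that prime-split along the conclusion sets. I would simply cite the theorem.
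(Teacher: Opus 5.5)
Your proof is correct and is exactly the paper's route: the paper gives no separate proof and states the corollary as an immediate consequence of Theorem~\ref{2.5.6}. Taking $Alg(L)$ (resp.\ $Alg(M)$) and using $Th\circ Alg=\mathrm{id}$ (resp.\ $Ru\circ Alg=\mathrm{id}$) is precisely how it follows, so your closing Lindenbaum--Tarski remark is not needed.
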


\subsection{Duality}

Finally, we recall dual descriptions of bounded distributive lattices  \textcolor{black}{and} Heyting algebras, which will be frequently used in this paper.

We start with Priestley duality for bounded distributive lattices, for which we refer the reader to \cite{pri}. 

\begin{definition}[Priestley space]

A tuple $(X,\leq)$ (where $\leq$ is a partial order on $X$) is a \textit{Priestley space} if $X$ is a compact space and for any $x,y\in X$, if $x\not\leq y$, then there is a clopen (closed and open) upset $U$ of $X$ such that $x\in U$ while $y\not\in U$.
    
\end{definition}

\begin{definition}[Priestley morphism]

For Priestley spaces $(X,\leq)$ and $(Y,\leq)$, a map $f:X\rightarrow Y$ is a \textit{Priestley morphism} if $f$ is
continuous and order-preserving.
    
\end{definition}

Let $\mathbf{BDL}$ be the category of bounded distributive lattices with bounded
lattice homomorphisms and $\mathbf{PS}$ be the category of Priestley spaces with Priestley morphisms, the functors $(-)_*:\mathbf{BDL}\rightarrow\mathbf{PS}$ and $(-)^*:\mathbf{PS}\rightarrow \mathbf{BDL}$ that establish Priestley duality are constructed as follows. For a bounded distributive lattice $\A$, its dual $\A_*$ is the set of all prime filters $X_A$ of $\A$ with $\subseteq$ as the order and $\{\beta(a)\mid a\in A\}\cup\{X_A\setminus \beta(a)\footnote{We may also denote \textcolor{black}{$X_A\setminus \beta(a)$} as $\delta_a$ for convenience.}\mid a\in A\}$ where $\beta(a)=\{x\in X_A\mid a\in x\}$ as the basis. For a bounded lattice homomorphism $h:A\rightarrow B$, its dual $h_*$ is given by $h^{-1}$. For a Priestley space $\mathcal{X}=(X,\leq)$, its dual $\mathcal{X}^*$ is the bounded distributive lattice of clopen upsets of $\mathcal{X}$ with intersection as meet and union as join. For a Priestley morphism $f:X\rightarrow Y$, its dual $f^*:Y^*\rightarrow X^*$ is given by $f^{-1}$.

We summarise some useful details about Priestley duality in the following theorem.

\begin{theorem}
 $\mathbf{BDL}$ is dually equivalent to $\mathbf{PS}$, which is witnessed by $(-)^*$ and $(-)_*$. In particular, for any bounded distributive lattice $\A$, $\A\cong (\A_*)^*$ witnessed by $\beta$ where $\beta(a)=\{x\in A_*\mid a\in x\}$, and for any Priestley space $\mathcal{X}$, \textcolor{black}{we have} $\mathcal{X}\cong (\mathcal{X}^*)_*$ witnessed by $\epsilon$ where $\epsilon(x)=\{U\in X^*\mid x\in U\}$.   
\end{theorem}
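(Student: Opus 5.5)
The plan is to prove Priestley duality in two halves: first the object-level facts, showing that $\A_*$ is a Priestley space and that $\beta$ and $\epsilon$ are isomorphisms, and then functoriality together with naturality of $\beta$ and $\epsilon$, which yields the dual equivalence.

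\emph{$\A_*$ is a Priestley space.} Identify the set of prime filters $X_A$ with a subset of the Cantor cube $2^A$ via characteristic functions. Being a prime filter is expressed by conditions involving finitely many coordinates each: $1\in x$, $0\notin x$, $a,b\in x\Leftrightarrow a\land b\in x$, and $a\lor b\in x\Leftrightarrow(a\in x \text{ or } b\in x)$. Hence $X_A$ is closed in $2^A$. Its subspace topology is generated exactly by the sets $\beta(a)$ and $\delta_a$, so $X_A$ is compact by Tychonoff.

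For the Priestley separation condition, suppose $x\not\subseteq y$. Pick $a\in x\setminus y$. Then $\beta(a)$ is a clopen upset containing $x$ but not $y$.

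\emph{$\beta$ is an isomorphism.} Clearly $\beta$ preserves the bounds and meets, and primeness of the filters gives $\beta(a\lor b)=\beta(a)\cup\beta(b)$. Injectivity comes from the prime filter theorem: if $a\not\leq b$, extend ${\uparrow}a$ to a prime filter avoiding the ideal ${\downarrow}b$. This is where Zorn's lemma or BPI enters.

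Surjectivity onto clopen upsets is the main obstacle. Let $U$ be a clopen upset. For each $x\in U$ and $y\notin U$ we have $x\not\subseteq y$, since $U$ is an upset, so some $\beta(a)$ contains $x$ but not $y$. Fix $y\notin U$. Compactness of $U$ lets us cover $U$ by finitely many such $\beta(a_i)$, so $U\subseteq\beta(\bigvee a_i)$ while $y\notin\beta(\bigvee a_i)$. The complements of these sets, taken over all $y\notin U$, cover the compact set $X\setminus U$. Choosing finitely many and taking the meet of the corresponding joins gives an element $c$ with $U=\beta(c)$.

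\emph{$\epsilon$ is an isomorphism.} Given a Priestley space $\mathcal{X}$, the set $\epsilon(x)$ is a prime filter, and $\epsilon$ preserves and reflects the order by the separation axiom. In particular it is injective.

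For continuity, note that $\epsilon^{-1}(\beta(U))=U$, so preimages of the subbasic sets $\beta(U)$ and their complements are clopen.

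For surjectivity, let $P$ be a prime filter of clopen upsets. The family $P\cup\{X\setminus V\mid V\notin P\}$ has the finite intersection property. Indeed, if $U\cap (X\setminus V_1)\cap\dots\cap (X\setminus V_n)$ were empty, then $U\subseteq V_1\cup\dots\cup V_n$, and primeness of $P$ would put some $V_i$ in $P$, a contradiction. By compactness there is a point $x$ in the intersection, and then $\epsilon(x)=P$.

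Finally, $\epsilon$ is a continuous bijection from a compact space to a Hausdorff space, hence a homeomorphism. It is an order isomorphism because it both preserves and reflects the order.

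\emph{Functoriality and naturality.} The preimage of a prime filter under a bounded lattice homomorphism is again a prime filter. The map $h^{-1}$ is order-preserving, and it is continuous because $(h^{-1})^{-1}(\beta(a))=\beta(h(a))$. Dually, $f^{-1}$ sends clopen upsets to clopen upsets and preserves the lattice operations.

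Naturality of $\beta$ and $\epsilon$ is a routine unwinding of definitions. Together with the fact that $\beta$ and $\epsilon$ are isomorphisms, this establishes the dual equivalence between $\mathbf{BDL}$ and $\mathbf{PS}$.
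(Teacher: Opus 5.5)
Your proof is correct and complete. The paper gives no proof to compare it with: it recalls the theorem as background and cites Priestley \cite{pri}.

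Your argument is the standard self-contained proof of Priestley duality:
\begin{itemize}
\item closedness of the set of prime filters in $2^{A}$, plus Tychonoff, for compactness;
\item the prime filter theorem for injectivity of $\beta$;
\item a two-fold compactness argument for surjectivity of $\beta$ onto clopen upsets;
\item the finite intersection property for surjectivity of $\epsilon$.
\end{itemize}
The naturality squares $\beta_B\circ h=(h_*)^*\circ\beta_A$ and $\epsilon_Y\circ f=(f^*)_*\circ\epsilon_X$ reduce exactly to identities you already have. These are $(h^{-1})^{-1}(\beta(a))=\beta(h(a))$ and $\{V\mid x\in f^{-1}(V)\}=\epsilon(f(x))$.

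The steps you leave implicit are all routine:
\begin{itemize}
\item $(\mathcal{X}^*)_*$ is Hausdorff, since it is a subspace of $2^{X^*}$ (or by the separation property you verified in the first part).
\item $\mathcal{X}^*$ is a bounded sublattice of the power set, hence distributive.
\item In the finite-intersection argument, you first intersect the finitely many members of $P$ into one. When no $V_i$ occur, you need $\emptyset\notin P$, which holds because $P$ is proper.
\end{itemize}

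You also correctly treat the family $\{\beta(a)\}\cup\{\delta_a\}$ as a subbasis, although the paper calls it a basis. For a non-Boolean lattice it is in general only a subbasis.
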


The dual description of Heyting algebras is then given by Esakia duality. The reader may refer to \cite{esa} for more details.

\begin{definition}[Esakia space]

A Priestley space $(X,\leq)$ is an \textit{Esakia space} if for any clopen set $U$ of $X$, \textcolor{black}{we have that} ${\downarrow}U$ is clopen. 
    
\end{definition}

\begin{remark}
\label{2.6.6}
Using an easy argument about general topology, one can easily check that every clopen subset of an Esakia space is of the form $\bigcup_{1\leq i\leq n} (U_i\setminus V_i)$ where $n\in \mathbb{N}$ and $U_i, V_i$'s are clopen upsets.

\end{remark}

For any topological space $X$, we will write $Clop(X)$ for the set of all clopen subsets of $X$.

\begin{definition}
    For Esakia spaces $(X,\leq)$ and $(Y,\leq)$, $f:X\rightarrow Y$ is an \textit{Esakia morphism} if it is continuous, order-preserving and for any $x\in X$, $f(x)\leq z$ implies that there is $\textcolor{black}{y\geq x}$ such that $f(y)=z$.
\end{definition}

Let $\mathbf{ES}$ be the category of Esakia spaces with Esakia morphisms and $\mathbf{HA}$ be the category of Heyting algebras with Heyting algebra homomorphisms, the functors $(-)_*:\mathbf{HA}\rightarrow \mathbf{ES}$ and $(-)^*:\mathbf{ES}\rightarrow \mathbf{HA}$ that establish Esakia duality are constructed as follows: $(-)_*$ is the same as above. For an Esakia space $\mathcal{X}=(X,\leq)$, its dual $\mathcal{X}^*$ is the Heyting algebra of clopen upsets of $\mathcal{X}$ where $U\rightarrow V=X\setminus {\downarrow} (U\setminus V)$. \textcolor{black}{And} $f^*=f^{-1}$ for any Esakia morphism $f$. In particular, we have the following theorem.

\begin{theorem}
 $\mathbf{HA}$ is dually equivalent to $\mathbf{ES}$, which is witnessed by $(-)^*$ and $(-)_*$. In particular, for any Heyting algebra $\A$, $\A\cong (\A_*)^*$ witnessed by $\beta$ where $\beta(a)=\{x\in A_*\mid a\in x\}$, and for any Esakia space $\mathcal{X}$, \textcolor{black}{we have} $\mathcal{X}\cong (\mathcal{X}^*)_*$ witnessed by $\epsilon$ where $\epsilon(x)=\{U\in X^*\mid x\in U\}$.       
\end{theorem}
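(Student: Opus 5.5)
The statement is Esakia duality: $\mathbf{HA}$ is dually equivalent to $\mathbf{ES}$ via $(-)^*$ and $(-)_*$, with $\beta$ and $\epsilon$ as the unit isomorphisms. I would build this on top of Priestley duality, which is already stated above. The plan is to show three things: the functors land in the right categories, morphisms correspond correctly, and the Priestley isomorphisms $\beta$ and $\epsilon$ respect the extra Heyting and Esakia structure.

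\emph{Objects.} First, for an Esakia space $\mathcal{X}$ I check that $\mathcal{X}^*$ is a Heyting algebra under $U\rightarrow V = X\setminus{\downarrow}(U\setminus V)$.
\begin{itemize}
\item The set $U\setminus V$ is clopen, so ${\downarrow}(U\setminus V)$ is a clopen downset by the Esakia condition. Hence $U\rightarrow V$ is a clopen upset.
\item The adjunction $W\cap U\subseteq V \iff W\subseteq U\rightarrow V$ holds because $W$ is an upset: $W$ meets ${\downarrow}(U\setminus V)$ iff ${\uparrow}W=W$ meets $U\setminus V$.
\end{itemize}
Conversely, for a Heyting algebra $\A$, the Priestley space $\A_*$ is Esakia. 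Priestley separation and compactness show every clopen set has the form $\bigcup_i(\beta(a_i)\setminus\beta(b_i))$, as in Remark~\ref{2.6.6}. So it suffices to show ${\downarrow}(\beta(a)\setminus\beta(b)) = X_A\setminus\beta(a\rightarrow b)$. The inclusion $\subseteq$ is immediate. For $\supseteq$, take a prime filter $x$ with $a\rightarrow b\notin x$. The filter generated by $x\cup\{a\}$ avoids the ideal ${\downarrow}b$, since $c\wedge a\le b$ would give $c\le a\rightarrow b$. The prime filter theorem then yields a prime $y\supseteq x$ with $a\in y$ and $b\notin y$.

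\emph{Morphisms.} Next I show that a Priestley morphism $f$ between Esakia spaces is Esakia exactly when $f^{-1}$ preserves $\rightarrow$. Since $f^{-1}$ commutes with complements and differences, this reduces to $f^{-1}({\downarrow}W) = {\downarrow}f^{-1}(W)$ for clopen $W$.
\begin{itemize}
\item The inclusion $\supseteq$ is monotonicity. The inclusion $\subseteq$ is the back condition.
\item For the converse, suppose $f(x)\le z$ but no $y\ge x$ has $f(y)=z$. Then ${\uparrow}x$ and $f^{-1}(z)$ are disjoint closed sets. Compactness and Priestley separation give a clopen $W\ni z$ with $x\notin{\downarrow}f^{-1}(W)$, yet $x\in f^{-1}({\downarrow}W)$, a contradiction.
\end{itemize}
Applying the same computation to $h_*=h^{-1}$ for a Heyting homomorphism $h$, together with the prime filter argument above, shows that $h_*$ satisfies the back condition. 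Concretely, if $h^{-1}(x)\subseteq z$, extend the filter generated by $x\cup h[z]$ to a prime filter avoiding $h[A\setminus z]$.

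\emph{Naturality and the main obstacle.} Finally, $\beta$ preserves $\rightarrow$ by the identity ${\downarrow}(\beta(a)\setminus\beta(b)) = X_A\setminus\beta(a\rightarrow b)$ proved above. The map $\epsilon$ is an order-homeomorphism by Priestley duality, and any order isomorphism automatically satisfies the Esakia back condition. Naturality is inherited from Priestley duality. I expect the main obstacle to be the prime-filter extension arguments, especially the back condition for $h_*$, since they require careful use of the Heyting adjunction to show that the generated filter stays disjoint from the relevant ideal.
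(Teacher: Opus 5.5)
The paper gives no proof of this statement. It is quoted as a known preliminary with a reference to Esakia \cite{esa}, so there is no internal argument to match step by step.

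Your proposal is a correct, self-contained reconstruction of Esakia duality as a non-full restriction of the Priestley duality stated just before it, and your two key lemmas are the right ones:
\begin{itemize}
\item the identity ${\downarrow}(\beta(a)\setminus\beta(b))=X_A\setminus\beta(a\rightarrow b)$, which shows at once that $\A_*$ is Esakia and that $\beta$ preserves $\rightarrow$;
\item the characterization of Esakia morphisms among Priestley morphisms as those $f$ with $f^{-1}({\downarrow}W)={\downarrow}f^{-1}(W)$ for clopen $W$, i.e.\ those whose inverse image preserves $\rightarrow$.
\end{itemize}
Compared with the paper's bare citation, your route makes explicit that the functors and the unit maps $\beta,\epsilon$ are literally the Priestley ones, with $\mathbf{HA}$ and $\mathbf{ES}$ non-full subcategories of $\mathbf{BDL}$ and $\mathbf{PS}$. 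This is exactly the viewpoint the paper relies on later, when it dualizes stable bounded lattice homomorphisms between Heyting algebras as Priestley morphisms between Esakia spaces (Propositions~\ref{3.3.6} and~\ref{3.3.12}).

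Two small points to tighten:
\begin{itemize}
\item When proving $\A_*$ is Esakia, do not cite Remark~\ref{2.6.6}, since the paper states it for Esakia spaces. Cite instead the Priestley-space fact you actually use: the clopen upsets of $X_A$ are the $\beta(a)$, and by compactness every clopen set is a finite union of differences of them.
\item In the direct back-condition argument for $h_*$, write out the disjointness check. It is $c\wedge h(d)\le h(e)\Rightarrow c\le h(d\rightarrow e)\Rightarrow d\rightarrow e\in z\Rightarrow e\in z$, using that $h[A\setminus z]$ is closed under joins because $z$ is prime. This step is routine, and in fact redundant: applying your morphism lemma to $f=h_*$, together with $(h_*)^{-1}\circ\beta_A=\beta_B\circ h$, already gives the back condition.
\end{itemize}
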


We finish this section with some useful results about \textit{modal spaces}.

\begin{definition}[Stone space]

A topological space is a \textit{Stone space} if it is a compact Hausdorff space which has a basis of clopen sets.
    
\end{definition}

\begin{remark}
    The category of Boolean algebras with Boolean algebra homomorphisms is dually equivalent to the category of Stone spaces with continuous maps. This duality is called \textit{Stone duality} which is not considered independently in this paper. One only needs to note that for any Boolean algebra $\A$, its dual Stone space is simply its dual Esakia space without the order.
\end{remark}

\begin{definition}[Modal space]

A \textit{modal space} $(X,R)$ consists of a Stone space $X$ and a binary relation $R$ on $X$ such that the following two hold:

\begin{itemize}
    \item For any $x\in X$, \textcolor{black}{the set} $R[x]=\{y\in X\mid xRy\}$ is closed.
    \item For any clopen subset $U$ of $X$, $R^{-1}[U]=\{x\in X\mid xRy \text{ for some }y\in U \}$ is clopen.
\end{itemize}
    
\end{definition}

\begin{remark}

The category of modal spaces with their ``corresponding maps" is dually equivalent to the category of modal algebas with their homomorphisms. We will only spell out some details about this duality (in fact, for bimodal spaces and bimodal algebras) in Section 4 when we need it.
    
\end{remark}

Because of the duality just mentioned, it is not surprising that we can define a \textit{valuation} on a modal space for unary modal logic\footnote{They are simply bimodal logics restricted to one modal operator.}: a \textit{valuation} on a modal space $(X,R)$ is a map $V:Prop \rightarrow Clop(X)$ which can be extended to all modal formulas in the standard way \footnote{Usually people still write $V$ for the extended map.}. For any modal formula $\varphi$, we write $(X,R)\vDash \varphi$ if for any valuation $V$, $V(\varphi)=X$. Then for any 
unary modal logic $L$, we call a modal space $(X,R)$ an \textit{$L$-space} if for any $\varphi\in L$, we have that $(X,R)\vDash \varphi$. In particular, an \textit{$S4$-space} is a modal space $(X,R)$ where $R$ is reflexive and transitive.

We recall the following useful results about $Grz$-spaces, whose proofs can be found in \cite[Ch. 3]{grz}.

\begin{theorem}
\label{2.6.13}
For any $Grz$-space $(X,R)$ and $U\in Clop(X)$, the following hold:

\begin{itemize}

\item $max_R(U)$ is closed.

\item $max_R(U)\subseteq pas_R(U)$.

\item $max_R(U)$ does not cut any $R$-cluster. 

\end{itemize} 
\end{theorem}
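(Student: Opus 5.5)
The plan is to work entirely on the modal space side. A $Grz$-space $(X,R)$ is an $S4$-space, so $R$ is reflexive and transitive. It additionally satisfies the Esakia-type condition that every point of a clopen set $U$ either is $R$-maximal in $U$ or sees a point outside $U$, more precisely, for every $x\in U$ there is $y\in max_R(U)$ with $xRy$ (``$U\subseteq R^{-1}[max_R(U)]$''). I would derive this from the Grzegorczyk axiom via the standard argument: take $V(p)=X\setminus U$, so the formula $\Box(\Box(p\to\Box p)\to p)\to p$ fails at any point $x\in U$ that sees no maximal point of $U$. Granting this reduction, the three clauses follow in the order passive, cluster, closed.

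For the second clause, take $x\in max_R(U)$ and $y\notin U$ with $xRy$. Suppose there is $z\in U$ with $yRz$. By transitivity $xRz$, and since $x$ is maximal we get $x=z$. Then $yRx$ and $xRy$, so $x$ and $y$ lie in a common cluster, which is proper since $y\neq x$. The main obstacle is to rule out such a proper cluster meeting both $U$ and its complement. I would handle this with the Grz condition applied to the clopen set $U\cap R^{-1}[X\setminus U]$, or more directly with the known fact that a $Grz$-space has no proper clusters containing points of a clopen set that can leave and re-enter it. Concretely, set $V(p)=U\setminus R^{-1}[X\setminus U]$ and evaluate the axiom at $x$. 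Every successor $w$ of $x$ in the cluster satisfies $\Box(p\to\Box p)\to p$ vacuously, since the cluster sees $y\notin U$ and so $w\notin V(p)$ while $p\to\Box p$ holds on the relevant points. Hence the antecedent holds at $x$ but $p$ fails, which is a contradiction. With this established, maximal points are passive.

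The third clause is now immediate from the same observation: if $x\in max_R(U)$ and $x\sim_R w$, then $w\in U$, because otherwise $x$ sees $w\notin U$ and $w$ sees $x\in U$, contradicting passivity. Moreover $w$ is maximal, since any $v\in U$ with $wRv$ gives $xRv$, hence $v=x$, hence $wRx$, and so maximality of $x$ forces $w=x$. In fact this shows that maximal points have trivial clusters, so $max_R(U)$ cannot cut a cluster.

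For the first clause, I would write
\[
max_R(U)=U\setminus R^{-1}[U\setminus\{\cdot\}],
\]
which is not directly clopen. Instead I would show the complement is open: if $x\notin max_R(U)$, either $x\notin U$, and $X\setminus U$ is an open neighbourhood of $x$ avoiding $max_R(U)$, or $x\in U$ with $xRy$ for some $y\in U$, $y\neq x$. In the second case, pick a clopen $W$ containing $y$ but not $x$. Then $U\cap R^{-1}[U\cap W]\setminus W$ is a clopen neighbourhood of $x$, clopen by the modal space condition. Each point $u$ in it sees some point $v\in U\cap W$ with $v\neq u$, since $u\notin W$, and so $u$ is non-maximal. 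Thus the complement is open, and $max_R(U)$ is closed.
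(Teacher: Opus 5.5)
\textbf{Gap in the second clause.} This is the step you yourself call the main obstacle, and it does not work; your third clause depends on it. The valuation $V(p)=U\setminus R^{-1}[X\setminus U]$ is just $\Box_R U$, and since $R$ is reflexive and transitive, $\Box_R\Box_R U=\Box_R U$. So $p\to\Box p$ is true at every point, $\Box(p\to\Box p)$ is true everywhere, and your instance of the Grzegorczyk axiom collapses to $\Box p\to p$. That holds by reflexivity, so no contradiction can come out of it.

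Concretely, three things go wrong:
\begin{itemize}
\item At a cluster point $w\notin V(p)$, the formula $\Box(p\to\Box p)\to p$ is false (true antecedent, false consequent), not vacuously true.
\item The outer $\Box$ at $x$ ranges over all $R$-successors of $x$, not only those in its cluster.
\item The antecedent at $x$ amounts to $\Box p$, which fails because $xRy$ with $y\notin U$.
\end{itemize}

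\textbf{The repair.} Use the valuation from your first paragraph, $V(p)=X\setminus U$. The contrapositive of the axiom then says that every $x\in U$ sees some $y\in U$ with $y\in\Box(p\to\Box p)$, and this condition unwinds exactly to $y\in pas_R(U)$. If $x\in max_R(U)$, the only point of $U$ it sees is $x$ itself, so $y=x$ and $x$ is passive.

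Equivalently, in your cluster situation: every successor of $x$ other than $x$ lies outside $U$ and so satisfies $p$. At $x$ the formula $\Box(p\to\Box p)$ fails, because $xRy$, $y\in V(p)$, and $y$ sees $x\notin V(p)$. So the antecedent of the axiom holds at $x$ while $p$ does not. With this fix, your argument for the third clause goes through as written.

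\textbf{Two smaller points.} First, your opening reduction conflates passive and maximal points. With $V(p)=X\setminus U$ the axiom yields a \emph{passive} point of $U$ above $x$, not a maximal one. For example, in the $S4$-frame consisting of one two-element cluster with $U=X$, the valuation $V(p)=\emptyset$ does not refute the axiom, yet no point of $U$ is maximal. So the claimed failure of the formula does not follow from that valuation. The inclusion $U\subseteq R^{-1}[max_R(U)]$ is true in $Grz$-spaces, but proving it needs compactness and Zorn's lemma to obtain quasi-maximal points, plus a further application of the axiom to a clopen set cutting their cluster. Since you never use it, drop it.

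Second, your proof of the first clause is correct. It uses only the modal-space condition that $R^{-1}$ of a clopen set is clopen, not the Grzegorczyk axiom. The paper gives no proof of this theorem and simply refers to Esakia's treatment of $Grz$-spaces. Once the valuation in the second clause is corrected, your proposal is a complete, self-contained proof.
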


\textcolor{black}{This concludes our general preliminaries. We can now start developing the theory of stable canonical rules for intuitionistic modal logics.}

\section{Stable Canonical Rules for Intuitionistic Modal Logics}

We begin by introducing stable canonical rules and \textcolor{black}{proving} that every intuitionistic modal multi-conclusion consequence
relation is axiomatizable by stable canonical rules. Then using the duality built in \cite{Al}, we give a dual (geometric) characterization of our stable canonical rules, which will be quite useful in their applications.

\label{3.1}
We start with the definition of \textit{stable maps}.

\begin{definition}

Let $\A=(A,\Box)$ and $\B=(B,\Box)$ be modal Heyting algebras, 
and let $h:A\rightarrow B$ be a bounded lattice homomorphism. \textcolor{black}{We say that} $h$ is \textit{stable} if for any $a\in A$, \textcolor{black}{we have} $h(\Box a)\leq \Box h(a)$.
\end{definition}

This definition is the analogue to the one given in \cite[Def. 3.1]{nrule} in the setting of classical modal logics.

\begin{definition}
Let $\A=(A,\Box)$, $\B=(B,\Box)$ be \textcolor{black}{modal} Heyting algebras, $D^\rightarrow\subseteq A\times A$ and $D^\Box\subseteq A$. A bounded lattice embedding $h:A\rightarrow B$ satisfies

\begin{itemize}
    \item  the \textit{closed domain condition} (CDC for short) for $D^\rightarrow$ if $h(a\rightarrow b)=h(a)\rightarrow h(b)$ for any $(a,b)\in D^\rightarrow$. 
    \item the \textit{closed domain condition} (CDC for short) for $D^\Box$ if $h(\Box a)=\Box h(a)$ for any $a\in D^\Box$.    
\end{itemize}  
\end{definition}

The following proposition relates each intuitionistic modal multi-conclusion rule with finitely many finite refutation patterns by stable bounded lattice embeddings which satisfy CDC \textcolor{black}{for some parameters}.

\begin{proposition}
\label{3.1.3}
    For each intuitionistic modal multi-conclusion rule $\Gamma/\Delta$, there exist $(\A_1,D^{\rightarrow}_1,D^{\Box}_1)$ $,...,(\A_n,D^{\rightarrow}_n,D^{\Box}_n)$ such that each $\A_i$ is a finite modal Heyting algebra, $D^{\rightarrow}_i\subseteq A_i\times A_i$ and $D^{\Box}_i\subseteq A_i$, and for each modal Heyting algebra $\B=(B,\Box)$, \textcolor{black}{we have} that $\B\not\vDash \Gamma/\Delta$ iff there is $i\leq n$ and a stable bounded lattice embedding $h:A_i\rightarrow B$ satisfying CDC for $D^{\rightarrow}_i$ and $D^{\Box}_i$.
\end{proposition}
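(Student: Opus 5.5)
Let $\Theta$ be the finite set of subformulas of formulas in $\Gamma\cup\Delta$, and let $m=|\Theta|$. The plan follows the filtration argument of \cite{nrule,ncsl}, which exploits the local finiteness of bounded distributive lattices.

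\emph{Step 1: the refutation patterns.} Consider every triple $(\A,D^\rightarrow,D^\Box)$ produced as follows. Take a modal Heyting algebra $\B$ and a valuation $V$ refuting $\Gamma/\Delta$. Let $A$ be the bounded sublattice of $B$ generated by $\bar V[\Theta]$; it is finite, with at most $2^{2^m}$ elements, because finitely generated distributive lattices are finite. A finite distributive lattice is a Heyting algebra, so we let $\rightarrow_A$ be its own implication. We define
\[
\Box_A a=\bigvee\{b\in A\mid b\leq \Box_B a\}.
\]
Since $A$ is finite and closed under joins, this is the largest element of $A$ below $\Box_B a$. It preserves $1$ and finite meets, so $\A=(A,\Box_A)$ is a modal Heyting algebra, and the inclusion $A\hookrightarrow B$ is a stable bounded lattice embedding. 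Note also that for $a,b\in A$, the element $a\rightarrow_A b$ is the largest element of $A$ below $a\rightarrow_B b$. Put
\[
D^\rightarrow=\{(\bar V(\varphi),\bar V(\psi))\mid \varphi\rightarrow\psi\in\Theta\},\qquad
D^\Box=\{\bar V(\varphi)\mid \Box\varphi\in\Theta\}.
\]
For these pairs, $\bar V(\varphi\rightarrow\psi)\in A$ lies below $\bar V(\varphi)\rightarrow_B\bar V(\psi)$ and is the largest such element, so it equals $\bar V(\varphi)\rightarrow_A\bar V(\psi)$. The same reasoning applies to $\Box$. Hence the inclusion satisfies CDC for $D^\rightarrow$ and $D^\Box$, and the valuation $V$, now viewed in $\A$, still refutes $\Gamma/\Delta$ in $\A$. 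Up to isomorphism only finitely many such triples exist, since $|A|$ is bounded. We list them as $(\A_1,D^\rightarrow_1,D^\Box_1),\dots,(\A_n,D^\rightarrow_n,D^\Box_n)$. For definiteness, I would index them by all finite modal Heyting algebras of size at most $2^{2^m}$ together with a valuation $V$ such that $\A_i$ is generated as a lattice by $V[\Theta]$, $V$ refutes the rule, and the $D$'s are defined as above from $V$.

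\emph{Step 2: the two directions.} For ($\Rightarrow$), if $\B\not\vDash\Gamma/\Delta$, the construction in Step 1 gives one of the listed triples together with the desired embedding. For ($\Leftarrow$), let $h:A_i\to B$ be a stable bounded lattice embedding satisfying CDC, and let $V_i$ be the refuting valuation on $\A_i$. Define $V=h\circ V_i$ on $\B$. I will show by induction on $\varphi\in\Theta$ that $\bar V(\varphi)=h(\bar V_i(\varphi))$. The lattice connectives hold because $h$ is a bounded lattice homomorphism. The cases for $\rightarrow$ and $\Box$ hold because the relevant arguments lie in $D^\rightarrow_i$ or $D^\Box_i$, where $h$ commutes with the operation by CDC. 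Because $h$ is injective, $h(x)=1$ iff $x=1$, so $V$ refutes $\Gamma/\Delta$ on $\B$.

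\emph{Main obstacle.} The subtle point is Step 1: verifying that $\Box_A$ (and the relative implication) really agrees with $\Box_B$ on the elements named by subformulas, and that restricting these operations does not disturb the induction. It also has to be checked that the refutation survives passage to $\A$. Stability is used only to make the pattern independent of $\B$, and the ($\Leftarrow$) direction never needs it. Once the "largest element of $A$ below" characterization is in place, both points are immediate.
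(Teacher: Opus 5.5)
Your proof is correct and follows essentially the same route as the paper. You use the same finite list of patterns from local finiteness of bounded distributive lattices and the same induction for ($\Leftarrow$). For ($\Rightarrow$) you filtrate through the bounded sublattice generated by $\bar V[\Theta]$ with its own Heyting implication, as the paper does.

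The one difference is the choice of box. You take $\Box_A a$ to be the largest element of $A$ below $\Box_B a$, whereas the paper uses $\bigvee\{\Box b\mid \Box b\le\Box a,\ b,\Box b\in B'\}$. Both operators preserve meets, are stable, and agree with $\Box_B$ on $D^\Box$. Your choice lets a single ``largest element below'' argument handle $\rightarrow$ and $\Box$ uniformly.
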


\begin{proof}
Let $\Gamma/\Delta$ be an arbitrary intuitionistic modal multi-conclusion rule. If $ \Gamma/\Delta\in \textbf{IntK$^R_\Box$}$, take $n=0$. Suppose $ \Gamma/\Delta\not\in \textbf{IntK$^R_\Box$}$, let $\Theta$ be the set of all subformulas of the formulas in $\Gamma\cup\Delta$. \textcolor{black}{Clearly} $\Theta$ is finite. Assume $|\Theta|=m$, since the variety of bounded distributive lattices is locally finite, there are only finitely many pairs $(\A,D^{\rightarrow},D^{\Box})$ satisfying the following two conditions up to isomorphism:

\begin{itemize}
    \item[i)] $\A=(A,\Box)$ is a finite modal Heyting algebra such that $\A|_{\{\land,\lor,1,0\}}$ is at most $m$-generated as a bounded distributive lattice and $\A\not\vDash \Gamma/\Delta$.
    \item[ii)] $D^\rightarrow=\{(V(\varphi),V(\psi))\mid \varphi\rightarrow\psi\in\Theta\}$ and $D^\Box=\{V(\psi)\mid \Box\psi\in\Theta\}$ where $V$ is a valuation on $\A$ witnessing $\A\not\vDash \Gamma/\Delta$.
\end{itemize}

Let $(\A_1,D^{\rightarrow}_1,D^{\Box}_1),...,(\A_n,D^{\rightarrow}_n,D^{\Box}_n)$ be the enumeration of such pairs. For any modal Heyting algebra $\B=(B,\Box)$ , we prove that $\B\not\vDash\Gamma/\Delta$ iff there is $i\leq n$ and a stable bounded lattice embedding $h:A_i\rightarrow B$ satisfying CDC for $D^{\rightarrow}_i$ and $D^{\Box}_i$.

For the right-to-left direction, suppose there is $i\leq n$ and a stable bounded lattice embedding $h:A_i\rightarrow B$ satisfying CDC for $D^{\rightarrow}_i$ and $D^{\Box}_i$. Define a valuation $V_B$ on $\B$ by $V_B(p)=h(V_i(p))$ for any propositional letter $p$ where $V_i$ is \textcolor{black}{the valuation on $\A_i$ witnessing $\A_i\not\vDash\Gamma/\Delta$.} We then prove by induction that $V_B(\psi)=h(V_i(\psi))$ for any $\psi\in \Theta$. We only consider following two cases as other cases are trivial ($h$ is a bounded lattice embedding):

If $\psi=\Box\varphi$, then as $\Box\varphi\in \Theta$, $V_i(\varphi)\in D^{\Box}_i$. 

\textcolor{black}{\begin{tabular}{l l l l}
$V_B(\Box\varphi)$ &=&$\Box V_B(\varphi)$\\  
   &=& $\Box h(V_i(\varphi))$ & (IH) \\
   &=& $h(\Box V_i(\varphi))$ & (CDC) \\
   &=& $h(V_i(\Box\varphi)).$   
\end{tabular}}

If $\psi=\varphi\rightarrow\chi$, then as $\varphi\rightarrow\chi\in \Theta$, $(V_i(\varphi),V_i(\chi))\in D^{\rightarrow}_i$. 
\vspace{2mm}

\textcolor{black}{\begin{tabular}{l l l l}
$V_B(\varphi\rightarrow\chi)$ &=&$V_B(\varphi)\rightarrow V_B(\chi)$\\
   &=& $h(V_i(\varphi))\rightarrow h(V_i(\chi))$ & (IH) \\
   &=& $h(V_i(\varphi)\rightarrow V_i(\chi))$ & (CDC) \\
   &=& $h(V_i(\varphi\rightarrow\chi)).$  \end{tabular}}

\vspace{1mm}
\textcolor{black}{Since $V_i(\gamma)=1_{A_i}$ for any $\gamma\in\Gamma$ and $h$ is a bounded lattice embedding, $V_B(\gamma)=h(V_i(\gamma))=h(1_{A_i})=1_B$ for any $\gamma\in\Gamma$. Since $V_i(\delta)\not=1_{A_i}$ for any $\delta\in\Delta$ and $h$ is a bounded lattice embedding,
$V_B(\delta)=h(V_i(\delta))\not=1_B$ for any $\delta\in \Delta$. Thus $\B\not\vDash\Gamma/\Delta$.}

For the left-to-right direction, suppose $\B\not\vDash\Gamma/\Delta$. There exists a valuation $V_B$ on $B$ such that $V_B(\gamma)=1_B$ for any $\gamma\in\Gamma$ and $V_B(\delta)\not=1_B$ for any $\delta\in \Delta$. Let $B'$ be the bounded sublattice of $B$ generated by $V_B(\Theta)=\{V_B(\varphi)\mid \varphi\in \Theta\}$. \textcolor{black}{Note that $B'$ is finite as the variety of bounded distributive lattices is locally finite}. Clearly $|V_B(\Theta)|\leq |\Theta|$. Let $D^\Box=\{V_B(\psi)\mid \Box\psi\in\Theta\}$ and $D^\rightarrow=\{(V_B(\varphi),V_B(\psi))\mid \varphi\rightarrow\psi\in \Theta\}$. We define $\rightarrow'$ and $\Box'$ on $B'$ as follows: $a\rightarrow' b=\bigvee \{d\in B'\mid d\land a\leq b\}$ for any $a,b\in B'$; $\Box'a=\bigvee\{\Box b\mid \Box b\leq \Box a\text{ and } b,\Box b\in B'\}$ for any $a\in B'$.

We first check that $(B',\rightarrow',\Box')$ is a modal Heyting algebra. Clearly, $(B',\rightarrow')$ is a Heyting algebra \textcolor{black}{by the definition of $\rightarrow'$}. Since $\Box 1=1$ and $1\in B'$, \textcolor{black}{we have that} $\Box' 1=\Box 1=1$. 

\vspace{1mm}
For any $a,b\in B'$, $\Box'a\land \Box'b$
~\\
$=\bigvee\{\Box x\leq \Box a\text{ and } x,\Box x\in B'\}\land \bigvee \{\Box y\mid \Box y\leq \Box b\text{ and } y,\Box y\in B'\}$
~\\
$=\bigvee\{\Box x\land \Box y\mid \Box x\leq\Box a, \Box y\leq\Box b \text{ where } x, y, \Box x,\Box y\in B'\}$\text{(distributivity)}
~\\
$=\bigvee \{\Box(x\land y)\mid \Box x\leq \Box a\text{ and }\Box y\leq \Box b \text{ where } x,y,\Box x,\Box y\in B'\}$
~\\
$=\bigvee\{\Box z\mid \Box z\leq \Box(a\land b)\text{ and } z,\Box z\in B'\}=\Box'(a\land b)$.

This proves that $(B',\rightarrow',\Box')$ is a modal Heyting algebra. Let $h:(B',\rightarrow',\Box')\rightarrow (B,\Box)$ be the inclusion map, $h$ is clearly a bounded lattice embedding as $B'$ is a bounded sublattice of $B$. $h$ is stable as $\Box'a\leq \Box a$ for any $a\in B'$ by definition.

Then we check that $h$ satisfies CDC for $D^\rightarrow$ and $D^\Box$. For any $a\in D^\Box$, $a=V_B(\psi)$ for some $\Box\psi\in \Theta$. \textcolor{black}{And} $V_B(\Box\psi)=\Box V_B(\psi)=\Box a\in B'$. Thus $\Box' a=\Box a$ by the definition of $\Box'$. For any $(a,b)\in D^\rightarrow$, $a=V_B(\varphi)$ and $b=V_B(\psi)$ for some $\varphi\rightarrow\psi\in\Theta$. Thus $V_B(\varphi\rightarrow\psi)=V_B(\varphi)\rightarrow V_B(\psi)=a\rightarrow b\in B'$. Then $a\rightarrow b'=a\rightarrow b$ by the definition of $\rightarrow'$. Therefore, the stable bounded lattice embedding $h$ satisfies CDC for $D^\rightarrow$ and $D^\Box$.

Let $V'$ be the valuation $V_B$ restricted to $B'$, we then prove that for any $\varphi\in \Theta$, $V'(\varphi)=V_B(\varphi)$ by induction on $\varphi$. We only consider the following two cases as others are trivial ($B'$ is a bounded sublattice of $B$):

If $\varphi=\psi\rightarrow \chi$, as $\psi\rightarrow \chi\in\Theta$, we have that $(V_B(\psi),V_B(\chi))\in D^\rightarrow$, and $V_B(\psi)\rightarrow V_B(\chi)\in B'$. 

\vspace{2mm}
\textcolor{black}{\begin{tabular}{l l l l}
$V'(\psi\rightarrow \chi)$ &=&$V'(\psi)\rightarrow' V'(\chi)$\\  
   &=& $V_B(\psi)\rightarrow' V_B(\chi)$ & (IH) \\
   &=& $V_B(\psi)\rightarrow V_B(\chi)$ & (By the definition of $\rightarrow'$) \\
   &=& $V_B(\psi\rightarrow\chi).$   
\end{tabular}}

\vspace{1mm}
If $\varphi=\Box\psi$, as $\Box\psi\in \Theta$, we have that $V_B(\Box\psi)\in B'$, and $V_B(\psi),\Box V_B(\psi)\in B'$.

\vspace{1mm}
\textcolor{black}{\begin{tabular}{l l l l}
$V'(\Box\psi)$ &=&$\Box' V'(\psi)$\\  
   &=& $\Box' V_B(\psi)$ & (IH) \\
   &=& $\Box V_B(\psi)$ & (By the definition of $\Box'$) \\
   &=& $V_B(\Box\psi).$   
\end{tabular}}

\vspace{1mm}
Since $V_B$ is a valuation which refutes $\Gamma/\Delta$ on $\B$, $V'$ is a valuation which refutes $\Gamma/\Delta$ on $(B',\rightarrow',\Box')$ by the above result. \textcolor{black}{Thus} $(B',\rightarrow',\Box')\not\vDash \Gamma/\Delta$. As for any $\varphi\in \Theta$, $V'(\varphi)=V_B(\varphi)$, \textcolor{black}{we have that} $D^\Box=\{V_B(\psi)\mid \Box\psi\in\Theta\}=\{V'(\psi)\mid \Box\psi\in\Theta\}$ and $D^\rightarrow=\{(V_B(\varphi),V_B(\psi))\mid \varphi\rightarrow\psi\in \Theta\}=\{(V'(\varphi),V'(\psi))\mid \varphi\rightarrow\psi\in \Theta\}$. As $B'$ is generated by $V_B(\Theta)$ whose cardinality is no larger than that of $\Theta$, $(B',\rightarrow',\Box',D^\rightarrow, D^\Box)$ must be one of $(\A_1,D^{\rightarrow}_1,D^{\Box}_1),...,(\A_n,D^{\rightarrow}_n,D^{\Box}_n)$. As $h$ is a stable bounded lattice embedding from $B'$ to $B$ satisfying CDC for $D^\rightarrow$ and $D^\Box$, we get what we want.

\end{proof}

In the above proof, what we have done is essentially a filtration in algebraic terms\footnote{See \cite{jphd} for more details.}: for the left-to-right direction, we begin with the assumption that $\B\not\vDash \Gamma/\Delta$, and then use $\B$ to construct a finite modal \textcolor{black}{Heyting} algebra $(B',\rightarrow',\Box')$ \textcolor{black}{-- a filtrated algebra --} which still refutes $\Gamma/\Delta$. And it turns out that the relation between the filtrated algebra $(B',\rightarrow',\Box')$ and the original algebra $\B$ (i.e., a stable bounded lattice embedding satisfying CDC for some parameters) can be coded syntactically. 

\begin{definition}
\label{3.1.4}

Let $\A=(A,\Box)$ be a finite modal Heyting algebra, $D^\rightarrow \subseteq A\times A$ and $D^\Box\subseteq A$. For each $a\in A$, we introduce a new propositional letter $p_a$ and define the \textit{stable canonical rule} $\rho(\A,D^\rightarrow,D^\Box)$ based on $(\A,D^\rightarrow,D^\Box)$ as follows:

\vspace{1mm}
\begin{tabular}{l l l l}
\renewcommand\arraystretch{2}
$\Gamma$&=&$\{p_{a\lor b}\leftrightarrow p_a\lor p_b\mid a,b\in A\}\cup\{p_0\leftrightarrow \bot, p_1\leftrightarrow \top\}$\\  
   & & $\cup\{p_{a\land b}\leftrightarrow p_a\land p_b\mid a,b\in A\}\cup\{p_{\Box a}\rightarrow \Box p_a\mid a\in A\}$ &  \\
   & & $\cup\{p_{a\rightarrow b}\leftrightarrow (p_a\rightarrow p_b)\mid (a,b)\in D^\rightarrow\}\cup\{\Box p_a\rightarrow p_{\Box a}\mid a\in D^\Box\}$ 
\end{tabular}

$$\Delta=\{p_a\leftrightarrow p_b\mid a\not=b\in A\}$$

$\rho(\A,D^\rightarrow,D^\Box)=\Gamma/\Delta$.

\end{definition}

The above definition can be seen as a combination of \cite[Def. 3.1]{nloc} and \cite[Def. 5.2]{nrule}. \textcolor{black}{It is easy to see that the following proposition holds:}

\begin{proposition}
\label{3.1.5}
Let $\A=(A,\Box)$ be a finite modal Heyting algebra, $D^\rightarrow \subseteq A\times A$ and $D^\Box\subseteq A$, \textcolor{black}{then} $\A\not\vDash\rho(\A,D^\rightarrow,D^\Box)$.
\end{proposition}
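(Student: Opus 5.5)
The refutation is witnessed by the ``identity'' valuation. Define $V$ on $\A$ by $V(p_a)=a$ for every $a\in A$; the remaining propositional letters may be sent anywhere, say to $1$. We then check two things: every premise in $\Gamma$ evaluates to $1$ under $V$, and every conclusion in $\Delta$ evaluates to something different from $1$. This shows directly that $\A\not\vDash\rho(\A,D^\rightarrow,D^\Box)$.

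For the premises, recall that in a Heyting algebra $x\leftrightarrow y=1$ iff $x=y$, and $x\rightarrow y=1$ iff $x\leq y$. The premises then reduce to the following facts in $\A$:
\begin{itemize}
\item $V(p_{a\lor b})=a\lor b=V(p_a)\lor V(p_b)$, and likewise for $\land$.
\item $V(p_0)=0=V(\bot)$ and $V(p_1)=1=V(\top)$.
\item $V(p_{\Box a})=\Box a=\Box V(p_a)$, so both $p_{\Box a}\rightarrow\Box p_a$ and $\Box p_a\rightarrow p_{\Box a}$ evaluate to $1$. This holds for all $a$, in particular for $a\in D^\Box$.
\item $V(p_{a\rightarrow b})=a\rightarrow b=V(p_a)\rightarrow V(p_b)$, for all pairs and in particular for those in $D^\rightarrow$.
\end{itemize}
Thus $\bar V(\gamma)=1$ for every $\gamma\in\Gamma$.

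For the conclusions, take $a\neq b$ in $A$. Then $\bar V(p_a\leftrightarrow p_b)=(a\rightarrow b)\land(b\rightarrow a)$. This equals $1$ iff $a\leq b$ and $b\leq a$, i.e. iff $a=b$. So it is not $1$, and no conclusion in $\Delta$ is valid under $V$.

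There is no real obstacle here: $\A$ is trivially a stable embedding of itself satisfying CDC for any parameters. The only point needing care is the Heyting identity $x\leftrightarrow y=1\iff x=y$, which follows from residuation ($1\leq x\rightarrow y$ iff $1\land x\leq y$). Equivalently, one could invoke the right-to-left direction of the argument in Proposition \ref{3.1.3} with $h=\mathrm{id}_A$.
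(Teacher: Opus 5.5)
Your proof is correct and matches the paper's: you take the identity valuation $V(p_a)=a$, check that every premise evaluates to $1$, and check that $p_a\leftrightarrow p_b$ with $a\neq b$ does not. One slip is in your closing aside: the result to apply with $h=\mathrm{id}_A$ is the right-to-left direction of Proposition~\ref{3.1.6}, not Proposition~\ref{3.1.3}, because the latter's right-to-left direction already assumes a valuation refuting the rule, which would be circular here.
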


\begin{proof}
Define a valuation $V$ on $A$ by $V(p_a)=a$ for any $a\in A$. It is then easy to check that $V$ refutes $\rho(\A,D^\rightarrow,D^\Box)$ on $\A$.
\end{proof}

\textcolor{black}{The next result shows that} the stable canonical rule does encode a stable bounded lattice embedding satisfying CDC for $D^\rightarrow$ and $D^\Box$. 

\begin{proposition}
\label{3.1.6}
Let $\A=(A,\Box)$ be a finite modal Heyting algebra, $D^\rightarrow \subseteq A\times A$, $D^\Box\subseteq A$, and $\B=(B,\Box)$ be a modal Heyting algebra. Then $\B\not\vDash\rho(\A,D^\rightarrow,D^\Box)$ iff there is a stable bounded lattice embedding $h:A\rightarrow B$ satisfying CDC for $D^\rightarrow$ and $D^\Box$.
\end{proposition}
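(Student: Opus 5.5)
Both directions go through valuations, exploiting the fact that in a Heyting algebra $x\leftrightarrow y=1$ iff $x=y$, and $x\rightarrow y=1$ iff $x\leq y$. So the premises of $\rho(\A,D^\rightarrow,D^\Box)$ evaluate to $1$ exactly when the assignment $a\mapsto V(p_a)$ respects the relevant operations. The conclusions all fail to be $1$ exactly when this assignment is injective.

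For the right-to-left direction, suppose $h:A\rightarrow B$ is a stable bounded lattice embedding satisfying CDC for $D^\rightarrow$ and $D^\Box$. I would define the valuation $V(p_a)=h(a)$ on $\B$ and check the premises one family at a time.
\begin{itemize}
\item The premises for $\lor$, $\land$, $0$ and $1$ evaluate to $1$ because $h$ is a bounded lattice homomorphism.
\item For $p_{\Box a}\rightarrow\Box p_a$, stability gives $h(\Box a)\leq\Box h(a)$, so this premise evaluates to $1$.
\item For $(a,b)\in D^\rightarrow$, CDC gives $h(a\rightarrow b)=h(a)\rightarrow h(b)$, so the biconditional evaluates to $1$.
\item For $a\in D^\Box$, CDC gives $\Box h(a)=h(\Box a)$, so $\Box p_a\rightarrow p_{\Box a}$ evaluates to $1$.
\end{itemize}
For each conclusion $p_a\leftrightarrow p_b$ with $a\neq b$, injectivity of $h$ gives $h(a)\neq h(b)$, so the conclusion is not $1$. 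Hence $V$ refutes the rule and $\B\not\vDash\rho(\A,D^\rightarrow,D^\Box)$. This is the same computation as in Proposition \ref{3.1.5}, transported along $h$.

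For the left-to-right direction, suppose a valuation $V$ on $\B$ makes every premise equal to $1$ and every conclusion different from $1$. I would set $h(a)=V(p_a)$ and read the required properties off the premises.
\begin{itemize}
\item The premises for $\lor$, $\land$, $0$ and $1$ show that $h$ is a bounded lattice homomorphism.
\item Since $V(p_{\Box a})\rightarrow\Box V(p_a)=1$, we get $h(\Box a)\leq\Box h(a)$, so $h$ is stable.
\item The $D^\rightarrow$ premises give $h(a\rightarrow b)=h(a)\rightarrow h(b)$ for $(a,b)\in D^\rightarrow$.
\item The $D^\Box$ premises give $\Box h(a)\leq h(\Box a)$ for $a\in D^\Box$; combined with stability this yields equality.
\item Since every conclusion fails, $h(a)\neq h(b)$ whenever $a\neq b$, so $h$ is injective.
\end{itemize}

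I do not expect a real obstacle. The only care needed is to invoke the Heyting-algebra facts $x\leftrightarrow y=1\iff x=y$ and $x\rightarrow y=1\iff x\leq y$ explicitly. One should also note that in the $D^\Box$ case the CDC equality needs both the stability premise and the $D^\Box$ premise together.
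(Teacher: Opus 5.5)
Your proposal is correct and follows the paper's proof essentially step for step. For right-to-left, the paper also uses the valuation $p_a\mapsto h(a)$, i.e.\ the valuation of Proposition \ref{3.1.5} pushed along $h$. For left-to-right, it likewise sets $h(a)=V(p_a)$ and obtains the CDC equality for $D^\Box$ by combining the $p_{\Box a}\rightarrow\Box p_a$ premises with the $\Box p_a\rightarrow p_{\Box a}$ premises.
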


\begin{proof}

For the right-to-left direction, suppose there is a stable bounded lattice embedding $h:A\rightarrow B$ satisfying CDC for $D^\rightarrow$ and $D^\Box$. Define $V_B$ on $B$ by $V_B(p_a)=h(V(p_a))=h(a)$ for any $a\in A$ where $V$ is just the valuation in the proof of Proposition \ref{3.1.5}. As $h$ is a bounded lattice embedding, for any $a,b\in A$, \textcolor{black}{it follows that} $h(a\land b)=h(a)\land h(b)$, $h(a\lor b)=h(a)\lor h(b)$, $h(0)=0$ and $h(1)=1$. It is then easy to check that $V_B(p_{a\lor b}\leftrightarrow p_a\lor p_b)=1$, $V_B(p_{a\land b}\leftrightarrow p_a\land p_b)=1$, $V_B(p_0)=0$ and $V_B(p_1)=1$.

As $h$ is stable, $h(\Box a)\leq \Box h(a)$ for any $a\in A$. \textcolor{black}{Thus} $V_B(p_{\Box a})=h(\Box a)\leq \Box h(a)=\Box V_B(p_a)=V_B(\Box p_a)$ for any $a\in A$. Therefore, $V_B(p_{\Box a}\rightarrow \Box p_a)=V_B(p_{\Box a})\rightarrow V_B(\Box p_a)=1$ for any $a\in A$.

As $h$ satisfies CDC for $D^\rightarrow$ and $D^\Box$, for any $a\in D^\Box$, \textcolor{black}{we have that} $h(\Box a)=\Box h(a)$; for any $(a,b)\in D^\rightarrow$, \textcolor{black}{we have that} $h(a\rightarrow b)=h(a)\rightarrow h(b)$. Thus
$V_B(p_{\Box a})=h(\Box a)=\Box h(a)=\Box V_B(p_a)=V_B(\Box p_a)$, we get $V_B(\Box p_a\rightarrow p_{\Box a})=1$ for any $a\in D^\Box$. 

For any $(a,b)\in D^\rightarrow$, we have that

\vspace{1mm}
\textcolor{black}{\vspace{1mm}
\begin{tabular}{l l l l}
$V_B(p_{a\rightarrow b}\leftrightarrow p_a\rightarrow p_b)$ &=&$V_B(p_{a\rightarrow b})\leftrightarrow V_B(p_a\rightarrow p_b)$\\  
   &=& $V_B(p_{a\rightarrow b})\leftrightarrow (V_B(p_a)\rightarrow V_B(p_b))$ & \\
   &=& $h(a\rightarrow b)\leftrightarrow (h(a)\rightarrow h(b))$ &  \\
   &=& $1.$   
\end{tabular}}

Since $h$ is an embedding, for any $a\not=b\in A$, $h(a)\not=h(b)$. Thus $V_B(p_a)\not=V_B(p_b)$ \textcolor{black}{and} $V_B(p_a\leftrightarrow p_b)\not=1$. Therefore, for any $\gamma\in\Gamma$, \textcolor{black}{we have that} $V_B(\gamma)=1$ while for any $\delta\in\Delta$, $V_B(\delta)\not=1$. Thus $V_B$ refutes $\Gamma/\Delta$ on $\B$, $\B\not\vDash\rho(\A,D^\rightarrow,D^\Box)$.

For the left-to-right direction, suppose $\B\not\vDash\rho(\A,D^\rightarrow,D^\Box)$. \textcolor{black}{Then} there exists a valuation $V$ on $B$ such that $V(\gamma)=1$ for any $\gamma\in\Gamma$, and $V(\delta)\not=1$ for any $\delta\in \Delta$. Define $h:A\rightarrow B$ by $h(a)=V(p_a)$ for any $a\in A$. It is then easy to check that $h$ is a bounded lattice homormophism. As for any $a\in A$, $V(p_{\Box a}\rightarrow \Box p_a)=1$, $V(p_{\Box a})\leq \Box V(p_a)$, and thus $h(\Box a)\leq \Box h(a)$. $h$ is stable.

For any $(a,b)\in D^\rightarrow$, as $V(p_{a\rightarrow b}\leftrightarrow p_a\rightarrow p_b)=1$, we have $V(p_{a\rightarrow b})=V(p_a)\rightarrow V(p_b)$, and thus $h(a\rightarrow b)=V(p_{a\rightarrow b})=V(p_a)\rightarrow V(p_b)=h(a)\rightarrow h(b)$. For any $a\in D^\Box$, as $V(\Box p_a\rightarrow p_{\Box a})=1$, $V(\Box p_a)\leq V(p_{\Box a})$, $h(\Box a)=V(p_{\Box a})\geq V(\Box p_a)=\Box V(p_a)=\Box h(a)$. Therefore, $h(\Box a)=\Box h(a)$ for any $a\in D^\Box$.

For any $a\not=b\in A$,  as $V(p_a\leftrightarrow p_b)\not=1$, \textcolor{black}{it follows that} $V(p_a)\not=V(p_b)$ and $h(a)=V(p_a)\not=V(p_b)=h(b)$. Therefore, $h$ is a stable bounded lattice embedding satisfying CDC for for $D^\rightarrow$ and $D^\Box$.
\end{proof}

Now, combining Propositions \ref{3.1.3} and \ref{3.1.6}, we \textcolor{black}{obtain} immediately the following result:

\begin{theorem}
\label{3.1.7}
For an intuitionistic modal multi-conclusion rule $\Gamma/\Delta$, there exist $(\A_1,D^{\rightarrow}_1,D^{\Box}_1),$ $...,(\A_n,D^{\rightarrow}_n,D^{\Box}_n)$ such that each $\A_i$ is a finite modal Heyting algebra, $D^{\rightarrow}_i\subseteq A_i\times A_i$ and $D^{\Box}_i\subseteq A_i$, and for each modal Heyting algebra $\B=(B,\Box)$, we have: $$\B\vDash \Gamma/\Delta \text{ iff } \B\vDash \rho(\A_1,D^{\rightarrow}_1,D^{\Box}_1),...,\rho(\A_n,D^{\rightarrow}_n,D^{\Box}_n).$$
\end{theorem}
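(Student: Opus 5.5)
The theorem follows by chaining Proposition \ref{3.1.3} with Proposition \ref{3.1.6}, and it is cleanest to prove the contrapositive form. Given $\Gamma/\Delta$, I take the finitely many triples $(\A_1,D^{\rightarrow}_1,D^{\Box}_1),\dots,(\A_n,D^{\rightarrow}_n,D^{\Box}_n)$ supplied by Proposition \ref{3.1.3}. For every modal Heyting algebra $\B$ that proposition gives: $\B\not\vDash\Gamma/\Delta$ iff for some $i\leq n$ there is a stable bounded lattice embedding $h:A_i\rightarrow B$ satisfying CDC for $D^{\rightarrow}_i$ and $D^{\Box}_i$.

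Next I apply Proposition \ref{3.1.6} to each $i$ separately, with $\A=\A_i$, $D^\rightarrow=D^{\rightarrow}_i$ and $D^\Box=D^{\Box}_i$. It says that such an embedding $A_i\rightarrow B$ exists iff $\B\not\vDash\rho(\A_i,D^{\rightarrow}_i,D^{\Box}_i)$. Substituting this equivalence into the previous one gives
\[
\B\not\vDash\Gamma/\Delta \iff \text{there is } i\leq n \text{ with } \B\not\vDash\rho(\A_i,D^{\rightarrow}_i,D^{\Box}_i).
\]
Negating both sides yields $\B\vDash\Gamma/\Delta$ iff $\B\vDash\rho(\A_i,D^{\rightarrow}_i,D^{\Box}_i)$ for all $i\leq n$, which is exactly the statement. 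Here validity of a finite list of rules means validity of each rule in the list.

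The only point that needs care is the degenerate case $n=0$, which arises when $\Gamma/\Delta\in\textbf{IntK}^R_\Box$. Then the list of stable canonical rules is empty and the right-hand side holds vacuously. The left-hand side also holds for every modal Heyting algebra, since $\Gamma/\Delta$ is then valid on all of $\mathbf{MHA}$, consistent with Theorem \ref{2.5.6}. Beyond this check there is no real obstacle: all the substantive work (the filtration and the syntactic encoding of stable embeddings with CDC) has already been done in Propositions \ref{3.1.3} and \ref{3.1.6}.
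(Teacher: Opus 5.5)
Your proposal is correct and is exactly the paper's argument: the paper obtains this theorem immediately by combining Proposition~\ref{3.1.3} with Proposition~\ref{3.1.6} applied to each triple, then negating both sides. Your separate check of the degenerate case $n=0$ is a harmless sanity check, since Proposition~\ref{3.1.3} already covers it uniformly.
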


As a corollary, \textcolor{black}{we arrive at the main theorem of this section.}

\begin{theorem}
\label{3.1.9}
Every intuitionistic modal multi-conclusion consequence relation is axiomatizable by stable canonical rules.    
\end{theorem}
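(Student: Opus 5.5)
Fix an intuitionistic modal multi-conclusion consequence relation $M\in\mathbf{NExt}(\textbf{IntK}^R_\Box)$. The plan is to replace each rule of $M$ by finitely many stable canonical rules using Theorem \ref{3.1.7}, and then show that this replacement changes neither the algebras validating $M$ nor, via the completeness given by Theorem \ref{2.5.6}, the consequence relation itself.

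For each rule $\Gamma/\Delta\in M$, Theorem \ref{3.1.7} supplies finitely many triples $(\A_1,D^{\rightarrow}_1,D^{\Box}_1),\dots,(\A_n,D^{\rightarrow}_n,D^{\Box}_n)$. These satisfy $\B\vDash\Gamma/\Delta$ iff $\B\vDash\rho(\A_i,D^{\rightarrow}_i,D^{\Box}_i)$ for all $i\leq n$, for every modal Heyting algebra $\B$. Let $\Sigma$ be the union of all these finite sets of stable canonical rules, taken over all $\Gamma/\Delta\in M$. The claim is then
$$M=\textbf{IntK}^R_\Box\oplus\Sigma.$$
Here stable canonical rules are added over the base system, which is how ``axiomatizable by stable canonical rules'' should be read. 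If one wants no base at all, one would also need to check that $\textbf{IntK}^R_\Box$ is itself axiomatized by stable canonical rules; the same argument applied to $M=\textbf{IntK}^R_\Box$ yields $\Sigma=\emptyset$, so the relative reading is the intended one.

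The key step is the equality $Alg(M)=Alg(\textbf{IntK}^R_\Box\oplus\Sigma)$. If $\B\in Alg(M)$, then $\B$ validates every rule of $M$. By the left-to-right direction of Theorem \ref{3.1.7}, it then validates every rule of $\Sigma$, so $\B$ validates $\textbf{IntK}^R_\Box\oplus\Sigma$. This uses that the set of rules valid on an algebra is a consequence relation extending $\textbf{IntK}^R_\Box$, which is the routine soundness part of Theorem \ref{2.5.6}. Conversely, suppose $\B$ is a modal Heyting algebra validating $\Sigma$. For any $\Gamma/\Delta\in M$, $\B$ validates all the $\rho(\A_i,D^{\rightarrow}_i,D^{\Box}_i)$ attached to it, hence validates $\Gamma/\Delta$ by the right-to-left direction. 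So $\B\in Alg(M)$.

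To conclude, Theorem \ref{2.5.6} says that $Alg$ is injective (indeed an isomorphism) from $\mathbf{NExt}(\textbf{IntK}^R_\Box)$ onto $\textbf{Uni}(\mathbf{MHA})$, with inverse $Ru$. Hence $M=Ru(Alg(M))=Ru(Alg(\textbf{IntK}^R_\Box\oplus\Sigma))=\textbf{IntK}^R_\Box\oplus\Sigma$. The only point needing care is that the equivalence in Theorem \ref{3.1.7} holds for every modal Heyting algebra, not just finite ones; it does, so the whole argument reduces to bookkeeping. I expect no real obstacle beyond confirming that $\oplus$ and the $Alg/Ru$ correspondence interact as stated.
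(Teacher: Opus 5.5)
Your proposal is correct and follows essentially the same route as the paper: apply Theorem~\ref{3.1.7} rule by rule, note that a modal Heyting algebra validates $M$ iff it validates the resulting stable canonical rules, and conclude via the $Alg$/$Ru$ correspondence of Theorem~\ref{2.5.6} (the paper cites the equivalent Corollary~\ref{comInt}). The only differences are cosmetic: you range over all rules of $M$ rather than over a chosen axiomatizing set $\{\Gamma_i/\Delta_i\mid i\in I\}$, and you spell out the two inclusions of algebra classes that the paper states in one line.
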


\begin{proof}
 \textcolor{black}{Let $M\in\mathbf{NExt}(\textbf{IntK$^R_\Box$})$ be arbitrary}. \textcolor{black}{Then} $M=\textbf{IntK$^R_\Box$}\oplus \{\Gamma_i/\Delta_i\mid i\in I\}$ where $\Gamma_i/\Delta_i$ is an intuitionistic modal multi-conclusion rule. For any $i\in I$, by Theorem \ref{3.1.7}, there exist stable canonical rules $\rho(\A_{i1},D^{\rightarrow}_{i1},D^{\Box}_{i1}),...,\rho(\A_{in_i},D^{\rightarrow}_{in_i},D^{\Box}_{in_i})$ such that for any modal Heyting algebra $\B=(B,\Box)$, \textcolor{black}{we have that} $\B\vDash \Gamma_i/\Delta_i$ iff $\B\vDash \rho(\A_{i1},D^{\rightarrow}_{i1},D^{\Box}_{i1}),...,\rho(\A_{in_i},D^{\rightarrow}_{in_i},D^{\Box}_{in_i})$. Therefore, for any modal Heyting algebra $\B=(B,\Box)$, \textcolor{black}{we have that} $\B$ validates $M$ iff $\B$ validates $\{\rho(\A_{i1},D^{\rightarrow}_{i1},D^{\Box}_{i1}),...,\rho(\A_{in_i},D^{\rightarrow}_{in_i},D^{\Box}_{in_i})\mid i\in I\}$. By Corollary \ref{comInt}, this means that $M=\textbf{IntK$^R_\Box$}\oplus\{\rho(\A_{i1},D^{\rightarrow}_{i1},D^{\Box}_{i1}),...,\rho(\A_{in_i},D^{\rightarrow}_{in_i},D^{\Box}_{in_i})\mid i\in I\}$. \textcolor{black}{Therefore,} $M$ is axiomatized by stable canonical rules. This proves that every intuitionistic modal multi-conclusion consequence  is axiomatizable by stable canonical rules.
     
\end{proof}

As every modal multi-conclusion consequence relation\footnote{ $\Box$ is the primitive operator and $\Diamond$ is defined by $\Box$.} is also an intuitionistic modal multi-conclusion consequence relation, Theorem \ref{3.1.9} is a generalization of \cite[Thm. 5.6]{nrule}. When considering intuitionistic modal multi-conclusion rules, the above theorem allows us to assume that they are stable canonical rules in many cases. Since stable canonical rules are in a certain syntactical shape, it is more manageable to work with them instead of  arbitrary intuitionistic modal multi-conclusion rules. This point may become more evident when we see the dual description \textcolor{black}{of} stable canonical rules, which gives us geometric intuitions about \textcolor{black}{how these rules work}.

Now, we recall the duality between the category of modal Heyting algebras and the category of modal Esakia spaces which is established in \cite{Al} and makes our dual description possible.

\begin{definition}
\label{3.3.1}
Let $(X,\leq,R)$ be a triple such that $(X,\leq)$ is an Esakia space and $R\subseteq X\times X$, then $(X,\leq,R)$ is a \textit{modal Esakia space}\footnote{It is also called \textit{$I_\Box$-space} in \cite{Al}.} if the following two conditions hold:

\begin{itemize}
    \item If $U$ is a clopen upset of $X$, then $\Box_R U$ is a clopen upset as well where $\Box_R U=\{x\in X\mid R[x]\subseteq U\}$.
    \item For every $x\in X$, $R[x]$ is a closed upset.
\end{itemize}

\end{definition}

The morphisms between modal Esakia spaces are given as follows:

\begin{definition}
Let $(X_1,\leq,R_1)$ and $(X_2,\leq, R_2)$ be modal Esakia spaces, a map $f:X_1\rightarrow X_2$ is called a \textit{p-morphism} if the following conditions are satisfied for any $x,y\in X_1$ and $z\in X_2$:

\begin{itemize}
    \item[1.] $f$ is continuous.
    \item[2.] If $x\leq y$, then $f(x)\leq f(y)$.
    \item[3.] If $f(x)\leq z$, then $f(x')=z$ for some $\textcolor{black}{x'\geq x}$.
    \item[4.] If $xR_1y$, then $f(x)R_2f(y)$.
    \item[5.] If $f(x)R_2 z$, then $f(x')\leq z$ for \textcolor{black}{some $x'\in X_1$ s.t} $xR_1x'$.
\end{itemize}

\end{definition}

Let $\mathbf{MHA}$ be the category of modal Heyting algebras and modal Heyting algebra homomorphisms, $\mathbf{MES}$ be the category of modal Esakia spaces and p-morphisms, the functors $(-)_*:\mathbf{MHA}\rightarrow\mathbf{MES}$ and $(-)^*:\mathbf{MES}\rightarrow \mathbf{MHA}$ that establish the duality are constructed as follows. For a modal Heyting algebra $\A=(A,\Box)$, let $\A_*=(A_*,R)$ where $A_*$ is the Esakia space of $A$ and $xRy$ iff $\forall \Box a\in A(\Box a\in x\implies a\in y)$. For a modal Esakia space $\mathcal{X}=(X,\leq, R)$, let $\mathcal{X}^*=(X^*,\Box_R)$ where $X^*$ is the Heyting algebra of clopen upsets of $X$ and $\Box_RU=\{x\in X\mid R[x]\subseteq U\}$. The duals of maps are exactly the same as that in Esakia duality. We spell out some useful details about the duality in the following theorem.

\begin{theorem}\cite[Thm 6.12]{Al}
\label{3.3.3}
$\mathbf{MHA}$ is dually equivalent to $\mathbf{MES}$, which is witnessed by $(-)^*$ and $(-)_*$. In particular, for any modal Heyting algebra $\A$, $\A\cong (\A_*)^*$ witnessed by $\beta$ where $\beta(a)=\{x\in A_*\mid a\in x\}$, and for any modal Esakia space $\mathcal{X}$, $\mathcal{X}\cong (\mathcal{X}^*)_*$ witnessed by $\epsilon$ where $\epsilon(x)=\{U\in X^*\mid x\in U\}$.
\end{theorem}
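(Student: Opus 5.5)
The plan is to verify the two halves of the duality directly, with Esakia duality (for the Heyting reducts) doing most of the work, so that only the modal component $\Box$ versus $R$ needs new arguments. First I check that the functors are well defined on objects. For a modal Heyting algebra $\A=(A,\Box)$, Esakia duality already tells me that $A_*$ is an Esakia space. For $x\in A_*$, I will write $R[x]=\bigcap\{\beta(a)\mid \Box a\in x\}$. This set is closed because it is an intersection of clopens. It is an upset because each $\beta(a)$ is an upset.

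The key identity on the algebraic side is $\Box_R\beta(a)=\beta(\Box a)$. The inclusion $\supseteq$ is immediate from the definition of $R$. For $\subseteq$, suppose $\Box a\notin x$. Let $F=\{b\mid \Box b\in x\}$. Because $\Box$ preserves finite meets and $\Box 1=1$, $F$ is a filter, and $a\notin F$. The prime filter theorem then gives a prime filter $y\supseteq F$ with $a\notin y$, so $xRy$ but $y\notin\beta(a)$. This identity shows at once that $\Box_R$ maps clopen upsets to clopen upsets, so $\A_*$ is a modal Esakia space. It also shows that $\beta$ preserves $\Box$; $\beta$ is already a Heyting isomorphism, so $\A\cong(\A_*)^*$. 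In the other direction, $\mathcal{X}^*$ is a modal Heyting algebra because $\Box_R X=X$ and $\Box_R(U\cap V)=\Box_R U\cap\Box_R V$.

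On the space side, $\epsilon$ is already an order-homeomorphism by Esakia duality, so it remains to show $xRy \iff \epsilon(x)R'\epsilon(y)$. Unfolding the definitions, this amounts to
\[
y\in R[x]\iff \forall U\in X^*\,(R[x]\subseteq U\Rightarrow y\in U).
\]
The forward direction is trivial. For the converse, suppose $y\notin R[x]$. Since $R[x]$ is a closed upset, each $z\in R[x]$ satisfies $z\not\le y$ (otherwise $y$ would lie in $R[x]$). The Priestley separation axiom then gives a clopen upset $U_z$ with $z\in U_z$ and $y\notin U_z$. Compactness of $R[x]$ lets me take a finite union of the $U_z$, which is a clopen upset containing $R[x]$ and omitting $y$. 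I expect this step to be the main place where both conditions of Definition \ref{3.3.1} (closedness and upward closure of $R[x]$) are genuinely used.

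For morphisms, I will show that a modal Heyting homomorphism $h$ gives $h^{-1}$ satisfying conditions 1–5, and that a p-morphism $f$ gives $f^{-1}$ commuting with $\Box_R$. Conditions 1–3 come from Esakia duality.
\begin{itemize}
\item Condition 4: if $xRy$ and $\Box a\in h^{-1}(x)$, then $\Box h(a)=h(\Box a)\in x$, so $h(a)\in y$, i.e. $a\in h^{-1}(y)$.
\item $f^{-1}\Box_R U\subseteq\Box_R f^{-1}U$ follows from condition 4. The reverse inclusion uses condition 5 together with $U$ being an upset: if $f(x)Rz$, pick $x'$ with $xRx'$ and $f(x')\le z$; then $f(x')\in U$ gives $z\in U$.
\end{itemize}

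The hard direction is condition 5 for $h_*$. Suppose $h^{-1}(x)\,R\,z$. I want a prime filter $x'$ of $B$ with $xRx'$ and $h^{-1}(x')\subseteq z$. Let $F=\{b\mid\Box b\in x\}$, a filter, and let $I$ be the ideal generated by $h[A\setminus z]$. If some $b\in F$ satisfied $b\le h(c)$ with $c\notin z$ (using that $A\setminus z$ is closed under finite joins), then $\Box h(c)=h(\Box c)\in x$, so $\Box c\in h^{-1}(x)$ and hence $c\in z$, a contradiction. So $F\cap I=\emptyset$, and the prime filter theorem separates them by a prime filter $x'$. This $x'$ has $xRx'$ and $h^{-1}(x')\subseteq z$. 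Finally, naturality of $\beta$ and $\epsilon$ is inherited from Esakia duality, which completes the dual equivalence.
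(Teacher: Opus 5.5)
Your proof is correct. The paper does not prove this theorem at all; it imports it from \cite[Thm 6.12]{Al}. Your argument is therefore a genuinely different, self-contained route.

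You reduce everything to Esakia duality for the Heyting reducts plus the prime filter theorem. The only new ingredients are three:
\begin{enumerate}
\item the identity $\Box_R\beta(a)=\beta(\Box a)$;
\item a Priestley-separation and compactness argument showing that $R$ is recovered from $\Box_R$ on clopen upsets;
\item a filter--ideal separation giving condition 5 for $h_*$.
\end{enumerate}

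This buys transparency in several places. It shows exactly where each clause of Definition~\ref{3.3.1} is used: clause 1 makes $\mathcal{X}^*$ a modal Heyting algebra at all, and both clauses (closedness and upward closure of $R[x]$) are what let $\epsilon$ reflect $R$.

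It also explains the particular shape of condition 5 for p-morphisms, which asks only for $f(x')\le z$ rather than $f(x')=z$. Your separation argument can only produce $h^{-1}(x')\subseteq z$, not equality. Conversely, the inclusion $\Box_R f^{-1}U\subseteq f^{-1}\Box_R U$ goes through with $f(x')\le z$ precisely because $U$ is an upset.

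Moreover, your identity $\Box_R\beta(a)=\beta(\Box a)$ is exactly the fact the paper later quotes as \cite[Cor. 5.6]{Al} in proving the dual characterization of CDC for $D^\Box$. Your argument therefore makes that step self-contained as well, whereas the citation simply delegates all of this verification to \cite{Al}.
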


By the above duality, we can now give a dual description \textcolor{black}{of} stable bounded lattice homomorphisms.

\begin{definition}
Let $(X,\leq, R)$ and $(Y,\leq, R)$ be modal Esakia spaces and $f:X\rightarrow Y$ be a Priestley morphism. The map $f$ is called \textit{stable} if for any $x,y\in X$, $xRy$ implies $f(x)Rf(y)$. 
\end{definition}

\begin{proposition}
\label{3.3.6}
Let $\A=(A,\Box)$ \textcolor{black}{and} $\B=(B,\Box)$ be modal Heyting algebras.   \textcolor{black}{Let} $(X_A,\leq,R)$ and $(X_B,\leq, R)$ be the \textcolor{black}{duals} of $\A$ and $\B$ respectively. For a bounded lattice homomorphism $h:A\rightarrow B$, $h$ is stable iff $h_*:X_B\rightarrow X_A$ is stable. 
\end{proposition}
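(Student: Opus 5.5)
We will prove both directions by working directly with prime filters, using that $h_* = h^{-1}$ and that $xRy$ in a dual space means $\forall \Box a\,(\Box a\in x\Rightarrow a\in y)$. Throughout, $x,y$ are prime filters of $B$, so $h^{-1}(x)$ and $h^{-1}(y)$ are prime filters of $A$, because $h$ is a bounded lattice homomorphism.

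\emph{Left to right.} Suppose $h$ is stable and $xRy$ in $X_B$. We must show $h^{-1}(x)\,R\,h^{-1}(y)$. Take $\Box a\in h^{-1}(x)$, i.e.\ $h(\Box a)\in x$. Stability gives $h(\Box a)\leq \Box h(a)$, and $x$ is an upset, so $\Box h(a)\in x$. Since $xRy$, we get $h(a)\in y$, that is, $a\in h^{-1}(y)$. This direction is purely syntactic.

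\emph{Right to left.} Suppose $h_*$ is stable. We argue by contraposition: assume $h(\Box a)\not\leq \Box h(a)$ for some $a\in A$.
\begin{enumerate}
\item By the prime filter theorem for distributive lattices, there is a prime filter $x$ of $B$ with $h(\Box a)\in x$ and $\Box h(a)\notin x$.
\item Next we need a prime filter $y$ with $xRy$ and $h(a)\notin y$. This is the standard fact from the duality of Theorem \ref{3.3.3}: $\beta(\Box b)=\Box_R\beta(b)$, equivalently $\Box b\in x$ iff $b\in y$ for all $y$ with $xRy$.
\item Concretely, we would construct $y$ by separating the filter $\{c\mid \Box c\in x\}$ from the ideal ${\downarrow}h(a)$ with a prime filter. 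They are disjoint: if $c\leq h(a)$ and $\Box c\in x$, then $\Box c\leq \Box h(a)$ by monotonicity of $\Box$, forcing $\Box h(a)\in x$, a contradiction. The set $\{c\mid \Box c\in x\}$ is a filter because $\Box$ preserves finite meets and $\Box 1=1$.
\item The resulting $y$ satisfies $xRy$ by construction and $h(a)\notin y$.
\item Stability of $h_*$ then gives $h^{-1}(x)\,R\,h^{-1}(y)$. Since $\Box a\in h^{-1}(x)$, we obtain $a\in h^{-1}(y)$, i.e.\ $h(a)\in y$, a contradiction.
\end{enumerate}

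The only nontrivial step is producing the $R$-successor $y$ in steps 2–3. We will either cite it directly from \cite{Al} via $\beta(\Box b)=\Box_R\beta(b)$, or give the filter–ideal separation argument above, which uses only the lattice structure and the monotonicity and meet-preservation of $\Box$. Note that no compatibility with $\rightarrow$ is needed, which is consistent with $h$ being merely a bounded lattice homomorphism.
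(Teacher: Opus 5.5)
Your proof is correct and follows the paper's route: the left-to-right direction is identical, and for the converse the paper argues directly that $\beta(h(\Box a))\subseteq\Box_R\beta(h(a))=\beta(\Box h(a))$ and then uses that $\beta$ is an order-embedding, whereas you run the same argument by contraposition. Your steps 1 and 2--3 are explicit prime-filter-theorem proofs of exactly those two facts (order-reflection of $\beta$, and $\Box_R\beta(b)\subseteq\beta(\Box b)$), which the paper takes from the duality of Theorem \ref{3.3.3}.
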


\begin{proof}
By Priestley duality, $h_*$ is a Priestley morphism. Thus, it suffices to prove that $h(\Box a)\leq \Box h(a)$ for any $a\in A$ iff $xRy$ implies $h_*(x)Rh_*(y)$ for any $x,y\in X_B$.

Suppose $h(\Box a)\leq \Box h(a)$ for any $a\in A$. Suppose $xRy$ and $\Box a\in h_*(x)$, then $h(\Box a)\in x$. As $h(\Box a)\leq \Box h(a)$ and $x$ is a prime filter, $\Box h(a)\in x$. As $xRy$, \textcolor{black}{it follows that} $h(a)\in y$ \textcolor{black}{and} $a\in h_*(y)$. Thus $h_*(x)Rh_*(y)$.

For the other direction, suppose $xRy$ implies $h_*(x)Rh_*(y)$ for any $x,y\in X_B$. For any $x\in \beta(h(\Box a))$, \textcolor{black}{we have that} $h(\Box a)\in x$ and $\Box a\in h_*(x)$. Now for any $xRy$, by assumption, $h_*(x)Rh_*(y)$. As $\Box a\in h_*(x)$, \textcolor{black}{it follows that} $a\in h_*(y)$ and $h(a)\in y$. Thus $R[x]\subseteq \beta(h(a))$, and $x\in \Box_R(\beta(h(a)))=\beta(\Box h(a))$. As $x\in \beta(h(\Box a))$ is arbitrary, this proves that $\beta(h(\Box a))\subseteq \beta(\Box h(a))$. As $\beta$ is an isomorphism, $h(\Box a)\leq \Box h(a)$.
\end{proof}

Similarly, we can use the duality to give a dual description of CDC for $D^\rightarrow$ and $D^\Box$.

\begin{definition}
Let $(X,\leq, R)$ and $(Y,\leq, R)$ be modal Esakia spaces, $f: X\rightarrow Y$ be a Priestley morphism, and $D$ be
a clopen subset of $Y$. We say that $f$ satisfies the \textit{implication closed domain condition (CDC$_\rightarrow$)} for $D$ if \textcolor{black}{the following holds: 
$$\uparrow f(x)\cap D\not=\emptyset \text{ implies } f[\uparrow x]\cap D\not=\emptyset.$$}
Furthermore, let $\mathcal{D}$ be a collection of clopen subsets of $Y$, $f$ satisfies the \textit{implication closed
domain condition (CDC$_\rightarrow$)} for $\mathcal{D}$ if $f$ satisfies (CDC$_\rightarrow$) for each $D\in \mathcal{D}$.

\end{definition}

We then have the following proposition \textcolor{black}{analogous to \cite[Lem. 4.3]{nloc}}, which connects the algebraic CDC for $D^\rightarrow$ with the geometric CDC$_\rightarrow$.

\begin{proposition}\cite[Lem. 4.3]{nloc}

Let $\A$ and $\B$ be modal Heyting algebras, $h:A\rightarrow B$ be a bounded lattice homomorphism, and $a,b\in A$, then the following two conditions are equivalent:

\begin{itemize}
    \item [1.] $h(a\rightarrow b)=h(a)\rightarrow h(b)$.
    \item [2.] $h_*$ satisfies CDC$_\rightarrow$ for $\beta(a)\setminus \beta(b)$.
\end{itemize}

\end{proposition}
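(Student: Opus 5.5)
The plan is to translate both conditions into statements about prime filters via Priestley duality, using $\beta$ as an isomorphism onto the clopen upsets of $X_A$ and of $X_B$, and then compare them pointwise. Write $f=h_*=h^{-1}$, so that $f^{-1}(\beta(c))=\beta(h(c))$ for all $c\in A$. One inclusion holds for any bounded lattice homomorphism: since $(a\rightarrow b)\land a\leq b$, we get $h(a\rightarrow b)\land h(a)\leq h(b)$, and hence $h(a\rightarrow b)\leq h(a)\rightarrow h(b)$. Condition 1 is therefore equivalent to the reverse inequality $h(a)\rightarrow h(b)\leq h(a\rightarrow b)$. Dually, this says $\beta(h(a)\rightarrow h(b))\subseteq \beta(h(a\rightarrow b))$.

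Next I unfold both sides in the Esakia space $X_B$. Using $U\rightarrow V=X\setminus{\downarrow}(U\setminus V)$, a point $x$ lies in $\beta(h(a)\rightarrow h(b))$ iff no $y\geq x$ has $h(a)\in y$ and $h(b)\notin y$. Pulling back through $f$, this means that for all $y\in{\uparrow}x$ we have $f(y)\notin \beta(a)\setminus\beta(b)$, i.e.\ $f[{\uparrow}x]\cap D=\emptyset$ with $D=\beta(a)\setminus\beta(b)$. Similarly, $x\in\beta(h(a\rightarrow b))$ iff $f(x)\in\beta(a\rightarrow b)=X_A\setminus{\downarrow}D$, i.e.\ ${\uparrow}f(x)\cap D=\emptyset$.

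Condition 1 thus becomes: for every $x\in X_B$, $f[{\uparrow}x]\cap D=\emptyset$ implies ${\uparrow}f(x)\cap D=\emptyset$. This is exactly the contrapositive of CDC$_\rightarrow$ for $D$, so 1 and 2 are equivalent. The only care needed is the routine direction: because $f$ is order preserving, $f[{\uparrow}x]\subseteq{\uparrow}f(x)$, so the opposite implication is automatic. This matches the automatic inequality above and confirms that only one direction carries content.

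The main point to check is the identity $\beta(c\rightarrow d)=X\setminus{\downarrow}(\beta(c)\setminus\beta(d))$ in the dual of a Heyting algebra, and that $\beta$ reflects order. Both are part of the Esakia duality recalled earlier. Note that $h$ need not preserve $\rightarrow$ or $\Box$ for any of this; only its bounded lattice structure and the Heyting structure of $A$ and $B$ separately are used, and $\Box$ plays no role.
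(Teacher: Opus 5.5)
Your argument is correct. Condition 1 reduces to the single inequality $h(a)\rightarrow h(b)\leq h(a\rightarrow b)$, and unfolding both sides through $\beta$ and $U\rightarrow V=X\setminus{\downarrow}(U\setminus V)$ gives exactly the contrapositive of CDC$_\rightarrow$ for $\beta(a)\setminus\beta(b)$. The paper does not prove this itself but cites the corresponding Heyting-algebra lemma from the literature, whose proof is this same duality computation; the paper's own proof of the CDC$_\Box$ analogue follows the same pattern (one inequality is automatic, the other is translated via $\beta$).
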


For the dual description of CDC for $D^\Box$, we have the following.

\begin{definition}
\label{3.3.9}
Let $(X,\leq, R)$ and $(Y,\leq, R)$ be modal Esakia spaces, $f: X\rightarrow Y$ be a Priestley morphism, and $D$ be
a clopen subset of $Y$. We say that $f$ satisfies the \textit{modal closed domain condition (CDC$_\Box$)} for $D$ if \textcolor{black}{the following holds: 
$$f[R[x]]\subseteq D \text{ implies } R[f(x)]\subseteq D.\footnote{By contraposition, this is equivalent to the condition that $R[f(x)]\cap \Bar{D}\not=\emptyset$ implies $f[R[x]]\cap \Bar{D}\not=\emptyset$. Sometimes it may be more convenient to use this one.}$$}
Furthermore, let $\mathcal{D}$ be a collection of clopen subsets of $Y$, $f$ satisfies the \textit{modal closed
domain condition (CDC$_\Box$)} for $\mathcal{D}$ if $f$ satisfies (CDC$_\Box$) for each $D\in \mathcal{D}$.

\end{definition}

\begin{proposition}
Let $\A=(A,\Box)$ and $\B=(B,\Box)$ be modal Heyting algebras, $h:A\rightarrow B$ be a stable bounded lattice homomorphism, and $a\in A$, then the following are equivalent:

\begin{itemize}
    \item [1.] $h(\Box a)=\Box h(a)$.
    \item [2.] $h_*:X_B\rightarrow X_A$ satisfies CDC$_\Box$ for $\beta(a)$.
\end{itemize}

\end{proposition}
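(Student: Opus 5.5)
The plan is to translate both sides into statements about prime filters via the isomorphism $\beta$ of Theorem \ref{3.3.3}. Write $f=h_*=h^{-1}$, so $f(x)=\{c\in A\mid h(c)\in x\}$ for $x\in X_B$. Because $\beta$ is injective, condition 1 holds iff $\beta(h(\Box a))=\beta(\Box h(a))$. By the duality, $\beta(\Box h(a))=\Box_R\beta(h(a))=\{x\in X_B\mid R[x]\subseteq\beta(h(a))\}$. Also, $x\in\beta(h(\Box a))$ iff $\Box a\in f(x)$ iff $f(x)\in\beta(\Box a)=\Box_R\beta(a)$, that is, iff $R[f(x)]\subseteq\beta(a)$. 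Finally, for $y\in X_B$ we have $y\in\beta(h(a))$ iff $a\in f(y)$ iff $f(y)\in\beta(a)$. Hence $R[x]\subseteq\beta(h(a))$ iff $f[R[x]]\subseteq\beta(a)$.

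Putting these together, $x\in\beta(\Box h(a))$ iff $f[R[x]]\subseteq\beta(a)$, and $x\in\beta(h(\Box a))$ iff $R[f(x)]\subseteq\beta(a)$. Stability of $h$ gives $h(\Box a)\leq\Box h(a)$. So condition 1 is equivalent to the reverse inclusion $\beta(\Box h(a))\subseteq\beta(h(\Box a))$, which reads: for every $x\in X_B$, $f[R[x]]\subseteq\beta(a)$ implies $R[f(x)]\subseteq\beta(a)$. This is precisely CDC$_\Box$ for $D=\beta(a)$ in Definition \ref{3.3.9}, and $\beta(a)$ is clopen, so the definition applies. Both directions of the equivalence come out of this single chain of equivalences.

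We do not even need Proposition \ref{3.3.6}: stability is used only in its algebraic form, to supply one of the two inclusions for free. The only step requiring care is the identity $\beta(\Box c)=\Box_R\beta(c)$ in the dual spaces of $\A$ and $\B$. This is part of the content of Theorem \ref{3.3.3}, since $\beta$ is a modal Heyting isomorphism onto $(\A_*)^*$, so we may simply cite it. Everything else is unwinding the definition of $h^{-1}$ on prime filters, so there is no substantial obstacle.
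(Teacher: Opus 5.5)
Your proposal is correct and follows essentially the same route as the paper: use stability to reduce condition 1 to the inclusion $\Box_R\beta(h(a))=\beta(\Box h(a))\subseteq\beta(h(\Box a))$, then unwind $h_*=h^{-1}$ on prime filters. The only differences are cosmetic. The paper proves the two directions separately and cites a corollary from the modal Esakia duality paper for the inclusion $\Box_R\beta(a)\subseteq\beta(\Box a)$ in $X_A$, whereas you package everything as one chain of equivalences and take $\beta(\Box a)=\Box_R\beta(a)$ directly from the duality theorem.
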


\begin{proof}
As $h$ is stable and $\beta$ is an isomorphism, $h(\Box a)=\Box h(a)$ iff $\Box h(a)\leq h(\Box a)$ iff $\beta(\Box h(a))\subseteq \beta(h(\Box a))$ iff $\Box_R \beta(h(a))\subseteq \beta(h(\Box a))$.

Suppose $h(\Box a)=\Box h(a)$, thus $\Box_R \beta(h(a))\subseteq \beta(h(\Box a))$. Suppose $h_*[R[x]]\subseteq \beta(a)$, \textcolor{black}{then} $R[x]\subseteq h^{-1}_*(\beta(a))$. For any $y\in R[x]$, \textcolor{black}{we have that} $h_*(y)\in \beta(a)$ and $a\in h_*(y)$, $h(a)\in y$. Thus $R[x]\subseteq \beta(h(a))$, and $x\in \Box_R(\beta(h(a)))\subseteq\beta(h(\Box a))$. Namely, $h(\Box a)\in x$ \textcolor{black}{and} $\Box a\in h_*(x)$. Thus, for any  $y\in R[h_*(x)]$, \textcolor{black}{we have that} $a\in y$. Therefore, $R[h_*(x)]\subseteq \beta(a)$. \textcolor{black}{This proves that} $h_*$ satisfies CDC$_\Box$ for $\beta(a)$.

For the other direction, suppose $h_*$ satisfies CDC$_\Box$ for $\beta(a)$. Suppose $x\in \Box_R(\beta(h(a)))$, then $R[x]\subseteq \beta(h(a))$. For any $xRy$, \textcolor{black}{we have that} $h(a)\in y$, and thus $a\in h_*(y)$. As $y$ is arbitrary, this means that $h_*[R[x]]\subseteq \beta(a)$. As $h_*$ satisfies CDC$_\Box$ for $\beta(a)$, \textcolor{black}{it follows that} $R[h_*(x)]\subseteq \beta(a)$. Then by \cite[Cor. 5.6]{Al} which says that for any $w\in X_A$, $\Box a\not\in w$ implies that $a\not\in v$ for some $wRv$, \textcolor{black}{we have that} $\Box a\in h_*(x)$ and $h(\Box a)\in x$, namely $x\in \beta(h(\Box a))$. Thus, $\Box_R(\beta(h(a)))\subseteq \beta(h(\Box a))$. \textcolor{black}{This proves that} $h(\Box a)=\Box h(a)$.

\end{proof}

Combining the above results together, we obtain the following dual description of the algebraic CDC for $D^\rightarrow$ and $D^\Box$:

\begin{proposition}
 \label{3.3.11}   
Let $\A=(A,\Box)$ and $\B=(B,\Box)$ be modal Heyting algebras, $h:A\rightarrow B$ be a stable bounded lattice homomorphism, $D^\Box\subseteq A$ and $D^\rightarrow\subseteq A\times A$, the following two are equivalent:

\begin{itemize}
    \item $h$ satisfies CDC for $D^\Box$ and $D^\rightarrow$.
    \item $h_*$ satisfies CDC$_\Box$ for any $\beta(a)$ where $a\in D^\Box$ and satisfies CDC$_\rightarrow$ for any $\beta(a)\setminus \beta(b)$ where $(a,b)\in D^\rightarrow$.
\end{itemize}
\end{proposition}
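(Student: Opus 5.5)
The plan is to obtain Proposition \ref{3.3.11} by combining the two pointwise equivalences established just above it. Both conditions in the statement are conjunctions indexed by the elements of $D^\Box$ and the pairs in $D^\rightarrow$. So it suffices to show, for each index separately, that the algebraic clause is equivalent to its geometric counterpart. We use the definition of CDC for $D^\Box$ and $D^\rightarrow$. We also use the definition of CDC$_\Box$ and CDC$_\rightarrow$ for a collection $\mathcal{D}$ of clopen sets, which is just the requirement for each member of $\mathcal{D}$.

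First, fix $a\in D^\Box$. Since $h$ is assumed to be a stable bounded lattice homomorphism, the proposition on CDC$_\Box$ applies directly. It gives $h(\Box a)=\Box h(a)$ iff $h_*$ satisfies CDC$_\Box$ for $\beta(a)$. Here we should check that $\beta(a)$ is a clopen subset of $X_A$, as the definition of CDC$_\Box$ requires. This follows from Esakia duality, since $\beta(a)$ is a clopen upset. Second, fix $(a,b)\in D^\rightarrow$. The proposition quoted from \cite[Lem. 4.3]{nloc} gives $h(a\rightarrow b)=h(a)\rightarrow h(b)$ iff $h_*$ satisfies CDC$_\rightarrow$ for $\beta(a)\setminus\beta(b)$. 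This set is again clopen, being a difference of clopens.

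Finally, we take the conjunction over all $a\in D^\Box$ and all $(a,b)\in D^\rightarrow$, which yields the equivalence of the two bullet points. Concretely, $h$ satisfies CDC for $D^\Box$ and $D^\rightarrow$ iff every such pointwise equality holds. By the two steps above, this holds iff $h_*$ satisfies CDC$_\Box$ for $\{\beta(a)\mid a\in D^\Box\}$ and CDC$_\rightarrow$ for $\{\beta(a)\setminus\beta(b)\mid (a,b)\in D^\rightarrow\}$.

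There is no real obstacle; the argument is bookkeeping. The only point needing care is that the stability hypothesis on $h$ is used for the modal clause, where the earlier proposition requires it. The implication clause does not use stability, since the cited lemma concerns only the lattice reduct and Esakia duality. It also does not depend on the modal part of the structure, because $h_*$ is the same Priestley map.
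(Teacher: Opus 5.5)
Your proposal is correct and matches the paper's approach. The paper gives no separate proof and obtains the proposition by combining, index by index, the modal CDC proposition (which uses stability of $h$) with the cited implication lemma from \cite{nloc}, just as you do.
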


By Propositions \ref{3.3.6} and \ref{3.3.11}, we now know what stable canonical rules code geometrically, \textcolor{black}{which is a dual analogue of Proposition \ref{3.1.6}.}

\begin{proposition}
\label{3.3.12}
Let $\A=(A,\Box)$ be a finite modal Heyting algebra, $D^\rightarrow \subseteq A\times A$, $D^\Box\subseteq A$, and \textcolor{black}{let} $\B=(B,\Box)$ be a modal Heyting algebra. Then $\B\not\vDash\rho(\A,D^\rightarrow,D^\Box)$ iff there is a surjective stable Priestley morphism $f:X_B\rightarrow X_A $ satisfying CDC$_\Box$ for any $\beta(a)$ where $a\in D^\Box$ and CDC$_\rightarrow$ for any $\beta(a)\setminus \beta(b)$ where $(a,b)\in D^\rightarrow$.   
\end{proposition}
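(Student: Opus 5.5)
The proof will transfer Proposition \ref{3.1.6} across the duality, using Propositions \ref{3.3.6} and \ref{3.3.11} to translate stability and the closed domain conditions. By Proposition \ref{3.1.6}, $\B\not\vDash\rho(\A,D^\rightarrow,D^\Box)$ iff there is a stable bounded lattice embedding $h:A\rightarrow B$ satisfying CDC for $D^\rightarrow$ and $D^\Box$. It therefore suffices to show that such embeddings correspond exactly to surjective stable Priestley morphisms $f:X_B\rightarrow X_A$ with the stated geometric CDCs.

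For the left-to-right direction, take such an $h$ and put $f=h_*=h^{-1}$. By Priestley duality, $h_*$ is a Priestley morphism. Since $h$ is injective, $h_*$ is surjective; this is the standard Priestley-duality fact that embeddings dualize to onto maps, which follows from the prime filter theorem by extending $h[x]$ to a prime filter of $B$ for each prime filter $x$ of $A$. Proposition \ref{3.3.6} shows that $h_*$ is stable. Proposition \ref{3.3.11}, which applies because $h$ is a stable bounded lattice homomorphism, turns CDC for $D^\Box$ and $D^\rightarrow$ into CDC$_\Box$ for each $\beta(a)$ with $a\in D^\Box$ and CDC$_\rightarrow$ for each $\beta(a)\setminus\beta(b)$ with $(a,b)\in D^\rightarrow$.

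For the right-to-left direction, start from such an $f$ and let $h:=f^*\circ\beta$, that is, $h(a)=\beta_B^{-1}(f^{-1}(\beta(a)))$. Since $f$ is a continuous order-preserving map, $f^{-1}(\beta(a))$ is a clopen upset of $X_B$ and hence lies in the image of $\beta_B$, so $h$ is a bounded lattice homomorphism. Surjectivity of $f$ makes $f^{-1}$ injective on subsets, so $h$ is an embedding. By Priestley duality, $h_*$ coincides with $f$ up to the canonical isomorphisms $\epsilon$. Proposition \ref{3.3.6} then gives that $h$ is stable, and Proposition \ref{3.3.11} gives CDC for $D^\Box$ and $D^\rightarrow$. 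Proposition \ref{3.1.6} now yields $\B\not\vDash\rho(\A,D^\rightarrow,D^\Box)$.

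The main obstacle is this identification of $f$ with $h_*$. Propositions \ref{3.3.6} and \ref{3.3.11} are stated for $h_*$ on the canonical dual spaces $X_B$ and $X_A$, while $f$ is given abstractly. To make them apply, I will use the natural isomorphism $\mathcal{X}\cong(\mathcal{X}^*)_*$ to show that $(f^*)_*=f$ under the identifications $\epsilon$. I will also check that stability and the CDCs are invariant under these isomorphisms. This holds because $\epsilon$ is an isomorphism of modal Esakia spaces (Theorem \ref{3.3.3}) and therefore preserves $R$, $\leq$ and the sets $\beta(a)$. The rest is direct bookkeeping.
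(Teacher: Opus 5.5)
Your proposal is correct and takes the same route as the paper, which simply combines Proposition \ref{3.1.6} with Propositions \ref{3.3.6} and \ref{3.3.11} and leaves implicit the embedding/surjection correspondence you spell out. The step you flag as the main obstacle is in fact immediate: $X_A$ and $X_B$ are literally the prime filter spaces, so $h_*(x)=\{a\in A\mid h(a)\in x\}=\{a\in A\mid f(x)\in\beta(a)\}=f(x)$, and no detour through $\epsilon$ is needed.
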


\begin{proof}

According to Proposition \ref{3.1.6}, $\B\not\vDash\rho(\A,D^\rightarrow,D^\Box)$ iff there is a stable bounded lattice embedding $h:A\rightarrow B$ satisfying CDC for $D^\rightarrow$ and $D^\Box$. Then by Proposition \ref{3.3.6} and Proposition \ref{3.3.11}, the result follows immediately.

\end{proof}

By the above proposition, we are then well-justified to write $\rho(\A, D^\rightarrow, D^\Box)$ as $\rho(\A_*,\mathfrak{D}_\rightarrow,\mathfrak{D}_M)$  where $\mathfrak{D}_\rightarrow=\{\beta(a)\setminus\beta(b)\mid (a,b)\in D^\rightarrow\}$, $\mathfrak{D}_M=\{\beta(a)\mid a\in D^\Box\}\footnote{``$M$" stands for ``modal".}$. This notation will become quite useful in the next section when we need to operate on those parameters.

\section{Applications of Stable Canonical Rules for Intuitionistic Modal Logics}

We have introduced stable canonical rules for intuitionistic modal logics and their dual characterization. This section will be devoted to the applications of stable canonical rules in establishing some intrinsic properties of intuitionistic modal logics.

First, using stable canonical rules for bimodal logics, we will give a new and self-contained proof of the Blok-Esakia theorem for intuitionistic modal logics which was first proved by Wolter and Zakharyaschev in \cite{zfon}, and generalize it naturally to intuitionistic modal multi-conclusion consequence relations. Then we will proceed to \textcolor{black}{proving} the Dummett-Lemmon conjecture in this setting by our stable canonical rules for intuitionistic modal logics.  

\subsection{The Blok-Esakia Theorem for Intuitionistic Modal Logics}

It is well-known that every superintuitionistic logic can be embedded into an extension of $\textbf{S4}$ via the G\"odel translation\footnote{\textcolor{black}{See Definition \ref{4.1.16} below.}}. The embedding led to \textcolor{black}{many important} results, one of which is  
the Blok-Esakia theorem. It states that the lattice of superintuitionistic logics is
isomorphic to the lattice of normal extensions of $\textbf{Grz}$, via
the mapping which sends each superintuitionistic logic $L$ to the normal extension
of $\textbf{Grz}$ with the set of all G\"odel translations of formulas in $L$. \textcolor{black}{As its name suggests, the Blok-Esakia theorem was proved independently by Blok using algebraic methods \cite{blok} and by Esakia using duality theory \cite{esath}. It allows us to study superintuitionistic logics by methods \textcolor{black}{and} results from normal modal logics and vice versa\footnote{See \cite{czmod} for more details.}.}
In \cite{zfon} and \cite{zfint}, Wolter and Zakharyaschev extended the G\"odel translation to intuitionistic modal logics, allowing them to embed each intuitionistic modal logic to an extension of the bimodal logic $\textbf{S4}\otimes\textbf{K}$. Besides, they proved a Blok-Esakia theorem for this translation, assuming in the proof that the reader is familiar with the so-called \textit{selection procedure} and \textit{cofinal subreductions} which are developed in \cite{zc1} and are quite involved. 

In this section, we will use stable canonical rules to prove the Blok-Esakia theorem for intuitionistic modal multi-conclusion consequence relations, which generalizes the analogous theorem for intuitionistic modal logics. \textcolor{black}{The proof strategy was adopted from the one in \cite{amaster, an} where Cleani proved the Blok-Esakia theorem for superintuitionistic logics using stable canonical rules for classical (unary) normal modal logics. Compared to the proof in \cite{zfon}, we believe that our alternative proof provides a new perspective and is arguably more self-contained.}

\subsubsection{Stable Canonical Rules for Bimodal Logics}

We first introduce stable canonical rules for bimodal logics. \textcolor{black}{They} are simple and natural generalizations of those in \cite{nrule} from the unimodal case (only one modal operator) to the bimodal case.

\begin{definition}

Let $\A=(A,\Box_I,\Box_M)$ and $\B=(B,\Box_I,\Box_M)$ be bimodal algebras, 
and $h:A\rightarrow B$ be a Boolean homomorphism, $h$ is \textit{stable} if for any $a\in A$, we have $h(\Box_I a)\leq \Box_I h(a)$ and $h(\Box_M a)\leq \Box_M h(a)$.
\end{definition}

\begin{definition}
Let $\A=(A,\Box_I,\Box_M)$, $\B=(B,\Box_I,\Box_M)$ be bimodal algebras, $D^I\subseteq A$ and $D^M\subseteq A$. A Boolean embedding $h:A\rightarrow B$ is said to satisfy

\begin{itemize}
    \item the \textit{closed domain condition} (CDC for short) for $D^I$ if $h(\Box_I a)=\Box_I h(a)$ for any $a\in D^I$.
    \item the \textit{closed domain condition} (CDC for short) for $D^M$ if $h(\Box_M b)=\Box_M h(b)$ for any $b\in D^M$.
\end{itemize}
  
\end{definition}

We can again encode a stable Boolean embedding which satisfies CDC for some parameters in a certain syntactic form in analogy with Definition \ref{3.1.4}. This is exactly what the following definition does. 

\begin{definition}

Let $\A=(A,\Box_I,\Box_M)$ be a finite bimodal algebra, $D^I \subseteq A$ and $D^M\subseteq A$. For each $a\in A$, we introduce a new propositional letter $p_a$ and define the \textit{stable canonical rule} $\mu(\A,D^I,D^M)$ based on $(\A,D^I,D^M)$ as follows:

\vspace{2mm}
\noindent\begin{tabular}{l l l l}
$\Gamma$&=&$\{p_{a\lor b}\leftrightarrow p_a\lor p_b\mid a,b\in A\}\cup\{p_0\leftrightarrow \bot, p_1\leftrightarrow \top\}$\\  
   & & $\cup\{p_{a\land b}\leftrightarrow p_a\land p_b\mid a,b\in A\}\cup\{p_{\neg a}\leftrightarrow \neg p_a\mid a\in A\}$ &  \\
   & & $\cup\{p_{\Box_I a}\rightarrow \Box_I p_a, p_{\Box_M a}\rightarrow \Box_M p_a \mid a\in A\}\cup\{\Box_I p_a\rightarrow p_{\Box_I a}\mid a\in D^I\}$ & \\
   & & $\cup \{\Box_M p_a\rightarrow p_{\Box_M a}\mid a\in D^M\}$
\end{tabular}

\vspace{2mm}
\noindent $\Delta=\{p_a\leftrightarrow p_b\mid a\not=b\in A\}$

\vspace{1mm}
$\mu(\A,D^I,D^M)=\Gamma/\Delta$.

\end{definition}

\textcolor{black}{Using a proof analogous to that
of Proposition \ref{3.1.6}, we have the following proposition.} The proof for the unimodal case can be found in \cite[Thm. 5.4]{nrule}. 

\begin{proposition}
\label{4.1.4}
Let $\A=(A,\Box_I,\Box_M)$ be a finite bimodal algebra, $D^I \subseteq A$, $D^M\subseteq A$, and $\B=(B,\Box_I,\Box_M)$ be a bimodal algebra. Then $\B\not\vDash\mu(\A,D^I,D^M)$ iff there is a stable Boolean embedding $h:A\rightarrow B$ satisfying CDC for $D^I$ and $D^M$.
\end{proposition}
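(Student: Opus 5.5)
The plan mirrors the proof of Proposition \ref{3.1.6}, replacing the Heyting clauses by Boolean ones and duplicating the modal clauses for $\Box_I$ and $\Box_M$. Write $\mu(\A,D^I,D^M)=\Gamma/\Delta$. The key observation is that in any bimodal algebra, $V(\varphi\leftrightarrow\psi)=1$ iff $V(\varphi)=V(\psi)$, and $V(\varphi\rightarrow\psi)=1$ iff $V(\varphi)\leq V(\psi)$. This holds because $\rightarrow$ is $\neg\varphi\lor\psi$ in a Boolean algebra. Each premise of $\Gamma$ is therefore equivalent to an equation or inequality between the values of the letters $p_a$, and each conclusion in $\Delta$ fails exactly when two letters receive distinct values.

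For the right-to-left direction, take a stable Boolean embedding $h:A\rightarrow B$ satisfying CDC for $D^I$ and $D^M$, and define $V(p_a)=h(a)$. The premises then translate as follows.
\begin{itemize}
\item The premises for $\lor,\land,\neg,0,1$ hold because $h$ is a Boolean homomorphism.
\item The premises $p_{\Box_I a}\rightarrow\Box_I p_a$ and $p_{\Box_M a}\rightarrow\Box_M p_a$ hold because $h$ is stable, i.e. $h(\Box_I a)\leq\Box_I h(a)$ and likewise for $\Box_M$.
\item The reverse implications, for $a\in D^I$ and $a\in D^M$ respectively, hold by CDC.
\end{itemize}
Injectivity of $h$ gives $V(p_a)\neq V(p_b)$ for $a\neq b$, so every member of $\Delta$ takes a value different from $1$. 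Hence $V$ refutes the rule.

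For the left-to-right direction, take a valuation $V$ on $\B$ refuting $\Gamma/\Delta$, and set $h(a)=V(p_a)$. The premises of $\Gamma$ give the following.
\begin{itemize}
\item $h(a\lor b)=h(a)\lor h(b)$, $h(a\land b)=h(a)\land h(b)$, $h(\neg a)=\neg h(a)$, $h(0)=0$ and $h(1)=1$, so $h$ is a Boolean homomorphism.
\item $h(\Box_I a)\leq\Box_I h(a)$ and $h(\Box_M a)\leq\Box_M h(a)$ for all $a$, so $h$ is stable.
\item The reverse inequalities hold for $a\in D^I$ and $a\in D^M$ respectively, so $h$ satisfies CDC for $D^I$ and $D^M$.
\end{itemize}
Since no member of $\Delta$ evaluates to $1$, we get $h(a)\neq h(b)$ whenever $a\neq b$, so $h$ is an embedding.

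No step is a genuine obstacle. The only point needing care is the Boolean translation of $\leftrightarrow$ and $\rightarrow$ into equality and order; everything else is routine bookkeeping with two modalities instead of one, as in \cite[Thm. 5.4]{nrule}.
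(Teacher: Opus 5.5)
Your proposal is correct and follows the same approach the paper intends. The paper proves this by analogy with Proposition~\ref{3.1.6} (and \cite[Thm.~5.4]{nrule}): set $V(p_a)=h(a)$ in one direction and $h(a)=V(p_a)$ in the other. Your observation that $\leftrightarrow$ and $\rightarrow$ evaluate to $1$ exactly when the values are equal, respectively comparable, is the only Boolean-specific ingredient needed.
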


The following result is essentially needed in our proof of the Blok-Esakia theorem. For convenience, let $\textbf{Mix}$\footnote{This was introduced by Wolter and Zakharyaschev in \cite{zfon}. \textcolor{black}{The reason why it is called ``\textbf{Mix}" is that semantically the formula says that $R_I\circ R_M\circ R_I= R_M$, which  means that $R_M$ and $R_I$ are ``mixed" together in some sense. \textcolor{black}{Thinking about modal Esakia spaces, it is the minimal interaction between $R_M$ and $R_I$ that one should expect. See Prop 4.11 below.}}} denote the formula $\Box_I\Box_M\Box_I p\leftrightarrow\Box_M p$.

\begin{proposition}
\label{4.1.5}
For any bimodal multi-conclusion rule $\Gamma/\Delta$, there exist tuples $(\A_1,D^I_1,D^M_1),...$ $,(\A_n, D^I_n,D^M_n)$ such that each $\A_i$ is a finite $(S4\otimes K)\oplus Mix$-algebra, $D^I_i\subseteq A_i$ and $D^M_i\subseteq A_i$, and for each $(S4\otimes K)\oplus Mix$-algebra $\B=(B,\Box_I,\Box_M)$, we have that $\B\not\vDash\Gamma/\Delta$ iff there is a stable embedding $h:A_i\rightarrow B$ satisfying CDC for $D^I_i$ and $D^M_i$.
\end{proposition}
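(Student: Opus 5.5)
The plan is to mimic the proof of Proposition \ref{3.1.3}: an algebraic filtration, now on Boolean subalgebras, with one extra twist so that the filtrated operators validate $\textbf{S4}$ for $\Box_I$ and $\textbf{Mix}$. If $\Gamma/\Delta$ is valid on all $(S4\otimes K)\oplus Mix$-algebras, take $n=0$. Otherwise let $\Theta$ be the set of subformulas of $\Gamma\cup\Delta$, with $|\Theta|=m$. Since Boolean algebras are locally finite, there are, up to isomorphism, only finitely many tuples $(\A,D^I,D^M)$ satisfying two conditions. First, $\A$ is a finite $(S4\otimes K)\oplus Mix$-algebra that is generated as a Boolean algebra by at most $2m$ elements and refutes $\Gamma/\Delta$ via a valuation $V$. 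Second, $D^I=\{V(\psi)\mid \Box_I\psi\in\Theta\}\cup\{V(\psi)\mid \Box_M\psi\in\Theta\}$ and $D^M=\{V(\psi)\mid\Box_M\psi\in\Theta\}$. These tuples form the enumeration.

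The right-to-left direction is exactly as in Proposition \ref{3.1.3}. Transport the valuation along $h$ and show by induction on $\Theta$ that $V_B=h\circ V_i$. The Boolean cases follow because $h$ is a Boolean embedding, and the $\Box_I$ and $\Box_M$ cases follow from CDC.

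For the left-to-right direction, let $V_B$ refute $\Gamma/\Delta$ on $\B$. Let $B'$ be the Boolean subalgebra generated by
$$V_B(\Theta)\cup\{\Box_I V_B(\psi)\mid \Box_M\psi\in\Theta\}.$$
This set has at most $2m$ elements, so $B'$ is finite. The extra generators are the twist needed later for $\textbf{Mix}$.

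On $B'$, first define
$$\Box_I' a=\bigvee\{\Box_I b\mid b,\Box_I b\in B',\ \Box_I b\le a\}.$$
I will verify the following properties of $\Box_I'$:
\begin{itemize}
\item $\Box_I'$ is monotone and preserves $1$.
\item $\Box_I'$ preserves meets, by distributivity as in Proposition \ref{3.1.3}.
\item $\Box_I'a\le a$ directly from the definition.
\item $\Box_I'a\le\Box_I a$ (stability), since $\Box_I b\le a$ implies $\Box_I b=\Box_I\Box_I b\le \Box_I a$.
\item $\Box_I'a\le\Box_I'\Box_I'a$ (transitivity). Each joinand $\Box_I b$ lies in $B'$ and satisfies $\Box_I\Box_I b=\Box_I b$, so $\Box_I b\le\Box_I'\Box_I b\le\Box_I'\Box_I' a$ by monotonicity.
\item $\Box_I' a=\Box_I a$ whenever $\Box_I a\in B'$, which gives CDC for $D^I$.
\end{itemize}
It follows that $\Box_I'$ is idempotent.

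Next, define
$$\Box_M' a=\bigvee\{\Box_M b\mid b,\Box_M b\in B',\ \Box_M b\le \Box_M\Box_I' a\}.$$
Its normality again follows from distributivity together with the fact that $\Box_I'$ preserves meets. Stability holds because $\Box_M' a\le\Box_M\Box_I'a\le\Box_M a$.

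For $\textbf{Mix}$, the key observation is that in $\B$ every element $\Box_M b$ is $\Box_I$-fixed, because $\Box_M b=\Box_I\Box_M\Box_I b\le \Box_I\Box_M b$. Hence every joinand of $\Box_M' x$ is of the form $\Box_I c$ with $c,\Box_I c\in B'$, so $\Box_M' x\le\Box_I'\Box_M' x$. Combined with reflexivity, this gives $\Box_I'\Box_M'x=\Box_M'x$. Moreover, $\Box_M'\Box_I'a=\Box_M'a$ because $\Box_I'$ is idempotent. Therefore $\Box_I'\Box_M'\Box_I'a=\Box_M'a$.

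CDC for $a\in D^M$ is the step where the extra generators are needed. For such $a$ we have $\Box_M a\in B'$, and also $\Box_I a\in B'$, which gives $\Box_I a=\Box_I'a$. Then $\Box_M a=\Box_I\Box_M\Box_I a\le\Box_M\Box_I a=\Box_M\Box_I' a$, so $\Box_M a$ is one of the joinands and $\Box_M' a=\Box_M a$. For the elements of $D^I$ that come from $\Box_M\psi\in\Theta$, CDC holds because $\Box_I V_B(\psi)$ is a generator of $B'$.

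The final induction shows that the restricted valuation agrees with $V_B$ on $\Theta$, exactly as in Proposition \ref{3.1.3}. Consequently $(B',\Box_I',\Box_M')$, with the parameters above, appears in the enumeration, and the inclusion is the required stable embedding. The main obstacle is making $\textbf{Mix}$ survive filtration, and it is handled by two choices: adding $\Box_I$-images to the generators, and composing $\Box_I'$ inside the definition of $\Box_M'$.
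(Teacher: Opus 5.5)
Your proposal is correct and follows essentially the same filtration as the paper: the same enlarged generating set (the paper's $\Theta'=\Theta\cup\{\Box_I\varphi\mid\Box_M\varphi\in\Theta\}$), a $\Box'_I$ equivalent to the paper's, and the same parameters $D^I,D^M$. The only difference is that the paper defines $\Box'_M a=\bigvee\{\Box_M b\mid b\le a,\ b,\Box_I b,\Box_M b\in B'\}$, so it proves $\Box'_M\Box'_I a=\Box'_M a$ by replacing a witness $b$ with $\Box_I b$, whereas your bound $\Box_M b\le\Box_M\Box'_I a$ makes that identity immediate from the idempotence of $\Box'_I$; both verifications go through.
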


\begin{proof}

Let $\Gamma/\Delta$ be an arbitrary bimodal multi-conclusion rule. If $\Gamma/\Delta\in (\textbf{S4}\otimes\textbf{K})\oplus\textbf{Mix}^R$, then take $n=0$. Suppose $\Gamma/\Delta\not\in (\textbf{S4}\otimes\textbf{K})\oplus\textbf{Mix}^R$, let $\Theta$ be the set of all subformulas of the formulas in $\Gamma\cup\Delta$ which is clearly finite, define $\Theta'=\Theta\cup \{\Box_I\varphi\mid \Box_M\varphi\in \Theta\}$. Clearly $\Theta'$ is finite and closed under subformulas, and for any formula $\Box_M\varphi$, we have that $\Box_M\varphi\in \Theta'$ implies $\Box_I\varphi\in\Theta'$. Assume $|\Theta'|=m$, there are only finitely many pairs $(\A,D^I,D^M)$ satisfying the following two conditions up to isomorphism:

\begin{itemize}
    \item[i)] $\A=(A,\Box_I,\Box_M)$ is a finite $(S4\otimes K)\oplus Mix$-algebra such that $A$ is 
    an $m$-generated Boolean \textcolor{black}{algebra} and $\A\not\vDash \Gamma/\Delta$.
    \item[ii)] $D^I=\{V(\varphi)\mid \Box_I\varphi\in\Theta'\}$ and $D^M=\{V(\varphi)\mid \Box_M\varphi\in \Theta' \}$ where $V$ is a valuation on $\A$ witnessing $\A\not\vDash\Gamma/\Delta$.
\end{itemize}

Let $(\A_1,D^I_1,D^M_1),...,(\A_n, D^I_n,D^M_n)$ be an enumeration of such pairs. For any $(S4\otimes K)\oplus Mix$-algebra $\B$, we prove that $\B\not\vDash\Gamma/\Delta$ iff there is a stable embedding $h:A_i\rightarrow B$ satisfying CDC for $D^I_i$ and $D^M_i$. 

For the right-to-left direction, suppose there is a stable embedding $h:A_i\rightarrow B$ satisfying CDC for $D^I_i$ and $D^M_i$. Define a valuation $V_B$ on $\B$ by $V_B(p)=h(V_i(p))$ for any propositional letter $p$ where $V_i$ is the valuation on $\A_i$ witnessing that $\A_i\not\vDash\Gamma/\Delta$. We then can prove by induction that $V_B(\psi)=h(V_i(\psi))$ for any $\psi\in \Theta'$ in the same way as that in the proof of Proposition \ref{3.1.3}.  

For the left-to-right direction, suppose $\B\not\vDash\Gamma/\Delta$. There exists a valuation $V_B$ on $B$ such that $V_B(\gamma)=1_B$ for any $\gamma\in\Gamma$ and $V_B(\delta)\not=1_B$ for any $\delta\in \Delta$. Let $B'$ be the Boolean subalgebra of $B$ generated by $V_B(\Theta')$. Clearly $|V_B(\Theta')|\leq |\Theta'|=m$. Let $D^I=\{V_B(\varphi)\mid \Box_I\varphi\in \Theta'\}$ and  $D^M=\{V_B(\varphi)\mid \Box_M\varphi\in \Theta'\}$, we define $\Box'_I$ and $\Box'_M$ on $B'$ as follows: $\Box'_I a=\bigvee\{\Box_I b\mid \Box_I b\leq \Box_I a\text{ and }b,\Box_I b\in B'\}$ and $\Box'_M a=\bigvee\{\Box_M b\mid b\leq a\text{ and } b,\Box_M b,\Box_I b\in B'\}$ for any $a\in B'$. We first prove that $(B',\Box'_I,\Box'_M)$ is an $(S4\otimes K)\oplus Mix$-algebra. 

As $\B$ is an $(S4\otimes K)\oplus Mix$-algebra and $\Box'_I$ is defined the same way as that of $\Box'$ in the proof of Proposition \ref{3.1.3}, checking that $(B',\Box'_I, \Box'_M)$ is an $S4\otimes K$-algebra is just a routine.  To see that $(B',\Box'_I,\Box'_M)$ is an $(S4\otimes K)\oplus Mix$-algebra, it then suffices to prove that for any $a\in B'$, $\Box'_I\Box'_M a=\Box'_M a$ and $\Box'_M\Box'_I a= \Box'_M a$\footnote{It is easy to check that for any $S4\otimes K$-algebra, it validates $\textbf{Mix}$ iff it validates $\Box_I\Box_M p\leftrightarrow \Box_M p$ and $\Box_M\Box_I p\leftrightarrow \Box_M p$. In fact, in \cite{zfint}, $\textbf{Mix}$ was simply defined to be $(\Box_I\Box_M p\leftrightarrow \Box_M p)\land (\Box_M\Box_I p\leftrightarrow \Box_M p)$.}.

For any $a\in B'$, as $(B',\Box'_I,\Box'_M)$ is an $S4\otimes K$-algebra, $\Box'_I a\leq a$, and thus $\Box'_M\Box'_I a\leq \Box'_M a$. Suppose $b\leq a$ such that $b,\Box_I b,\Box_M b\in B'$, then $\Box_I b\leq \Box_I a$. Thus $\Box_I b\leq \bigvee\{\Box_I x\mid \Box_I x\leq \Box_I a\text{ where } x,\Box_I x\in B'\}=\Box'_I a$. As $\B$ is an $(S4\otimes K)\oplus Mix$-algebra, $\Box_I b=\Box_I \Box_I b\in B'$ and $\Box_M\Box_I b=\Box_M b\in B'$. Therefore, $\Box_M b\in \{\Box_M x\mid x\leq \Box'_I a\text{ where }x, \Box_I x,\Box_M x\in B'\}$. Thus $\Box_M b\leq \bigvee \{\Box_M x\mid x\leq \Box'_I a\text{ where }x, \Box_I x,\Box_M x\in B'\}=\Box'_M\Box'_I a$. By the definition of $\Box'_M$, we have that $\Box'_M a\leq \Box'_M\Box'_I a$. Therefore, $\Box'_M a=\Box'_M\Box'_I a$. For any $a\in B'$, as $(B',\Box'_I,\Box'_M)$ is an $S4\otimes K$-algebra, $\Box'_I\Box'_M a\leq \Box'_M a$. Suppose $b\leq a$ such that $b,\Box_I b,\Box_M b\in B'$, then $\Box_M b\leq \Box'_M a$. As $\B$ is an $(S4\otimes K)\oplus Mix$-algebra, $\Box_I\Box_M b\leq \Box_I \Box'_M a$ and $\Box_I\Box_M b=\Box_M b\in B'$. Thus $\Box_M b=\Box_I\Box_M b\leq \bigvee\{\Box_I x\mid \Box_I x\leq \Box_I\Box'_M a\text{ where }x,\Box_I x\in B'\}=\Box'_I\Box'_M a$. By the definition of $\Box'_M$, we have that $\Box'_M a\leq \Box'_I\Box'_M a$, and thus $\Box'_M a=\Box'_I\Box'_M a$. Therefore, $(B',\Box'_I,\Box'_M)$ is an $(S4\otimes K)\oplus Mix$-algebra. 

Let $h:(B',\Box'_I,\Box'_M)\rightarrow (B,\Box_I,\Box_M)$ be the inclusion map, $h$ is clearly an embedding as $B'$ is a Boolean subalgebra of $B$. As $\Box'_I a\leq \Box_I a$ and $\Box'_M a\leq \Box_M a$ for any $a\in B'$ by definition, $h$ is stable. Then we check that $h$ satisfies CDC for $D^I$ and $D^M$. For any $a\in D^I$, $a=V_B(\varphi)$ for some $\Box_I \varphi\in \Theta'$. Thus $V_B(\Box_I \varphi)=\Box_I a\in B'$. By the definition of $\Box'_I$, we then have that $\Box'_I a=\Box_I a$. For any $b\in D^M$, $b=V_B(\psi)$ for some $\Box_M\psi\in \Theta'$. Thus $\Box_I\psi\in \Theta'$, and $V_B(\Box_I \psi)=\Box_I V_B(\psi)=\Box_I b\in B'$. As $V_B(\Box_M \psi)=\Box_M V_B(\psi)=\Box_M b\in B'$, by the definition of $\Box'_M$, $\Box'_M b=\Box_M b$. This proves that $h$ satisfies CDC for $D^I$ and $D^M$. 

Let $V'$ be the valuation $V_B$ restricted to $B'$, we then prove that for any $\varphi\in \Theta'$, $V'(\varphi)=V_B(\varphi)$ by induction on $\varphi$. We only consider the modal cases as others are trivial:

If $\varphi=\Box_I \psi$, as $\Box_I\psi\in \Theta'$, we have that $\psi\in\Theta'$. Thus $V_B(\Box_I\psi)=\Box_I V_B(\psi)\in B'$ and $V_B(\psi)\in B'$. 

\vspace{1mm}
\textcolor{black}{\begin{tabular}{l l l l}
$V'(\Box_I\psi)$ &=&$\Box'_I V'(\psi)$\\  
   &=& $\Box'_I V_B(\psi)$ & (IH) \\
   &=& $\Box_I V_B(\psi)$ & (By the definition of $\Box'_I$) \\
   &=& $V_B(\Box_I\psi).$   
\end{tabular}}

\vspace{1mm}
If $\varphi=\Box_M\psi$, as $\Box_M\psi\in \Theta'$, we have that $\Box_I\psi,\psi\in\Theta'$. Thus $V_B(\Box_M\psi)=\Box_M V_B(\psi)\in B'$, $V_B(\Box_I\psi)=\Box_I V_B(\psi)\in B'$ and $V_B(\psi)\in B'$. 

\vspace{1mm}
\textcolor{black}{\begin{tabular}{l l l l}
$V'(\Box_M\psi)$ &=&$\Box'_M V'(\psi)$\\  
   &=& $\Box'_M V_B(\psi)$ & (IH) \\
   &=& $\Box_M V_B(\psi)$ & (By the definition of $\Box'_M$) \\
   &=& $V_B(\Box_M\psi).$   
\end{tabular}}

\vspace{1mm}
Since $V_B$ is a valuation which refutes $\Gamma/\Delta$ on $\B$, $V'$ is a valuation on $(B',\Box_I,\Box_M)$ which refutes $\Gamma/\Delta$. As for any $\varphi\in\Theta'$, $V'(\varphi)=V_B(\varphi)$, we have that $D^I=\{V_B(\varphi)\mid \Box_I\varphi\in \Theta'\}=\{V'(\varphi)\mid \Box_I\varphi\in \Theta'\}$ and  $D^M=\{V'(\varphi)\mid \Box_M\varphi\in \Theta'\}$. As $B'$ is generated by $V_B(\Theta')$ whose cardinality is no larger than $m$, $(B', \Box'_I,\Box'_M, D^I, D^M)$ must be one of $(\A_1,D^I_1,D^M_1),...,(\A_n, D^I_n,D^M_n)$. Since $h:(B',\Box'_I,\Box'_M)\rightarrow (B,\Box_I,\Box_M)$ is a stable embedding  satisfying CDC for $D^I$ and $D^M$, we get what we want.

\end{proof}

The above proposition is not a trivial generalization of \cite[Thm. 5.1]{nrule} to bimodal cases as we require that every $A_i$ should validate $\textbf{Mix}$. This is quite crucial in the proof of Lemma \ref{4.1.19} below. In fact, the above proof gives a concrete example of how to do filtrations in polymodal cases when there are interactions between different operators. It is far from clear how to do so in general, and an answer to it may shed light on some open problems about the finite model property of polymodal logics. 

The above two propositions allow us to get the following result, which is analogous to Theorem \ref{3.1.7}.

\begin{proposition}
\label{4.1.6}
For any bimodal multi-conclusion rule $\Gamma/\Delta$, there exist tuples $(\A_1,D^I_1,D^M_1),...$ $,(\A_n, D^I_n,D^M_n)$ such that each $\A_i$ is a finite $(S4\otimes K)\oplus Mix$-algebra, $D^I_i\subseteq A_i$ and $D^M_i\subseteq A_i$, and for each $(S4\otimes K)\oplus Mix$-algebra $\B=(B,\Box_I,\Box_M)$, we have that $\B\vDash\Gamma/\Delta$ iff $\B\vDash\mu(\A_1,D^I_1,D^M_1),...,$ $\mu(\A_n, D^I_n,D^M_n)$.

\end{proposition}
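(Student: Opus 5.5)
The plan is to combine Propositions \ref{4.1.5} and \ref{4.1.4}, in the same way that Theorem \ref{3.1.7} was obtained from Propositions \ref{3.1.3} and \ref{3.1.6}.

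Fix a bimodal multi-conclusion rule $\Gamma/\Delta$. Apply Proposition \ref{4.1.5} to obtain tuples $(\A_1,D^I_1,D^M_1),\dots,(\A_n,D^I_n,D^M_n)$. Each $\A_i$ is a finite $(S4\otimes K)\oplus Mix$-algebra, and they have the following property: for every $(S4\otimes K)\oplus Mix$-algebra $\B$, we have $\B\not\vDash\Gamma/\Delta$ iff, for some $i\leq n$, there is a stable embedding $h:A_i\rightarrow B$ satisfying CDC for $D^I_i$ and $D^M_i$. Here ``stable embedding'' means a stable Boolean embedding, since the algebras involved are bimodal algebras and the embedding constructed in that proof is the inclusion of a Boolean subalgebra. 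These are the tuples I would use in the statement.

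Now let $\B$ be an arbitrary $(S4\otimes K)\oplus Mix$-algebra. Each $\A_i$ is in particular a finite bimodal algebra, so Proposition \ref{4.1.4} applies to it. It says that $\B\not\vDash\mu(\A_i,D^I_i,D^M_i)$ iff there is a stable Boolean embedding $A_i\rightarrow B$ satisfying CDC for $D^I_i$ and $D^M_i$. Chaining the two equivalences gives
$$\B\not\vDash\Gamma/\Delta \iff \B\not\vDash\mu(\A_i,D^I_i,D^M_i)\text{ for some } i\leq n.$$
Contraposing yields $\B\vDash\Gamma/\Delta$ iff $\B\vDash\mu(\A_i,D^I_i,D^M_i)$ for all $i\leq n$, which is exactly the claim.

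The degenerate case $\Gamma/\Delta\in(\textbf{S4}\otimes\textbf{K})\oplus\textbf{Mix}^R$, where $n=0$, needs no separate treatment. Every $(S4\otimes K)\oplus Mix$-algebra validates $\Gamma/\Delta$, and the empty list of stable canonical rules is also trivially validated. There is no real obstacle in this argument. The only point to check is that the notion of ``stable embedding'' in Proposition \ref{4.1.5} coincides with the stable Boolean embeddings of Proposition \ref{4.1.4}, and this is clear from the definitions.
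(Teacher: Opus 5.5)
Your proposal is correct and follows the paper's approach: the paper obtains this result by combining Propositions~\ref{4.1.4} and~\ref{4.1.5}, just as Theorem~\ref{3.1.7} follows from Propositions~\ref{3.1.3} and~\ref{3.1.6}. You have spelled out that chaining of equivalences, including the trivial case $n=0$.
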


Then we introduce the category of bimodal spaces, which is dually equivalent to the category of bimodal algebras. \textcolor{black}{Similarly to Section 3,} this duality allows us to \textcolor{black}{obtain a geometric characterization of} stable canonical rules for bimodal logics, which \textcolor{black}{makes our construction in the proof of Lemma \ref{4.1.19} possible.}

\begin{definition}

A \textit{bimodal space} is a triple $(X,R_I,R_M)$ where $(X,R_I)$ and $(X,R_M)$ are modal spaces.

Let $\mathfrak{X},\mathfrak{Y}$ be bimodal spaces, a map  $\textcolor{black}{f}:X\rightarrow Y$ is \textit{stable} if for $R\in\{R_I, R_M\}$ and any $x,y\in X$, $xR y$ implies $f(x)R f(y)$. Furthermore, $\textcolor{black}{f}$ is a \textit{bounded morphism} if $\textcolor{black}{f}$ is stable,  and for any $x,y\in X$, $f(x)R y$ implies that there is a $z\in X$ such that $xR z$ and $f(z)=y$ where $R\in\{R_I, R_M\}$.
\end{definition}

A \textit{valuation} on a bimodal space $\mathfrak{X}$ is a map $V:Prop\rightarrow Clop(X)$ which can be extended recursively to a map from $Form_{bi}$ to $Clop(X)$ in the usual way. For any bimodal logic $L$, a bimodal space $\mathfrak{X}$ is \textit{an $L$-modal space} if it validates $L$. In particular, an $S4\otimes K$-modal space $\mathfrak{X}=(X,R_I,R_M)$ is a bimodal space where $(X,R_I)$ is an $S4$-space and $(X,R_M)$ is a modal space.  

Let $\mathbf{BMS}$ be the category of bimodal spaces with continuous bounded morphisms, and $\mathbf{BMA}$ be the category of bimodal algebras with their homomorphisms, the functor $(-)_*:\mathbf{BMA}\rightarrow \mathbf{BMS}$ and $(-)^*:\mathbf{BMS}\rightarrow \mathbf{BMA}$ that establish the duality are constructed as follows. For a bimodal algebra $\A=(A,\Box_I,\Box_M)$, let $\A_*=(A_*,R_I,R_M)$ where $A_*$ is the Stone space of $A$ and $xR_Iy$ iff $\forall\Box_I a\in A(\Box_I a\in x \Rightarrow a\in y)$, 
and similarly we define $R_M$. For a bimodal space $\mathfrak{X}=(X,R_I,R_M)$, let $\mathfrak{X}^*=(X^*,\Box_{R_I},\Box_{R_M})$ where $X^*$ is the Boolean algebra of clopen sets of $X$ and $\Box_{R_I}U=\{x\in X\mid R_I[x]\subseteq U\}$. Similarly we define $\Box_{R_M}$. The duals of maps are the same as those in Esakia duality (namely taking inverse images). 

\begin{theorem}
\label{4.1.8}
$\mathbf{BMA}$ is dually equivalent to $\mathbf{BMS}$, which is witnessed by $(-)^*$ and $(-)_*$. In particular, for any bimodal algebra $\A$, $\A\cong (\A_*)^*$ witnessed by $\beta$ where $\beta(a)=\{x\in A_*\mid a\in x\}$, and for any bimodal space $\mathfrak{X}$, $\mathfrak{X}\cong (\mathfrak{X}^*)_*$ witnessed by $\epsilon$ where $\epsilon(x)=\{U\in X^*\mid x\in U\}$.
\end{theorem}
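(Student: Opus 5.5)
The plan is to reduce Theorem \ref{4.1.8} to the classical Jónsson--Tarski duality between modal algebras and modal spaces, applied coordinatewise. For one operator this is standard: every modal algebra $(A,\Box)$ satisfies $(A,\Box)\cong(A_*,R)^*$ via $\beta$, and every modal space $(X,R)$ satisfies $(X,R)\cong((X,R)^*)_*$ via $\epsilon$. A bimodal algebra is a Boolean algebra carrying two independent modal operators, and a bimodal space is a Stone space carrying two independent relations. So I check each construction one operator at a time.

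First, the functors are well defined on objects. Given $\A=(A,\Box_I,\Box_M)$, both $(A_*,R_I)$ and $(A_*,R_M)$ are modal spaces by the unimodal result, since $R_I$ depends only on $\Box_I$ and $R_M$ only on $\Box_M$. Hence $\A_*$ is a bimodal space. Dually, for a bimodal space $\mathfrak{X}$, both $\Box_{R_I}$ and $\Box_{R_M}$ map clopens to clopens and preserve $1$ and $\wedge$. This uses only that $R^{-1}[U]$ is clopen for clopen $U$, applied to complements. Hence $\mathfrak{X}^*$ is a bimodal algebra.

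Second, morphisms. A map $f$ is a continuous bounded morphism for $(R_I,R_M)$ exactly when it is a continuous bounded morphism of modal spaces for each relation separately. Likewise, $h$ is a bimodal homomorphism exactly when it is a Boolean homomorphism commuting with each box separately. The unimodal correspondence (forth condition iff $h(\Box a)\le\Box h(a)$, as in Proposition \ref{3.3.6}; back condition iff the reverse inequality, using that $R[x]$ is closed and a compactness argument) then shows that $f^{-1}$ and $h^{-1}$ are morphisms of the right kind. Functoriality is immediate, since these functors are inverse images.

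Third, the natural isomorphisms. By Stone duality, $\beta$ is a Boolean isomorphism $A\to(A_*)^*$ and $\epsilon$ is a homeomorphism $X\to(X^*)_*$. By the unimodal result for $\Box_I$, we get $\beta(\Box_I a)=\Box_{R_I}\beta(a)$, and for $R_I$, $x R_I y$ iff $\epsilon(x) R_I \epsilon(y)$. The same holds for $M$. Naturality is inherited from Stone duality, because the maps on underlying sets coincide.

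The only step needing real content is the unimodal identity $\beta(\Box a)=\Box_R\beta(a)$, specifically the inclusion that if $\Box a\notin x$ then some $y$ with $xRy$ omits $a$. I would cite this from the unimodal literature (e.g.\ \cite{ChagrovZakharyashev}). Should it need proof, I would extend the filter $\{b: \Box b\in x\}$, together with $\neg a$, to an ultrafilter; this filter is proper because $\Box$ preserves meets. The equality $\epsilon$-reflection of $R$ similarly uses closedness of $R[x]$ and separation by clopens. Everything else is bookkeeping.
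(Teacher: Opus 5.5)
Your proposal is correct and matches the paper's approach: the paper states this theorem without proof, treating it as the routine bimodal version of the J\'onsson--Tarski duality between modal algebras and modal spaces (see its remark on modal spaces in Section 2), and that is exactly the coordinatewise reduction you carry out. You have also correctly identified the only substantive ingredients. One is the identity $\beta(\Box a)=\Box_R\beta(a)$, proved by extending the proper filter generated by $\{b:\Box b\in x\}\cup\{\neg a\}$ to an ultrafilter. The other is reflecting $R$ along $\epsilon$, which uses closedness of $R[x]$ and separation by clopens.
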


When restricted to those validating $S4\otimes K$, the above theorem gives us the dual equivalence between the category of $S4\otimes K$-modal spaces with continuous bounded morphisms and the category of $S4\otimes K$-algebras
with their homomorphisms.

Using Theorem \ref{4.1.8}, we can now easily exploit the geometric intuitions about the stable canonical rules for bimodal logics. The proof of the one for unimodal logics can be found in \cite[Thm. 5.4]{nrule}.

\begin{proposition}
\label{4.1.9}

    Let $\A=(A,\Box_I,\Box_M)$ be a finite bimodal algebra, $D^I \subseteq A$ and $D^M\subseteq A$,  and let $\B=(B,\Box_I,\Box_M)$ be a bimodal algebra, then $\B\not\vDash\mu(\A,D^I,D^M)$ iff there is a continuous stable surjection $f:X_B\rightarrow X_A $ satisfying CDC$_\Box$ for any $\beta(a)$ where $a\in D^I$ and for any $\beta(b)$ where $b\in D^M$\footnote{ CDC$_\Box$ is given in Definition \ref{3.3.9}. When there is no ambiguity (in particular, for bimodal logics), we may just write CDC instead of CDC$_\Box$. }. 
\end{proposition}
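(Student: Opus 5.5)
The plan is to reduce the statement to Proposition \ref{4.1.4} and then translate the algebraic data into dual data via Theorem \ref{4.1.8}, exactly as Proposition \ref{3.3.12} was obtained from Proposition \ref{3.1.6}. By Proposition \ref{4.1.4}, $\B\not\vDash\mu(\A,D^I,D^M)$ iff there is a stable Boolean embedding $h:A\rightarrow B$ satisfying CDC for $D^I$ and $D^M$. It therefore suffices to establish three dual correspondences for a Boolean homomorphism $h:A\rightarrow B$ with dual $h_*=h^{-1}:X_B\rightarrow X_A$. First, $h$ is injective iff $h_*$ is surjective; this is standard Stone duality. Second, $h$ is stable iff $h_*$ is stable with respect to both $R_I$ and $R_M$. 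Third, for a stable $h$, we have $h(\Box a)=\Box h(a)$ iff $h_*$ satisfies CDC$_\Box$ for $\beta(a)$, where $\Box\in\{\Box_I,\Box_M\}$ and CDC is taken for the corresponding relation.

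Continuity of $h_*$ is automatic, because $h_*^{-1}(\beta(a))=\beta(h(a))$. For the converse direction, a continuous map $f:X_B\rightarrow X_A$ yields the Boolean homomorphism $f^{-1}$ on clopens, which we compose with the isomorphisms $\beta$ to obtain $h$ with $h_*=f$ up to the identification $\epsilon$. Since the two modalities never interact in any of these conditions, each correspondence can be checked one operator at a time.

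For stability, I will copy the proof of Proposition \ref{3.3.6} with $\Box_I$ (respectively $\Box_M$) in place of $\Box$. It uses only that the elements of $X_B$ are prime (here ultra) filters, that $\beta(\Box b)=\Box_R\beta(b)$, and that $\beta$ is an isomorphism, and all of these hold by Theorem \ref{4.1.8}. For CDC, I will copy the proof of the modal CDC proposition in Section 3. The one nontrivial ingredient there was \cite[Cor. 5.6]{Al}, which says that $\Box a\notin w$ implies $a\notin v$ for some $wRv$. In the bimodal setting this is the standard fact for modal spaces: if $\Box a\notin w$, then the filter generated by $\{c\mid \Box c\in w\}\cup\{\neg a\}$ is proper, so it extends to an ultrafilter $v$ with $wRv$ and $a\notin v$.

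Combining the three correspondences gives both directions of the equivalence. I do not expect a serious obstacle; the only step needing care is this replacement of \cite[Cor. 5.6]{Al} by its Boolean analogue, since the original is stated for modal Esakia spaces. Alternatively, one can simply cite \cite[Thm. 5.4]{nrule} for each modality separately.
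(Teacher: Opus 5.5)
Your proposal is correct and is essentially the paper's own route. The paper gives no separate proof: it obtains Proposition \ref{4.1.9} from Proposition \ref{4.1.4} via the duality of Theorem \ref{4.1.8}, in the same way Proposition \ref{3.3.12} was derived from Proposition \ref{3.1.6} through the stability and CDC correspondences, and it defers the unimodal details to \cite[Thm. 5.4]{nrule}. Your one-modality-at-a-time verification, with \cite[Cor. 5.6]{Al} replaced by the standard ultrafilter-extension argument, supplies exactly those details.
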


Because of the above proposition, we are justified to write $\mu(\A,D^I,D^M)$ as $\mu(\A_*,\mathfrak{D}_I,\mathfrak{D}_M)$ where $\mathfrak{D}_I=\{\beta(a)\mid a\in D^I\}$ and $\mathfrak{D}_M=\{\beta(b)\mid b\in D^M\}$. Besides, since $\A_*$ is finite, every subset of $\A_*$ is clopen and thus of the form $\beta(a)$ for some $a\in \A$ by Stone duality. Therefore, we are also justified to write $\mu (\mathfrak{Y}, \mathfrak{D}_I,\mathfrak{D}_M)$ where $\mathfrak{Y}$ is a finite bimodal space, and $\mathfrak{D}_I$, $\mathfrak{D}_M$ are sets of subsets of $\mathfrak{Y}$.

\subsubsection{The G\"odel Translation for Intutionistic Modal Logics and Related Constructions}

Now we introduce the operations from modal Esakia spaces to $S4\otimes K$-modal spaces and vice versa. They are in essence the topological versions of those defined in \cite{zfint} for general frames, and have an intimate relation with the G\"odel translation.

\textcolor{black}{For any $S4\otimes K$-modal space $\mathfrak{Y}=(Y,R_I,R_M,\mathcal{O})$, we write $x\backsim y$ iff $xR_Iy$ and $yR_Iy$, and then define $\rho: Y\rightarrow \mathcal{P}(Y)$ by $\rho(x)=\{y\in Y\mid x\backsim y\}.$} \textcolor{black}{For convenience, we may also write $[x]$ for $\rho(x)$.}

\begin{definition}
\label{4.1.10}
\quad
\begin{itemize}
    \item For any modal Esakia space $\mathcal{X}=(X,\leq,R,\mathcal{O})$ where $\mathcal{O}$ is the topology, we set $\sigma(\mathcal{X})=(X,R_I,R_M,\mathcal{O})$ where $R_I=\leq$ and $R_M=R$.
    \item For any $S4\otimes K$-modal space $\mathfrak{Y}=(Y,R_I,R_M,\mathcal{O})$, we set $\rho(\mathfrak{Y})=(\rho[Y],\leq,[R_I\circ R_M\circ R_I],\rho[\mathcal{O}])$ where $\rho(x)\leq\rho(y)$ iff $xR_I y$, $\rho(x)[R_I\circ R_M\circ R_I]\rho(y)$ iff $xR_I\circ R_M\circ R_I y$\footnote{It is easy to check that they are well defined.} and $\rho(\mathcal{O})$ is the quotient topology.  
\end{itemize}

\end{definition}

First, modal Esakia spaces have \textcolor{black}{the following intrinsic property}: 

\begin{proposition}
\label{4.1.11}
For any modal Esakia space $\mathcal{X}=(X,\leq,R,\mathcal{O})$, $\leq\circ R= R\circ \leq =R$, i.e., $R=\leq \circ R\circ \leq$. 
\end{proposition}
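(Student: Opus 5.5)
We need to prove: in a modal Esakia space, $\leq\circ R = R$ and $R\circ\leq = R$; together these give $\leq\circ R\circ\leq = R$. Composition $\leq\circ R$ is read as: $x(\leq\circ R)z$ iff there is $y$ with $x\leq y$ and $yRz$. The order of composition is symmetric enough that we simply prove both inclusions in both readings.

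\emph{Step 1: $R\circ\leq\,=R$.} This is immediate from the second clause of Definition \ref{3.3.1}. Suppose $xRy$ and $y\leq z$. Since $R[x]$ is an upset and $y\in R[x]$, we get $z\in R[x]$, i.e.\ $xRz$. The reverse inclusion follows from reflexivity of $\leq$.

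\emph{Step 2: $\leq\circ R\,=R$.} Again reflexivity gives $R\subseteq{\leq\circ R}$. For the converse, suppose $x\leq y$, $yRz$, and assume for contradiction that not $xRz$.
\begin{itemize}
\item Since $R[x]$ is a closed upset and $z\notin R[x]$, for each $w\in R[x]$ we have $w\not\leq z$: otherwise $z$ would lie in the upset $R[x]$.
\item By the Priestley separation axiom, for each such $w$ there is a clopen upset $U_w$ with $w\in U_w$ and $z\notin U_w$.
\item These $U_w$ cover $R[x]$, which is closed and hence compact. Choose finitely many of them; their union $U$ is a clopen upset with $R[x]\subseteq U$ and $z\notin U$.
\end{itemize}
Then $x\in\Box_R U$. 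By the first clause of Definition \ref{3.3.1}, $\Box_R U$ is an upset, so $x\leq y$ gives $y\in\Box_R U$, i.e.\ $R[y]\subseteq U$. But $yRz$ and $z\notin U$, a contradiction. Hence $xRz$.

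\emph{Step 3: conclusion.} Combining Steps 1 and 2 gives $\leq\circ R\circ\leq\,=(\leq\circ R)\circ\leq\,=R\circ\leq\,=R$.

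The only nontrivial point is the compactness argument in Step 2. It needs $R[x]$ to be closed, the Priestley separation axiom, and the fact that $\Box_R$ of a clopen upset is an upset. That last fact is exactly where the interaction between $\leq$ and $R$ comes from.
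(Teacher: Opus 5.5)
Your proof is correct, but it takes a different route from the paper.

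The paper argues through the duality of Theorem~\ref{3.3.3}. Since $\mathcal{X}\cong(\mathcal{X}^*)_*$, it suffices to check the identity on the dual space $\A_*$ of a modal Heyting algebra, where $R$ is the canonical relation ($xRy$ iff $\Box a\in x$ implies $a\in y$). There, $x\subseteq z\,R\,z'\subseteq y$ gives $xRy$ in one line: membership of $\Box a$ passes up from $x$ to $z$, and membership of $a$ passes up from $z'$ to $y$.

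You instead work directly from Definition~\ref{3.3.1} and the Priestley axioms, and your argument shows which axiom is responsible for which half.
\begin{itemize}
\item $R\circ\leq\;\subseteq R$ uses only that each $R[x]$ is an upset; no topology is needed.
\item $\leq\circ R\subseteq R$ needs the closedness of $R[x]$, Priestley separation and compactness to produce a clopen upset $U\supseteq R[x]$ missing $z$. After that, the fact that $\Box_R U$ is an upset does the work.
\end{itemize}

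The paper's version is shorter, but it delegates exactly this content to the duality theorem. The verification that $\epsilon$ reflects $R$, i.e.\ that $\epsilon(x)R^*\epsilon(z)$ implies $xRz$, rests on the same separation-and-compactness step you give. Your proof is therefore the more self-contained and elementary of the two, and it does not need Theorem~\ref{3.3.3} at all.
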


\begin{proof}
By the duality given in Theorem \ref{3.3.3}, it suffices to prove that for any modal Heyting algebra $\A=(A,\Box)$, the relation holds on its dual space $\A_*$. Namely, we need to prove that $\subseteq\circ R\circ\subseteq=R$. Let $x,y\in A_*$ where $x\subseteq \circ R\circ \subseteq y$, then there exists $z,z'\in A_*$ such that $x\subseteq z$, $zRz'$ and $z'\subseteq y$. For any $\Box a\in A$, if $\Box a\in x$, then $\Box a\in z$. As $zRz'$, \textcolor{black}{it follows that} $a\in z'$ and thus $a\in y$. Therefore, $xRy$. We get $\subseteq \circ R\circ \subseteq =R$ (the other direction is obvious).    
\end{proof}

Let $\mathfrak{Y}=(Y,R_I,R_M,\mathcal{O})$ be an $S4\otimes K$-modal space, we say $U\subseteq Y$ is an \textit{upset} if it is an upset w.r.t the quasi-order $R_I$, i.e., $U=\{y\in Y\mid xR_I y \text{ for some } x\in U\}$. We can now check that the operations in Definition \ref{4.1.10} transform modal Esakia spaces to bimodal spaces and vice versa.

\begin{proposition}
\label{4.1.12} 
The following hold:

\begin{itemize}
    \item[1)] For any modal Esakia space $\mathcal{X}$, we have that $\sigma(\mathcal{X})$ is an $S4\otimes K$-modal space.
    \item[2)] For any $S4\otimes K$-modal space $\mathfrak{Y}$, we have that $\rho(\mathfrak{Y})$ is a modal Esakia space. 
\end{itemize}

\end{proposition}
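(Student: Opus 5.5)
Both parts are checked directly from the definitions: part 1) uses Proposition \ref{4.1.11}, and part 2) rests on the standard fact that the skeleton of an $S4$-space is an Esakia space.

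\textbf{Part 1).} Let $\mathcal{X}=(X,\leq,R,\mathcal{O})$ be a modal Esakia space. Then $(X,\mathcal{O})$ is a Stone space, since a Priestley space is compact and the Priestley separation axiom gives Hausdorffness and a clopen basis. We then check the two modal-space conditions for each relation.
\begin{itemize}
\item For $R_I=\leq$: the set ${\uparrow}x$ is closed in any Priestley space, being the intersection of the clopen upsets containing $x$. Also $\leq^{-1}[U]={\downarrow}U$ is clopen for clopen $U$, which is exactly the Esakia condition.
\item For $R_M=R$: $R[x]$ is closed by Definition \ref{3.3.1}. For $R^{-1}[U]$ with $U$ clopen, write $U=\bigcup_{i}(U_i\setminus V_i)$ with $U_i,V_i$ clopen upsets (Remark \ref{2.6.6}); since $R^{-1}$ commutes with unions, it suffices to treat $U_i\setminus V_i$. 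Because $R[x]$ is an upset, $R[x]\cap (U_i\setminus V_i)\neq\emptyset$ iff $R[x]\cap {\downarrow}(U_i\setminus V_i)\neq\emptyset$. Hence $R^{-1}[U_i\setminus V_i]=X\setminus\Box_R(X\setminus{\downarrow}(U_i\setminus V_i))$. The set $X\setminus{\downarrow}(U_i\setminus V_i)=U_i\rightarrow V_i$ is a clopen upset, so $\Box_R$ of it is a clopen upset by Definition \ref{3.3.1}, and the complement is clopen.
\end{itemize}
Reflexivity and transitivity of $\leq$ give the $S4$ axioms, so $\sigma(\mathcal{X})$ is an $S4\otimes K$-modal space.

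\textbf{Part 2).} Let $\mathfrak{Y}$ be an $S4\otimes K$-modal space. First, $\leq$ on $\rho[Y]$ is a partial order because $\backsim$ is the cluster relation. Second, $[R_I\circ R_M\circ R_I]$ is well defined, since $R_I\circ R_M\circ R_I$ is invariant under replacing points by $\backsim$-equivalent ones (transitivity of $R_I$). For the topology I will use the known fact that the skeleton of an $S4$-space is an Esakia space. The key observation is that the clopen sets of the quotient correspond bijectively to the clopen $\backsim$-saturated subsets of $Y$. Compactness passes to the quotient. For Priestley separation: if $\rho(x)\not\leq\rho(y)$, then $y\notin R_I[x]$. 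Since $R_I[x]$ is closed, pick a clopen $W\ni y$ disjoint from it. Then $U=Y\setminus R_I^{-1}[W]$ is a clopen $R_I$-upset (hence saturated) containing $x$ but not $y$. The Esakia condition follows because $R_I^{-1}$ of a saturated clopen set is clopen and saturated.

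\textbf{Modal conditions on the quotient.} It remains to check the two conditions of Definition \ref{3.3.1} for $R'=[R_I\circ R_M\circ R_I]$.
\begin{itemize}
\item For a saturated clopen upset $U$, compute
\[\Box_{R'}\rho[U]=\rho\bigl[\Box_{R_I}\Box_{R_M}\Box_{R_I}U\bigr],\]
which is the image of a clopen set. It is an upset because of the leading $\Box_{R_I}$, and it is saturated since it is an $R_I$-upset.
\item For $R'[\rho(x)]$: it is an upset because of the trailing $R_I$. To see it is closed, write $R_I[R_M[R_I[x]]]$ as the image of the closed set $R_I[x]$ under the relation $R_I\circ R_M$. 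Relations of modal spaces are point-closed and their inverses preserve clopens, so images of closed sets are closed by compactness: a point outside the image can be separated by a clopen set whose preimage misses the closed set.
\end{itemize}
Saturation then passes closedness to the quotient.

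The main obstacle I expect is this last closedness argument for composed relations, together with the bookkeeping that identifies clopen sets of $\rho(\mathfrak{Y})$ with saturated clopen sets of $\mathfrak{Y}$. I will handle the former by the standard lemma that, in a modal space, $R[F]=\bigcap\{U \text{ clopen}: F\subseteq \Box_R U\}$ for closed $F$.
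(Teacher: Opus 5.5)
Your proposal is correct and follows essentially the same route as the paper. In part 1) you reduce to $\Box_R$ of a Heyting implication of clopen upsets using that $R[x]$ is an upset, which is the dual of the paper's computation (the paper rewrites $U$ in conjunctive form, applies $\Box_R$, and uses $R\circ\leq=R$). In part 2) you pull $\Box_{R'}$ back to $\Box_{R_I}\Box_{R_M}\Box_{R_I}$ on saturated sets and show $R_I[R_M[R_I[x]]]$ is closed, as the paper does; the only differences are that you prove directly the facts the paper cites from Cleani's thesis and use a direct compactness argument instead of the Esakia lemma for closed images.
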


\begin{proof}

For 1), as $\mathcal{X}=(X,\leq,R,\mathcal{O})$ is a modal Esakia space, $(X,\leq,\mathcal{O})$ is an Esakia space. If we omit $R$, $\sigma(\mathcal{X})$ is defined exactly the same as that in \cite[Def. 2.43]{amaster}, which is well-known to be an S4-modal space. Thus we only need to check that $(X,R_M,\mathcal{O})$ is a modal space. For this, it suffices to prove that if $U\subseteq X$ is clopen, then $\Box_{R_M}U=\Box_R U$ is also clopen. Let $U$ be an arbitrary clopen subset of $X$, as $(X,\leq, \mathcal{O})$ is an Esakia space, $U=\bigcup_{1\leq i\leq n}(U_i\setminus V_i)$ where $U_i,V_i$'s are clopen upsets. $\Box_R U=\Box_R (\bigcup_{1\leq i\leq n}(U_i\setminus V_i))=\Box_R((U_1\cap \Bar{V_1})\cup...\cup (U_n\cap \Bar{V_n}))$. By distributivity, $\Box_R U=\Box_R \bigcap_{1\leq i\leq k}(U'_i\cup \Bar{V}'_i)$ for some $k$ where $U'_i, V'_i$'s are clopen upsets. Then $\Box_R U=\bigcap_{1\leq i\leq k} \Box_R(U'_i\cup \Bar{V}'_i)$. Now, for any $1\leq i\leq k$, as $V'_i\rightarrow U'_i\subseteq U'_i\cup \Bar{V}'_i$, we have that $\Box_R (V'_i\rightarrow U'_i)\subseteq\Box_R (U'_i\cup \Bar{V}'_i)$. Suppose $x\in \Box_R (U'_i\cup \Bar{V}'_i)$, then $R[x]\subseteq U'_i\cup \Bar{V}'_i$. For any $xRy$ and $y\leq z$, as $R\circ \leq=R$ 
 by Proposition \ref{4.1.11}, $xRz$, and thus $z\in U'_i\cup \Bar{V}'_i$. Therefore, $y\in V'_i\rightarrow U'_i$, and thus $R[x]\subseteq V'_i\rightarrow U'_i$, namely $x\in \Box_R (V'_i\rightarrow U'_i)$. As $x\in \Box_R (U'_i\cup \Bar{V}'_i)$ is arbitrary, we have that $\Box_R (U'_i\cup \Bar{V}'_i)\subseteq \Box_R (V'_i\rightarrow U'_i)$, and thus $\Box_R (U'_i\cup \Bar{V}'_i)=\Box_R (V'_i\rightarrow U'_i)$. As $V'_i$ and $U'_i$ are clopen upsets and $(X,\leq,\mathcal{O})$ is an Esakia space, $V'_i\rightarrow U'_i$ is a clopen upset. As $\mathcal{X}$ is a modal Esakia space, by item 1 in Definition \ref{3.3.1}, $\Box_R (U'_i\cup \Bar{V}'_i)=\Box_R (V'_i\rightarrow U'_i)$ is clopen. $\Box_R U= \bigcap_{1\leq i\leq k}\Box_R(U'_i\cup \Bar{V}'_i)$ is also clopen. 

\vspace{1mm}
For 2), let $\mathfrak{Y}=(Y,R_I,R_M,\mathcal{O})$ be an $S4\otimes K$-modal space. If we omit $R_M$, $\rho(\mathfrak{Y})$
is defined exactly the same as that in \cite[Def. 2.43]{amaster}, which is well-known to be an Esakia space. Thus it suffices to check that for any clopen upset $U$ of $\rho(\mathfrak{Y})$, $\Box_{[R_I\circ R_M\circ R_I]}U$ is a clopen upset, and for any $x\in Y$, $[R_I\circ R_M\circ R_I][\rho(x)]$ is a closed upset. 

Let $U$ be an arbitrary clopen upset of $\rho(\mathfrak{Y})$, as $\rho(\mathcal{O})$ is the quotient topology, $\rho^{-1}(U)$ is clopen. By the definition of $\leq$, $\rho^{-1}(U)$ is an upset as $U$ is an upset. Clearly, $\rho^{-1}(\Box_{[R_I\circ R_M\circ R_I]}U)=\Box_{R_I}\Box_{R_M}\Box_{R_I}\rho^{-1}(U)$. As $\rho^{-1}(U)$ is a clopen upset and $\mathfrak{Y}$ is an $S4\oplus K$-modal space, $\rho^{-1}(\Box_{[R_I\circ R_M\circ R_I]}U)=\Box_{R_I}\Box_{R_M}\Box_{R_I}\rho^{-1}(U)$ is clopen. As $R_I$ is transitive, obviously $\Box_{R_I}\Box_{R_M}\Box_{R_I}\rho^{-1}(U)$ is an upset. Therefore, $\rho^{-1}(\Box_{[R_I\circ R_M\circ R_I]}U)$ is a clopen upset, so is $\Box_{[R_I\circ R_M\circ R_I]}U$. 

Let $x\in Y$ be arbitrary, clearly, $[R_I\circ R_M\circ R_I][\rho[x]]$ is an upset. By the definition of $\leq$, $\bigcup[R_I\circ R_M\circ R_I][\rho[x]]=(R_I\circ R_M\circ R_I)[x]=R_I[R_M[R_I[x]]]$. By the definition of modal spaces, $R_I[x]$ is closed. As $\mathfrak{Y}$ is an $S4\otimes K$-modal space, $R_M[R_I[x]]$ is also closed\footnote{It is known that for any modal space $\mathfrak{X}$, if $U$ is closed, then $R[U]$ is also closed: suppose $x\not\in R[U]$, then $\Diamond_R \{x\}\cap U=\emptyset$. $\Diamond_R\{x\}=\Diamond_R \bigcap \{Y\mid x\in Y\in Clop(\mathfrak{X})\}$. By Esakia Lemma, $\Diamond_R \bigcap \{Y\mid x\in Y\in Clop(\mathfrak{X})\}=\bigcap\{\Diamond_R Y\mid x\in Y\in Clop(\mathfrak{X})\}$. Thus $\bigcap\{\Diamond_R Y\mid x\in Y\in Clop(\mathfrak{X})\}\cap U=\emptyset$. By compactness, $U\cap \Diamond_R Y_1\cap...\cap\Diamond_R Y_n=\emptyset$ for some $x\in Y_1,...,Y_n\in Clop(\mathfrak{X})$. As $\Diamond_R(Y_1\cap...\cap Y_n)\subseteq \Diamond_R Y_1\cap...\cap \Diamond_R Y_n$, $U\cap\Diamond_R(Y_1\cap...\cap Y_n)=\emptyset$. As $x\in Y_1\cap...\cap Y_n$ is clopen and $Y_1\cap...\cap Y_n\cap R[U]=\emptyset$, $R[U]$ is closed.}, so is $R_I[R_M[R_I[x]]]$. Therefore, $[R_I\circ R_M\circ R_I][\rho(x)]$ is a closed upset.

\end{proof}

Now, by duality, we can also give the dual of $\sigma$ and $\rho$ as follows. Let $\A$ be an arbitrary modal Heyting algebra, $\A_*$ is a modal Esakia space by the duality given in Theorem \ref{3.3.3}. $\sigma(\A_*)$ is an $S4\otimes K$-modal space by Proposition \ref{4.1.12}. By Theorem \ref{4.1.8},  $\sigma(\A_*)^*$ is an $S4\otimes K$-algebra. We set $\sigma(\A)=\sigma(\A_*)^*$ for any modal Heyting algebra $\A$. $\sigma$ is then a map from the class of modal Heyting algebras to the class of $S4\otimes K$-algebras. Similarly, we define $\rho(\B)=\rho(\B_*)^*$ for any $S4\otimes K$-algebra. $\rho$ is then a map from the class of $S4\otimes K$-modal algebras to the class of modal Heyting algebras. 

Then we can get the following proposition  which says that the algebraic $\sigma,\rho$ and the geometric $\sigma,\rho$ are dual to each other respectively.

\begin{proposition} 
\label{4.1.13}
The following hold:

\begin{enumerate}
    \item [1)] For any modal Heyting algebra $\A$, $(\sigma(\A))_*\cong \sigma(\A_*)$. Consequently, $\sigma((\mathcal{X})^*)\cong (\sigma(\mathcal{X}))^*$ for any modal Esakia space $\mathcal{X}$.
    \item [2)] For any $S4\otimes K$-algebra $\B$, $(\rho(\B))_*\cong \rho (\B_*)$. Consequently, $(\rho(\mathfrak{X}))^*\cong \rho (\mathfrak{X}^*)$  for any  $S4\otimes K$-modal space $\mathfrak{X}$. 
\end{enumerate}

\end{proposition}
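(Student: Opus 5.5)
The plan is to unwind the definitions of the algebraic $\sigma$ and $\rho$, which were \emph{defined} through the geometric ones, and then apply the relevant duality theorem twice: once directly, and once together with the observation that the geometric operations send isomorphic spaces to isomorphic spaces.

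For 1), let $\A$ be a modal Heyting algebra. By definition $\sigma(\A)=\sigma(\A_*)^*$, so $(\sigma(\A))_*=(\sigma(\A_*)^*)_*$. By Proposition \ref{4.1.12}(1), $\sigma(\A_*)$ is an $S4\otimes K$-modal space, in particular a bimodal space. Theorem \ref{4.1.8} then gives $(\sigma(\A_*)^*)_*\cong\sigma(\A_*)$ via $\epsilon$, which proves the first claim.

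For the ``consequently'' part, let $\mathcal{X}$ be a modal Esakia space. Then $\sigma(\mathcal{X}^*)=\sigma((\mathcal{X}^*)_*)^*$. By Theorem \ref{3.3.3}, $\epsilon:\mathcal{X}\to(\mathcal{X}^*)_*$ is an isomorphism in $\mathbf{MES}$. An isomorphism of modal Esakia spaces is a homeomorphism that preserves and reflects both $\leq$ and $R$; this follows from conditions 1--5 together with the fact that the inverse is also a p-morphism. Such a map, read through $\sigma$, is therefore a homeomorphism preserving and reflecting $R_I=\leq$ and $R_M=R$, i.e.\ an isomorphism of bimodal spaces $\sigma(\mathcal{X})\cong\sigma((\mathcal{X}^*)_*)$. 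Applying $(-)^*$ yields $\sigma(\mathcal{X}^*)\cong(\sigma(\mathcal{X}))^*$.

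For 2), let $\B$ be an $S4\otimes K$-algebra. By definition $\rho(\B)=\rho(\B_*)^*$, and by Proposition \ref{4.1.12}(2), $\rho(\B_*)$ is a modal Esakia space. Theorem \ref{3.3.3} then gives $(\rho(\B_*)^*)_*\cong\rho(\B_*)$ via $\epsilon$. For the consequence, let $\mathfrak{X}$ be an $S4\otimes K$-modal space, so that $\rho(\mathfrak{X}^*)=\rho((\mathfrak{X}^*)_*)^*$. Again it suffices to show that $\rho$ respects isomorphisms. Let $f:\mathfrak{Y}\to\mathfrak{Y}'$ be a homeomorphism preserving and reflecting $R_I$ and $R_M$, and define $\bar f(\rho(x))=\rho(f(x))$.
\begin{itemize}
\item $\bar f$ is well defined and injective, because $x\backsim y$ iff $f(x)\backsim f(y)$.
\item $\bar f$ is surjective, because $f$ is.
\item $\bar f$ preserves and reflects $\leq$ and $[R_I\circ R_M\circ R_I]$, since $f$ preserves and reflects each of $R_I$ and $R_M$, and hence their composite.
\end{itemize}
Applying $(-)^*$ to the resulting isomorphism $\rho(\mathfrak{X})\cong\rho((\mathfrak{X}^*)_*)$ gives $(\rho(\mathfrak{X}))^*\cong\rho(\mathfrak{X}^*)$.

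The only step needing real care is showing that $\bar f$ is a homeomorphism of the quotient topologies. I would use $\rho'\circ f=\bar f\circ\rho$, where $\rho'$ is the quotient map of $\mathfrak{Y}'$. Since $f$ is a homeomorphism, a set $W$ is open in the quotient of $\mathfrak{Y}'$ iff $\rho'^{-1}(W)$ is open iff $f^{-1}\rho'^{-1}(W)=\rho^{-1}\bar f^{-1}(W)$ is open. This holds iff $\bar f^{-1}(W)$ is open in the quotient of $\mathfrak{Y}$, so $\bar f$ is continuous. The symmetric argument with $f^{-1}$ shows that $\bar f^{-1}$ is continuous as well. With this in place, both consequences follow formally from the dualities.
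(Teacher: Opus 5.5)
Your proof is correct and takes the same approach as the paper, whose proof is just ``by the definitions of $\sigma$ and $\rho$ and the duality.'' You unwind exactly that, using Proposition \ref{4.1.12} to place $\sigma(\A_*)$ and $\rho(\B_*)$ in the right category and Theorems \ref{3.3.3} and \ref{4.1.8} for the double-dual isomorphisms. Your additional check that the geometric $\sigma$ and $\rho$ send isomorphic spaces to isomorphic spaces, including the quotient-topology argument, is the detail the paper leaves implicit. Note that reflecting the composite $R_I\circ R_M\circ R_I$ also uses the bijectivity of $f$, which you have.
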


\begin{proof}

Simply by the definitions of $\sigma$ and $\rho$ and the duality.
    
\end{proof}

Besides, since $R=\leq\circ R\circ\leq$ for any modal Esakia space $\mathcal{X}=(X,\leq, R,\mathcal{O})$, the following proposition is \textcolor{black}{straightforward}.

\begin{proposition}
\label{4.1.14}
For any modal Esakia space $\mathcal{X}=(X,\leq, R,\mathcal{O})$, $\rho(\sigma(\mathcal{X}))\cong \mathcal{X}$. Consequently, for any modal Heyting algebra $\A$, $\rho(\sigma(\A))\cong \A$.    
\end{proposition}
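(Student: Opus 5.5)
We prove the geometric statement first and then transfer it to algebras by duality. Let $\mathcal{X}=(X,\leq,R,\mathcal{O})$ be a modal Esakia space. Then $\sigma(\mathcal{X})=(X,\leq,R,\mathcal{O})$ with $R_I={\leq}$ and $R_M=R$.

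Since $\leq$ is a partial order, the equivalence $x\backsim y$ (i.e.\ $x\leq y$ and $y\leq x$) holds iff $x=y$. Hence $\rho(x)=\{x\}$ for every $x$. The candidate isomorphism is therefore $g:X\rightarrow\rho[X]$, $g(x)=\{x\}$, which is clearly a bijection.

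We then check that $g$ preserves and reflects the structure.
\begin{itemize}
\item Order: by Definition \ref{4.1.10}, $\rho(x)\leq\rho(y)$ iff $xR_Iy$ iff $x\leq y$.
\item Modal relation: $\rho(x)[R_I\circ R_M\circ R_I]\rho(y)$ iff $x({\leq}\circ R\circ{\leq})y$. By Proposition \ref{4.1.11}, ${\leq}\circ R\circ{\leq}=R$, so this holds iff $xRy$.
\item Topology: $\rho[\mathcal{O}]$ is the quotient topology induced by $\rho$. Since $\rho$ is a bijection, a set $U\subseteq\rho[X]$ is open iff $\rho^{-1}(U)$ is open, so $g$ is a homeomorphism.
\end{itemize}
Thus $g$ is an isomorphism of modal Esakia spaces, in particular a bijective p-morphism whose inverse is also a p-morphism. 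This gives $\rho(\sigma(\mathcal{X}))\cong\mathcal{X}$.

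For the algebraic consequence, let $\A$ be a modal Heyting algebra. By definition of the algebraic operations and Proposition \ref{4.1.13}, $\sigma(\A)=\sigma(\A_*)^*$, and $\sigma(\A)_*\cong\sigma(\A_*)$ by Theorem \ref{4.1.8}. Hence
$$\rho(\sigma(\A))=\rho(\sigma(\A)_*)^*\cong\rho(\sigma(\A_*))^*\cong(\A_*)^*\cong\A.$$
Here the middle isomorphism uses the geometric result applied to $\mathcal{X}=\A_*$, and the last uses Theorem \ref{3.3.3}. The step $\rho(\sigma(\A)_*)\cong\rho(\sigma(\A_*))$ uses that $\rho$ on spaces respects isomorphisms of $S4\otimes K$-modal spaces, which is clear from its definition.

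There is no real obstacle. The only points needing care are the following.
\begin{itemize}
\item The relation $\backsim$ is meant with $yR_Ix$; the paper's text has the typo ``$yR_Iy$''.
\item The triviality of $\backsim$ relies on antisymmetry of the Esakia order.
\item The modal relation on the quotient is recovered exactly only because of Proposition \ref{4.1.11}. This is the one substantive ingredient.
\end{itemize}
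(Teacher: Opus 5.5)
Your proof is correct and takes the same approach as the paper. The paper only says the result is straightforward given $R=\leq\circ R\circ\leq$ (Proposition \ref{4.1.11}). You fill in the details, namely that antisymmetry makes $\backsim$ trivial, so the quotient map is a bijection and a homeomorphism, and that the dual algebraic statement follows, while also correctly flagging the typo $yR_Iy$ for $yR_Ix$ in the definition of $\backsim$.
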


The above proposition is the analogue to the first half of \cite[Prop. 2.45]{amaster}. For the second half of \cite[Prop. 2.45]{amaster}, the analogue does not hold for $S4\otimes K$-algebras in general. We need an extra assumption.

\begin{proposition}
\label{4.1.15}
If $\B=(B,\Box_I,\Box_M)$ is an $(S4\otimes K)\oplus Mix$-algebra, then there is a homomorphic embedding of $\sigma(\rho(\B))$ into $\B$ (usually denoted as $\sigma(\rho(\B))\rightarrowtail \B$).

\end{proposition}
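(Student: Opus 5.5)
The plan is to work dually and realise the embedding as the dual of the quotient map $\rho: B_*\to \rho(B_*)$. Write $\mathfrak{Y}=\B_*=(Y,R_I,R_M,\mathcal{O})$, which is an $S4\otimes K$-modal space validating \textbf{Mix}. By Proposition \ref{4.1.13}, $\sigma(\rho(\B))\cong (\sigma(\rho(\mathfrak{Y})))^*$. By Theorem \ref{4.1.8}, it therefore suffices to show that $\pi:=\rho: Y\to \sigma(\rho(\mathfrak{Y}))$, $x\mapsto [x]$, is a continuous surjective bounded morphism of bimodal spaces. Its dual $\pi^{-1}$ is then an injective bimodal algebra homomorphism $\sigma(\rho(\mathfrak{Y}))^*\rightarrowtail \mathfrak{Y}^*\cong\B$.

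\textbf{Step 1 (non-modal part).} Surjectivity is trivial, and continuity holds because $\rho(\mathcal{O})$ is the quotient topology. The $R_I$-part of the bounded morphism conditions is immediate. We have $xR_Iy$ iff $[x]\leq[y]$, so $\pi$ is $R_I$-stable. If $[x]\leq [z]$, then $xR_Iz$ and $\pi(z)=[z]$, which gives the back condition. This is just the $S4$ part of the classical Blok--Esakia argument in \cite{amaster}.

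\textbf{Step 2 (the $R_M$-part, where \textbf{Mix} enters).} In $\sigma(\rho(\mathfrak{Y}))$, the relation $R_M$ is $[R_I\circ R_M\circ R_I]$. The validity of \textbf{Mix}, in the form $\Box_I\Box_M p\leftrightarrow\Box_M p$ and $\Box_M\Box_I p\leftrightarrow \Box_M p$, translates on descriptive frames into $R_I\circ R_M = R_M$ and $R_M\circ R_I=R_M$, hence $R_I\circ R_M\circ R_I=R_M$. To justify this translation, note that on modal spaces the box operators determine the relations through the duality, namely $xRy$ iff $\forall U\in Clop(\mathfrak{Y})\,(x\in\Box_R U\Rightarrow y\in U)$, using that $R[x]$ is closed. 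Equalities of the box operators on clopens then give equalities of the composite relations, because composites of relations with closed point-images again have closed point-images, by the footnote in Proposition \ref{4.1.12}.

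With $R_I\circ R_M\circ R_I=R_M$ in hand, both conditions follow. For stability: $xR_My$ implies $x(R_I\circ R_M\circ R_I)y$, i.e. $[x][R_I\circ R_M\circ R_I][y]$. For the back condition: if $[x][R_I\circ R_M\circ R_I][z]$, then $xR_Mz$ already, so $z$ itself is an $R_M$-successor of $x$ with $\pi(z)=[z]$. Hence $\pi$ is a bounded morphism.

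\textbf{Main obstacle.} The main difficulty is Step 2, and specifically the correspondence of \textbf{Mix} with $R_I\circ R_M\circ R_I=R_M$ on (non-Kripke) modal spaces. If that route proves awkward, I would instead argue algebraically. Let $B_I=\{a\in B\mid \Box_Ia=a\}$ be the open elements and $B'$ the Boolean subalgebra generated by $B_I$; it is well known from the $S4$ case that $B'$ is closed under $\Box_I$. For $\Box_M$, \textbf{Mix} gives $\Box_M a=\Box_I\Box_M a\in B_I\subseteq B'$ for every $a$. Then $(B',\Box_I,\Box_M)$ is a subalgebra of $\B$, and I would identify it with $\sigma(\rho(\B))$. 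Here $\rho(\B)\cong (B_I,\Box_I\Box_M\Box_I)$, whose box agrees with $\Box_M$ on $B_I$ by \textbf{Mix}. For $\sigma$ one uses that in $\sigma(\A)$ the modal box satisfies $\Box_R U=\Box_R\Box_\leq U$, since $R[x]$ is an upset by Proposition \ref{4.1.11}. This matches $\Box_M c=\Box_M\Box_I c$ in $\B$.
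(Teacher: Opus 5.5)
Your proposal is correct and follows the paper's own argument: dualize, then check that the quotient map $x\mapsto[x]$ from $\B_*$ onto $\sigma(\rho(\B_*))$ is a continuous surjective bounded morphism, with \textbf{Mix} used only through $R_I\circ R_M\circ R_I=R_M$ on the dual space. You also justify that frame condition via closed point-images of the composite relations, which the paper merely asserts, and you cite Proposition \ref{4.1.13} for $(\sigma(\rho(\B)))_*\cong\sigma(\rho(\B_*))$, whereas the paper's text cites Proposition \ref{4.1.14}.
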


\begin{proof}

Let $\B$ be an $S4\otimes K$-algebra which validates $\Box_I\Box_M\Box_I p\leftrightarrow \Box_M p$. By duality given in Theorem \ref{4.1.8}, $\B_*$ is an $S4\otimes K$-modal space on which $R_I\circ R_M\circ R_I=R_M$, and it suffices to show that there is a surjective continuous bounded morphism from $\B_*$ onto $(\sigma(\rho(\B)))_*$. By Proposition \ref{4.1.14}, $(\sigma(\rho(\B)))_*\cong\sigma(\rho(\B_*))$. We check that $x\mapsto [x]$ is a surjective continuous bounded morphism from $\mathfrak{X}=(X,R_I,R_M,\mathcal{O})$ onto $\sigma(\rho(\mathfrak{X}))$ for any $S4\otimes K$-modal space $\mathfrak{X}$ on which $R_I\circ R_M\circ R_I=R_M$.

Clearly, the map is surjective and continuous (the topology is the quotient topology). If $xR_Iy$, then by definition $[x]\leq [y]$; If $[x]\leq [y]$, then by definition $xR_Iy$. Suppose $xR_M y$, then as $R_I$ is reflexive, $xR_I\circ R_M\circ R_I y$ and thus $[x][R_I\circ R_M\circ R_I][y]$. Suppose $[x][R_I\circ R_M\circ R_I][y]$, then $x R_I\circ R_M\circ R_I y$ by definition. As $R_I\circ R_M\circ R_I =R_M $, $xR_M y$. This proves that the map is a bounded morphism.

\end{proof}

Now, we introduce the \textit{G\"odel translation} for intuitionistic modal logics, which is given in \cite{zfint}. 

\begin{definition}
\label{4.1.16}
The \textit{G\"odel translation} for intuitionistic modal logics $t: Form_{i\Box}\rightarrow Form_{bi}$ is recursively defined as follows:

\begin{itemize}
    \item $t(p)=\Box_I p \text{ where } p \text{ is a propositional variable}$
    \item $t(\bot)=\Box_I\bot$
    \item  $t(\top)=\Box_I \top$
    \item  $t(\varphi\rightarrow\psi)=\Box_I (t(\varphi)\rightarrow t(\psi))$
    \item $t(\varphi\land\psi)=\Box_I (t(\varphi)\land t(\psi))$
    \item $t(\varphi\lor\psi)=\Box_I (t(\varphi)\lor t(\psi))$
    \item $t(\Box\varphi)=\Box_I\Box_M t(\varphi)$
\end{itemize}

\end{definition}

It turns out that for the following results to hold, it makes no difference if we define $t(\varphi\land\psi)= t(\varphi)\land t(\psi)$ and $t(\varphi\lor\psi)= t(\varphi)\lor t(\psi)$ instead. When restricted to the set of intuitionistic propositional formulas, $t$ is just the well-known G\"odel translation. For any intuitionistic modal multi-conclusion rule $\Gamma/\Delta$, we write $t(\Gamma/\Delta)$ for $\{t(\gamma)\mid \gamma\in\Gamma\}/\{t(\delta)\mid \delta\in\Delta\}$.

The following proposition is the counterpart to \cite[Lem. 3.13]{can}.

\begin{proposition}
\label{4.1.17}
If $\A$ is an $S4\otimes K$-algebra, then $\A\vDash t(\Gamma/\Delta)$ iff $\rho(\A)\vDash \Gamma/\Delta$.

\end{proposition}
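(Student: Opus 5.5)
The plan is to identify $\rho(\A)$, up to isomorphism, with the subalgebra of $R_I$-open elements of $\A$, and then show by induction on formulas that the Gödel translation computes exactly the intuitionistic modal valuation there. Concretely, let $O(\A)=\{\Box_I a\mid a\in A\}=\{a\in A\mid \Box_I a=a\}$. Equip it with the lattice operations of $\A$, with
\[ a\rightarrow_O b=\Box_I(\neg a\lor b) \quad\text{and}\quad \Box_O a=\Box_I\Box_M a. \]

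First I would check that $(O(\A),\rightarrow_O,\Box_O)$ is a modal Heyting algebra. The Heyting part is standard for $S4$-algebras. For the modal part, $\Box_O 1=1$ and $\Box_O$ preserves meets because both $\Box_I$ and $\Box_M$ do.

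Next I would show $O(\A)\cong\rho(\A)=\rho(\A_*)^*$ by duality. Clopen upsets of $\rho(\A_*)$ correspond, via $\rho^{-1}$, to clopen $R_I$-upsets of $\A_*$. By Stone duality these are exactly the $\beta(a)$ with $\Box_I a=a$. Under this correspondence, Proposition 4.1.12(2) already gives
\[ \rho^{-1}(\Box_{[R_I\circ R_M\circ R_I]}U)=\Box_{R_I}\Box_{R_M}\Box_{R_I}\rho^{-1}(U). \]
Since $\rho^{-1}(U)$ is an upset, $\Box_{R_I}\rho^{-1}(U)=\rho^{-1}(U)$, so this matches $\Box_I\Box_M a$. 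The implication $U\rightarrow V=X\setminus{\downarrow}(U\setminus V)$ pulls back to $\Box_{R_I}(\overline{U}\cup V)$. Lattice operations are preserved trivially. This identification is the main obstacle: everything else is bookkeeping, but one must check carefully that the quotient by $\backsim$ matches $R_I$-upsets and that the quotient order agrees with $R_I$.

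Then I would relate valuations. Given a valuation $V$ on $\A$, define $V^O(p)=\Box_I V(p)\in O(\A)$. By induction on $\varphi$ I would show $V(t(\varphi))=V^O(\varphi)$. Each clause of $t$ begins with $\Box_I$, which matches the definitions of the operations in $O(\A)$. For $\land$ and $\lor$, note that $\Box_I$ fixes meets and joins of open elements. For $\Box$, we have $V(t(\Box\varphi))=\Box_I\Box_M V(t(\varphi))=\Box_O V^O(\varphi)$.

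Conversely, every valuation $W$ on $O(\A)$ arises this way, taking $V=W$ viewed as a valuation into $A$. Hence the valuations $V^O$ range over all valuations of $O(\A)$. Since the top elements of $\A$ and $O(\A)$ coincide, a valuation $V$ refutes $t(\Gamma/\Delta)$ on $\A$ iff $V^O$ refutes $\Gamma/\Delta$ on $O(\A)\cong\rho(\A)$. This gives both directions of the equivalence.
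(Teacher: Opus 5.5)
Your proof is correct, and it takes a genuinely different route from the paper's.

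The paper never describes $\rho(\A)$ algebraically. It passes to dual spaces via Proposition~\ref{4.1.13} and, for an $S4\otimes K$-modal space $\mathfrak{X}$, proves the two directions by two separate inductions:
\begin{itemize}
\item it pulls a refuting valuation back along the quotient map, setting $V'(p)=\rho^{-1}(V(p))$;
\item it pushes a refuting valuation forward, setting $V'(p)=\rho[\Box_I V(p)]$.
\end{itemize}
Each induction needs set-level checks of the $\rightarrow$ and $\Box$ cases, including the auxiliary fact $\rho[X\setminus Y]=\rho[X]\setminus\rho[Y]$ for upsets.

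You instead use duality in one step only: the isomorphism between $\rho(\A)=\rho(\A_*)^*$ and the algebra $O(\A)$ of $\Box_I$-fixed points, with operations $\Box_I(\neg a\lor b)$ and $\Box_I\Box_M a$. This isomorphism does follow from two facts. First, clopen upsets of the quotient correspond to the clopen $R_I$-upsets $\beta(a)$ with $\Box_I a=a$. Second, the identity $\rho^{-1}(\Box_{[R_I\circ R_M\circ R_I]}U)=\Box_{R_I}\Box_{R_M}\Box_{R_I}\rho^{-1}(U)$ is already computed in Proposition~\ref{4.1.12}. After that, one induction gives both directions, because $V\mapsto V^O$ is onto the valuations of $O(\A)$ and the two top elements coincide.

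What each route buys:
\begin{itemize}
\item Yours is shorter and makes explicit the classical open-element description of $\rho(\A)$, in the spirit of McKinsey--Tarski and Blok. It also shows plainly that neither $\textbf{Mix}$ nor $\textbf{Grz}$ is needed.
\item The paper's avoids a separate identification and stays in the geometric setting where $\rho$ is defined. That is what it reuses ``in the same way'' for Kripke frames in Proposition~\ref{4.2.2}. Your argument would transfer there too: apply it to the complex algebra of a frame, whose $\Box_{R_I}$-fixed points are exactly the $R_I$-upsets.
\end{itemize}
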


\begin{proof}

By Theorem \ref{4.1.8}, $\A\vDash\Gamma/\Delta$ iff $\A_*\vDash\Gamma/\Delta$ and $\rho(\A)\vDash\Gamma/\Delta$ iff $\rho(\A_*)=\rho(\A)_*\vDash\Gamma/\Delta$. Thus, it suffices to prove that for any $S4\otimes K$-modal space $\mathfrak{X}=(X,R_I,R_M,\mathcal{O})$,  $(X,R_I,R_M,\mathcal{O})\vDash t(\Gamma/\Delta)$ iff $\rho(X,R_I,R_M,\mathcal{O})\vDash \Gamma/\Delta$.

For the left-to-right direction, suppose $\rho(X,R_I,R_M,\mathcal{O})\not\vDash \Gamma/\Delta$, there is a valuation $V$ on $\rho(X,R_I,R_M,\mathcal{O})$ such that $V(\varphi)=\rho[X]$ for any $\varphi\in \Gamma$ and $V(\psi)\not=\rho[X]$ for any $\psi\in \Delta$. Since $V(p)$ is a clopen upset in $\rho(X,R_I,R_M,\mathcal{O})$ for any propositional variable $p$, $\rho^{-1}(V(p))$ is a clopen upset in $(X,R_I,R_M,\mathcal{O})$. We define a valuation $V'$ on $(X,R_I,R_M,\mathcal{O})$ by setting $V'(p)=\rho^{-1}(V(p))$. Then we prove by induction that for any formula $\varphi$, $V'(t(\varphi))=\rho^{-1}(V(\varphi))$ (note that $V(\varphi)$ is a clopen upset for any formula $\varphi$):

If $\varphi=p$ for some propositional variable $p$, $V'(t(p))=V'(\Box_I p)=\Box_I V'(p)=\Box_I \rho^{-1}(V(p))$. As $\rho^{-1}(V(p))$ is an upset and $R_I$ is reflexive, $\Box_I \rho^{-1}(V(p))=\rho^{-1}(V(p))$. Thus $V'(t(p))=\rho^{-1}(V(p))$.

If $\varphi=\bot$, $V'(t(\bot))=V'(\Box_I\bot)=\Box_I V'(\bot)=\emptyset=\rho^{-1}(V(\bot))$; if $\varphi=\top$, $V'(t(\top))=V'(\Box_I \top)=\Box_I V'(\top)= X= \rho^{-1}(\rho[X])=\rho^{-1}(V(\top))$.

If $\varphi=\psi\land\theta$,

\vspace{1mm}
\textcolor{black}{\begin{tabular}{l l l l}
$V'(t(\psi\land\theta))$ &=&$V'(\Box_I(t(\psi)\land t(\theta)))$\\  
   &=& $\Box_I V'(t(\psi)\land t(\theta))$ &  \\
   &=& $\Box_I (V'(t(\psi))\cap V'(t(\theta)))$ &  \\
   &=& $\Box_I (\rho^{-1}(V(\psi))\cap \rho^{-1}(V(\theta)))$ & (IH)  \\
   &=& $\Box_I \rho^{-1}(V(\psi)\cap V(\theta))$ \\
   &=& $\Box_I \rho^{-1}(V(\psi\land\theta))$ \\
   &=& $\rho^{-1}(V(\psi\land\theta))$.
\end{tabular}}

Note that the last equality holds as $\rho^{-1}(V(\psi\land\theta))$ is an upset and $R_I$ is reflexive. The case when $\varphi=\psi\lor\theta$ is similar.

If $\varphi=\psi\rightarrow \theta$, $V'(t(\psi\rightarrow\theta))=V'(\Box_I(t(\psi)\rightarrow t(\theta)))=\Box_I (V'(t(\psi))\rightarrow V'(t(\theta)))
=\Box_I (X\setminus V'(t(\psi))\cup V'(t(\theta)) )$. By IH, $V'(t(\psi))=\rho^{-1}(V(\psi))$ and $V'(t(\theta))=\rho^{-1}(V(\theta))$. Thus $V'(t(\psi\rightarrow \theta))=\Box_I (X\setminus \rho^{-1}(V(\psi))\cup \rho^{-1}(V(\theta)))= \Box_I (\rho^{-1}(\rho[X]\setminus V(\psi)\cup V(\theta)))$ (note $V(\psi)$ and $V(\theta)$ are upsets).

Suppose $x\in \rho^{-1}(\rho[X]\setminus {\downarrow} (V(\psi)\setminus V(\theta)))$, then $\rho(x)\in \rho[X]\setminus {\downarrow} (V(\psi)\setminus V(\theta))$. For any $xR_I y$, by definition $\rho(x)\leq \rho(y)$, and thus $\rho(y)\not\in V(\psi)\setminus V(\theta)$. As $x R_I y$ is arbitrary, we have that $x\in \Box_I(\rho^{-1}(\rho[X]\setminus V(\psi)\cup V(\theta)))$, and thus $\rho^{-1}(\rho[X]\setminus {\downarrow} (V(\psi)\setminus V(\theta)))\subseteq \Box_I(\rho^{-1}(\rho[X]\setminus V(\psi)\cup V(\theta)))$. Suppose $x\in \Box_I(\rho^{-1}(\rho[X]\setminus V(\psi)\cup V(\theta)))$ while $x\not\in \rho^{-1}(\rho[X]\setminus {\downarrow} (V(\psi)\setminus V(\theta)))$. Then $\rho(x)\not\in \rho[X]\setminus {\downarrow} (V(\psi)\setminus V(\theta)))$, there exists $\rho(x)\leq \rho(y)$ such that $\rho(y)\in V(\psi)\setminus V(\theta)$. Thus $y\not\in \rho^{-1}(\rho[X]\setminus V(\psi)\cup V(\theta))$, and by definition $xR_I y$. This contradicts the assumption that $x\in \Box_I(\rho^{-1}(\rho[X]\setminus V(\psi)\cup V(\theta)))$. Therefore, if $x\in \Box_I(\rho^{-1}(\rho[X]\setminus V(\psi)\cup V(\theta)))$, then $x\in \rho^{-1}(\rho[X]\setminus {\downarrow} (V(\psi)\setminus V(\theta)))$. Thus $\Box_I(\rho^{-1}(\rho[X]\setminus V(\psi)\cup V(\theta)))\subseteq \rho^{-1}(\rho[X]\setminus {\downarrow} (V(\psi)\setminus V(\theta)))$, and $\Box_I(\rho^{-1}(\rho[X]\setminus V(\psi)\cup V(\theta)))=\rho^{-1}(\rho[X]\setminus {\downarrow} (V(\psi)\setminus V(\theta)))$. As $\rho^{-1}(\rho[X]\setminus {\downarrow} (V(\psi)\setminus V(\theta)))=\rho^{-1}(V(\psi\rightarrow \theta))$, this proves that $V'(t(\psi\rightarrow\theta))=\rho^{-1}(V(\psi\rightarrow \theta))$.

If $\varphi=\Box\psi$, $V'(t(\Box\psi))=V'(\Box_I\Box_Mt(\psi))=\Box_I\Box_M V'(t(\psi))=\text{(By IH) }\Box_I\Box_M \rho^{-1}(V(\psi))$. And $\rho^{-1}(V(\Box\psi))=\rho^{-1}(\Box_{[R_I\circ R_M\circ R_I]}V(\psi))=\Box_I\circ \Box_M\circ \Box_I \rho^{-1}(V(\psi))$. As $V(\psi)$ is an upset, so is $\rho^{-1}(V(\psi))$. And since $R_I$ is reflexive, $\Box_I\rho^{-1}(V(\psi))=\rho^{-1}(V(\psi))$, and thus $\Box_I\circ \Box_M\circ \Box_I \rho^{-1}(V(\psi))=\Box_I \circ \Box_M \rho^{-1}(V(\psi))$. We have that $V'(t(\Box\psi))=\rho^{-1}(V(\Box\psi))$. This finishes the induction.

Since $V'(t(\varphi))=\rho^{-1}(V(\varphi))$, $V'(t(\varphi))=X$ for any $\varphi\in \Gamma$ as $V(\varphi)=\rho[X]$; $V'(t(\psi))\not=X$ for any $\psi\in\Delta$ as $V(\psi)\not=\rho[X]$. Thus $V'$ is a valuation on $(X,R_I,R_M,\mathcal{O})$ which refutes $t(\Gamma/\Delta)$, $(X,R_I,R_M,\mathcal{O})\not\vDash t(\Gamma/\Delta)$.

For the right-to-left direction, suppose $(X,R_I,R_M,\mathcal{O})\not\vDash t(\Gamma/\Delta)$, there is a valuation $V$ on $(X,R_I,R_M,\mathcal{O})$ such that $V(t(\varphi))=X$ for any $\varphi\in\Gamma$ and $V(t(\psi))\not=X$ for any $\psi\in\Delta$. Define $V'(p)=\rho[V(\Box_I p)]=\rho[\Box_I V(p)]$ for any propositional variable $p$. As $\Box_I V(p)$ is a clopen upset in $(X,R_I,R_M,\mathcal{O})$, so $\rho[\Box_I V(p)]$ is a clopen upset in $\rho(X,R_I,R_M,\mathcal{O})$. Thus $V'$ is indeed a valuation on $\rho(X,R_I,R_M,\mathcal{O})$. We prove by induction that $V'(\varphi)=\rho[V(t(\varphi))]$ for any formula $\varphi$.

The cases when $\varphi$ is a propositional letters, $\bot$ or $\top$ are easy.

If $\varphi=\psi\lor\theta$, $V'(\psi\lor \theta)=V'(\psi)\cup V'(\theta)=\text{(By IH) } \rho[V(t(\psi))]\cup \rho[V(t(\theta)))]=\rho[V(t(\psi))\cup V(t(\theta))]=\rho[V(t(\psi)\lor t(\theta))]$. As $V(t(\varphi))$ is an upset for any formula $\varphi$, so $V(t(\psi)\lor t(\theta))=V(t(\psi))\cup V(t(\theta))$ is an upset. Thus $\rho[V(t(\psi)\lor t(\theta))]=\rho[\Box_I V(t(\psi)\lor t(\theta))]=\rho[V(\Box_I(t(\psi)\lor t(\theta)))]=\rho[V(t(\psi\lor \theta))]$. The case when $\varphi=\psi\land\theta$ is similar.

If $\varphi=\psi\rightarrow \theta$, $V'(\psi\rightarrow\theta)=\rho[X]\setminus {\downarrow} (V'(\psi)\setminus V'(\theta))=\text{(By IH) } \rho[X]\setminus {\downarrow} (\rho[V(t(\psi))]\setminus \rho[V(t(\theta))])$. And $\rho(V(t(\psi\rightarrow\theta)))=\rho(V(\Box_I(t(\psi)\rightarrow t(\theta))))=\rho[\Box_I(X\setminus V(t(\psi))\cup V(t(\theta)))]$. Now suppose $\rho(x)\in \rho[X]\setminus {\downarrow} (\rho[V(t(\psi))]\setminus \rho[V(t(\theta))])$, namely $\rho(x)\not\in {\downarrow} (\rho[V(t(\psi))]\setminus \rho[V(t(\theta))])$. As $V(t(\psi))$ and $V(t(\theta))$ are upsets, $\rho[V(t(\psi))]\setminus \rho[V(t(\theta))]=\rho[V(t(\psi))\setminus V(t(\theta))]\footnote{$\rho[X\setminus Y]=\rho[X]\setminus \rho[Y]$ for any upsets X, Y: clearly, $\rho[X]\setminus \rho[Y]\subseteq \rho[X\setminus Y]$. Suppose $a\in \rho[X\setminus Y]$, there is $x\in X\setminus Y$ such that $a=\rho(x)$. As $X,Y$ are upsets, for any $x\backsim x'$, $x'\not\in Y$. Thus $\rho(x)\not\in \rho[Y]$, $a=\rho(x)\in \rho[X]\setminus \rho[Y]$. Thus $\rho[X\setminus Y]\subseteq \rho[X]\setminus\rho[Y]$, $\rho[X\setminus Y]= \rho[X]\setminus\rho[Y]$.}$. Thus for any $xR_I y$, by definition $\rho(x)\leq \rho(y)$, thus $\rho(y)\not\in \rho[V(t(\psi))\setminus V(t(\theta))]$, and $y\not\in V(t(\psi))\setminus V(t(\theta)) $. Therefore, $x\in \Box_I(X\setminus V(t(\psi))\cup V(t(\theta)))$, and $\rho(x)\in \rho[\Box_I(X\setminus V(t(\psi))\cup V(t(\theta)))]$. This proves that $\rho[X]\setminus {\downarrow} (\rho[V(t(\psi))]\setminus \rho[V(t(\theta))])\subseteq \rho[\Box_I(X\setminus V(t(\psi))\cup V(t(\theta)))]$. 

\textcolor{black}{For the other inclusion, suppose} $\rho(x)\in \rho[\Box_I(X\setminus V(t(\psi))\cup V(t(\theta)))]$ while $\rho(x)\in {\downarrow} (\rho[V(t(\psi))]\setminus \rho[V(t(\theta))])$. Then $x\backsim y$ for some $y\in \Box_I(X\setminus V(t(\psi))\cup V(t(\theta)))$, and thus $x\in \Box_I(X\setminus V(t(\psi))\cup V(t(\theta)))$. As $\rho(x)\in {\downarrow} (\rho[V(t(\psi))]\setminus \rho[V(t(\theta))])$, there exists $\rho(x)\leq \rho(z)$ such that $\rho(z)\in \rho[V(t(\psi))]\setminus \rho[V(t(\theta))]$. There exists $z'\in V(t(\psi))\setminus V(t(\theta))$ such that $\rho(z)=\rho(z')$. As $\rho(x)\leq \rho(z')$, $x R_I z'$, which contradicts the fact that $x\in \Box_I(X\setminus V(t(\psi))\cup V(t(\theta)))$. Thus if $\rho(x)\in \rho[\Box_I(X\setminus V(t(\psi))\cup V(t(\theta)))]$, then $\rho(x)\not\in {\downarrow} (\rho[V(t(\psi))]\setminus \rho[V(t(\theta))])$, namely $\rho[\Box_I(X\setminus V(t(\psi))\cup V(t(\theta)))]\subseteq \rho[X]\setminus {\downarrow} (\rho[V(t(\psi))]\setminus \rho[V(t(\theta))])$. We thus have that $\rho[\Box_I(X\setminus V(t(\psi))\cup V(t(\theta)))]= \rho[X]\setminus {\downarrow} (\rho[V(t(\psi))]\setminus \rho[V(t(\theta))])$. This proves that $V'(\psi\rightarrow\theta)=\rho(V(t(\psi\rightarrow\theta)))$.

If $\varphi=\Box\psi$, $V'(\Box\psi)=\Box_{[R_I\circ R_M\circ R_I]}V'(\psi)=(\text{IH}) \Box_{[R_I\circ R_M\circ R_I]} \rho[V(t(\psi)]$. And   $\rho[V(t(\Box\psi))]=\rho[V(\Box_I\Box_M t(\psi))]=\rho[\Box_I\Box_M V(t(\psi))]$. Suppose $a\in \rho[\Box_I\Box_M V(t(\psi))]$, then  $a=\rho(x)$ for some $x\in \Box_I\Box_M V(t(\psi))$ . As $V(t(\psi))$ is an upset, $x\in \Box_I\Box_M V(t(\psi))=\Box_I \Box_M \Box_I V(t(\psi))$. Thus $a=\rho(x)\in \Box_{[R_I\circ R_M\circ R_I]} \rho[V(t(\psi))]$. As $a\in \rho[\Box_I\Box_M V(t(\psi))]$ is arbitrary, we have that $\rho[\Box_I\Box_M V(t(\psi))]\subseteq \Box_{[R_I\circ R_M\circ R_I]} \rho[V(t(\psi))]$. 

\textcolor{black}{For the other inclusion,  suppose} $\rho(x)\in \Box_{[R_I\circ R_M\circ R_I]} \rho[V(t(\psi))]$ while $x\not\in \Box_I\Box_M\Box_I V(t(\psi))=\Box_I\Box_M V(t(\psi))$, then there exist $xR_I x_1$, $x_1 R_M x_2 $ and $x_2 R_I y$ such that $y\not\in V(t(\psi))$. Thus,  $\rho(x)[R_I\circ R_M\circ R_I]\rho(y)$ by definition. As $V(t(\psi))$ is an upset, $\rho(y)\not\in \rho[V(t(\psi))]$ which contradicts the assumption that $\rho(x)\in \Box_{[R_I\circ R_M\circ R_I]} \rho[V(t(\psi))]$. Thus if $\rho(x)\in \Box_{[R_I\circ R_M\circ R_I]} \rho[V(t(\psi))]$, then $\rho(x)\in \rho[\Box_I\Box_M V(t(\psi))]$. Therefore, $\Box_{[R_I\circ R_M\circ R_I]} \rho[V(t(\psi))]\subseteq \rho[\Box_I\Box_M V(t(\psi))]$, and thus $\Box_{[R_I\circ R_M\circ R_I]} \rho[V(t(\psi))]= \rho[\Box_I\Box_M V(t(\psi))]$. This proves that $V'(\Box\psi)=\rho[V(t(\Box\psi))]$ and finishes the induction.

Now as $V(t(\varphi))=X$ for any $\varphi\in \Gamma$, $V'(\varphi)=\rho[V(t(\varphi))]=\rho[X]$ for any $\varphi\in\Gamma$. As $V(t(\psi))\not=X$ for any $\psi\in\Delta$ and $V(t(\psi))$ is an upset, $V'(\psi)=\rho[V(t(\psi))]\not=\rho[X]$ for any $\psi\in \Delta$. Thus $V'$ is a valuation on $\rho(X,R_I,R_M,\mathcal{O})$ which refutes $\Gamma/\Delta$. \textcolor{black}{Therefore,} $\rho(X,R_I,R_M,\mathcal{O})\not\vDash \Gamma/\Delta$.

\end{proof}

When restricted to formulas, the above proposition is exactly   \cite[Lem. 5]{zfint} or \cite[Lem. 19]{zfon} in algebraic terms. Thus it is a generalization of that result from formulas to multi-conclusion rules. Now we define the \textit{modal companions} for intuitionistic modal logics. This concept connects intuitionistic modal logics with bimodal logics via the G\"odel translation.

\begin{definition}
\label{4.1.18}
Let $L\in \mathbf{NExt}(\textbf{IntK$^R_\Box$})$ and $M\in \mathbf{NExt}(\textbf{S4}\otimes \textbf{K}^R)$. \textcolor{black}{Then} $M$ is a \textit{modal companion} of $L$ if $\Gamma/\Delta\in L$ $\iff$ $t(\Gamma/\Delta)\in M$. Moreover, let $L\in \mathbf{NExt}(\textbf{IntK$_\Box$})$ and $M\in\mathbf{NExt}(\textbf{S4}\otimes \textbf{K})$, \textcolor{black}{then} $M$ is a \textit{modal companion} of $L$ if $\varphi\in L$ $\iff$ $t(\varphi)\in M$.
\end{definition}

We then define the following maps between the lattices $\mathbf{NExt}(\textbf{IntK$^R_\Box$})$ and $\mathbf{NExt}(\textbf{S4}\otimes \textbf{K}^R)$. The analogues of them for the lattices of superintuitionistic logics and classical modal logics over $\textbf{S4}$ are quite well-known and can be found in many textbooks \textcolor{black}{on} modal logic. See \cite{ChagrovZakharyashev} for more details. 

$$\tau: \mathbf{NExt}(\textbf{IntK$^R_\Box$})\rightarrow \mathbf{NExt}((\textbf{S4}\otimes \textbf{K})\oplus \textbf{Mix}^R)$$ $$\tau(L)=(\textbf{S4}\otimes \textbf{K})\oplus \textbf{Mix}^R\oplus\{t(\Gamma/\Delta)\mid \Gamma/\Delta\in L\}$$

$$\sigma:\mathbf{NExt}(\textbf{IntK$^R_\Box$})\rightarrow \mathbf{NExt}((\textbf{Grz}\otimes \textbf{K})\oplus \textbf{Mix}^R)$$ $$\sigma(L)=(\textbf{Grz}\otimes \textbf{K})\oplus \textbf{Mix}^R\oplus\{t(\Gamma/\Delta)\mid \Gamma/\Delta\in L\}$$

$$\rho:\mathbf{NExt}(\textbf{S4}\otimes \textbf{K}^R)\rightarrow \mathbf{NExt}(\textbf{IntK$^R_\Box$})$$ $$\rho(M)=\{\Gamma/\Delta\mid t(\Gamma/\Delta)\in M\}\footnote{To see why $\rho(M)\in \mathbf{NExt}(\textbf{IntK$^R_\Box$})$, note that $t(\Gamma/\Delta)\in M$ iff (by Theorem \ref{2.4.7}) $Alg(M)\vDash t(\Gamma/\Delta)$ iff (by Proposition \ref{4.1.17}) $\{\rho(\A)\mid \A\in Alg(M)\}\vDash \Gamma/\Delta$ iff $Uni(\{\rho(\A)\mid \A\in Alg(M)\})\vDash \Gamma/\Delta$ iff (by Theorem \ref{2.5.6}) $\Gamma/\Delta \in Ru(Uni(\{\rho(\A)\mid \A\in Alg(M)\}))$. Thus $\rho(M)=Ru(Uni(\{\rho(\A)\mid \A\in Alg(M)\}))\in \mathbf{NExt}(\textbf{IntK$^R_\Box$})$. Similarly we can show that $\rho(M)\in \mathbf{NExt}(\textbf{IntK$_\Box$})$ where $M\in \mathbf{NExt}(\textbf{S4}\otimes \textbf{K})$.}$$

Similarly, these maps can be defined for logics: $$\tau: \mathbf{NExt}(\textbf{IntK$_\Box$})\rightarrow \mathbf{NExt}((\textbf{S4}\otimes \textbf{K})\oplus \textbf{Mix})$$
$$\tau(L)=(\textbf{S4}\otimes \textbf{K})\oplus \textbf{Mix}\oplus\{t(\varphi)\mid \varphi\in L\}$$

$$\sigma:\mathbf{NExt}(\textbf{IntK$_\Box$})\rightarrow \mathbf{NExt}((\textbf{Grz}\otimes \textbf{K})\oplus \textbf{Mix})$$ $$\sigma(L)=(\textbf{Grz}\otimes \textbf{K})\oplus \textbf{Mix}\oplus\{t(\varphi)\mid \varphi\in L\}$$ 

$$\rho:\mathbf{NExt}(\textbf{S4}\otimes \textbf{K})\rightarrow \mathbf{NExt}(\textbf{IntK$_\Box$})$$ $$\rho(M)=\{\varphi\mid t(\varphi)\in M\}$$

Semantically, we can extend the algebraic mappings $\sigma$ and $\rho$ (given below Proposition \ref{4.1.12}) to universal classes:

$$\sigma:\textbf{Uni}(\mathbf{MHA})\rightarrow \textbf{Uni}(\mathbf{S4}\otimes\mathbf{K})$$ $$\sigma(\mathcal{A})=Uni(\{\sigma(\A)\mid \A\in \mathcal{A}\})$$

$$\rho:\textbf{Uni}(\mathbf{S4}\otimes\mathbf{K})\rightarrow \textbf{Uni}(\mathbf{MHA})$$ $$\rho(\mathcal{W})=\{\rho(\A)\mid \A\in \mathcal{W}\}$$

We can also define the semantic analogue to $\tau$ as follows: $$\tau: \textbf{Uni}(\mathbf{MHA})\rightarrow \textbf{Uni}((\mathbf{S4}\otimes\mathbf{K})\oplus\mathbf{Mix})$$ $$\tau(\mathcal{A})=\{\A\text{ is a } (S4\otimes K)\oplus Mix\text{-algebra}\mid \rho(\A)\in \mathcal{A}\}\footnote{It is easy to check that $\rho(\mathcal{W})$ is always a universal class by Proposition \ref{4.1.17}. To see that $\tau(\mathcal{A})$ a universal class, note that $\A\in Alg(\tau(Ru(\mathcal{A})))$ (here $\tau$ is the syntactic one as you can tell) iff $\A$ is a $(S4\otimes K)\oplus Mix\text{-algebra}$ such that $\A\vDash t(\Gamma/\Delta)$ for any $\Gamma/\Delta\in Ru(\mathcal{A})$ iff (by Proposition \ref{4.1.17}) $\A$ is a $(S4\otimes K)\oplus Mix\text{-algebra}$ such that $\rho(\A)\vDash\Gamma/\Delta$ for any $\Gamma/\Delta\in Ru(\mathcal{A})$ iff (by Theorem \ref{2.5.6}) $\A$ is a $(S4\otimes K)\oplus Mix\text{-algebra}$ such that $\rho(\A)\in \mathcal{A}$ iff $\A\in \tau(\mathcal{A})$. Thus $\tau(\mathcal{A})=Alg(\tau(Ru(\mathcal{A})))$ is a universal class.}.$$

\subsubsection{Proof of The Blok-Esakia Theorem}

Now, we can start with the  following lemma which is the counterpart to \cite[Lem. 2.50]{amaster} and plays a key role in the proof of the Blok-Esakia theorem.

\begin{lemma}
\label{4.1.19}
Let $\A$ be a $(Grz\otimes K)\oplus Mix$-algebra, then for any bimodal multi-conclusion rule $\Gamma/\Delta$, \textcolor{black}{we have that} $$\A\vDash \Gamma/\Delta \text{ iff } \sigma(\rho(\A))\vDash\Gamma/\Delta.$$ 
  
\end{lemma}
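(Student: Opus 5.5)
The direction from left to right is the easy one. By Proposition \ref{4.1.15}, $\sigma(\rho(\A))\rightarrowtail\A$. Validity of a multi-conclusion rule is a universal sentence, so it is preserved under subalgebras (Theorem \ref{thm:univClassSPu}). Hence $\A\vDash\Gamma/\Delta$ implies $\sigma(\rho(\A))\vDash\Gamma/\Delta$. This uses only that $\A$ validates $\textbf{Mix}$; $\textbf{Grz}$ is not needed here.

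For the converse I would argue by contraposition, following \cite[Lem. 2.50]{amaster}. Suppose $\A\not\vDash\Gamma/\Delta$. By Proposition \ref{4.1.6} there is a finite $(S4\otimes K)\oplus Mix$-algebra $\B$ and parameters $D^I,D^M$ with $\A\not\vDash\mu(\B,D^I,D^M)$. The point of insisting in Proposition \ref{4.1.5} that $\B$ validates $\textbf{Mix}$ is exactly this step. By Proposition \ref{4.1.9}, writing $\mathfrak{X}=\A_*$ and $\mathfrak{F}=\B_*$, we get a continuous stable surjection $f:\mathfrak{X}\rightarrow\mathfrak{F}$ satisfying CDC for $\mathfrak{D}_I$ and $\mathfrak{D}_M$. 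By Propositions \ref{4.1.13} and \ref{4.1.14}, $(\sigma(\rho(\A)))_*\cong\sigma(\rho(\mathfrak{X}))$, which is $\mathfrak{X}$ with every $R_I$-cluster collapsed to a point. It therefore suffices to build a continuous stable surjection $g:\sigma(\rho(\mathfrak{X}))\rightarrow\mathfrak{F}$ satisfying CDC for $\mathfrak{D}_I$ and $\mathfrak{D}_M$; Proposition \ref{4.1.9} then gives $\sigma(\rho(\A))\not\vDash\mu(\B,D^I,D^M)$, hence $\sigma(\rho(\A))\not\vDash\Gamma/\Delta$.

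I would construct $g$ from $f$ using the $Grz$ structure of $(X,R_I)$.
\begin{enumerate}
\item If $f$ is constant on every $R_I$-cluster of $\mathfrak{X}$, let $g$ be the induced map on the quotient. For each non-proper cluster $\{c\}$ of $\mathfrak{F}$ nothing changes.
\item For a proper $R_I$-cluster $C=\{c_1,\dots,c_k\}$ of $\mathfrak{F}$, the clopen set $U_C=f^{-1}(C)$ may contain $\mathfrak{X}$-clusters that $f$ splits across several $c_j$. Here I use Theorem \ref{2.6.13}. The set $\max_{R_I}(U_C)$ is closed, is $R_I$-passive in $U_C$, and does not cut clusters. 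Every point of $U_C$ sees a point of $\max_{R_I}(U_C)$, and since $f$ is stable and surjective onto the cluster $C$, every $c_j$ should be hit by $f$ on $\max_{R_I}(U_C)$.
\item Redefine the map so that each cluster of $\mathfrak{X}$ inside $U_C$ is sent to a single $c_j$. I would choose a clopen partition $W_1,\dots,W_k$ of $U_C$ into sets that do not cut clusters, with each $W_j$ meeting $\max_{R_I}(U_C)$. These are obtained by compactness from clopen neighbourhoods of closed pieces of $\max_{R_I}(U_C)$ that are saturated under the cluster relation. Then set $g=c_j$ on $W_j$, and $g=f$ outside $\bigcup_C U_C$.
\end{enumerate}

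Next I check the required properties of $g$.
\begin{itemize}
\item Continuity: $g$ is constant on clusters and its fibres are clopen.
\item $R_I$-stability: $R_I$-successors stay inside $U_C$ or leave it exactly as under $f$, and $C$ is a cluster.
\item $R_M$-stability: by $\textbf{Mix}$ on both sides, $R_M=R_I\circ R_M\circ R_I$. Changing $f$ only within a cluster $C$ therefore preserves $R_M$-images.
\item CDC for $\mathfrak{D}_I$: I use passivity of maximal points. If $R_I[g(x)]$ meets a complement $\bar D$, I go to a maximal point above $x$ in $U_C$, or use CDC for $f$ when the witness lies outside $C$. Passivity guarantees that a path leaving $U_C$ does not return.
\item CDC for $\mathfrak{D}_M$: I reduce it via $R_M=R_I\circ R_M\circ R_I$ to CDC for $f$ at an $R_I$-successor that can be chosen maximal.
\end{itemize}

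The main obstacle is step 3 together with the verification of CDC for $\mathfrak{D}_I$. The partition $W_1,\dots,W_k$ must be clopen, must be cluster-saturated, and every point of $U_C$ must still $R_I$-see each $c_j$ inside $U_C$. Getting all three simultaneously is where the closedness of $\max_{R_I}(U_C)$ and its non-cutting of clusters are essential. I would first try to obtain the $W_j$ as $R_I$-downsets generated by a clopen, cluster-saturated partition of $\max_{R_I}(U_C)$, intersected with $U_C$. If those downsets are not disjoint, I would fall back to assigning each point of $U_C$ the value of a fixed maximal point above it, and verify continuity of that assignment directly.
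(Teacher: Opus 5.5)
\paragraph{Verdict: there is a genuine gap.} Your outer structure is the same as the paper's. You get the easy direction from Proposition \ref{4.1.15}, reduce to a single rule $\mu(\B,D^I,D^M)$ with $\B\vDash\textbf{Mix}$, and build a continuous stable surjection $g$ on $\sigma(\rho(\A_*))$ by redistributing $f$ over each proper $R_I$-cluster $C$ of $\B_*$. Your $R_I$- and $R_M$-stability arguments are fine. The gap is in steps 2--3, where you key the redistribution to $\max_{R_I}(f^{-1}(C))$.

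\paragraph{Why steps 2--3 fail.} The claim that $f$ hits every $c_j$ on $\max_{R_I}(f^{-1}(C))$ is false. Take a two-element chain $x_1R_Ix_2$ and a two-element cluster $C=\{c_1,c_2\}$, both with empty $R_M$, with $f(x_i)=c_i$ and $D^I=D^M=\emptyset$. Then $\max_{R_I}(f^{-1}(C))=\{x_2\}$.
\begin{itemize}
\item Step 3 must work for every proper cluster of $\B_*$. This configuration can occur as one component of a disjoint union in which $f$ splits clusters elsewhere, so step 1 does not rescue it.
\item In that situation no partition of $f^{-1}(C)$ into $k$ sets each meeting $\max_{R_I}(f^{-1}(C))$ exists.
\item Your fallback, copying the value of a maximal point above, gives a map that misses $c_1$.
\item Your extra requirement, that every point of $f^{-1}(C)$ sees every $c_j$, already fails for $f$ at $x_2$, which sees only itself. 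So it cannot be demanded of $g$.
\end{itemize}
More fundamentally, even when such a partition exists, your CDC checks do not close. CDC for $f$ gives a witness $z$ with $f(z)\in\delta$. After moving up to a maximal point $m$ of $f^{-1}(C)$, you know neither $f(m)=f(z)$ nor $g(\rho(m))=f(m)$, so nothing puts $g(\rho(m))$ in $\delta$. Passivity does not help with this.

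\paragraph{The missing idea: work fibrewise, as the paper does.} For $C=\{x_1,\dots,x_n\}$ put $M_i=\max_{R_I}(f^{-1}(x_i))$.
\begin{itemize}
\item Each $M_i$ is nonempty, by surjectivity and the Grz property that every point of a clopen set sees a maximal point of it.
\item The $M_i$ are pairwise disjoint, being in different fibres. By Theorem \ref{2.6.13} each is closed and does not cut clusters.
\item Hence the sets $\rho[M_i]$ are disjoint closed subsets of the clopen set $\rho[f^{-1}(C)]$. They can be separated by a clopen partition $U_1,\dots,U_n$ of it with $\rho[M_i]\subseteq U_i$. Set $g=x_i$ on $U_i$.
\end{itemize}
Then $g(\rho(z'))=f(z')$ whenever $z'$ is $R_I$-maximal in its own $f$-fibre. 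This gives surjectivity at once, and both CDC checks become short:
\begin{enumerate}
\item Replace $g(\rho(x))$ by $f(x)$, which lies in the same cluster. For $R_M$ this step uses $\textbf{Mix}$ on $\B_*$.
\item CDC for $f$ supplies a witness $z$ with $xR_Iz$ (respectively $xR_Mz$) and $f(z)\in\delta$.
\item Choose $z'\in\max_{R_I}f^{-1}(f(z))$ with $zR_Iz'$. Then $xR_Iz'$, respectively $xR_Mz'$ by $R_M\circ R_I=R_M$ on $\A_*$.
\item Finally $g(\rho(z'))=f(z')=f(z)\in\delta$.
\end{enumerate}
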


\begin{proof}

For the left-to-right direction, by Proposition \ref{4.1.15}, $\sigma(\rho(\A))\rightarrowtail \A$. Thus, the result follows immediately.

For the right-to-left direction, we prove that if $\A_*\not\vDash\Gamma/\Delta$, then $\sigma(\rho(\A))_*\not\vDash\Gamma/\Delta$. By Proposition \ref{4.1.13}, $\sigma(\rho(\A))_*=\sigma(\rho(\A)_*)=\sigma(\rho(\A_*))$. And by Proposition \ref{4.1.6}, we can assume that $\Gamma/\Delta=\mu(\B,D^I,D^M)$ where $\B$ is a finite $S4\otimes K$-algebra which validates $\textbf{Mix}$ and $D^I,D^M\subseteq B$. Suppose $\A_*\not\vDash\mu(\B,D^I,D^M)$ where $R_I\circ R_M\circ R_I=R_M$ on $\A_*$ as $\A$ validates $\textbf{Mix}$, then by Proposition \ref{4.1.9}, there is a continuous stable surjection $f:\A_*\rightarrow \B_*$ satisfying CDC for $\{\beta(a)\mid a\in D^I\}$ and $\{\beta(b)\mid b\in D^M\}$. We construct a continuous stable surjection $g:\sigma(\rho(\A_*))\rightarrow \B_*$ satisfying CDC for $\{\beta(a)\mid a\in D^I\}$ and $\{\beta(b)\mid b\in D^M\}$, and this will show that $\sigma(\rho(\A_*))\not\vDash \mu(\B,D^I,D^M)$ by Proposition \ref{4.1.9} and thus finish the proof.

We use the construction in the proof of \cite[Lem. 2.50]{amaster}. Let $C\subseteq \B_*$ be an $R_I$-cluster, consider $Z_C=f^{-1}(C)$. As $f$ is continuous, $Z_C$ is clopen in $\A_*$. Since $f$ is stable, $Z_C$ does not cut any $R_I$-cluster. As $\sigma(\rho(\A_*))$ has the quotient topology, $\rho[Z_C]$ is clopen. Assume $C=\{x_1,...x_n\}$ (note $\B_*$ is finite), $f^{-1}(x_i)\subseteq Z_C$ is clopen. As $\A_*$ is a $Grz\otimes K$-space, by Proposition \ref{2.6.13}, we have that $M_i=max_{R_I}(f^{-1}(x_i))$ is closed and $M_i$ does not cut any $R_I$-cluster. As $\sigma(\rho(\A_*))$ has the quotient topology, $\rho[M_i]$ is closed. And for $i\not=j$, $\rho[M_i]\cap\rho[M_j]=\emptyset$.

Then we find disjoint clopen sets $U_1,...,U_n$ of $\sigma(\rho(\A_*))$ with $\rho[M_i]\subseteq U_i$ and $\bigcup_i U_i=\rho[Z_C]$. Let $k\leq n$ and assume that $U_i$ has been defined for all $i< k$. If $k=n$, let $U_n=\rho[Z_C]\setminus(\bigcup_{i<k}U_i)$. Otherwise, let $V_k=\rho[Z_C]\setminus (\bigcup_{i<k} U_i)$ and note that $V_k$ contains $\rho[M_i]$ for $k\leq i\leq n$. As $\sigma (\rho(\A_*))$ is a Stone space, by its separation properties, for each $k<i\leq n$, there is a clopen set $U_{k_i}$ of $\sigma(\rho(\A_*))$ such that $\rho[M_k]\subseteq U_{k_i}$ and $\rho[M_i]\cap U_{k_i}=\emptyset$. Then let $U_k=\bigcap_{k<i\leq n} U_{k_i}\cap V_k$. Now define $g_C:\rho[Z_C]\rightarrow C$ as follows: $g_C(z)=x_i$ iff $z\in U_i$. Finally, define $g:\sigma(\rho(\A_*))\rightarrow \B_*$ as follows:

\vspace{1mm}
$g(\rho(z))=\begin{cases}f(z) & \text{ if }f(z)\text{ does not belong to any proper $R_I$-cluster}  \cr g_C(\rho(z)) & \text{ where } C \text{ is the proper $R_I$-cluster containing } f(z)    \end{cases}$

\vspace{1mm}
As shown in the proof of \cite[Lem. 2.50]{amaster}, $g$ is surjective, continuous, and relation-preserving w.r.t $R_I$, and satisfies CDC for $\{\beta(a)\mid a\in D^I\}$. We only need to check that $g$ is relation-preserving w.r.t $R_M$ and satisfies CDC for $\{\beta(b)\mid b\in D^M\}$.

As $\B$ validates $\textbf{Mix}$, $R_I\circ R_M\circ R_I=R_M$ on $\B_*$. Now 
suppose $\rho(a) R_M \rho(b)$, as $f$ is stable, $f(a)R_M f(b)$. Since for any $z$, $f(z)$ and $g(\rho(z))$ are mapped to the same $R_I$-cluster, in any case, $g(\rho(a))R_If(a)$ and $f(b)R_I g(\rho(b))$. As $R_I\circ R_M\circ R_I=R_M$ on $\B_*$, $g(\rho(a))R_M g(\rho(b))$. $g$ is thus relation-preserving w.r.t $R_M$. 

Suppose $g(\rho(x))R_M y$ where $y\in \delta$ for some $\delta\in \{\delta_b\mid b\in D^M\}$\footnote{Recall that $\delta_b$ is just the (set-theoretic) complement of $\beta(b)$ as defined in \textcolor{black}{Footnote 11}.}. As $g(\rho(x))$ and $f(x)$ belong to the same $R_I$-cluster and $R_I\circ R_M\circ R_I= R_M$ on $\B_*$, it follows that $f(x)R_M y$. Since $f$ satisfies CDC for $\{\beta(b)\mid b\in D^M\}$, there exists $z\in \A_*$ such that $xR_M z$ and $f(z)\in \delta$. Since $f^{-1}(f(z))$ is clopen in $\A_*$, there exists $z'\in max_{R_I}f^{-1}(f(z))$ such that $zR_I z'$. As $R_I\circ R_M\circ R_I= R_M$ on $\A_*$, we have that $xR_M z'$ and $f(z')\in\delta$. And from $z'\in max_{R_I}f^{-1}(f(z))$, it follows that $f(z')=g(\rho(z'))$ by construction, and thus $g(\rho(z'))\in \delta$. As $x R_M z'$, $\rho(x) [R_I\circ R_M \circ R_I] \rho(z')$, $g$ satisfies CDC for $\{\beta(b)\mid b\in D^M\}$.

\end{proof}

Compared to that of \cite[Lem. 2.50]{amaster}, in the above proof, we have to exploit the assumption that $\B$ validates $\textbf{Mix}$ heavily, which is only given by Proposition \ref{4.1.6}.  

\begin{theorem}
\label{4.1.20}
For every $\mathcal{A}\in \textbf{Uni}((\mathbf{Grz}\otimes\mathbf{K})\oplus \textbf{Mix})$, $\sigma(\rho(\mathcal{A}))=\mathcal{A}$. 
 
\end{theorem}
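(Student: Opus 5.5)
We prove the two inclusions $\sigma(\rho(\mathcal{A}))\subseteq\mathcal{A}$ and $\mathcal{A}\subseteq\sigma(\rho(\mathcal{A}))$. Recall that $\rho(\mathcal{A})=\{\rho(\A)\mid \A\in\mathcal{A}\}$ and $\sigma(\rho(\mathcal{A}))=Uni(\{\sigma(\rho(\A))\mid \A\in\mathcal{A}\})$. Note that for $\A\in\mathcal{A}$, $\rho(\A)$ makes sense because $\A$ is a $Grz\otimes K$-algebra, hence an $S4\otimes K$-algebra.

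For $\sigma(\rho(\mathcal{A}))\subseteq\mathcal{A}$: take any $\A\in\mathcal{A}$. Since $\A$ is a $(Grz\otimes K)\oplus Mix$-algebra, and hence an $(S4\otimes K)\oplus Mix$-algebra, Proposition \ref{4.1.15} gives an embedding $\sigma(\rho(\A))\rightarrowtail\A$. By Theorem \ref{thm:univClassSPu}, universal classes are closed under $\subal$, so $\sigma(\rho(\A))\in\mathcal{A}$. Since $\mathcal{A}$ is a universal class containing every generator $\sigma(\rho(\A))$, it contains the least universal class they generate. Hence $\sigma(\rho(\mathcal{A}))\subseteq\mathcal{A}$.

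For $\mathcal{A}\subseteq\sigma(\rho(\mathcal{A}))$ we argue by rules, using Theorem \ref{2.4.7}: a universal class of bimodal algebras is determined by the set of multi-conclusion rules it validates, and $Alg(Ru(\mathcal{K}))=Uni(\mathcal{K})$. So it suffices to show $Ru(\sigma(\rho(\mathcal{A})))\subseteq Ru(\mathcal{A})$. Suppose $\Gamma/\Delta$ is valid on $\sigma(\rho(\mathcal{A}))$. Then in particular $\sigma(\rho(\A))\vDash\Gamma/\Delta$ for every $\A\in\mathcal{A}$. Lemma \ref{4.1.19} (right-to-left direction) then gives $\A\vDash\Gamma/\Delta$, so $\Gamma/\Delta\in Ru(\mathcal{A})$. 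Consequently $\mathcal{A}=Alg(Ru(\mathcal{A}))\subseteq Alg(Ru(\sigma(\rho(\mathcal{A}))))=\sigma(\rho(\mathcal{A}))$, where the last equality holds because $\sigma(\rho(\mathcal{A}))$ is a universal class.

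The substantive work is all in Lemma \ref{4.1.19}; the remaining step to check is the bookkeeping. Lemma \ref{4.1.19} applies to each $\A$ because every member of $\mathcal{A}$ validates $(\textbf{Grz}\otimes\textbf{K})\oplus\textbf{Mix}$. The correspondence of Theorem \ref{2.4.7}, stated for $\textbf{Uni}(\mathbf{BMA})$, restricts to universal subclasses of $(\mathbf{Grz}\otimes\mathbf{K})\oplus\textbf{Mix}$-algebras, and $\sigma(\rho(\mathcal{A}))$ is such a subclass by the first inclusion.
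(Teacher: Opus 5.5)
Your proposal is correct and follows the paper's proof. The inclusion $\sigma(\rho(\mathcal{A}))\subseteq\mathcal{A}$ comes from the embedding $\sigma(\rho(\A))\rightarrowtail\A$ of Proposition~\ref{4.1.15} together with closure of universal classes under subalgebras, and the reverse inclusion comes from the right-to-left direction of Lemma~\ref{4.1.19} combined with the $Alg$/$Ru$ correspondence of Theorem~\ref{2.4.7}; you only spell out the bookkeeping (the antitonicity of $Alg$ and the membership of the generators $\sigma(\rho(\A))$) that the paper leaves implicit.
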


\begin{proof}

By Proposition \ref{4.1.15}, for every $\A\in\mathcal{A}$, $\sigma(\rho(\A))\rightarrowtail \A$. Thus $\sigma(\rho(\mathcal{A}))\subseteq\mathcal{A}$. Now, suppose $\mathcal{A}\not\vDash\Gamma/\Delta$, there is $\A\in\mathcal{A}$ such that $\A\not\vDash\Gamma/\Delta$. As $\A$ validates $\textbf{Mix}$, by Lemma \ref{4.1.19}, we have $\sigma(\rho(\A))\not\vDash\Gamma/\Delta$. Thus $Ru(\sigma(\rho(\mathcal{A})))\subseteq Ru(\mathcal{A})$, by Theorem \ref{2.4.7}, $\mathcal{A}\subseteq\sigma(\rho(\mathcal{A}))$. Thus $\sigma(\rho(\mathcal{A}))=\mathcal{A}$.

\end{proof}

The above theorem is the counterpart to \cite[Lem. 4.4]{Stronkowski2018OnTB}, which was also proved later by the method of stable canonical rules in \cite[Thm. 2.51]{amaster}. Having this theorem, the following results can be obtained by well-known routine arguments shown in \cite{amaster, an} and \cite{can} for example.

\begin{lemma}
\label{4.1.21}
For each $L\in \mathbf{NExt}(\textbf{IntK$^R_\Box$})$ and $M\in \mathbf{NExt}(\textbf{S4}\otimes \textbf{K}^R)$, the following hold:

\begin{itemize}
    \item $Alg(\tau (L))=\tau (Alg(L))$
    \item $Alg(\sigma (L))=\sigma(Alg(L))$
    \item $Alg(\rho(M))=\rho(Alg(M))$

\end{itemize}
\end{lemma}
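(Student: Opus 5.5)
The three identities all follow the same pattern: compare the algebras satisfying the syntactically defined relation with the semantically defined class. The two tools are Proposition \ref{4.1.17}, which says $\A\vDash t(\Gamma/\Delta)$ iff $\rho(\A)\vDash\Gamma/\Delta$ for $S4\otimes K$-algebras, and the Galois correspondence of Theorems \ref{2.4.7} and \ref{2.5.6}. I take them in the order $\tau$, $\rho$, $\sigma$, since the last uses Theorem \ref{4.1.20}.

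For $\tau$, the argument is a chain of equivalences, essentially the one already sketched in the footnote defining the semantic $\tau$. An algebra $\A$ lies in $Alg(\tau(L))$ iff $\A$ is an $(S4\otimes K)\oplus Mix$-algebra validating $t(\Gamma/\Delta)$ for every $\Gamma/\Delta\in L$. By Proposition \ref{4.1.17}, this holds iff $\rho(\A)\vDash L$, that is, iff $\rho(\A)\in Alg(L)$. That is exactly the condition $\A\in\tau(Alg(L))$.

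For $\rho$, the set $\rho(Alg(M))=\{\rho(\A)\mid \A\in Alg(M)\}$ is a universal class (this is noted in the paper). So by Theorem \ref{2.5.6} it suffices to show $Ru(\rho(Alg(M)))=\rho(M)$, which again follows from Proposition \ref{4.1.17}:
\begin{center}
$\Gamma/\Delta\in\rho(M)$ iff $t(\Gamma/\Delta)\in M$ iff $Alg(M)\vDash t(\Gamma/\Delta)$ iff $\rho(Alg(M))\vDash\Gamma/\Delta$.
\end{center}
Applying $Alg$ to both sides and using Theorem \ref{2.5.6} then gives $Alg(\rho(M))=\rho(Alg(M))$.

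For $\sigma$, which is the substantive case, I prove two inclusions.

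First, $\sigma(Alg(L))\subseteq Alg(\sigma(L))$. Since $Alg(\sigma(L))$ is a universal class and $\sigma(Alg(L))$ is generated by the algebras $\sigma(\B)$, it suffices to check $\sigma(\B)\in Alg(\sigma(L))$ for each $\B\in Alg(L)$. Three things are needed:
\begin{itemize}
\item $\sigma(\B)_*=\sigma(\B_*)$ has $R_I=\leq$ a partial order on an Esakia space. It is then a standard fact that $\sigma(\B)$ validates $\textbf{Grz}$; this is the known $\textbf{Grz}$ part of the classical Blok--Esakia argument, exactly as in \cite{amaster}.
\item $\sigma(\B)$ validates $\textbf{Mix}$, because $R_M=R={\leq}\circ R\circ{\leq}$ by Proposition \ref{4.1.11}.
\item $\sigma(\B)$ validates each $t(\Gamma/\Delta)$ with $\Gamma/\Delta\in L$. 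By Proposition \ref{4.1.17} this reduces to $\rho(\sigma(\B))\vDash\Gamma/\Delta$, which holds because $\rho(\sigma(\B))\cong\B$ by Proposition \ref{4.1.14}.
\end{itemize}

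Second, $Alg(\sigma(L))\subseteq\sigma(Alg(L))$. Let $\A\in Alg(\sigma(L))$. Then $\A$ is a $(Grz\otimes K)\oplus Mix$-algebra validating every $t(\Gamma/\Delta)$ with $\Gamma/\Delta\in L$, so $\rho(\A)\in Alg(L)$ by Proposition \ref{4.1.17}. Hence $\sigma(\rho(\A))\in\sigma(Alg(L))$. Now apply Theorem \ref{4.1.20} to the universal class $\mathcal{A}=Uni(\{\A\})$, which lies in $\textbf{Uni}((\mathbf{Grz}\otimes\mathbf{K})\oplus\textbf{Mix})$. This gives $\mathcal{A}=\sigma(\rho(\mathcal{A}))$.

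The remaining task is to check that $\sigma(\rho(\mathcal{A}))\subseteq\sigma(Alg(L))$. It is enough that $\rho(\mathcal{A})\subseteq Alg(L)$, and for this I would argue elementwise with Lemma \ref{4.1.19}. For any $\A'\in\mathcal{A}$, $\A'$ satisfies every universal sentence that $\A$ does, so $\A'\vDash t(\Gamma/\Delta)$ for all $\Gamma/\Delta\in L$. Then Proposition \ref{4.1.17} gives $\rho(\A')\in Alg(L)$. It follows that $\A\in\mathcal{A}\subseteq\sigma(Alg(L))$.

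The main obstacle is this last bookkeeping step: making sure that $\sigma$ and $\rho$ interact correctly with the $Uni$-closure, and that $\textbf{Grz}$ validity of $\sigma(\B)$ is legitimately imported from the superintuitionistic case. Everything else is a direct chain of equivalences through Proposition \ref{4.1.17}.
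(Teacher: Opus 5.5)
Your arguments for $\tau$ and $\sigma$ are correct and essentially match the paper's. For $\tau$ you use the same chain through Proposition~\ref{4.1.17}. For $\sigma$ you use the same two ingredients: each $\sigma(\B)$ is a $(Grz\otimes K)\oplus Mix$-algebra validating every $t(\Gamma/\Delta)$ with $\Gamma/\Delta\in L$, and Theorem~\ref{4.1.20} handles the $Uni$-closure. Your treatment of $Alg(\sigma(L))\subseteq\sigma(Alg(L))$ spells out what the paper compresses into ``by Theorem~\ref{4.1.20} it suffices to consider $\A\cong\sigma(\rho(\A))$''. You could apply Theorem~\ref{4.1.20} to $Alg(\sigma(L))$ directly, and Lemma~\ref{4.1.19} plays no role in that step.

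The gap is in the third item; your worry about $Uni$-closure belongs there rather than with $\sigma$.

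\emph{What your chain does prove.} It correctly gives $Ru(\rho(Alg(M)))=\rho(M)$, i.e.\ $Alg(\rho(M))=Uni(\rho(Alg(M)))$.

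\emph{What fails.} The final step, that $\rho(Alg(M))$ is already universal, comes from the paper's footnote, and the paper's own proof uses it the same way. But it is false unless $M$ contains $\textbf{Mix}$. Closure under $\prodal_u$ is fine by \L{}o\'{s}; closure under $\subal$ is not. Let $X=\{a,b\}$, let $R_I$ be the reflexive closure of $\{(a,b)\}$, let $R_M=\{(b,a)\}$, and let $\A$ be the $S4\otimes K$-algebra of all subsets of $X$.
\begin{itemize}
\item Since $\Box_M\emptyset=\{a\}$, $\A$ has no proper subalgebras, so $Uni(\{\A\})=\II(\{\A\})$.
\item $\rho(\A)$ is the chain $\emptyset\subset\{b\}\subset X$ with $\Box\{b\}=\Box_I\Box_M\{b\}=\Box_I\{a\}=\emptyset$ and $\Box\emptyset=\emptyset$. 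It therefore has the two-element subalgebra $\{\emptyset,X\}$.
\item For $M=Ru(\{\A\})$, the universal class $Alg(\rho(M))$ contains that two-element algebra, while $\rho(Alg(M))$ does not.
\end{itemize}

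\emph{Where it does hold.} The identity is true when $M\supseteq(\textbf{S4}\otimes\textbf{K})\oplus\textbf{Mix}^R$. Take a modal Heyting subalgebra $C$ of $\rho(\A)$ and let $B(C)$ be the Boolean subalgebra of $\A$ generated by $C$. Each element of $B(C)$ has the form $x=\bigwedge_i(\neg c_i\lor d_i)$ with $c_i,d_i\in C$, so $\Box_I x=\bigwedge_i(c_i\rightarrow d_i)\in C$. $\textbf{Mix}$ then gives $\Box_M x=\Box_I\Box_M(\Box_I x)\in C$. Hence $B(C)\in Alg(M)$ and $\rho(B(C))\cong C$.

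\emph{How to repair it.} Either of these works:
\begin{itemize}
\item Add the argument above and restrict the third item, and the left-to-right half of Proposition~\ref{4.1.22}, to such $M$. This covers every later use.
\item Redefine $\rho(\mathcal{W})$ as $Uni(\{\rho(\A)\mid\A\in\mathcal{W}\})$; your chain then proves the item verbatim.
\end{itemize}
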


\begin{proof}
For any $(S4\otimes K)\oplus Mix$-algebra $\A$, $\A\in Alg(\tau (L))$ iff $\A\vDash t(\Gamma/\Delta)$ for any $\Gamma/\Delta\in L$ iff (by Proposition \ref{4.1.17})$\rho(\A)\vDash \Gamma/\Delta$  for any $\Gamma/\Delta\in L$ iff $\rho(\A)\in Alg(L)$ iff $\A\in \tau(Alg(L))$. Thus $Alg(\tau (L))=\tau (Alg(L))$.

For the second one, for any modal Heyting algebra $\A$, $\A_*$ is a modal Esakia space. By Proposition \ref{4.1.11}, we know that $\sigma(\A_*)$ validates $\textbf{Mix}$. For any modal Esakia space $\mathcal{X}=(X,\leq, R, \mathcal{O})$, by definition $(X,\leq, R)$ is just an Esakia space. If we dismiss $R$, then $\sigma$ operates on $(X,\leq, R)$ exactly the same way as that in \cite[Def. 3.32]{amaster}, which is known to give us a $Grz$-space. Thus $\sigma(\A_*)$ is a $(Grz\otimes K)\oplus Mix$-modal space. By Proposition \ref{4.1.13}, $(\sigma(\A))_*\cong \sigma(\A_*)$, and thus $\sigma(\A)$ is a $(Grz\otimes K)\oplus Mix$-algebra. Therefore, by Theorem \ref{4.1.20}, it suffices to prove that for any $\A=\sigma(\rho(\A))$ which is a $(Grz\otimes K)\oplus Mix$-algebra, $\A\in Alg(\sigma(L))$ iff $\A\in \sigma(Alg(L))$.

Suppose $\A=\sigma(\rho(\A))\in\sigma (Alg(L))$ where $\A$ is a $(Grz\otimes K)\oplus Mix$-algebra. For any $\B\in Alg(L)$, we have that $\B\vDash\Gamma/\Delta$ for any $\Gamma/\Delta\in L$. By Proposition \ref{4.1.14}, $\rho(\sigma(\B))\vDash\Gamma/\Delta$. Then by Proposition \ref{4.1.17}, $\sigma(\B)\vDash t(\Gamma/\Delta)$ for any $\B\in Alg(L)$ and $\Gamma/\Delta\in L$. As $\sigma(Alg(L))$ is generated by $\{\sigma(\B)\mid \B\in Alg(L)\}$, we have that $\sigma(Alg(L))\vDash t(\Gamma/\Delta)$ for any $\Gamma/\Delta\in L$. Thus $\A\vDash t(\Gamma/\Delta)$ for any $\Gamma/\Delta\in L$, namely $\A\in Alg(\sigma(L))$. For the other direction, suppose $\A\in Alg(\sigma(L))$, $\A\vDash t(\Gamma/\Delta)$ for any $\Gamma/\Delta\in L$. By Proposition \ref{4.1.17}, $\rho(\A)\vDash\Gamma/\Delta$ for any $\Gamma/\Delta\in L$. Thus $\rho(\A)\in Alg(L)$, $\A=\sigma(\rho(\A))\in \sigma(Alg(L))$.

For the third one, let $\A$ be a modal Heyting algebra, if $\A\in \rho(Alg(M))$, then $\A=\rho(\B)$ for some $\B\in Alg(M)$. For any $t(\Gamma/\Delta)\in M$, we have that $\B\vDash t(\Gamma/\Delta)$. By Proposition \ref{4.1.17}, $\A=\rho(\B)\vDash\Gamma/\Delta$ for any $t(\Gamma/\Delta)\in M$. Thus $\A\in Alg(\rho(M))$. This proves that $\rho(Alg(M))\subseteq Alg(\rho(M))$. For the other direction, if $\rho(Alg(M))\vDash\Gamma/\Delta$, then by Proposition \ref{4.1.17}, $Alg(M)\vDash t(\Gamma/\Delta)$. Thus $t(\Gamma/\Delta)\in M$, and $\Gamma/\Delta\in \rho(M)$. We have that $Ru(\rho(Alg(M)))\subseteq \rho(M)$, and thus by Theorem \ref{2.5.6},   $Alg(\rho(M))\subseteq\rho(Alg(M))$. Therefore, $Alg(\rho(M))=\rho(Alg(M))$

\end{proof}

The above lemma is the analogue to \cite[Thm. 5.9]{can}, whose proof, as we can see, heavily depends on Proposition \ref{4.1.17}. We then can give a characterization of modal companions of an intuitionistic modal logic as that in \cite[Lem. 2.53]{amaster}.

\begin{proposition}
\label{4.1.22}
For any $L\in \mathbf{NExt}(\textbf{IntK$^R_\Box$})$, we have that $M\in \mathbf{NExt}(\textbf{S4}\otimes \textbf{K}^R)$ is a modal companion of $L$ iff $Alg(L)=\rho(Alg(M))$

\end{proposition}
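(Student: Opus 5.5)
The plan is to reduce the statement to a comparison of two consequence relations, $L$ and $\rho(M)$, and then pass to universal classes using Theorem \ref{2.5.6} and Lemma \ref{4.1.21}.

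First, I unfold the definitions. By Definition \ref{4.1.18}, $M$ is a modal companion of $L$ iff for every intuitionistic modal rule, $\Gamma/\Delta\in L \iff t(\Gamma/\Delta)\in M$. By the definition of the syntactic map $\rho$, the right-hand side says exactly $\Gamma/\Delta\in\rho(M)$. So $M$ is a modal companion of $L$ iff $L=\rho(M)$ as sets of rules. Here $\rho(M)\in\mathbf{NExt}(\textbf{IntK$^R_\Box$})$ by the footnote accompanying the definition of $\rho$.

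Next, I translate this equality into algebra. Theorem \ref{2.5.6} tells us that $Alg$ is an isomorphism between $\mathbf{NExt}(\textbf{IntK$^R_\Box$})$ and $\textbf{Uni}(\mathbf{MHA})$, with inverse $Ru$. Hence $L=\rho(M)$ iff $Alg(L)=Alg(\rho(M))$. The third item of Lemma \ref{4.1.21} gives $Alg(\rho(M))=\rho(Alg(M))$. Substituting, $M$ is a modal companion of $L$ iff $Alg(L)=\rho(Alg(M))$, which is the claim.

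The only point needing care is well-typedness: $\rho(Alg(M))$ must be a universal class, so that the injectivity of $Alg$ applies. This is either noted in the footnote to the semantic definitions or follows directly from $Alg(\rho(M))=\rho(Alg(M))$ in Lemma \ref{4.1.21}. There is no real obstacle beyond this; the substantive work was already done in Proposition \ref{4.1.17} and Lemma \ref{4.1.21}.
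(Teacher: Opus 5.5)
Your proof is correct and is essentially the paper's. The forward direction is identical: $L=\rho(M)$, then the third item of Lemma \ref{4.1.21}. For the converse you use injectivity of $Alg$ on $\mathbf{NExt}(\textbf{IntK$^R_\Box$})$ together with Lemma \ref{4.1.21}, where the paper unwinds the same equivalence rule by rule through Proposition \ref{4.1.17} and Theorem \ref{2.4.7}.

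Your well-typedness caveat is unnecessary, since injectivity is applied to $L$ and $\rho(M)$, not to $\rho(Alg(M))$.
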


\begin{proof}
For the left-to-right direction, if $M$ is a modal companion of $L$, then $L=\rho(M)$. Thus $Alg(L)=Alg(\rho(M))=\rho(Alg(M))$ by the above lemma. For the other direction, assume $Alg(L)=\rho(Alg(M))$, then $\Gamma/\Delta\in L$ iff $Alg(L)\vDash \Gamma/\Delta$ iff $\rho(Alg(M))\vDash \Gamma/\Delta$ iff (by Proposition \ref{4.1.17}) $Alg(M)\vDash t(\Gamma/\Delta)$ iff (by Theorem \ref{2.4.7}) $t(\Gamma/\Delta)\in M$. $M$ is thus a modal companion of $L$.

\end{proof}

The following proposition is a generalization of \cite[Thm. 27]{zfon} from logics to multi-conclusion consequence relations.

\begin{proposition}
For every $L\in \mathbf{NExt}(\textbf{IntK$^R_\Box$})$, the modal \textcolor{black}{companions of $L$ which contain} $\textbf{Mix}$ form an interval $\{M\in \mathbf{NExt}((\textbf{S4}\otimes \textbf{K})\oplus\textbf{Mix}^R)\mid \tau(L)\subseteq M\subseteq \sigma(L)\}$.    
\end{proposition}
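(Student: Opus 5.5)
We must show two things. First, $\tau(L)$ and $\sigma(L)$ are themselves modal companions of $L$ containing $\textbf{Mix}$. Second, any $M\in\mathbf{NExt}((\textbf{S4}\otimes\textbf{K})\oplus\textbf{Mix}^R)$ is a modal companion of $L$ iff $\tau(L)\subseteq M\subseteq\sigma(L)$.

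\emph{Endpoints.} By Proposition \ref{4.1.22}, $M$ is a companion iff $Alg(L)=\rho(Alg(M))$. For $\tau(L)$: Lemma \ref{4.1.21} gives $\rho(Alg(\tau(L)))=\{\rho(\A)\mid \A\in\tau(Alg(L))\}$. This is contained in $Alg(L)$ by the definition of the semantic $\tau$. For the converse inclusion, take $\B\in Alg(L)$. Then $\sigma(\B)$ is a $(Grz\otimes K)\oplus Mix$-algebra, as shown in the proof of Lemma \ref{4.1.21}, and $\rho(\sigma(\B))\cong\B$ by Proposition \ref{4.1.14}. Hence $\sigma(\B)\in\tau(Alg(L))$ and $\B\in\rho(Alg(\tau(L)))$. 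For $\sigma(L)$: $Alg(\sigma(L))=\sigma(Alg(L))=Uni(\{\sigma(\B)\mid\B\in Alg(L)\})$. The inclusion $Alg(L)\subseteq\rho(Alg(\sigma(L)))$ again follows from $\rho(\sigma(\B))\cong\B$. For the other inclusion, take $\A\in Alg(\sigma(L))$. Every $t(\Gamma/\Delta)$ with $\Gamma/\Delta\in L$ is in $\sigma(L)$, so $\A\vDash t(\Gamma/\Delta)$, and Proposition \ref{4.1.17} gives $\rho(\A)\vDash\Gamma/\Delta$, i.e.\ $\rho(\A)\in Alg(L)$.

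\emph{Interval.} Let $M\supseteq(\textbf{S4}\otimes\textbf{K})\oplus\textbf{Mix}^R$. If $\tau(L)\subseteq M\subseteq\sigma(L)$, then $t(\Gamma/\Delta)\in M$ for every $\Gamma/\Delta\in L$. Conversely, if $t(\Gamma/\Delta)\in M$, then $t(\Gamma/\Delta)\in\sigma(L)$, and since $\sigma(L)$ is a companion, $\Gamma/\Delta\in L$. So $M$ is a companion.

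Now suppose $M$ is a companion containing $\textbf{Mix}$. Then $\tau(L)\subseteq M$ is immediate, since $M$ contains $\textbf{Mix}^R$ and all $t(\Gamma/\Delta)$ with $\Gamma/\Delta\in L$. The main obstacle is $M\subseteq\sigma(L)$. Here we must show $Alg(\sigma(L))\subseteq Alg(M)$ and then apply Theorem \ref{2.4.7}. Since $Alg(M)$ is a universal class, it suffices to show $\sigma(\B)\in Alg(M)$ for each $\B\in Alg(L)$.

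Since $Alg(L)=\rho(Alg(M))$, we can write $\B=\rho(\A)$ for some $\A\in Alg(M)$. Then $\sigma(\B)=\sigma(\rho(\A))$, and Proposition \ref{4.1.15} applies because $\A$ validates $\textbf{Mix}$. It gives $\sigma(\rho(\A))\rightarrowtail\A$. Universal classes are closed under $\subal$ (Theorem \ref{thm:univClassSPu}), so $\sigma(\B)\in Alg(M)$, as required. Note that this step uses only Proposition \ref{4.1.15} with the $\textbf{Mix}$ assumption, not $\textbf{Grz}$.

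Combining the two inclusions, the $\textbf{Mix}$-containing companions are exactly the $M$ with $\tau(L)\subseteq M\subseteq\sigma(L)$.
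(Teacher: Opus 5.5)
Your proof is correct and follows essentially the paper's route through Proposition~\ref{4.1.22} and Lemma~\ref{4.1.21}. The key inclusion $\sigma(Alg(L))\subseteq Alg(M)$ is obtained exactly as in the paper, from $\mathfrak{B}\cong\rho(\mathfrak{A})$ with $\mathfrak{A}\in Alg(M)$ and $\sigma(\rho(\mathfrak{A}))\rightarrowtail\mathfrak{A}$ (Proposition~\ref{4.1.15}). You just treat the easy inclusions syntactically and apply the argument directly to the generators $\sigma(\mathfrak{B})$, which avoids the paper's explicit use of Theorem~\ref{4.1.20} here.

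Your closing remark that this step avoids \textbf{Grz} is misleading, though. The reduction to generators uses $Alg(\sigma(L))=\sigma(Alg(L))$ from Lemma~\ref{4.1.21}. That lemma's proof rests on Theorem~\ref{4.1.20}, hence on Lemma~\ref{4.1.19}, where \textbf{Grz} is essential.
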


\begin{proof}
By Lemma \ref{4.1.21}, it suffices to prove that for any $M\in \mathbf{NExt}((\textbf{S4}\otimes \textbf{K})\oplus\textbf{Mix}^R)$, we have that $M$ is a modal companion of $L$ iff $\sigma(Alg(L))\subseteq Alg(M)\subseteq \tau (Alg(L))$.

For the left-to-right direction, assume $M$ is a modal companion of $L$ which contains $\textbf{Mix}$. By Proposition \ref{4.1.22}, $Alg(L)=\rho(Alg(M))$. For any $\A\in Alg(M)$, we have that $\rho(\A)\in \rho(Alg(M))$. Thus $\rho(\A)\in Alg(L)$, and $\A\in \tau(Alg(L))$. Therefore,  $Alg(M)\subseteq \tau (Alg(L))$. To see that $\sigma(Alg(L))\subseteq Alg(M)$, as $\sigma(A)$ is a $(Grz\otimes K)\oplus Mix$-algebra (shown in the proof of Lemma \ref{4.1.21}), by Theorem \ref{4.1.20}, it suffices to prove that for any $\A= \sigma(\rho(\A))\in \sigma (Alg(L))$, we have that $\A\in Alg(M)$. Suppose $\A\cong \sigma(\rho(\A))\in \sigma (Alg(L))$, by Lemma \ref{4.1.21}, $\A\in Alg(\sigma(L))$. Thus for any $\Gamma/\Delta\in L$, we have that $\A\vDash t(\Gamma/\Delta)$. By Proposition \ref{4.1.17}, $\rho(\A)\vDash \Gamma/\Delta$ for any $\Gamma/\Delta\in L$. Thus $\rho(\A)\in Alg(L)=\rho(Alg(M))$, namely $\rho(\A)\cong \rho(\B)$ for some $\B\in Alg(M)$. $\A=\sigma(\rho(\A))\cong \sigma(\rho(\B))\rightarrowtail \B\in Alg(M)$. As $Alg(M)$ is a universal class which is closed under subalgebras, $\A\in M$. Thus $\sigma(Alg(L))\subseteq Alg(M).$ 

For the other direction, assume that $\sigma(Alg(L))\subseteq Alg(M)\subseteq \tau (Alg(L))$. By Proposition \ref{4.1.14}, $\rho(\sigma(Alg(L)))=Alg(L)$. As $\sigma(Alg(L))\subseteq Alg(M)$, we have that $Alg(L)\subseteq \rho(Alg(M))$. As $Alg(M)\subseteq \tau (Alg(L))$, it follows that $\rho(Alg(M))\subseteq \rho(\tau(Alg(L)))$. Assume $\A\in \tau(Alg(L))$, then $\rho(\A)\in Alg(L)$, and $\sigma(\rho(\A))\in 
\sigma(Alg(L))\subseteq Alg(M)$. Thus $\rho(\sigma(\rho(\A)))\in \rho(Alg(M))$. By Proposition \ref{4.1.14}, $\rho(\A)\in \rho(Alg(M))$. Therefore, $\rho(\tau(Alg(L)))\subseteq\rho(Alg(M))$, and $\rho(\tau(Alg(L)))=\rho(Alg(M))$. By definition, $\rho(\tau(Alg(L)))\subseteq Alg(L)$, and thus $\rho(Alg(M))\subseteq Alg(L)$. Therefore, $\rho(Alg(M))= Alg(L)$. $M$ is a modal companion of $L$ by Proposition \ref{4.1.22}.

\end{proof}

\textcolor{black}{Now, we obtain the Blok-Esakia theorem for intuitionistic modal logics:}

\begin{theorem}
\label{4.1.24}
\quad
\begin{itemize}
    \item[1.] The mappings $\sigma:\mathbf{NExt}(\textbf{IntK$^R_\Box$})\rightarrow \mathbf{NExt}((\textbf{Grz}\otimes \textbf{K})\oplus \textbf{Mix}^R)$ and $\rho:\mathbf{NExt}((\textbf{Grz}\otimes \textbf{K})\oplus \textbf{Mix}^R)\rightarrow \mathbf{NExt}(\textbf{IntK$^R_\Box$})$ are lattice isomorphisms and mutual inverses.
    \item[2.] The mappings $\sigma:\mathbf{NExt}(\textbf{IntK$_\Box$})\rightarrow \mathbf{NExt}((\textbf{Grz}\otimes \textbf{K})\oplus \textbf{Mix})$ and $\rho:\mathbf{NExt}((\textbf{Grz}\otimes \textbf{K})\oplus \textbf{Mix})\rightarrow \mathbf{NExt}(\textbf{IntK$_\Box$})$
     are lattice isomorphisms and mutual inverses.

\end{itemize}

\end{theorem}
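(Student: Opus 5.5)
We will prove item 1 on the semantic side, using Theorems \ref{2.4.7} and \ref{2.5.6} to translate between consequence relations and universal classes, and then derive item 2 from item 1 via Propositions \ref{2.4.4} and \ref{2.5.3}.

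Under the dual isomorphisms $Alg$ and $Ru$, Lemma \ref{4.1.21} shows that the syntactic maps correspond to the semantic ones: $Alg(\sigma(L))=\sigma(Alg(L))$ and $Alg(\rho(M))=\rho(Alg(M))$. Hence it suffices to show that the semantic maps
$$\sigma:\textbf{Uni}(\mathbf{MHA})\rightarrow \textbf{Uni}((\mathbf{Grz}\otimes\mathbf{K})\oplus\mathbf{Mix}), \qquad \rho:\textbf{Uni}((\mathbf{Grz}\otimes\mathbf{K})\oplus\mathbf{Mix})\rightarrow \textbf{Uni}(\mathbf{MHA})$$
are mutually inverse. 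Both are monotone: for $\sigma$ this is immediate from the definition via $Uni$, and for $\rho$ it is clear. Mutually inverse monotone bijections between lattices are order isomorphisms, hence lattice isomorphisms. The codomain of $\sigma$ is correct because the proof of Lemma \ref{4.1.21} shows that each $\sigma(\A)$ is a $(Grz\otimes K)\oplus Mix$-algebra.

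For $\sigma\circ\rho=\mathrm{id}$, Theorem \ref{4.1.20} gives this directly for every $\mathcal{A}\in\textbf{Uni}((\mathbf{Grz}\otimes\mathbf{K})\oplus\mathbf{Mix})$. For $\rho\circ\sigma=\mathrm{id}$, take $\mathcal{K}\in \textbf{Uni}(\mathbf{MHA})$. The inclusion $\mathcal{K}\subseteq \rho(\sigma(\mathcal{K}))$ holds because $\A\cong\rho(\sigma(\A))$ by Proposition \ref{4.1.14}. For the converse, the cleanest route is syntactic. Let $L=Ru(\mathcal{K})$. By Proposition \ref{4.1.17} and Proposition \ref{4.1.14}, $\sigma(L)\vDash t(\Gamma/\Delta)$ for all $\Gamma/\Delta\in L$, so $\rho(\sigma(L))\supseteq L$. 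For the reverse inclusion, suppose $t(\Gamma/\Delta)\in\sigma(L)$. Then $Alg(\sigma(L))=\sigma(Alg(L))\vDash t(\Gamma/\Delta)$, so in particular $\sigma(\B)\vDash t(\Gamma/\Delta)$ for every $\B\in Alg(L)$. Proposition \ref{4.1.17} then gives $\rho(\sigma(\B))\cong\B\vDash\Gamma/\Delta$, hence $\Gamma/\Delta\in L$. Thus $\rho(\sigma(L))=L$.

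Symmetrically, for $M$ over $(\textbf{Grz}\otimes\textbf{K})\oplus\textbf{Mix}^R$, we get $Alg(\sigma(\rho(M)))=\sigma(Alg(\rho(M)))=\sigma(\rho(Alg(M)))=Alg(M)$ by Lemma \ref{4.1.21} and Theorem \ref{4.1.20}, so $\sigma(\rho(M))=M$.

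For item 2, restrict the bijection of item 1 to the relations $M$ with $Taut(M)^R=M$. The main point is that $\sigma$ and $\rho$ commute with $(-)^R$ and $Taut$. First, $\rho(M^R)=\rho(M)^R$: if $t(\Gamma/\Delta)\in M^R$ then, since $Taut(\rho(M^R))=\rho(M)$, it suffices to know that $\rho(M)^R$ has algebras $\rho(Alg(M^R))$. Second, $\sigma(L^R)=\sigma(L)^R$: $\sigma(L^R)$ is generated over $(\textbf{Grz}\otimes\textbf{K})\oplus\textbf{Mix}^R$ by rules $/t(\varphi)$ with $\varphi\in L$, and this is exactly $\sigma(L)^R$ once we note $t$ of a rule with empty premises is a rule with empty premises. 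Also $Taut(\sigma(L^R))=\sigma(L)$. With this commutation, Propositions \ref{2.4.4} and \ref{2.5.3} transport the isomorphism of item 1 to the lattices of logics.

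I expect the main obstacle to be this commutation, in particular $Taut(\rho(M))=\rho(Taut(M))$ and the fact that $\sigma(L)^R$ has the same tautologies as $\sigma(L)$. If the direct argument gets messy, I would fall back on the variety-level analogues of Lemma \ref{4.1.21} and Theorem \ref{4.1.20}, replacing $Uni$ by $Var$, and repeat the same argument with $\mathbb{H}\mathbb{S}\mathbb{P}$ in place of $\mathbb{S}\mathbb{P}_u$.
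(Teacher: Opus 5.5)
Your argument for item 1 is correct and is essentially the paper's. $\sigma\rho=\mathrm{id}$ is Theorem~\ref{4.1.20}, and your proof of $\rho\sigma(L)=L$ is the paper's chain through Propositions~\ref{4.1.14} and~\ref{4.1.17}, read syntactically. Your remark that mutually inverse monotone maps are automatically lattice isomorphisms is a real saving. The paper instead checks preservation of binary joins and meets by hand, and for meets under $\sigma$ it has to invoke Lemma~\ref{4.1.19} again.

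The gap is in item 2. The paper calls item 2 immediate from item 1 and Propositions~\ref{2.4.4},~\ref{2.5.3}. You rightly see that one needs $\sigma(L^R)=\sigma(L)^R$ and $\rho(M^R)=\rho(M)^R$, but you never prove the second, and you place the difficulty in the wrong spot.

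The identities you flag as the obstacle are trivial: $Taut(\rho(N))=\rho(Taut(N))$ holds by definition, and $Taut(\sigma(L)^R)=\sigma(L)$ is Proposition~\ref{2.4.4}. They already give $\rho(M)^R\subseteq\rho(M^R)$. Combined with $\sigma(L^R)=\sigma(L)^R$ they also give $\rho(\sigma(L))=Taut(\rho(\sigma(L)^R))=Taut(\rho(\sigma(L^R)))=Taut(L^R)=L$.

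What is left is $\rho(M^R)\subseteq\rho(M)^R$. Given item 1, this is equivalent to $M\subseteq\sigma(\rho(M))$ for logics, i.e.\ surjectivity of $\sigma$. Your sentence ``it suffices to know that $\rho(M)^R$ has algebras $\rho(Alg(M^R))$'' only restates this. $Alg(\rho(M)^R)=Alg(\rho(M))$ is the variety generated by $\rho(Alg(M))$, whereas Lemma~\ref{4.1.21} only tells you that $\rho(Alg(M))=Alg(\rho(M^R))$ is a universal class.

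The missing idea is that $\rho(\mathcal{V})$ is closed under $\mathbb{H}$ and $\mathbb{P}$ for every variety $\mathcal{V}$ of $(S4\otimes K)\oplus Mix$-algebras. Products are clear. For $\mathbb{H}$, view $\rho(\mathfrak{A})$ as the algebra of $\Box_I$-open elements with $\Box=\Box_I\Box_M$. Let $F$ be a $\Box$-closed filter of $\rho(\mathfrak{A})$. The filter $F'$ it generates in $\mathfrak{A}$ is closed under $\Box_I$ (as $\Box_I f=f$) and under $\Box_M$ (as $\Box_M f\geq\Box_I\Box_M f=\Box f\in F$). Moreover $F'\cap\rho(\mathfrak{A})=F$ and $\rho(\mathfrak{A}/F')\cong\rho(\mathfrak{A})/F$.

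Hence $\rho(Alg(M))$ is a variety, equal to $Alg(\rho(M))$. So $\rho(M^R)=\rho(M)^R$, and $\sigma(\rho(M))^R=\sigma(\rho(M)^R)=\sigma(\rho(M^R))=M^R$. Your fallback to variety-level versions of Lemma~\ref{4.1.21} and Theorem~\ref{4.1.20} runs into exactly the same point: the variety analogue of the third item of Lemma~\ref{4.1.21} is this closure property.

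Your justification of $\sigma(L^R)=\sigma(L)^R$ is also too thin. $L^R$ contains rules with premises, such as $p/\Box p$ and modus ponens, and you must show their translations lie in $\sigma(L)^R$. The easy argument is semantic. If $\mathfrak{A}\vDash\sigma(L)$, then $\rho(\mathfrak{A})\vDash L$ by Proposition~\ref{4.1.17}. So $\rho(\mathfrak{A})$ validates every rule of $L^R$, and hence $\mathfrak{A}\vDash t(\Gamma/\Delta)$ for all $\Gamma/\Delta\in L^R$.
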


\begin{proof}
For item 1, by Lemma \ref{4.1.21}, it suffices to prove that $\sigma:\textbf{Uni}(\mathbf{MHA})\rightarrow \textbf{Uni}((\mathbf{Grz}\otimes\mathbf{K})\oplus\mathbf{Mix})$ and $\rho:\textbf{Uni}((\mathbf{Grz}\otimes\mathbf{K})\oplus\mathbf{Mix}) \rightarrow \textbf{Uni}(\mathbf{MHA})$ are lattice isomorphisms and mutual inverses.

We first prove that $\sigma$ and $\rho$ are mutual inverses and thus bijective. For any  $\mathcal{A}\in\textbf{Uni}((\mathbf{Grz}\otimes\mathbf{K})\oplus\mathbf{Mix})$, by Theorem \ref{4.1.20}, $\sigma(\rho(\mathcal{A}))=\mathcal{A}$. Let $\mathcal{A}\in \textbf{Uni}(\mathbf{MHA})$ be arbitrary, by Proposition \ref{4.1.14}, it is clear that $\mathcal{A}\subseteq \rho(\sigma(\mathcal{A}))$. Suppose $\mathcal{A}\vDash\Gamma/\Delta$, namely for any $\A\in\mathcal{A}$, we have that $\A\vDash\Gamma/\Delta$. By Proposition \ref{4.1.14}, $\rho(\sigma(\A))\vDash \Gamma/\Delta$, for any $\A\in\mathcal{A}$. By Proposition \ref{4.1.17}, $\sigma(\A)\vDash t(\Gamma/\Delta)$ for any $\A\in\mathcal{A}$. Thus $\sigma(\mathcal{A})\vDash t(\Gamma/\Delta)$. By Proposition \ref{4.1.17}, $\rho(\sigma(\mathcal{A}))\vDash \Gamma/\Delta$. Thus $Ru(\mathcal{A})\subseteq Ru(\rho(\sigma(\mathcal{A})))$, by Theorem \ref{2.4.7} $\rho(\sigma(\mathcal{A}))\subseteq \mathcal{A}$. Therefore, $\rho(\sigma(\mathcal{A}))=\mathcal{A}$. 

We then prove that $\sigma$ and $\rho$ preserve joins and meets, and thus are lattice morphisms. Clearly, for any $\mathcal{A},\mathcal{B}\in\textbf{Uni}((\mathbf{Grz}\otimes\mathbf{K})\oplus\mathbf{Mix})$, we have that  $\rho(\mathcal{A}\cap \mathcal{B})\subseteq \rho(\mathcal{A})\cap\rho(\mathcal{B})$. Assume $\mathfrak{C}\in \rho(\mathcal{A})\cap\rho(\mathcal{B})$, there exist $\A\in\mathcal{A}$ and $\B\in\mathcal{B}$ such that $\C=\rho(\A)= \rho(\B)$, and $\sigma(\rho(\A))=\sigma(\rho(\B))\in \mathcal{A}\cap\mathcal{B}$. Thus $\rho(\sigma(\rho(\A)))\in \rho(\mathcal{A}\cap\mathcal{B})$. By Proposition \ref{4.1.14}, $\mathfrak{C}=\rho(\A)\in \rho(\mathcal{A}\cap\mathcal{B})$. Thus $\rho(\mathcal{A})\cap\rho(\mathcal{B})\subseteq \rho(\mathcal{A}\cap\mathcal{B})$, and $\rho(\mathcal{A})\cap\rho(\mathcal{B})=\rho(\mathcal{A}\cap\mathcal{B})$. Clearly, for any $\mathcal{A},\mathcal{B}\in\textbf{Uni}((\mathbf{Grz}\otimes\mathbf{K})\oplus\mathbf{Mix})$, \textcolor{black}{we have that} $\rho(\mathcal{A})\lor \rho(\mathcal{B})\subseteq \rho(\mathcal{A}\lor\mathcal{B})$. Suppose $\rho(\mathcal{A})\lor \rho(\mathcal{B})\vDash \Gamma/\Delta$, then $\rho(\mathcal{A})\vDash\Gamma/\Delta$ and $\rho(\mathcal{B})\vDash\Gamma/\Delta$. By Proposition \ref{4.1.17}, $\mathcal{A}\vDash t(\Gamma/\Delta)$ and $\mathcal{B}\vDash t(\Gamma/\Delta)$. Thus $\mathcal{A}\lor \mathcal{B}\vDash t(\Gamma/\Delta)$. By Proposition \ref{4.1.17}, $\rho(\mathcal{A}\lor \mathcal{B})\vDash \Gamma/\Delta$. Thus $Ru(\rho(\mathcal{A})\lor \rho(\mathcal{B}))\subseteq Ru(\rho(\mathcal{A}\lor \mathcal{B}))$, and by Theorem \ref{2.5.6}, $\rho(\mathcal{A}\lor \mathcal{B})\subseteq \rho(\mathcal{A})\lor \rho(\mathcal{B})$. Therefore, $\rho(\mathcal{A}\lor \mathcal{B})= \rho(\mathcal{A})\lor \rho(\mathcal{B})$, $\rho$ preserves joins and meets. For any $\mathcal{A},\mathcal{B}\in\textbf{Uni}(\mathbf{MHA})$, by what we have proved above $\sigma(\mathcal{A})\lor \sigma(\mathcal{B})=\sigma(\rho(\sigma(\mathcal{A})\lor \sigma(\mathcal{B})))=\sigma(\rho(\sigma(\mathcal{A}))\lor \rho(\sigma(\mathcal{B})))=\sigma(\mathcal{A}\lor \mathcal{B})$. Clearly, $\sigma(\mathcal{A}\cap \mathcal{B})\subseteq \sigma(\mathcal{A})\cap \sigma(\mathcal{B})$. Suppose $\sigma(\mathcal{A}\cap\mathcal{B})\vDash\Gamma/\Delta$ while $\sigma(\mathcal{A})\cap\sigma(\mathcal{B})\not\vDash\Gamma/\Delta$, there exists $\A\in \sigma(\mathcal{A})\cap\sigma(\mathcal{B})$ \textcolor{black}{such that} $\A\not\vDash\Gamma/\Delta$. By Proposition \ref{4.1.14}, $\rho(\A)\in\mathcal{A}\cap\mathcal{B}$. Then $\sigma(\rho(\A))\in\sigma(\mathcal{A}\cap\mathcal{B})$, and thus $\sigma(\rho(\A))\vDash\Gamma/\Delta$ which contradicts Lemma \ref{4.1.19}. Therefore, \textcolor{black}{$Ru(\sigma(\mathcal{A}\cap \mathcal{B}))\subseteq Ru(\sigma(\mathcal{A})\cap\sigma(\mathcal{B}))$}, and by Theorem \ref{2.4.7}, $\textcolor{black}{\sigma(\mathcal{A})\cap\sigma(\mathcal{B})\subseteq \sigma(\mathcal{A}\cap\mathcal{B})}$. Therefore, $\textcolor{black}{\sigma(\mathcal{A}\cap\mathcal{B})}=\sigma(\mathcal{A})\cap\sigma(\mathcal{B})$, $\sigma$ preserves joins and meets. This finishes our proof.

\vspace{1mm}
Item 2 follows immediately from item 1 and Propositions \ref{2.4.4} and \ref{2.5.3}.

\end{proof}

The transformations $\sigma$ and $\rho$ are useful as they may allow us to transfer questions about intuitionistic modal logic to bimodal logics and vice versa. In particular, at this stage one can easily prove that $\rho$ preserves decidability, Kripke \textcolor{black}{completeness}, the finite model property and tabularity as shown in \cite[Thm. 11]{zfint}. 

The second item of the above theorem is the Blok-Esakia theorem for intuitionistic modal logics proved in \cite{zfon}, so what we have got here is a generalization of it from logics to multi-conclusion consequence relations. However, \textcolor{black}{one may note} that \cite[Cor. 28]{zfon} is slightly stronger than the second item of the above theorem, which says that the Blok-Esakia theorem holds not only for normal extensions but also for weaker extensions ($\textbf{K}$ could be replaced by $\textbf{C}$ on both sides of the map $\sigma$ and $\rho$)\footnote{$\textbf{C}$ stands for ``congruential". It is the least modal logic which has the algebraic semantics. See \cite{zfon} for more details.} . The reason why so far we can not prove the result for \textcolor{black}{non-normal} modal logic is that we do not have a nice dual description \textcolor{black}{of} the algebraic semantics for such weak logics\footnote{This also explains the reason why unlike Kripke frames, what Wolter and Zakharyaschev called ``frames" are quite algebraic.}. As a result, \textcolor{black}{we can not exploit geometric intuitions which are quite crucial} in Lemma \ref{4.1.19}.  

Having said that, compared to the proofs in \cite{zfon}, our proofs are arguably more self-contained. Since the construction in the proof of Lemma \ref{4.1.19} is the same as that in \cite[Lem. 2.50]{amaster}, this also indicates the robustness of \textcolor{black}{this} construction.

\subsection{The Dummett-Lemmon Conjecture}

The Dummett-Lemmon conjecture for superintuitionistic logics \textcolor{black}{states} that a superintuitionistic logic is Kripke complete iff its least modal companion is Kripke complete. This conjecture was \textcolor{black}{proved correct} by Zakharyaschev in \cite{zcss}, which is a very important application of his canonical formulas. Using stable canonical rules, Cleani \cite[Thm. 2.70]{amaster} proved that the Dummet-Lemmon conjecture holds for superintuitionistic rules systems as well. \textcolor{black}{However, as mentioned in the introduction, the proof there contains a gap that can be corrected. In particular, the ``rule-collapse lemma" in Cleani's thesis \cite[Lem. 2.62]{amaster} is problematic because the collapsed stable canonical rule for superintuitionistic logics is not of the right form. One way to fix this problem is to introduce so-called ``modalized stable canonical rules" and prove that in some sense, every stable canonical rule can be seen as the conjunction of finitely many modalized ones \cite[Section 5]{an}.} 

In this section, following a somewhat similar strategy, we prove the Dummett-Lemmon conjecture for intuitionistic modal multi-conclusion consequence relations which says that for any $L\in  \mathbf{NExt}(\textbf{IntK$^R_\Box$})$, $L$ is Kripke complete iff $\tau(L)$ (i.e. the least modal companion containing $\textbf{Mix}$) is Kripke complete. \textcolor{black}{Instead of introducing something like modalized stable canonical rules as in \cite{an}}, the proof uses only the stable canonical rules for intuitionistic modal logics (also those for bimodal logics) and the Blok-Esakia theorem proved in the previous section, and thus can be seen as the peak of what we have done so far.

We start with the definition of Kripke frames in the setting of intuitionistic modal logics.

\begin{definition}

An \textit{intuitionistic modal Kripke frame} is a triple $(X,\leq, R)$ where $X$ is a non-empty set, $\leq$ is a partial order on $X$ and $R\subseteq X\times X$ \textcolor{black}{is} such that $\leq \circ R=R\circ \leq =R$.     
\end{definition}

It is easy to check that for any intuitionistic modal Kripke frame $(X,\leq, R)$, the set of all upsets of $X$ is closed under $\Box_R$. \textcolor{black}{A \textit{valuation} on a intuitionistic modal Kripke frame $(X,\leq,R)$ is a map $V: Prop\rightarrow Up(X)$ which can be recursively extended to all intuitionistic modal formulas in the standard way. Besides, for any intuitionistic modal Kripke frame $\mathfrak{X}=(X,\leq, R)$, we use $\mathfrak{X}^*$ to denote the intuitionistic modal algebra of upsets of $\mathfrak{X}$ where $\Box_R$ is defined in the same way as that in Theorem \ref{3.3.3}.}

For any $S4\otimes K$ Kripke frame $\mathfrak{F}=(X,R_I,R_M)$, it is clear that $\rho(\mathfrak{F})=(\textcolor{black}{\rho[X]},\leq,[R_I\circ R_M\circ R_I])$ is an intuitionistic modal Kripke frame where $\rho$ is exactly the same as that in Definition \ref{4.1.10} (without the topology).

The following proposition can be proved in the same way as Proposition \ref{4.1.17}.

\begin{proposition}
\label{4.2.2}
For any $S4\otimes K$ Kripke frame $(X,R_I,R_M)$ and any intuitionistic modal multi-conclusion rule $\Gamma/\Delta$, $(X,R_I,R_M)\vDash t(\Gamma/\Delta)$ iff $(\rho[X],\leq, [R_I\circ R_M\circ R_I])\vDash \Gamma/\Delta$.
    
\end{proposition}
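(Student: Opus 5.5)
We follow the proof of Proposition \ref{4.1.17} verbatim, deleting all topology: valuations on $(X,R_I,R_M)$ now range over arbitrary subsets of $X$, and valuations on $\rho(\mathfrak{F})=(\rho[X],\leq,[R_I\circ R_M\circ R_I])$ range over arbitrary upsets. Throughout, $\rho(x)=[x]$ is the $R_I$-cluster of $x$. The only facts about clusters we use are that $R_I$ is reflexive and transitive and that upsets of $X$ (with respect to $R_I$) never cut a cluster. The latter gives $\rho^{-1}(\rho[U])=U$ for every upset $U$, and $\rho[U\setminus W]=\rho[U]\setminus\rho[W]$ for upsets $U,W$, as in the footnote to Proposition \ref{4.1.17}.

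For the direction ``$(X,R_I,R_M)\vDash t(\Gamma/\Delta)$ implies $\rho(\mathfrak{F})\vDash\Gamma/\Delta$'', we argue by contraposition. Take a refuting valuation $V$ on $\rho(\mathfrak{F})$ and set $V'(p)=\rho^{-1}(V(p))$, which is an upset of $X$. We then show by induction on $\varphi$ that $V'(t(\varphi))=\rho^{-1}(V(\varphi))$.
\begin{itemize}
\item For variables, $\bot,\top,\land,\lor$: use that $\Box_I$ fixes upsets, since $R_I$ is reflexive and transitive.
\item For $\to$: check directly that $\Box_I\bigl(\rho^{-1}(\rho[X]\setminus V(\psi)\cup V(\theta))\bigr)=\rho^{-1}\bigl(\rho[X]\setminus{\downarrow}(V(\psi)\setminus V(\theta))\bigr)$, using $\rho(x)\leq\rho(y)$ iff $xR_Iy$.
\item For $\Box$: we need $\rho^{-1}(\Box_{[R_I\circ R_M\circ R_I]}V(\psi))=\Box_I\Box_M\Box_I\rho^{-1}(V(\psi))$. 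This holds because $[x][R_I\circ R_M\circ R_I][y]$ iff $xR_I\circ R_M\circ R_Iy$, and the relation is independent of representatives by transitivity of $R_I$. Since $\rho^{-1}(V(\psi))$ is an upset, $\Box_I\rho^{-1}(V(\psi))=\rho^{-1}(V(\psi))$, so the right-hand side equals $\Box_I\Box_M\rho^{-1}(V(\psi))=V'(t(\Box\psi))$.
\end{itemize}
Since $\rho^{-1}$ sends $\rho[X]$ to $X$ and a proper upset to a proper subset, $V'$ refutes $t(\Gamma/\Delta)$.

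For the converse, again by contraposition, take $V$ on $(X,R_I,R_M)$ refuting $t(\Gamma/\Delta)$ and put $V'(p)=\rho[\Box_IV(p)]$. This is an upset of $\rho(\mathfrak{F})$, because $\Box_IV(p)$ is an $R_I$-upset. We prove $V'(\varphi)=\rho[V(t(\varphi))]$ by induction, noting first that every $V(t(\varphi))$ is an $R_I$-upset because each clause of $t$ begins with $\Box_I$. The $\lor,\land$ cases use $\rho[U\cup W]=\rho[U]\cup\rho[W]$ and $\rho[U\cap W]=\rho[U]\cap\rho[W]$ for cluster-closed sets. The $\to$ case uses $\rho[U\setminus W]=\rho[U]\setminus\rho[W]$ together with the same $\Box_I$ versus ${\downarrow}$ argument as above. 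The $\Box$ case uses $\Box_I\Box_MV(t(\psi))=\Box_I\Box_M\Box_IV(t(\psi))$ and the definition of $[R_I\circ R_M\circ R_I]$. Finally, since $V(t(\delta))\neq X$ is an upset, hence a union of clusters, we get $\rho[V(t(\delta))]\neq\rho[X]$, so $V'$ refutes $\Gamma/\Delta$.

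The only point needing care is that no clopenness is required anywhere. In Proposition \ref{4.1.17} the topology served only to guarantee that the constructed valuations are admissible. Here admissibility reduces to being an upset on the intuitionistic side, which holds in both directions. The main obstacle is therefore merely the $\to$ and $\Box$ inductive steps, and these are set-theoretic identities already verified in Proposition \ref{4.1.17}.
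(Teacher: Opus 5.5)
Your proposal is correct and takes the same approach as the paper, which only says that Proposition~\ref{4.2.2} is proved in the same way as Proposition~\ref{4.1.17}. You drop the topology, so valuations become arbitrary subsets of $X$ and arbitrary upsets of $\rho[X]$, and you reuse the two valuation transfers $V'(p)=\rho^{-1}(V(p))$ and $V'(p)=\rho[\Box_I V(p)]$ with the same cluster-based inductions, which is exactly what the paper intends.
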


We first \textcolor{black}{obtain} the easy direction of the Dummett-Lemmon conjecture as follows:

\begin{proposition}
\label{4.2.3}
For any $L\in  \mathbf{NExt}(\textbf{IntK$^R_\Box$})$, if $\tau(L)$ is Kripke complete, then $L$ is Kripke complete.
    
\end{proposition}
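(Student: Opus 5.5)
To prove Proposition~\ref{4.2.3}, I will show that every rule not in $L$ is refuted on some intuitionistic modal Kripke frame validating $L$. Kripke completeness of $L$ is understood in the usual way: $L$ is the set of rules valid on all intuitionistic modal Kripke frames validating $L$. So fix $\Gamma/\Delta\notin L$; the goal is a Kripke frame $\mathfrak{G}$ with $\mathfrak{G}\vDash L$ and $\mathfrak{G}\not\vDash\Gamma/\Delta$.

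The first step moves the problem to the bimodal side. Consider the universal class $\tau(Alg(L))$ and the least modal companion $\tau(L)$. I claim $t(\Gamma/\Delta)\notin\tau(L)$. By Lemma~\ref{4.1.21}, $Alg(\tau(L))=\tau(Alg(L))$. Since $\Gamma/\Delta\notin L$, by Theorem~\ref{2.5.6} there is a modal Heyting algebra $\A\in Alg(L)$ with $\A\not\vDash\Gamma/\Delta$. Then $\sigma(\A)$ is a $(Grz\otimes K)\oplus Mix$-algebra, as shown in the proof of Lemma~\ref{4.1.21}, and $\rho(\sigma(\A))\cong\A\in Alg(L)$ by Proposition~\ref{4.1.14}. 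Hence $\sigma(\A)\in\tau(Alg(L))=Alg(\tau(L))$. By Proposition~\ref{4.1.17}, $\sigma(\A)\not\vDash t(\Gamma/\Delta)$, so $t(\Gamma/\Delta)\notin\tau(L)$.

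The second step uses the hypothesis. Since $\tau(L)$ is Kripke complete, there is an $S4\otimes K$ Kripke frame $\mathfrak{F}=(X,R_I,R_M)$ with $\mathfrak{F}\vDash\tau(L)$ and $\mathfrak{F}\not\vDash t(\Gamma/\Delta)$. Here $R_I$ is reflexive and transitive because $\tau(L)\supseteq\textbf{S4}\otimes\textbf{K}$. Put $\mathfrak{G}=\rho(\mathfrak{F})=(\rho[X],\leq,[R_I\circ R_M\circ R_I])$, which is an intuitionistic modal Kripke frame as remarked before Proposition~\ref{4.2.2}. That proposition gives $\mathfrak{G}\not\vDash\Gamma/\Delta$. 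Moreover, for each $\Gamma'/\Delta'\in L$ we have $t(\Gamma'/\Delta')\in\tau(L)$, so $\mathfrak{F}\vDash t(\Gamma'/\Delta')$, and Proposition~\ref{4.2.2} again gives $\mathfrak{G}\vDash\Gamma'/\Delta'$. Thus $\mathfrak{G}\vDash L$, which is the required frame.

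The step that needs the most care is the appeal to Proposition~\ref{4.2.2}, which the paper justifies only by analogy with Proposition~\ref{4.1.17}. I would check the translation argument in the Kripke setting, where valuations range over all upsets rather than clopen upsets, and $\rho$ is the quotient by $R_I$-clusters without topology. The inductive clauses for $\rightarrow$ and $\Box$ go through verbatim, because they use only reflexivity and transitivity of $R_I$ and the upset property of translated formulas. Notably, \textbf{Mix} is not needed on $\mathfrak{F}$ at all, since $\rho$ builds the intuitionistic relation as $R_I\circ R_M\circ R_I$ directly.
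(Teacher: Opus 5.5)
Your proof is correct and follows essentially the same route as the paper: refute $t(\Gamma/\Delta)$ on $\sigma(\A)$ using Propositions~\ref{4.1.14} and~\ref{4.1.17}, get a refuting Kripke frame for $\tau(L)$ from its Kripke completeness, and collapse it with $\rho$ via Proposition~\ref{4.2.2}. The only differences are cosmetic. You obtain $\sigma(\A)\vDash\tau(L)$ through Lemma~\ref{4.1.21}, where the paper appeals to Proposition~\ref{4.1.17} directly, and you make explicit that $\sigma(\A)$ validates $(\textbf{S4}\otimes\textbf{K})\oplus\textbf{Mix}$.
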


\begin{proof}
Suppose $\tau(L)$ is Kripke complete. For any intuitionistic modal multi-conclusion rule $\Gamma/\Delta$, suppose $\Gamma/\Delta\not\in L$, then there exists a modal Heyting algebra $\A$ such that $\A\vDash L$ while $\A\not\vDash \Gamma/\Delta$. If $\sigma(\A)\vDash t(\Gamma/\Delta)$, then by Proposition \ref{4.1.17} $\rho(\sigma(\A))\vDash\Gamma/\Delta$. As $\rho(\sigma(\A))\cong \A$ by Proposition \ref{4.1.14}, $\A\vDash\Gamma/\Delta$, we get a contradiction. Thus $\sigma(\A)\not\vDash t(\Gamma/\Delta)$. As $\rho(\sigma(\A))\cong \A\vDash L$, by Proposition \ref{4.1.17}, $\sigma(\A)\vDash \tau(L)$. Thus $t(\Gamma/\Delta)\not\in \tau(L)$. As $\tau(L)$ is Kripke complete, there exists an $(S4\otimes K)\oplus Mix$ Kripke frame $(X,R_I,R_M)$ such that $(X,R_I,R_M)\vDash \tau(L)$ while $(X,R_I,R_M)\not \vDash t(\Gamma/\Delta)$. Now, by Proposition \ref{4.2.2}, $(\rho[X],\leq, [R_I\circ R_M\circ R_I])\vDash L$ while $(\rho[X],\leq,[R_I\circ R_M\circ R_I])\not\vDash\Gamma/\Delta$. As $(\rho[X],\leq, [R_I\circ R_M\circ R_I])$ is an intuitionistic modal Kripke frame, this proves that $L$ is Kripke complete.
    
\end{proof}

Then we prove the so-called rule collapse lemma\footnote{Here, we actually prove something weaker than \cite[Lem. 2.62]{amaster} which is problematic as mentioned above.}. Informally speaking, it describes what happens if we collapse $R_I$-clusters in the stable canonical rules for bimodal logics.

\begin{lemma}[Rule collapse lemma]
\label{lemma 4.2.4}
For any $S4\otimes K$-modal space $\mathfrak{X}$, and any $(S4\otimes K)\oplus Mix$-modal space $\A_*$, let $\mathfrak{D}_I,\mathfrak{D}_M\subseteq \mathcal{P}(\A_*)$, if $\mathfrak{X}\not\vDash\mu(\A_*, \overline{\mathfrak{D}_I}, \overline{\mathfrak{D}_M})$, then $\sigma(\rho(\mathfrak{X}))\not\vDash\mu(\sigma(\rho(\A_*)),\overline{\rho\mathfrak{D}_I},\overline{\rho\mathfrak{D}_M})$ where $\overline{\mathfrak{D}_I}=\{\Bar{\delta}\mid \delta\in \mathfrak{D}_I\}$, $\overline{\mathfrak{D}_M}=\{\Bar{\delta}\mid \delta\in \mathfrak{D}_M\}$, $\overline{\rho\mathfrak{D}_I}=\{\overline{\rho[\delta]}\mid \delta\in \mathfrak{D}_I\}$ and $\overline{\rho\mathfrak{D}_M}=\{\overline{\rho[\delta]}\mid \delta\in \mathfrak{D}_M\}$\footnote{Recall that we use `` $\Bar{}$ " to denote the set-theoretic complement.}.
    
\end{lemma}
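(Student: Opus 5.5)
By Proposition \ref{4.1.9}, the hypothesis gives a continuous stable surjection $f:\mathfrak{X}\to\A_*$ satisfying CDC for every $\bar\delta$ with $\delta\in\mathfrak{D}_I$ (w.r.t.\ $R_I$) and every $\bar\delta$ with $\delta\in\mathfrak{D}_M$ (w.r.t.\ $R_M$). In the contrapositive form of Definition \ref{3.3.9}, this says: if $f(x)R\,y$ with $y\in\delta$, then there is $z$ with $xRz$ and $f(z)\in\delta$. Again by Proposition \ref{4.1.9}, it suffices to find a continuous stable surjection
\[
\bar f:\sigma(\rho(\mathfrak{X}))\to\sigma(\rho(\A_*))
\]
satisfying CDC for the complements $\overline{\rho[\delta]}$. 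I will take $\bar f([x])=[f(x)]$, i.e.\ the map induced by $f$ on the quotients $\rho$.

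The first step is to check that $\bar f$ is well defined and is a continuous stable surjection. If $x\sim x'$ then $xR_Ix'$ and $x'R_Ix$, so by stability $f(x)\sim f(x')$; hence $\bar f$ is well defined. Surjectivity is inherited from $f$. Continuity follows from the quotient topologies: for clopen $U$ in $\sigma(\rho(\A_*))$ (every subset is clopen, since $\A_*$ is finite), $\rho^{-1}(\bar f^{-1}(U))=f^{-1}(\rho^{-1}(U))$ is clopen. For stability in $R_I$, $[x]\le[y]$ means $xR_Iy$, which gives $f(x)R_If(y)$. For stability in the modal relation $[R_I\circ R_M\circ R_I]$, a chain $xR_Ix_1R_Mx_2R_Iy$ is mapped by $f$ to a chain of the same shape. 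So $\bar f$ is stable for both relations of $\sigma(\rho(\cdot))$; note that $R_M$ there is $[R_I\circ R_M\circ R_I]$.

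The second step is the CDC for $\rho[\delta]$.

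\emph{The $R_I$ case.} Suppose $[f(x)]\le[y]$ with $[y]\in\rho[\delta]$, say $y\sim y'$ with $y'\in\delta$. Then $f(x)R_Iy'$, and CDC for $f$ gives $z$ with $xR_Iz$ and $f(z)\in\delta$. Hence $[x]\le[z]$ and $\bar f([z])=[f(z)]\in\rho[\delta]$.

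\emph{The modal case.} Suppose $f(x)\,R_I\circ R_M\circ R_I\,y'$ with $y'\in\delta$. Since $\A_*$ validates $\textbf{Mix}$, we have $R_I\circ R_M\circ R_I=R_M$ on $\A_*$, so $f(x)R_My'$. CDC for $f$ w.r.t.\ $R_M$ then gives $z$ with $xR_Mz$ and $f(z)\in\delta$. By reflexivity of $R_I$ we get $x\,R_I\circ R_M\circ R_I\,z$, so $[x]\,[R_I\circ R_M\circ R_I]\,[z]$, and $\bar f([z])\in\rho[\delta]$.

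Note that this is exactly why the statement uses the complements $\bar\delta$: with CDC in the contrapositive form, membership in $\rho[\delta]$ is witnessed by a single element. The orientation of $\mathfrak{D}$ versus $\overline{\mathfrak{D}}$ must therefore be tracked carefully against the footnote to Definition \ref{3.3.9}.

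I expect the main obstacle to be the modal case. The relation on $\sigma(\rho(\mathfrak{X}))$ is the composite $R_I\circ R_M\circ R_I$, and CDC for $f$ only refers to single $R_M$-steps. The $\textbf{Mix}$ assumption on $\A_*$ is what collapses the composite in the target, and reflexivity of $R_I$ handles the source. No $\textbf{Mix}$ is needed on $\mathfrak{X}$. A further point to verify is that $\sigma(\rho(\A_*))$ is finite, so that the rule $\mu(\sigma(\rho(\A_*)),\dots)$ makes sense and every subset $\overline{\rho[\delta]}$ is clopen; this holds because it is a quotient of the finite space $\A_*$.
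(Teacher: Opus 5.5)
Your proposal is correct and follows essentially the same route as the paper. The paper also passes to the induced map $\rho(x)\mapsto\rho(f(x))$, checks well-definedness, surjectivity, stability and continuity via the quotient topology, and verifies the two CDC conditions in the contrapositive form. In the modal case it likewise uses $\textbf{Mix}$ on $\A_*$ to collapse $R_I\circ R_M\circ R_I$ to $R_M$, and reflexivity of $R_I$ on $\mathfrak{X}$ to lift the single $R_M$-step.
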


\begin{proof}

Suppose $\mathfrak{X}\not\vDash\mu(\A_*, \overline{\mathfrak{D}_I}, \overline{\mathfrak{D}_M})$, by Proposition \ref{4.1.9}, there exists $f:\mathfrak{X}\rightarrow \A_*$ such that $f$ is surjective, continuous, stable and satisfies CDC for $\overline{\mathfrak{D}_I}$ and $\overline{\mathfrak{D}_M}$. Define $g:\sigma(\rho(\mathfrak{X}))\rightarrow \sigma(\rho(\mathfrak{\A_*}))$ as follows: $g(\rho(x))=\rho(f(x))$. Suppose $\rho(x)=\rho(y)$, then $xR_I y$ and $y R_I x$. As $f$ is stable, it follows that $f(x)R_I f(y)$ and $f(y) R_I f(x)$, and thus $f(x)\backsim f(y)$. Therefore, $g$ is well defined. As $f$ is surjective, so is $g$. Then we check that $g$ is stable, continuous and satisfies CDC for $\overline{\rho\mathfrak{D}_I},\overline{\rho\mathfrak{D}_M}$. 

Suppose $\rho(x)R_I \rho(y)$, then $\rho(x)\leq \rho(y)$ in $\rho(\mathfrak{X})$, and $xR_I y$ in $\mathfrak{X}$. As $f$ is stable, $f(x)R_I f(y)$. Thus $\rho(f(x))\leq \rho(f(y))$ in $\rho(\A_*)$, 
and $\rho(f(x))R_I \rho(f(y))$ in $\sigma(\rho(\A_*))$ by definition, namely $g(\rho(x))R_I g(\rho(y))$. Suppose $\rho(x) R_M \rho(y)$, then $\rho(x)[R_I\circ R_M \circ R_I]\rho(y)$ in $\rho(\mathfrak{X})$. Thus $xR_I\circ R_M \circ R_I y$ in $\mathfrak{X}$ by definition. There exist $x_1,x_2$ such that $xR_I x_1 R_M x_2 R_I y$. As $f$ is stable, $f(x)R_I f(x_1)R_M f(x_2) R_I f(y)$. Thus $f(x) R_I\circ R_M \circ R_I f(y)$, by definition $\rho(f(x))[R_I\circ R_M \circ R_M]\rho(f(y))$, namely $\rho(f(x))R_M \rho(f(y))$ in $\sigma(\rho(\A_*))$. Thus, $g(\rho(x))R_M g(\rho(y))$. This proves that $g$ is stable.

For any $\rho[F]\subseteq \rho(\A_*)$ where $F\subseteq \A_*$, we have that $x\in\rho^{-1}(g^{-1}(\rho[F]))$ iff $\rho(x)\in g^{-1}(\rho[F])$ iff $g(\rho(x))\in \rho[F]$ iff $\rho(f(x))\in \rho[F]$ iff $f(x)\in \rho^{-1}(\rho[F])$ iff $x\in f^{-1}(\rho^{-1}(\rho[F]))$. As $f$ is continuous, we know that $\rho^{-1}(g^{-1}(\rho[F]))=f^{-1}(\rho^{-1}(\rho[F]))$ is clopen in $\mathfrak{X}$. Clearly, $\rho^{-1}(g^{-1}(\rho[F]))$ does not cut any $R_I$-cluster in $\mathfrak{X}$. As $\sigma(\rho(\mathfrak{X}))$ has the quotient topology, $g^{-1}(\rho[F])=\rho[\rho^{-1}(g^{-1}(\rho[F]))]$ $ =\rho[f^{-1}(\rho^{-1}(\rho[F]))]$ is clopen in $\sigma(\rho(\mathfrak{X}))$. Therefore, $g$ is continuous.

Suppose $R_M[g(\rho(x))]\cap \rho[\delta]\not=\emptyset$ where $\delta\in \mathfrak{D}_M$, then there exists $a\in\delta$ such that $g(\rho(x))R_M \rho(a)$. Thus $\rho(f(x))R_M \rho(a)$ in $\sigma(\rho(\A_*))$, and $\rho(f(x))[R_I\circ R_M\circ R_I]\rho(a)$. By definition, $f(x) R_I\circ R_M\circ R_I a$ in $\A_*$. Since $\A_*$ validates $\textbf{Mix}$, $R_I\circ R_M\circ R_I=R_M$. Thus $f(x)R_M a$. As $f$ satisfies CDC for $\overline{\mathfrak{D}_M}$, there exists $y$ such that $xR_M y$ and $f(y)\in \delta$. As $R_I$ is reflexive, $xR_I\circ R_M \circ R_I y$, and by definition $\rho(x)[R_I\circ R_M\circ R_I]\rho(y)$ in $\rho(\A_*)$. Namely, $\rho(x)R_M \rho(y)$ in $\sigma(\rho(\A_*))$ by the definition of $\sigma$, and $g(\rho(y))=\rho(f(y))\in \rho[\delta]$. Thus $g[R_M[\rho(x)]]\cap \rho[\delta]\not=\emptyset$. This proves that $g$ satisfies CDC for $\overline{\rho\mathfrak{D}_M}$.

Suppose $R_I[g(\rho(x))]\cap \rho[\delta]\not=\emptyset$ where $\delta\in \mathfrak{D}_I$, then there exists $a\in\delta $ such that $g(\rho(x))R_I\rho(a)$. Thus $\rho(f(x))=g(\rho(x)\leq \rho(a)$ in $\rho(\A_*)$. By definition, $f(x)R_I a$. As $f$ satisfies CDC or $\overline{\mathfrak{D}_I}$, there exists $y$ such that $xR_I y$ and $f(y)\in \delta$. Then $\rho(x)\leq \rho(y)$ in $\rho(\mathfrak{X})$, and by definition of $\sigma$, we have that $\rho(x)R_I \rho(y)$ in $\sigma(\rho(\mathfrak{X}))$ and $g(\rho(y))=\rho(f(y))\in\rho[\delta]$. This proves that $g$ satisfies CDC for $\overline{\rho\mathfrak{D}_I}$.

Therefore, $\sigma(\rho(\mathfrak{X}))\not\vDash\mu(\sigma(\rho(\A_*)),\overline{\rho\mathfrak{D}_I},\overline{\rho\mathfrak{D}_M})$.
    
\end{proof}

Apart from the rule collapse lemma, we also need to show that the refutation conditions stated in Proposition \ref{3.3.12} work essentially the same way for intuitionistic modal Kripke frames.

\begin{theorem}
For any intuitionstic modal Kripke frame $(X,\leq,R)$, we have that $(X,\leq,R)\not\vDash \rho(\A,D^\rightarrow, D^\Box)$ iff there is a surjective stable order-preserving map $f:X\rightarrow X_A$ satisfying CDC$_\Box$ for any $\beta(a)$ where $a\in D^\Box$ and satisfies CDC$_\rightarrow$ for any $\beta(a)\setminus \beta(b)$ where $(a,b)\in D^\rightarrow$.      
\end{theorem}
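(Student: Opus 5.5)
We reduce to Proposition \ref{3.1.6} applied to the complex algebra $\mathfrak{X}^*=(Up(X),\Box_R)$. Validity on the frame is validity on $\mathfrak{X}^*$, since valuations on the frame are exactly valuations into $Up(X)$. So $(X,\leq,R)\not\vDash\rho(\A,D^\rightarrow,D^\Box)$ iff there is a stable bounded lattice embedding $h:A\to Up(X)$ satisfying CDC for $D^\rightarrow$ and $D^\Box$. It remains to translate such embeddings into maps $f:X\to X_A$. Since $\A$ is finite, $X_A$ is a finite poset of prime filters, with $R$ as in Theorem \ref{3.3.3}, and every subset is clopen.

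\emph{From $h$ to $f$.} Put $f(x)=\{a\in A\mid x\in h(a)\}$. Because $h$ preserves $0,1,\land,\lor$ and each $h(a)$ is a set, $f(x)$ is a prime filter. Order preservation follows from the $h(a)$ being upsets. Also $f^{-1}(\beta(a))=h(a)$.
\begin{itemize}
\item \emph{Surjectivity.} Let $p\in X_A$. Since $A$ is finite, $p={\uparrow}c$ with $c=\bigwedge p$, and $c$ is join-prime. Then $\beta(c)\setminus\bigcup\{\beta(d)\mid d<c\}=\{p\}$. Injectivity of $h$ gives $h(c)\not\subseteq h(c^-)$, where $c^-$ is the join of the elements strictly below $c$, or of the predecessors, so that finitely many joins suffice. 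Any $x\in h(c)\setminus h(c^-)$ then satisfies $f(x)=p$.
\item \emph{Stability.} Repeat the argument of Proposition \ref{3.3.6}, using only $f^{-1}(\beta(a))=h(a)$ and the definition of $\Box_R$ on upsets. No topology is needed.
\item \emph{CDC$_\Box$.} Repeat the computation of the proposition after Definition \ref{3.3.9}. The one step that used \cite[Cor. 5.6]{Al} is now applied in the finite algebra $\A$, where it holds because $\A_*$ is a finite modal Esakia space.
\item \emph{CDC$_\rightarrow$.} Show that $h(a\to b)=h(a)\to h(b)$ iff $f$ satisfies CDC$_\rightarrow$ for $\beta(a)\setminus\beta(b)$. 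In $Up(X)$ we have $U\to V=X\setminus{\downarrow}(U\setminus V)$. Unfolding both sides pointwise, with $\beta(a\to b)=X_A\setminus{\downarrow}(\beta(a)\setminus\beta(b))$, gives exactly the condition ${\uparrow}f(x)\cap D\neq\emptyset\Rightarrow f[{\uparrow}x]\cap D\neq\emptyset$. The inequality $h(a\to b)\subseteq h(a)\to h(b)$ always holds.
\end{itemize}

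\emph{From $f$ to $h$.} Given $f$, set $h(a)=f^{-1}(\beta(a))$. This is an upset because $f$ is order-preserving, and it is a bounded lattice homomorphism. It is injective by surjectivity of $f$ together with $\beta$ being injective. Stability and the CDC conditions follow from the same equivalences as above, read backwards.

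The main obstacle is the modal CDC equivalence without compactness. The direction from CDC$_\Box$ to $\Box h(a)\subseteq h(\Box a)$ needs the step ``$R[f(x)]\subseteq\beta(a)$ implies $\Box a\in f(x)$''. This is where the finiteness of $X_A$, a genuine modal Esakia space, replaces the topological argument on $X$. On the side of $X$, only the definition of $\Box_R$ on upsets is used, so no closedness of $R[x]$ is required. The remaining steps are the pointwise checks above.
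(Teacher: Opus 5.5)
Your proof is correct, but it takes a more direct route than the paper. Both proofs start from the same observation, that $\mathcal{X}\vDash\rho$ iff $\mathcal{X}^*\vDash\rho$. From there the paper passes to the double dual $(\mathcal{X}^*)_*$ and uses the topological Proposition \ref{3.3.12}:
\begin{itemize}
\item For left-to-right, it composes the resulting $g:(\mathcal{X}^*)_*\to X_A$ with $\epsilon(x)=\{U\mid {\uparrow}x\subseteq U\}$. It proves surjectivity via the clopen decomposition of Remark \ref{2.6.6}.
\item For the converse, it builds $g$ on prime filters of upsets by $g(\mathfrak{p})=\{a\mid {\uparrow}f^{-1}(P_{a_i})\in\mathfrak{p}\text{ for some join-irreducible } a_i\leq a\}$. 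It then checks by hand that this is a well-defined surjective stable Priestley morphism satisfying both CDCs.
\end{itemize}

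You instead stay algebraic. You apply Proposition \ref{3.1.6} to $\mathfrak{X}^*$ and set up the correspondence $f(x)=\{a\mid x\in h(a)\}$, $h(a)=f^{-1}(\beta(a))$. You then redo the arguments of Propositions \ref{3.3.6} and \ref{3.3.11} pointwise, without any topology. The two proofs compute the same objects:
\begin{itemize}
\item The paper's $g\circ\epsilon$ is exactly your $f$.
\item The paper's $g$ in the converse is just $h_*$ for your $h=f^{-1}\circ\beta$, since $\bigcup_{a_i\leq a}{\uparrow}f^{-1}(P_{a_i})=f^{-1}(\beta(a))$ and $\mathfrak{p}$ is prime.
\end{itemize}
Your version makes this explicit. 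It also avoids the well-definedness, primeness and continuity checks on $(\mathcal{X}^*)_*$. What the paper gains is only the reuse of Proposition \ref{3.3.12} as a black box.

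One small correction to your justification. The step ``$R[f(x)]\subseteq\beta(a)$ implies $\Box a\in f(x)$'' does not depend on $\A$ being finite. It is the identity $\beta(\Box a)=\Box_R\beta(a)$, which holds for every modal Heyting algebra by the duality of Theorem \ref{3.3.3}. Where you genuinely need finiteness is surjectivity of $f$, which you obtain via join-irreducibles.
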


\begin{proof}

See \ref{A}.
    
\end{proof}

Similarly (in fact, more easily), we get the following refutation conditions for bimodal Kripke frames.

\begin{theorem}
\label{4.2.6}
For any bimodal Kripke frame $(X,R_I,R_M)$, $(X,R_I,R_M)\not\vDash\mu(\A,D^I,D^M)$ iff there is a surjective relation-preserving (or stable) map $f:X\rightarrow X_A$ satisfying CDC$_\Box$ for any $\beta(a)$ and $\beta(b)$ where $a\in D^I$ and $b\in D^M$.
    
\end{theorem}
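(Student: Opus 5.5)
The plan is to reduce the frame statement to the algebraic one, Proposition \ref{4.1.4}, applied to the complex algebra $\mathfrak{F}^*=(\mathcal{P}(X),\Box_{R_I},\Box_{R_M})$ of $\mathfrak{F}=(X,R_I,R_M)$. By definition, $\mathfrak{F}\vDash\mu(\A,D^I,D^M)$ iff $\mathfrak{F}^*\vDash\mu(\A,D^I,D^M)$, because valuations on the frame are precisely valuations on $\mathfrak{F}^*$. Proposition \ref{4.1.4} then says that $\mathfrak{F}\not\vDash\mu(\A,D^I,D^M)$ iff there is a stable Boolean embedding $h:A\to\mathcal{P}(X)$ satisfying CDC for $D^I$ and $D^M$. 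What remains is to translate such embeddings into maps $f:X\to X_A$ and back. For this we use that $A$ is finite, so that $X_A$ is the finite set of atoms (ultrafilters) of $A$ and $\beta$ identifies $A$ with $\mathcal{P}(X_A)$.

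From maps to embeddings: given $f$, put $h=f^{-1}\circ\beta$. This is a Boolean homomorphism, and it is injective because $f$ is surjective. Stability follows as in Proposition \ref{3.3.6}. Suppose $x\in f^{-1}(\beta(\Box a))$ and $xRy$. Then $f(x)Rf(y)$, and $\Box a\in f(x)$ gives $a\in f(y)$. Hence $x\in\Box_R h(a)$, i.e.\ $h(\Box a)\leq\Box h(a)$. For CDC at $a\in D^I$ (the case $D^M$ is the same) we need $\Box_{R_I}f^{-1}\beta(a)\subseteq f^{-1}\beta(\Box_I a)$. Take $x$ with $f[R_I[x]]\subseteq\beta(a)$. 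CDC$_\Box$ for $f$ gives $R_I[f(x)]\subseteq\beta(a)$. In the finite dual $X_A$, $\beta(\Box_I a)=\Box_{R_I}\beta(a)$, so $\Box_I a\in f(x)$.

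From embeddings to maps: given $h$, define $f(x)=\{a\in A\mid x\in h(a)\}$. Since $h$ is a Boolean homomorphism into a powerset, this is an ultrafilter. Moreover $f^{-1}(\beta(a))=h(a)$. Surjectivity of $f$ comes from injectivity of $h$: for each atom $c$ of $A$ we have $h(c)\neq\emptyset$, and any $x\in h(c)$ satisfies $f(x)=\uparrow c$. Stability is obtained by reversing the argument above. If $xRy$ and $\Box a\in f(x)$, then $x\in h(\Box a)\subseteq\Box_R h(a)$, so $y\in h(a)$ and $a\in f(y)$. CDC$_\Box$ is likewise the converse of the previous computation. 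If $f[R_I[x]]\subseteq\beta(a)$, then $R_I[x]\subseteq h(a)$, so $x\in\Box_{R_I}h(a)=h(\Box_I a)$, hence $\Box_I a\in f(x)$, i.e.\ $R_I[f(x)]\subseteq\beta(a)$.

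The one step that needs care is the identity $\beta(\Box a)=\Box_R\beta(a)$ in the finite dual $X_A$, used in both directions. This is the finite case of the duality in Theorem \ref{4.1.8}, and finiteness is exactly what makes $X_A$ discrete, so no topology or continuity has to be checked. Apart from that step, the argument is a finite-frame version of Proposition \ref{4.1.9}, and the same proof works without any reflexivity or transitivity assumptions on $R_I$ and $R_M$.
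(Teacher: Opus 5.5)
Your proof is correct, and it takes a more direct route than the paper. The paper gives no separate argument for this statement. It says it follows ``similarly (in fact, more easily)'' from the intuitionistic-modal Kripke-frame version proved in Appendix~\ref{A}. That template works as follows. One passes from the frame $\mathfrak{F}$ to the complex algebra $\mathfrak{F}^*$ and then to its dual space $(\mathfrak{F}^*)_*$. The space-level criterion (here Proposition~\ref{4.1.9}) then yields a continuous stable surjection $g:(\mathfrak{F}^*)_*\to X_A$, which is precomposed with the point map $\epsilon:X\to(\mathfrak{F}^*)_*$ sending $x$ to its principal ultrafilter. Conversely, a given $f:X\to X_A$ is lifted to a map on $(\mathfrak{F}^*)_*$ defined via atoms (join-irreducibles), and one checks that this lift is a continuous stable surjection satisfying CDC.

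You instead stop at the algebraic criterion, Proposition~\ref{4.1.4}, and set up the correspondence $h=f^{-1}\circ\beta$, $f(x)=\{a\mid x\in h(a)\}$ by hand. Your $f$ is exactly $h_*\circ\epsilon$. So you are running the paper's argument with the detour through the ultrafilter space of $\mathcal{P}(X)$ removed.

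What your route buys is a short, purely set-theoretic proof, with no continuity checks and no lifting construction. It also makes transparent that no conditions on $R_I,R_M$ are used. The paper's route reuses the topological criterion as a black box and stays uniform with its treatment of the intuitionistic case. Your shortcut should adapt there as well, with prime filters in place of ultrafilters. The one adjustment is in the surjectivity step: for a join-irreducible $c$, pick $x\in h(c)\setminus h(e)$, where $e$ is the largest element not above $c$.
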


Then we prove the rule translation lemma, which is the counterpart to \cite[Lem. 2.56]{amaster} and shows that the G\"odel translation connects the stable canonical rules for intuitionistic modal logics and the stable canonical rules for bimodal logics. 

\begin{lemma}[Rule translation lemma]
\label{4.2.7}
For every stable canonical rule $\rho(\B,D^\rightarrow,D^\Box)$ and every $(S4\otimes K)\oplus Mix$-modal space $\mathfrak{X}$, we have that $\mathfrak{X}\vDash t(\rho(\B_*,\mathfrak{D}_\rightarrow,\mathfrak{D}_M))$ iff $\mathfrak{X}\vDash\mu(\sigma(\B_*),\mathfrak{D}_I,\mathfrak{D}_M)$ where $\mathfrak{D}_\rightarrow=\{\beta(a)\setminus\beta(b)\mid (a,b)\in D^\rightarrow\}$, $\mathfrak{D}_M=\{\beta(a)\mid a\in D^\Box\}$ and $\mathfrak{D}_I=\{\overline{\beta(a)}\cup \beta(b)\mid (a,b)\in D^\rightarrow\}$.
\end{lemma}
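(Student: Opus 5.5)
We argue geometrically, comparing the two refutation criteria: Proposition \ref{3.3.12} (via Proposition \ref{4.1.17}) on the intuitionistic side, and Proposition \ref{4.1.9} on the bimodal side. The goal is to show that $\mathfrak{X}\not\vDash t(\rho(\B_*,\mathfrak{D}_\rightarrow,\mathfrak{D}_M))$ iff $\mathfrak{X}\not\vDash\mu(\sigma(\B_*),\mathfrak{D}_I,\mathfrak{D}_M)$.

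First, by Proposition \ref{4.1.17}, $\mathfrak{X}\not\vDash t(\rho(\B,D^\rightarrow,D^\Box))$ iff $\rho(\mathfrak{X})\not\vDash\rho(\B,D^\rightarrow,D^\Box)$. By Proposition \ref{3.3.12}, this holds iff there is a surjective stable Priestley morphism $h:\rho(\mathfrak{X})\to\B_*$ satisfying CDC$_\Box$ for $\mathfrak{D}_M$ and CDC$_\rightarrow$ for $\mathfrak{D}_\rightarrow$. The plan is to pass between such $h$ and a continuous stable surjection $f:\mathfrak{X}\to\sigma(\B_*)$ satisfying CDC for $\mathfrak{D}_I$ (w.r.t.\ $R_I$) and $\mathfrak{D}_M$ (w.r.t.\ $R_M$). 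The natural correspondence is $f=h\circ\rho$ in one direction and, in the other, $h(\rho(x))=f(x)$. The latter is well defined because $\B_*$ is a poset, so stability of $f$ w.r.t.\ $R_I$ forces $f$ to be constant on $R_I$-clusters. Continuity transfers via the quotient topology, surjectivity is immediate, and order/$R_I$-preservation translate directly since $\rho(x)\leq\rho(y)$ iff $xR_Iy$.

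Next we match the modal conditions. For stability w.r.t.\ $R_M$: if $h$ is stable for $[R_I\circ R_M\circ R_I]$, then $xR_My$ gives $\rho(x)[R_I\circ R_M\circ R_I]\rho(y)$, hence $f(x)Rf(y)$. Conversely, $xR_Ix_1R_Mx_2R_Iy$ gives $f(x)\leq f(x_1)Rf(x_2)\leq f(y)$, which yields $f(x)Rf(y)$ by Proposition \ref{4.1.11}. CDC$_\Box$ for $\beta(a)$ is compared using the footnoted form of Definition \ref{3.3.9}: $R[h(\rho(x))]\cap\overline{\beta(a)}\neq\emptyset$ iff some $\rho(x)[R_I R_M R_I]\rho(y)$ has $h(\rho(y))\notin\beta(a)$. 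Because $\mathfrak{X}$ validates \textbf{Mix}, $R_I\circ R_M\circ R_I=R_M$, so this is the same as some $xR_My$ with $f(y)\notin\beta(a)$. The two CDCs therefore coincide, with $\mathfrak{D}_M$ read on either side.

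The step I expect to require the most care is matching CDC$_\rightarrow$ for $\beta(a)\setminus\beta(b)$ on $h$ with the bimodal CDC for $\overline{\beta(a)}\cup\beta(b)\in\mathfrak{D}_I$ on $f$ w.r.t.\ $R_I$. Unfolding the contrapositive form of the bimodal CDC for $D=\overline{\beta(a)}\cup\beta(b)$: it says that if ${\uparrow}f(x)$ meets $\overline{D}=\beta(a)\setminus\beta(b)$, then $f[R_I[x]]$ meets it. Since $f[R_I[x]]=h[{\uparrow}\rho(x)]$ and ${\uparrow}f(x)={\uparrow}h(\rho(x))$, this is literally CDC$_\rightarrow$ for $\beta(a)\setminus\beta(b)$. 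This explains the choice of complements in $\mathfrak{D}_I$, and I would verify the direction of the complements carefully here.

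Finally, combining these equivalences with Proposition \ref{4.1.9} applied to the finite algebra $\sigma(\B_*)^*$ gives the lemma. We should also note that $\sigma(\B_*)$ is a finite $(S4\otimes K)\oplus Mix$-space, by Proposition \ref{4.1.11} and Proposition \ref{4.1.12}, so that the rule $\mu(\sigma(\B_*),\mathfrak{D}_I,\mathfrak{D}_M)$ makes sense.
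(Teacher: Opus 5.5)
Your proposal is correct and follows the paper's proof. You use Propositions \ref{4.1.13} and \ref{4.1.17} to reduce to comparing refutations of $\rho(\B,D^\rightarrow,D^\Box)$ on $\rho(\mathfrak{X})$ with refutations of $\mu(\sigma(\B_*),\mathfrak{D}_I,\mathfrak{D}_M)$ on $\mathfrak{X}$, and transfer witnessing maps via $f=h\circ\rho$ using antisymmetry of $\B_*$, Proposition \ref{4.1.11}, the complement $\overline{\beta(a)}\cup\beta(b)$, and \textbf{Mix} on $\mathfrak{X}$ exactly as the paper does; your property-by-property equivalence for the pair $(h,h\circ\rho)$ is a more compact packaging of its two directions, and the paper needs only reflexivity of $R_I$, not \textbf{Mix}, in one direction of the modal CDC.
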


\begin{proof}

By Propositions \ref{4.1.13} and \ref{4.1.17}, it suffices to prove that $\rho(\mathfrak{X})\vDash\rho(\B_*,\mathfrak{D}_\rightarrow,\mathfrak{D}_M)$ iff $\mathfrak{X}\vDash\mu(\sigma(\B_*),\mathfrak{D}_I,\mathfrak{D}_M)$.

Suppose $\mathfrak{X}\not\vDash\mu(\sigma(\B_*),\mathfrak{D}_I,\mathfrak{D}_M)$, then there is a continuous stable surjection $f:\mathfrak{X}\rightarrow \sigma(\B_*)$ satisfying CDC$_\Box$ for any $\beta(a)\in \mathfrak{D}_M$ and for any $\beta(b)\in \mathfrak{D}_I$. Define $g:\rho(\mathfrak{X})\rightarrow \B_*$ by $g(\rho(x))=f(x)$. Suppose $x\backsim y$ in $\mathfrak{X}$, then $xR_I y$ and $yR_I x$. As $f$ is stable, $f(x) R_I f(y)$ and $f(y) R_I f(x)$ in $\sigma(\B_*)$, namely $f(x)\subseteq f(y)$ and $f(y)\subseteq f(x)$ in $\B_*$. Thus $f(x)=f(y)$, $g$ is well defined.

As $f$ is surjective, so is $g$. Suppose $\rho(x)\leq \rho(y)$ in $\rho(\mathfrak{X})$, then $xR_I y$ in $\mathfrak{X}$, as $f$ is stable, $f(x)R_I f(y)$ in $\sigma(\B_*)$, namely $f(x)\subseteq f(y)$ in $\B_*$. Thus $g(\rho(x))\subseteq g(\rho(y))$, $g$ is order-preserving. Suppose $\rho(x)[R_I\circ R_M\circ R_I]\rho(y)$, then there exist $x_1,x_2$ such that $xR_I x_1 R_M x_2 R_I y$ in $\mathfrak{X}$. As $f$ is stable, $f(x)R_If(x_1)R_M f(x_2)R_I f(y)$ in $\sigma(\B_*)$, namely $f(x)\subseteq f(x_1)R f(x_2)\subseteq f(y)$ in $\B_*$. As $\B_*$ is a modal Esakia space, by Proposition \ref{4.1.11}, $\subseteq \circ R\circ \subseteq = R$. Thus $f(x)Rf(y)$ in $\B_*$, namely $g(\rho(x))Rg(\rho(y))$. \textcolor{black}{This proves that} $g$ is stable.

For any $p\in \B_*$, we have that $x\in \rho^{-1}(g^{-1}(p))$ iff $g(\rho(x))=p$ iff $f(x)=p$ iff $x\in f^{-1}(p)$. As $f$ is continuous, $\rho^{-1}(g^{-1}(p))=f^{-1}(p)$ is clopen in $\mathfrak{X}$. Clearly, $\rho^{-1}(g^{-1}(p))$ does not cut any $R_I$-cluster, so $g^{-1}(p)$ is clopen in $\rho(\mathfrak{X})$ as the topology is the quotient topology. Thus $g$ is continuous.

Suppose ${\uparrow} g(\rho(x))\cap \beta(a)\setminus \beta(b)\not=\emptyset$ where $(a,b)\in D^\rightarrow$, then ${\uparrow} f(x)\cap \beta(a)\setminus \beta(b)\not=\emptyset$. Thus $R_I[f(x)]\not\subseteq \overline{\beta(a)}\cup\beta(b)$. As $f$ satisfies CDC$_\Box$ 
for $\overline{\beta(a)}\cup\beta(b)$, \textcolor{black}{we have that} $f[R_I[x]]\not\subseteq \overline{\beta(a)}\cup\beta(b)$. There exists $xR_I y$ such that $f(y)\in \beta(a)$ while $f(y)\not\in\beta(b)$. Then $\rho(x)\leq \rho(y)$ in $\rho(\mathfrak{X})$, $g(\rho(y))=f(y)\in\beta(a)\setminus\beta(b)$, and thus $g[{\uparrow}\rho(x)]\cap \beta(a)\setminus\beta(b)\not=\emptyset$. Therefore, $g$ satisfies CDC$_\rightarrow$ any $\beta(a)\setminus\beta(b)$ where $(a,b)\in D^\rightarrow$.

Suppose $g[[R_I\circ R_M\circ R_I][\rho(x)]]\subseteq \beta(a)$ where $a\in D^\Box$, then for any $xR_M y$ in $\mathfrak{X}$, as $R_I$ is reflexive, $xR_I\circ R_M\circ R_I y$. By definition, $\rho(x)[R_I\circ R_M\circ R_I]\rho(y)$ in $\rho(\mathfrak{X})$. As $g[[R_I\circ R_M\circ R_I][\rho(x)]]\subseteq \beta(a)$, it follows that $g(\rho(y))=f(y)\in\beta(a)$. Thus $f[R_M[x]]\subseteq \beta(a)$, and as $f$ satisfies CDC$_\Box$ for $\beta(a)$, \textcolor{black}{we have that}  $R_M[f(x)]\subseteq \beta(a)$, namely $R[g(\rho(x))]\subseteq \beta(a)$ in $\B_*$. Therefore, $g$ satisfies CDC$_\Box$ for any $\beta(a)$ where $a\in D^\Box$. By Proposition \ref{3.3.12}, $\rho(\mathfrak{X})\not\vDash\rho(\B_*,\mathfrak{D}_\rightarrow,\mathfrak{D}_M)$.

For the other direction, suppose $\rho(\mathfrak{X})\not\vDash\rho(\B_*,\mathfrak{D}_\rightarrow,\mathfrak{D}_M)$, by Proposition \ref{3.3.12}, there is a surjective stable Priestley morphism $f: \rho(\mathfrak{X})\rightarrow \B_*$ satisfying CDC$\rightarrow$ for any $\beta(a)\setminus\beta(b)$ where $(a,b)\in D^\rightarrow$  and CDC$_\Box$ for any $\beta(a)$ where $a\in D^\Box$. Define $g:\mathfrak{X}\rightarrow \sigma(\B_*)$ as follows: $g(x)=f(\rho(x))$. 

As $f$ is surjective, so is $g$. Suppose $xR_I y$ in $\mathfrak{X}$, then $\rho(x)\leq \rho(y)$ in $\rho(\mathfrak{X})$. As $f$ is order-preserving, $f(\rho(x))\subseteq f(\rho(y))$, namely $f(\rho(x))R_I f(\rho(y))$ in $\sigma(\B_*)$, and $g(x)R_I g(y)$. Thus $g$ is relation-preserving w.r.t $R_I$. Suppose $xR_M y$ in $\mathfrak{X}$, as $R_I$ is reflexive, $x R_I\circ R_M\circ R_I y$. By definition, $\rho(x)[R_I\circ R_M\circ R_I]\rho(y)$ in $\rho(\mathfrak{X})$. As $f$ is stable, $f(\rho(x))Rf(\rho(y))$ in $\B_*$, namely $g(x)R_M g(y)$ in $\sigma(\B_*)$. $g$ is thus relation-preserving w.r.t $R_M$.

For any $p\in\sigma(\B_*)$, $g^{-1}(p)=\rho^{-1}(f^{-1}(p))$. As $f$ is continuous, $f^{-1}(p)$ is clopen in $\rho(\mathfrak{X})$. As $\rho(\mathfrak{X})$ has the quotient topology, $\rho^{-1}(f^{-1}(p))$ is clopen in $\mathfrak{X}$. Thus $g$ is continuous.

Suppose $g[R_I[x]]\subseteq \overline{\beta(a)}\cup \beta(b)$ where $(a,b)\in D^\rightarrow$, for any $\rho(x)\leq \rho(y)$ in $\rho(\mathfrak{X})$, $xR_I y$ in $\mathfrak{X}$, and $f(\rho(y))=g(y)\in\overline{\beta(a)}\cup \beta(b)$. Thus $f[{\uparrow} \rho(x)]\cap \beta(a)\setminus\beta(b)=\emptyset$. As $f$ satisfies CDC$_\rightarrow$ for $\beta(a)\setminus\beta(b)$, \textcolor{black}{we have that} ${\uparrow} f(\rho(x))\cap \beta(a)\setminus\beta(b)=\emptyset$. Thus $R_I[g(x)]\cap \beta(a)\setminus\beta(b)=\emptyset$, namely $R_I[g(x)]]\subseteq\overline{\beta(a)}\cup \beta(b)$. \textcolor{black}{This proves that} $g$ satisfies CDC$_\Box$ for any element in $\mathfrak{D}_I$.

Suppose $g[R_M[x]]\subseteq \beta(a)$ where $a\in D^\Box$. For any $\rho(x)[R_I\circ R_M\circ R_I]\rho(y)$ in $\rho(\mathfrak{X})$, by definition $xR_I\circ R_M \circ R_I y$ in $\mathfrak{X}$. As $\mathfrak{X}$ validates $\textbf{Mix}$, $R_I\circ R_M\circ R_I=R_M$, and thus $xR_My$. As $g[R_M[x]]\subseteq \beta(a)$, it follows that $f(\rho(y))=g(y)\in \beta(a)$. Thus $f[[R_I\circ R_M\circ R_I][\rho(x)]]\subseteq \beta(a)$. As $f$ satisfies CDC$_\Box$ for $\beta(a)$, \textcolor{black}{we have that} $R[f(\rho(x))]\subseteq \beta(a)$. Thus $R_M[g(x)]=R[f(\rho(x))]\subseteq \beta(a)$. \textcolor{black}{This proves that} $g$ satisfies CDC$_\Box$ for any $\beta(a)$ where $a\in D^\Box$. Therefore, $\mathfrak{X}\not\vDash\mu(\sigma(\B_*),\mathfrak{D}_I,\mathfrak{D}_M)$ by Proposition \ref{4.1.9}.

\end{proof}

We can also state the above lemma in algebraic terms just like \cite[Lem. 2.56]{amaster}: for every stable canonical rule $\rho(\B,D^\rightarrow,D^\Box)$ and every $(S4\otimes K)\oplus Mix$-algebra $\A$, we have that $\A\vDash t(\rho(\B,D^\rightarrow,D^\Box))$ iff $\A\vDash\mu(\sigma(\B),D^I, D^M)$ where $D^I=\{\neg a\lor b\mid (a,b)\in D^\rightarrow\}$ and $D^M=D^\Box$.

Now, we can finish the proof of the Dummett-Lemmon conjecture which is the main result of this section.

\begin{theorem}
For any $L\in  \mathbf{NExt}(\textbf{IntK$^R_\Box$})$, $L$ is Kripke complete iff $\tau(L)$ is Kripke complete. 
\end{theorem}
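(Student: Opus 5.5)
The right-to-left direction is Proposition \ref{4.2.3}, so I only need to show that Kripke completeness of $L$ implies Kripke completeness of $\tau(L)$.

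Let $L$ be Kripke complete, and suppose $\Gamma/\Delta\notin\tau(L)$ is a bimodal rule. I want an $(S4\otimes K)\oplus Mix$ Kripke frame that validates $\tau(L)$ and refutes $\Gamma/\Delta$. The plan has four steps.

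\textbf{Step 1: normal form.} Since $\Gamma/\Delta\notin\tau(L)$, some $(S4\otimes K)\oplus Mix$-algebra $\A\in Alg(\tau(L))$ refutes it. By Proposition \ref{4.1.6}, $\A$ refutes some stable canonical rule $\mu(\B,D^I,D^M)$, where $\B$ is a finite $(S4\otimes K)\oplus Mix$-algebra and every algebra refuting that rule refutes $\Gamma/\Delta$. So it suffices to refute $\mu(\B,D^I,D^M)$ on a suitable Kripke frame.

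\textbf{Step 2: pass to the intuitionistic side.} Let $f:\A_*\to\B_*$ be the refuting map from Proposition \ref{4.1.9}. Replacing the finite Kripke-type target $\B_*$ by its skeleton, Lemma \ref{lemma 4.2.4} (with the parameters written as complements) gives
\[
\sigma(\rho(\A_*))\not\vDash \mu\bigl(\sigma(\rho(\B_*)),\overline{\rho\mathfrak{D}_I},\overline{\rho\mathfrak{D}_M}\bigr).
\]
Now $\sigma(\rho(\B_*))=\sigma(\mathcal{Y})$ for the finite modal Esakia space $\mathcal{Y}=\rho(\B_*)$. The goal is to express the collapsed rule as the G\"odel translation of an intuitionistic stable canonical rule $\rho(\mathcal{Y},\mathfrak{D}_\rightarrow,\mathfrak{D}'_M)$, via the rule translation lemma (Lemma \ref{4.2.7}).

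\textbf{Main obstacle.} Lemma \ref{4.2.7} needs every $R_I$-parameter to have the form $\overline{\beta(a)}\cup\beta(b)$ with $\beta(a),\beta(b)$ upsets. Collapsed parameters $\overline{\rho[\delta]}$ need not have this form. This is exactly the gap mentioned for \cite{amaster}. My first attempt is a decomposition. Every subset $S$ of the finite poset $\mathcal{Y}$ is a finite intersection of sets $\overline{U}\cup V$ with $U,V$ upsets (the dual of Remark \ref{2.6.6}). Moreover, CDC$_\Box$ for $R_I$ with respect to $S$ should follow from CDC with respect to suitable pieces, once each piece is chosen as $\overline{{\uparrow}s}\cup({\uparrow}s\setminus\{s\})$ for points $s\notin S$. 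Using these pieces, I would replace the single collapsed rule by finitely many rules of translated form, and show that the frame refutes at least one of them.

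For the $R_M$-parameters, I would take upset closures so that they become sets $\beta(a)$, using $R=\leq\circ R\circ\leq$ (Proposition \ref{4.1.11}).

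\textbf{Step 3: transfer through $L$.} Lemma \ref{4.2.7} together with Proposition \ref{4.1.17} then gives $\rho(\A)\not\vDash\rho(\mathcal{Y}^*,D^\rightarrow,D^\Box)$. Since $\rho(\A)\in Alg(L)$, this rule is not in $L$. Kripke completeness of $L$ yields an intuitionistic modal Kripke frame $\mathfrak{F}=(X,\leq,R)\vDash L$ refuting it. By the Kripke refutation theorem (proved in the appendix), this refutation is witnessed by a surjective stable order-preserving map $h$ satisfying the CDCs.

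\textbf{Step 4: lift back.} View $\mathfrak{F}$ as the bimodal frame $\sigma(\mathfrak{F})=(X,\leq,R)$. It validates $\textbf{Mix}$, and $\rho(\sigma(\mathfrak{F}))=\mathfrak{F}$. By Proposition \ref{4.2.2}, $\sigma(\mathfrak{F})\vDash\tau(L)$. Running the left-to-right construction of Lemma \ref{4.2.7} on Kripke frames, using Theorem \ref{4.2.6}, $\sigma(\mathfrak{F})$ refutes $\mu(\sigma(\mathcal{Y}),\dots)$.

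It remains to get back to $\mu(\B,D^I,D^M)$ itself, which has a possibly clustered target $\B_*$. Following the proof of Lemma \ref{4.1.19}, I would replace each point $x$ of $\sigma(\mathfrak{F})$ whose image under the composite lies in a proper cluster $C$ of $\B_*$ by a copy of $C$, arranged as a cluster, so that the frame stays a partial-order-with-clusters $S4\otimes K$ frame. Copies should be $R_M$-related whenever their originals are, which keeps $\textbf{Mix}$. This blown-up frame maps onto $\B_*$ with CDC, and it collapses back onto $\sigma(\mathfrak{F})$ by a bounded morphism, so it still validates $\tau(L)$ by Proposition \ref{4.2.2}. Hence it refutes $\Gamma/\Delta$.

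Beyond the decomposition in the main obstacle, I also expect difficulty in checking that the CDC for the $R_I$-parameters $D^I$ survives this cluster blow-up.
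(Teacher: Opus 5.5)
Your normal form, your use of the rule collapse lemma, and your final cluster blow-up (copies $x_i$ with $x_iR_Iy_j$ iff $x\leq y$ and $x_iR_My_j$ iff $xRy$) are the same as in the paper's proof. The CDC check for the $R_I$-parameters after the blow-up, which you worry about, is routine. The genuine gap is your ``main obstacle'': the decomposition you propose does not resolve it.

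Your pieces are $\overline{{\uparrow}s}\cup({\uparrow}s\setminus\{s\})=\overline{\{s\}}$ for $s\notin S$. CDC for all of them does imply CDC for $S$, but that is only the direction needed at the end: a frame refuting the piece-rule refutes the collapsed rule. Step 3 needs the converse. From the refutation $f$ of the collapsed rule on $\sigma(\rho(\A_*))$, you must produce a refutation of some rule of translated form.

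CDC for $\overline{\{s\}}$ demands that every $x$ with $f(x)\leq s$ has some $y\geq x$ with $f(y)=s$. CDC for $S$ only asks that such an $x$ reach \emph{some} point outside $S$, which is much weaker. Algebraically, writing $S=\bigcap_i(\overline{U_i}\cup V_i)$, CDC for $S$ only yields $h\bigl(\bigcap_i(U_i\rightarrow V_i)\bigr)=\bigcap_i\bigl(h(U_i)\rightarrow h(V_i)\bigr)$. It does not yield the separate equations $h(U_i\rightarrow V_i)=h(U_i)\rightarrow h(V_i)$. If you drop the pieces for which CDC fails, you lose the way back.

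The same one-directionality affects replacing an arbitrary $R_M$-parameter $E$ by an upset, since $f[R_M[x]]$ need not be an upset. This is precisely the defect of the collapsed rules in \cite{amaster}. Fixing it head-on would need a genuine normal-form result, such as the modalized rules of \cite{an}, and your sketch does not supply one.

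The paper sidesteps the issue entirely, and this is the missing idea. Write $\mu_c$ for the collapsed rule.
\begin{itemize}
\item Theorem~\ref{4.1.24} gives $L'$ with $\sigma(L')=(\textbf{Grz}\otimes\textbf{K})\oplus\textbf{Mix}^R\oplus\mu_c$.
\item The $(Grz\otimes K)\oplus Mix$-space $\sigma(\rho(\A_*))$ refutes $\mu_c$, so it refutes $t(r)$ for some $r\in L'$. By Theorem~\ref{3.1.9} one may take $r=\rho(\C,D^\rightarrow,D^\Box)$ with $\rho(\A)\not\vDash r$.
\item Since $\rho(\A)\vDash L$, we get $r\notin L$, and Kripke completeness yields a frame $\mathfrak{F}\vDash L$ refuting $r$.
\end{itemize}
The way back is the inclusion
$(\textbf{S4}\otimes\textbf{K})\oplus\textbf{Mix}^R\oplus\{t(r')\mid r'\in L'\}\subseteq(\textbf{S4}\otimes\textbf{K})\oplus\textbf{Mix}^R\oplus\mu_c$.
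To prove it, suppose an $(S4\otimes K)\oplus Mix$-space $\mathfrak{Y}$ refutes $t(r')$ for a stable canonical $r'\in L'$.
\begin{itemize}
\item Lemma~\ref{4.2.7} applies because $r'$ is intuitionistic, hence automatically of translated form.
\item Lemma~\ref{lemma 4.2.4} is applied to a target $\sigma(\C'_*)$ without proper clusters, so collapsing changes nothing.
\item Together these show that $\sigma(\rho(\mathfrak{Y}))$ refutes $t(r')$, hence refutes $\mu_c$, hence $\mathfrak{Y}$ refutes $\mu_c$ by Proposition~\ref{4.1.15}.
\end{itemize}
Therefore $\mathfrak{F}$, viewed as a bimodal frame, refutes $\mu_c$, and your blow-up finishes the argument.

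One minor point: the bounded morphism from the blown-up frame onto $\sigma(\mathfrak{F})$ transfers validity in the wrong direction. The right reason the blown-up frame validates $\tau(L)$ is $\rho(\mathfrak{F}')=\mathfrak{F}\vDash L$ together with Proposition~\ref{4.2.2}, which you also cite.
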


\begin{proof}

The right-to-left direction is given by Proposition \ref{4.2.3}. For the other direction, let $L\in  \mathbf{NExt}(\textbf{IntK$^R_\Box$})$ be arbitrary, suppose $L$ is Kripke complete and $\Gamma/\Delta\not\in \tau(L)$ where $\Gamma/\Delta$ is a bimodal multi-conclusion rule. By Proposition \ref{4.1.6}, we can assume that $\Gamma/\Delta=\mu(\A_*, \mathfrak{D}_I,\mathfrak{D}_M)$ where $\A$ is a finite $(S4\otimes K)\oplus Mix$-algebra and $\mathfrak{D}_I,\mathfrak{D}_M\subseteq \mathcal{P}(\A_*)$. By Corollary \ref{combi} and Theorem \ref{4.1.8}, there exists an $(S4\otimes K)\oplus Mix$-modal space $\mathfrak{X}$ such that $\mathfrak{X}\vDash\tau(L)$ and $\mathfrak{X}\not\vDash\mu(\A_*, \mathfrak{D}_I,\mathfrak{D}_M)$. By Lemma \ref{lemma 4.2.4}, $\sigma(\rho(\mathfrak{X}))\not\vDash\mu(\sigma(\rho(\A_*)),\overline{\rho\overline{\mathfrak{D}_I}},\overline{\rho\overline{\mathfrak{D}_M}})$ where $\overline{\rho\overline{\mathfrak{D}_I}}=\{\overline{\rho[\Bar{\delta}]}\mid \delta\in \mathfrak{D}_I\}$ and $\overline{\rho\overline{\mathfrak{D}_M}}=\{\overline{\rho[\Bar{\delta}]}\mid \delta\in \mathfrak{D}_M\}$. Note that $\sigma(\rho(\mathfrak{X}))$ is a $(Grz\otimes K)\oplus Mix$-modal space.

Now by Theorem \ref{4.1.24}, there exists an $L'\in  \mathbf{NExt}(\textbf{IntK$^R_\Box$})$ such that $\sigma(L')=(\textbf{Grz}\otimes \textbf{K})\oplus \textbf{Mix}^R\oplus \mu(\sigma(\rho(\A_*)),\overline{\rho\overline{\mathfrak{D}_I}},\overline{\rho\overline{\mathfrak{D}_M}})$, namely $(\textbf{Grz}\otimes \textbf{K})\oplus \textbf{Mix}^R\oplus \mu(\sigma(\rho(\A_*)),\overline{\rho\overline{\mathfrak{D}_I}},\overline{\rho\overline{\mathfrak{D}_M}})=(\textbf{Grz}\otimes \textbf{K})\oplus \textbf{Mix}^R\oplus\{t(\Gamma/\Delta)\mid \Gamma/\Delta\in L'\}$. Therefore, $\sigma(\rho(\mathfrak{X}))\not\vDash t(\Gamma/\Delta)$ for some $\Gamma/\Delta\in L'$. By Theorem \ref{3.1.9}, we can assume that $\Gamma/\Delta$ is a stable canonical rule of the form $\rho(\B,D^\rightarrow, D^\Box)$. Thus $\sigma(\rho(\mathfrak{X}))\not\vDash t(\rho(\B,D^\rightarrow, D^\Box))$. By Proposition \ref{4.1.17} and Proposition \ref{4.1.14}, $\rho(\mathfrak{X})\not\vDash\rho(\B,D^\rightarrow, D^\Box)$. As $\mathfrak{X}\vDash \tau(L)$, by Proposition \ref{4.1.17}, $\rho(\mathfrak{X})\vDash L$. Therefore,   $\rho(\B,D^\rightarrow, D^\Box)\not\in L$. As $L$ is Kripke complete, there is an intuitionistic modal Kripke frame $\mathfrak{F}=(X,\leq,R)$ such that $\mathfrak{F}\vDash L$ while $\mathfrak{F}\not\vDash \rho(\B,D^\rightarrow, D^\Box)$. Viewing $\mathfrak{F}$ as an $(S4\otimes K)\oplus Mix$-bimodal Kripke frame, as $\rho(\mathfrak{F})=\mathfrak{F}\vDash L$, by Proposition \ref{4.2.2}, $\mathfrak{F}\not\vDash t(\rho(\B,D^\rightarrow, D^\Box))$ and $\mathfrak{F}\vDash \tau(L)$.

\vspace{1mm}
Then we prove that $(\textbf{S4}\otimes \textbf{K})\oplus \textbf{Mix}^R\oplus\{t(\Gamma/\Delta)\mid \Gamma/\Delta\in L'\}\subseteq(\textbf{S4}\otimes \textbf{K})\oplus \textbf{Mix}^R\oplus \mu(\sigma(\rho(\A_*)),\overline{\rho\overline{\mathfrak{D}_I}},\overline{\rho\overline{\mathfrak{D}_M}})$: let $\mathfrak{X}$ be an arbitrary $(S4\otimes K)\oplus Mix$-modal space, suppose $\mathfrak{X}\not\vDash t(\Gamma/\Delta)$ for some $\Gamma/\Delta \in L'$. By Theorem \ref{3.1.9}, we can assume $\Gamma/\Delta$ is a stable canonical rule of the form $\rho(\C_*,\mathfrak{D'}_\rightarrow,\mathfrak{D}'_M)$. As $\mathfrak{X}\not\vDash t(\rho(\C_*,\mathfrak{D}'_\rightarrow,\mathfrak{D}'_M))$, by Lemma \ref{4.2.7}, $\mathfrak{X}\not\vDash\mu(\sigma(\C_*),\mathfrak{D}'_I,\mathfrak{D}'_M)$. By Lemma \ref{lemma 4.2.4} and the fact that $\sigma(\rho(\sigma(\C_*)))=\sigma(\C_*)$, it follows that  $\sigma(\rho(\mathfrak{X}))\not\vDash\mu(\sigma(\C_*),\mathfrak{D}'_I,\mathfrak{D}'_M)$. By Lemma \ref{4.2.7}, $\sigma(\rho(\mathfrak{X}))\not\vDash t(\rho(\C_*,\mathfrak{D}'_\rightarrow,\mathfrak{D}'_M))$. As $\sigma(\rho(\mathfrak{X}))$ is a $(Grz\otimes K)\oplus Mix$-modal space and $(\textbf{Grz}\otimes \textbf{K})\oplus \textbf{Mix}^R\oplus \mu(\sigma(\rho(\A_*)),\overline{\rho\overline{\mathfrak{D}_I}},\overline{\rho\overline{\mathfrak{D}_M}})=(\textbf{Grz}\otimes \textbf{K})\oplus \textbf{Mix}^R\oplus\{t(\Gamma/\Delta)\mid \Gamma/\Delta\in L'\}$, we get that $\sigma(\rho(\mathfrak{X}))\not\vDash \mu(\sigma(\rho(\A_*)),\overline{\rho\overline{\mathfrak{D}_I}},\overline{\rho\overline{\mathfrak{D}_M}})$. By Proposition \ref{4.1.15} and the duality, $\mathfrak{X}\not\vDash\mu(\sigma(\rho(\A_*)),\overline{\rho\overline{\mathfrak{D}_I}},\overline{\rho\overline{\mathfrak{D}_M}})$. This proves that for any $(S4\otimes K)\oplus Mix$-modal space, if $\mathfrak{X}\not\vDash (\textbf{S4}\otimes \textbf{K})\oplus \textbf{Mix}^R\oplus\{t(\Gamma/\Delta)\mid \Gamma/\Delta\in L'\}$, then $\mathfrak{X}\not\vDash\mu(\sigma(\rho(\A_*)),\overline{\rho\overline{\mathfrak{D}_I}},\overline{\rho\overline{\mathfrak{D}_M}})$. By Theorems \ref{2.4.7} and \ref{4.1.8}, $(\textbf{S4}\otimes \textbf{K})\oplus \textbf{Mix}^R\oplus\{t(\Gamma/\Delta)\mid \Gamma/\Delta\in L'\}\subseteq(\textbf{S4}\otimes \textbf{K})\oplus \textbf{Mix}^R\oplus \mu(\sigma(\rho(\A_*)),\overline{\rho\overline{\mathfrak{D}_I}},\overline{\rho\overline{\mathfrak{D}_M}})$.

\vspace{1mm}
Now, since $\mathfrak{F}\not\vDash t(\rho(\B,D^\rightarrow, D^\Box))$ where $\rho(\B,D^\rightarrow, D^\Box)\in L'$, we have that $\mathfrak{F}\not \vDash\mu(\sigma(\rho(\A_*)),\overline{\rho\overline{\mathfrak{D}_I}},\overline{\rho\overline{\mathfrak{D}_M}})$. Therefore, by Theorem \ref{4.2.6}, there is a surjective stable map $f:X\rightarrow \sigma(\rho(\A_*))$ satisfying CDC for any element in $\overline{\rho\overline{\mathfrak{D}_I}}$ and $\overline{\rho\overline{\mathfrak{D}_M}}$. We construct a new bimodal Kripke frame  $\mathfrak{F}'=(X',R_I,R_M)$ as follows: for any $x\in X$, say $f(x)=\{a_1,...,a_n\}$ where $a_1,...,a_n\in \A_*$ (an $R_I$-cluster), replace $x$ by $n$ many copies of it, say $x_1,...,x_n$, then let $X'$ be the set of all such elements. For any $x_i,y_j\in X'$ ($x_i$ is a copy of $x$ and $y_j$ is a copy of $y$), define $x_iR_Iy_j$ iff $x\leq y$ (in $\mathfrak{F}$), and $x_iR_M y_j$ iff $xR_My$. It is easy to check that $\mathfrak{F}'$ is an $(S4\otimes K)\oplus Mix$-Kripke frame. As $\rho(\mathfrak{F}')=\mathfrak{F}=\rho(\mathfrak{F})\vDash L$, by Proposition \ref{4.2.2}, $\mathfrak{F}'\vDash\tau(L)$.

We then define $g:\mathfrak{F}'\rightarrow \A_*$ as follows: $g(x_i)=a_i$ where $x_i$ is copy of $x$ and $f(x)=\{a_1,...,a_n\}$. As $f$ is surjective , $g$ is surjective by the construction of $X'$. For any $x_i, y_j\in X'$(say $g(x_i)=a_i$, $g(y_j)=b_j$), suppose $x_iR_I y_j$ in $\mathfrak{F}'$, then $x\leq y$ in $\mathfrak{F}$ (or $x R_I y$ in $\mathfrak{F}$ when viewed $\mathfrak{F}$ as a bimodal Krikpe frame). As $f$ is stable, $f(x)R_If(y)$ in $\sigma(\rho(\A_*))$, namely $f(x)\leq f(y)$ in $\rho(\A_*)$. As $a_i$ is an element of $f(x)$ and $b_j$ is an element of $f(y)$ by the construction, $a_i R_I b_j$ in $\A_*$, namely $g(x_i)R_I g(y_j)$. Suppose $x_i R_M y_j$, then $xR_M y$ in $\mathfrak{F}$. As $f$ is stable, $f(x)[R_I\circ R_M \circ R_M]f(y)$. As $\A_*$ validates $\textbf{Mix}$, $R_I\circ R_M\circ R_I=R_M$, and thus $f(x)[R_M]f(y)$. We have that $a_i R_M b_j$, namely $g(x_i)R_M g(b_j)$. Therefore, $g$ is stable.

Suppose $R_M[g(x_i)]\cap \Bar{\delta}\not=\emptyset $ where $\delta\in \mathfrak{D}_M$, there exists $p\in\Bar{\delta}$ such that $g(x_i)R_M p$. Thus $\rho(p)\in \rho[\Bar{\delta}]$, and $f(x)=\rho(g(x_i))[R_M]\rho(p)$. Since $f$ satisfies CDC for $\overline{\rho[\Bar{\delta}]}$, there exists $z\in \mathfrak{F}$ such that $xR_M z$ and $f(z)\in\rho[\Bar{\delta}]$. By the construction, there exists $z_j$ such that $g(z_j)\in \Bar{\delta}$. Then as $x_iR_Mz_j$, $g[R_M[x_i]]\cap \Bar{\delta}\not=\emptyset$. Thus $g$ satisfies CDC for any element in $\mathfrak{D}_M$. Similarly, we can prove that $g$ satisfies CDC for any element in $\mathfrak{D}_I$. Therefore, by Theorem \ref{4.2.6}, $\mathfrak{F}'\not\vDash\mu(\A_*, \mathfrak{D}_I,\mathfrak{D}_M )$. As $\mathfrak{F}'\vDash\tau(L)$, this proves that $\tau(L)$ is Kripke complete.
    
\end{proof}

\textcolor{black}{The above theorem is not only interesting for its own sake, but also strengthens the connection between the lattice of intuitionistic modal logics and the lattice of normal extensions of the bimodal logic $(\mathbf{S4}\otimes\mathbf{K})\oplus\mathbf{Mix}$. It improves our toolkit when we try to study intuitionistic modal logics or bimodal logics via the G\"odel translation. In particular,
it allows us to reduce the problem about Kripke completeness of an intuitionistic modal  multi-conclusion consequence relation to the same problem about a bimodal multi-conclusion consequence relation and vice versa.}

\section{Conclusion}

In this paper we developed the method of stable canonical formulas and rules for intuitionistic modal logics. We proved that every intuitionistic modal multi-conclusion consequence relation is axiomatizable by stable canonical rules. Besides, \textcolor{black}{our rules have proved useful}. In particular, with some adaptations to known techniques from \cite{zmo} and \cite{amaster, an}, we provided an alternative proof of the Blok-Esakia theorem and \textcolor{black}{proved an analogue of the Dummett-Lemmon conjecture for intuitionistic modal logics}. Therefore, we hope that our work will contribute to the theory of stable canonical rules and also provide a uniform method to study intuitionistic modal logics.

For reasons of space, we did not include stable canonical rules for intuitionistic modal logics with both $\Box$ and $\Diamond$ in this paper. This can be done for $\textbf{IntK$_{\Box,\Diamond}$}$  (i.e. $\textbf{IntK$_\Box$}$ extended with $\Diamond$ which satisfies $\Diamond \bot=\bot$ and $\Diamond (p\lor q)=\Diamond p\lor \Diamond q$) with some adaptations to the techniques in Section 3. In particular, we can prove the corresponding completeness theorem with no difficulty and use  the duality in \cite{Al} to give a dual description of the rules. However, once we require some interactions between $\Box$ and $\Diamond$, \textcolor{black}{it is unclear} how to do filtrations in the counterpart to Proposition \ref{3.1.3}. \textcolor{black}{Solutions to some open problems about the finite model property of some intuitionistic modal logics with both $\Box$ and $\Diamond$ may help for this type of questions.}

Besides, \textcolor{black}{using similar methods, one can also develop stable canonical rules for Heyting-Lewis logics as well, which are introduced as superintuitionistic logics with a strict implication by Litak and Visser in \cite{lit} . This may not be surprising as implications can often be viewed as implicit modal operators. In fact,} our proof strategy of the Blok-Esakia theorem for intuitionistic modal logics also leads us to a gap in the proof of the Blok-Esakia theorem for Heyting-Lewis logics given in \cite{litak}\footnote{\textcolor{black}{In the proof of \cite[Lem. 4.18]{litak}, the induction step for formulas of the form $\Box_M\psi$ does not work. This lemma corresponds to a version of Lem 4.19 for Heyting-Lewis logics which is equivalent to the Blok-Esakia theorem. See \cite[Chap 5]{cmaster} for more details.}}. As far as we know, so far whether the Blok-Esakia theorem holds for Heyting-Lewis logics remains an open problem. 

Lastly, the applications of stable canonical rules in this paper are far from exhaustive. It is highly likely that we could apply our stable canonical rules to some other topics in intuitionistic modal logics. One such example is given in \cite{nadm} where stable canonical rules for superintuitionistic logics and transitive modal logics were used to give an alternative proof of decidability of admissibility of these logics. Since whether there is an algorithm to decide the admissibility of a rule remains an open problem for most intuitionistic modal logics, we hope and believe that our stable canonical rules for intuitionistic modal logics may also help us attacking these questions.

\section*{Acknowledgment}
The author thanks the referees for many useful suggestions which have improved the paper. The author is very grateful to his master supervisor Nick Bezhanishvili for his guidance that led to the first draft of this paper. The author is also thankful to Rodrigo Almeida and 
Antonio Maria Cleani for helpful comments and interesting discussions.

\appendix

\section{Applications of Stable Canonical Rules for Intuitionistic Modal
Logics}
\label{sec:sample:appendix}

\subsection{Proof of Theorem 4.29}
\label{A}
For any intuitionistic modal Kripke frame $\mathcal{X}=(X,\leq,R)$, $\mathcal{X}\vDash\rho(\A,D^\rightarrow, D^\Box)$ iff $\mathcal{X}^*\vDash\rho(\A,D^\rightarrow, D^\Box)$ iff $(\mathcal{X}^*)_*\vDash \rho(\A,D^\rightarrow, D^\Box)$.

Suppose $\mathcal{X}=(X,\leq,R)\not\vDash\rho(\A,D^\rightarrow, D^\Box)$, then $(\mathcal{X}^*)_*\not\vDash \rho(\A,D^\rightarrow, D^\Box)$. By Proposition \ref{3.3.12}, there is a surjective stable Priestley morphism $g:(\mathcal{X}^*)_*\rightarrow X_A$ satisfying CDC$_\Box$ for any $\beta(a)$ where $a\in D^\Box$ and CDC$_\rightarrow$ for any $\beta(a)\setminus \beta(b)$ where $(a,b)\in D^\rightarrow$. Define $\epsilon:\mathcal{X}\rightarrow (\mathcal{X}^*)_*$ as follows: $\epsilon(x)=\{{{\uparrow}} x\subseteq U\mid U \text{ is an upset of }X\}$ (It is easy to see that $\epsilon(x)$ is a prime filter of upsets of $X$). Then we check that $g\circ\epsilon$ satisfies all the expected conditions.

For any $x\leq y$ in $\mathcal{X}$, ${{\uparrow}} y\subseteq {{\uparrow}} x$, and thus $\epsilon(x)\subseteq \epsilon(y)$, $g\circ\epsilon (x)\leq g\circ\epsilon (y)$. $g\circ\epsilon$ is order-preserving. Suppose $xRy$ in $\mathcal{X}$, then for any $\Box_R U\in \epsilon(x)$ where $U$ is an upset, by definition, ${{\uparrow}} x\subseteq \Box_R U$, and thus $x\in \Box_R U$, namely $R[x]\subseteq U$. Therefore, $y\in U$. As $U$ is an upset, ${{\uparrow}} y\subseteq U$, and $U\in \epsilon(y)$. This means that $\epsilon(x)R^*\epsilon(y)$ in $(\mathcal{X}^*)_*$. As $g$ is stable, $g(\epsilon(x))R_A g(\epsilon(y))$ in $\A_*$, and thus $g\circ \epsilon$ is stable.

For any $p\in X_A$, as $g$ is continuous, we know that $g^{-1}(p)$ is a clopen set of $(\mathcal{X}^*)_*$. As $(\mathcal{X}^*)_*$ is an Esakia space if we dismiss $R^*$, by Remark \ref{2.6.6} we have that $g^{-1}(p)=\bigcup_{1\leq i\leq n}(\beta(U_i)\setminus\beta(V_i))$ where $U_i's,V_i's$ are upsets of $X$. Since $g$ is surjective, $g^{-1}(p)$ is not empty, there exists $1\leq i\leq n$ such that $\beta(U_i)\setminus \beta(V_i)\not=\emptyset$. Thus $U_i\setminus V_i\not=\emptyset$, there exists $x\in U_i\setminus V_i$. Then ${\uparrow} x\subseteq U_i$ while ${\uparrow} x\not \subseteq V_i$. Thus $\epsilon(x)\in \beta(U_i)$ while $\epsilon(x)\not\in \beta(V_i)$. Therefore, $\epsilon(x)\in g^{-1}(p)$, and $g(\epsilon(x))=p$. As $p\in X_A$ is arbitrary, this proves that $g\circ \epsilon$ is surjective.

Let $x\in X$ be arbitrary, suppose $R_A[g(\epsilon(x))]\not\subseteq\beta(a)$ while $(g\circ \epsilon)[R[x]]\subseteq \beta(a)$ where $a\in D^\Box$. As $g$ satisfies CDC$_\Box$ for $\beta(a)$, \textcolor{black}{we have that} $g[R^*[\epsilon(x)]]\not\subseteq\beta(a)$, there exists $\epsilon(x)R^* \mathfrak{q}$ such that $g(\mathfrak{q})\not\in \beta(a)$, namely $a\not\in g(\mathfrak{q})$. As $(\mathcal{X}^*)_*\not\vDash \rho(\A,D^\rightarrow, D^\Box)$, we know that $\mathcal{X}^*\not\vDash \rho(\A,D^\rightarrow, D^\Box)$. By Proposition \ref{3.1.6}, there is a stable bounded lattice embedding $h: A\rightarrow Up(X)$ satisfying CDC for $D^\rightarrow$ and $D^\Box$. By duality, we can assume that $g=h^{-1}$, and thus there exists an upset $U$ of $X$ (in fact just $h(a)$) such that for any prime filter $\mathfrak{p}$ of upsets of $X$ (i.e any element of $(\mathcal{X}^*)_*$) $a\in g(\mathfrak{p})$ iff $U\in\mathfrak{p}$. As $(g\circ \epsilon)[R[x]]\subseteq \beta(a)$, for any $xRy$ in $X$, we have that $g(\epsilon(y))\in\beta(a)$, namely $a\in g(\epsilon(y))$ Thus $U\in \epsilon(y)$, and $y\in{\uparrow} y\subseteq  U$. Therefore, $R[x]\subseteq U$. As $\epsilon(x) R^*\mathfrak{q}$,  for any $\Box_R V\in\epsilon(x)$, it follows that $V\in \mathfrak{q}$ where $V$ is an upset of $X$. For any $x\leq z$ in $X$, \textcolor{black}{we have that} $R[z]\subseteq R[x]\subseteq U$ as $\leq \circ R=R$ in $\mathcal{X}$. Thus ${\uparrow} x\subseteq \Box_R U$, and by definition $\Box_R U\in \epsilon(x)$. Thus $U\in \mathfrak{q}$ and $a\in g(\mathfrak{q})$, contradicting the assumption that $a\not\in  g(\mathfrak{q})$. Therefore, for any $x\in X$, we have that $(g\circ \epsilon)[R[x]]\subseteq \beta(a)$ implies that $R_A[g(\epsilon(x))]\subseteq\beta(a)$ where $a\in D^\Box$. \textcolor{black}{This proves that} $g\circ \epsilon$ satisfies CDC$_\Box$ for any $\beta(a)$ where $a\in D^\Box$.

Let $x\in X$ be arbitrary, suppose ${\uparrow} g(\epsilon(x))\cap \beta(a)\setminus \beta(b)\not=\emptyset$ where $(a,b)\in D^\rightarrow$, as $g$ satisfies CDC$_\rightarrow$ for $\beta(a)\setminus \beta(b)$, \textcolor{black}{we have that} $g[{\uparrow} \epsilon(x)]\cap \beta(a)\setminus\beta(b)\not=\emptyset$. Thus there exists a prime filter $\mathfrak{p}$ of upsets of $X$ such that $\epsilon(x)\subseteq \mathfrak{p}$ and $g(\mathfrak{p})\in \beta(a)\setminus\beta(b)$, namely $a\in g(\mathfrak{p})$ and $b\not\in g(\mathfrak{p})$. As we have proved above, there exist upsets $U_a$, $U_b$ of $X$ such that for any prime filters $\mathfrak{q}$ of upsets of $X$ (i.e any element of $(\mathcal{X}^*)_*$) $a\in g(\mathfrak{q})$ iff $U_a\in\mathfrak{q}$, and $b\in g(\mathfrak{q})$ iff $U_b\in\mathfrak{q}$. Thus $U_a\in\mathfrak{p}$ and $U_b\not\in\mathfrak{p}$. Suppose $(g\circ\epsilon)[{\uparrow} x]\cap \beta(a)\setminus\beta(b)=\emptyset$, then for any $x\leq y$, \textcolor{black}{we have that} $U_a\in \epsilon(y)$ implies that $U_b\in\epsilon(y)$, namely $y\in U_a$ implies that $y\in U_b$. Thus ${\uparrow} x\subseteq U_a\rightarrow U_b$. As $\epsilon(x)\subseteq \mathfrak{p}$, \textcolor{black}{we get that} $U_a\rightarrow U_b\in\mathfrak{p}$, contradicting the fact that $U_a\in\mathfrak{p}$ and $U_b\not\in\mathfrak{p}$. Therefore, $(g\circ\epsilon)[{\uparrow} x]\cap \beta(a)\setminus\beta(b)\not=\emptyset$. This proves that $g\circ\epsilon$ satisfies CDC$_\rightarrow$ for $\beta(a)\setminus \beta(b)$ where $(a,b)\in D^\rightarrow$. Therefore, $g\circ \epsilon: X\rightarrow X_A$ is a surjective stable order-preserving map satisfying CDC$_\Box$ for any $\beta(a)$ where $a\in D^\Box$ and satisfies CDC$_\rightarrow$ for any $\beta(a)\setminus \beta(b)$ where $(a,b)\in D^\rightarrow$.

For the other direction, suppose there is a surjective stable order-preserving map $f:X\rightarrow X_A$ satisfying CDC$_\Box$ for any $\beta(a)$ where $a\in D^\Box$ and satisfies CDC$_\rightarrow$ for any $\beta(a)\setminus \beta(b)$ where $(a,b)\in D^\rightarrow$. 

As $\A$ is finite, every prime filter of $\A$ (i.e elements of $X_A$) is principal and is given by a join-irreducible element. For each join-irreducible element of $a$ of $\A$, we denote the prime filter corresponding to it by $P_{a}$. Define $g:(\mathcal{X}^*)_*\rightarrow X_A$ as follows: $g(\mathfrak{p})=\{a\in A\mid {\uparrow} f^{-1}(P_{a_i})\in \mathfrak{p} \text{ for some join-irreducible element } a_i\leq a \}$. Clearly, $0\not\in g(\mathfrak{p})\not=\emptyset$ and $g(\mathfrak{p})$ is upward-closed. Suppose $a,b\in g(\mathfrak{p})$, then there exist join-irreducible elements $a_i,b_j\in A$ such that $a_i\leq a$, $b_j\leq b$, ${\uparrow} f^{-1}(P_{a_i})\in\mathfrak{p}$ and ${\uparrow} f^{-1}(P_{b_j})\in\mathfrak{p}$. As $\mathfrak{p}$ is a prime filter, ${\uparrow} f^{-1}(P_{a_i})\cap {\uparrow} f^{-1}(P_{b_j})\not=\emptyset$, say $x\in {\uparrow} f^{-1}(P_{a_i})\cap {\uparrow} f^{-1}(P_{b_j})$. There exist $y\leq x, z\leq x$ in $X$ such that $f(y)=P_{a_i}$ and $f(z)=P_{b_j}$. As $f$ is order-preserving, \textcolor{black}{we have that} $P_{a_i}, P_{b_j}\subseteq f(x)$, and thus $a_i,b_j\in f(x)$. As $f(x)$ is a prime filter of $\A$, it follows that $a_i\land b_j\not=0$. Let $c_1,...,c_n$ enumerate all the join-irreducible elements of $\A$ less than or equal to $a_i\land b_j$ (note $1\leq n$ as $a_i\land b_j\not=0$). For any $x'\in {\uparrow} f^{-1}(P_{a_i})\cap {\uparrow} f^{-1}(P_{b_j})$, as we have shown $P_{a_i}\subseteq f(x')$ and $P_{b_j}\subseteq f(x')$. Thus $f(x')=P_{c_k}$ for some $1\leq k\leq n$. Since $x'\leq x'$, $x'\in {\uparrow} f^{-1}(P_{c_1})\cup...\cup {\uparrow} f^{-1}(P_{c_n})$. As $x'$ is arbitrary, this proves that ${\uparrow} f^{-1}(P_{a_i})\cap {\uparrow} f^{-1}(P_{b_j})\subseteq {\uparrow} f^{-1}(P_{c_1})\cup...\cup {\uparrow} f^{-1}(P_{c_n})$. As ${\uparrow} f^{-1}(P_{a_i})\in\mathfrak{p}$, ${\uparrow} f^{-1}(P_{b_j})\in\mathfrak{p}$ and $\mathfrak{p}$ is a prime filter, ${\uparrow} f^{-1}(P_{c_m})\in\mathfrak{p}$ for some $1\leq m\leq n$. As $c_m\leq a_i\land b_j\leq a\land b$ is join-irreducible, $a\land b\in g(\mathfrak{p})$. This proves that $g(\mathfrak{p})$ is a proper filter. By the definition of join-irreducibility, it is easy to see that $g(\mathfrak{p})$ is also prime. Therefore, $g$ is well defined.

Clearly, if $\mathfrak{p}\subseteq \mathfrak{q}$, then $g(\mathfrak{p})\subseteq g(\mathfrak{q})$, and thus $g$ is order-preserving. Suppose $f(x)=P_{a}$ where $x\in X$ and $a\in\A$ is join-irreducible. Then by definition ${\uparrow} f^{-1}(P_{a})\in \epsilon(x)$, and thus $P_{a}\subseteq g(\epsilon(x))$. Now suppose ${\uparrow} f^{-1}(P_b)\in \epsilon(x)$ where $b$ is join-irreducible, then ${\uparrow} x\subseteq {\uparrow} f^{-1}(P_b)$, there exists $y\leq x$ such that $f(y)=P_b$. As $f$ is order-preserving, $f(y)=P_b\subseteq f(x)=P_{a}$. Thus $g(\epsilon(x))\subseteq P_{a}$, and $g(\epsilon(x))=f(x)$. As $f$ is surjective, so is $g$. For any $P_a\in X_A$, let $\Gamma=\{b\in A\mid b \text{ is join-irreducible and } a\not\leq b\}$ (note that $\Gamma$ is finite as $\A$ is finite), then by definition $g^{-1}(P_a)=\beta({\uparrow} f^{-1}(P_a))\setminus \bigcup_{b\in\Gamma}\beta({\uparrow} f^{-1}(P_b))$ which is clopen in $(\mathcal{X}^*)_*$. Thus $g$ is continuous. Therefore, $g$ is a surjective Priestley morphism.

Suppose $\mathfrak{p}R^*\mathfrak{q}$ in $(\mathcal{X}^*)_*$, then for any $\Box a\in g(\mathfrak{p})$, there exists a join-irreducible element $a_i\leq \Box a$ such that ${\uparrow} f^{-1}(P_{a_i})\in\mathfrak{p}$. For any $x\in {\uparrow} f^{-1}(P_{a_i})$, there exists $y\leq x$ such that $f(y)=P_{a_i}$. As $f$ is order-preserving, $f(y)\subseteq f(x)$, $a_i\in f(x)$, and thus $\Box a\in f(x)$. For any $xRz$ in $\mathcal{X}$, as $f$ is stable, $f(x)R_Af(z)$. Thus $a\in f(z)$, and $f(z)=P_{b_i}$ for some join-irreducible element $b_i\leq a$. As $x\in{\uparrow} f^{-1}(P_{a_i})$ is arbitrary, \textcolor{black}{this proves that} ${\uparrow} f^{-1}(P_{a_i})\subseteq \Box_R ({\uparrow} f^{-1}(P_{b_1})\cup...\cup {\uparrow} f^{-1}(P_{b_n}))$ where $b_1,...,b_n$ enumerate all the join-irreducible elements less than or equal to $a$.
As ${\uparrow} f^{-1}(P_{a_i})\in\mathfrak{p}$, it follows that $\Box_R ({\uparrow} f^{-1}(P_{b_1})\cup...\cup {\uparrow} f^{-1}(P_{b_n}))\in\mathfrak{p}$. As $\mathfrak{p}R^*\mathfrak{q}$, \textcolor{black}{we know that} ${\uparrow} f^{-1}(P_{b_1})\cup...\cup {\uparrow} f^{-1}(P_{b_n})\in \mathfrak{q}$, and ${\uparrow} f^{-1}(P_{b_i})\in\mathfrak{q}$ for some $1\leq i\leq n$. Thus $a\in g(\mathfrak{q})$. As $\Box a\in g(\mathfrak{p})$ is arbitrary, this proves that $g(\mathfrak{p})R_A g(\mathfrak{q})$ in $X_A$, $g$ is thus stable.

Suppose $g[R^*[\mathfrak{p}]]\subseteq \beta(a)$ where $a\in D^\Box$, then for any $\mathfrak{p}R^*\mathfrak{q}$, \textcolor{black}{we have that} $g(\mathfrak{q})\in \beta(a)$, namely $a\in g(\mathfrak{q})$. By definition, there exists a join-irreducible element $a_i\leq a$ such that ${\uparrow} f^{-1}(P_{a_i})\in \mathfrak{q}$. As $\mathfrak{q}$ is a prime filter of upsets of $X$, \textcolor{black}{we also get that} ${\uparrow} f^{-1}(P_{a_1})\cup...\cup {\uparrow} f^{-1}(P_{a_n})\in\mathfrak{q}$ where $a_1,...,a_n$ enumerate all the join-irreducible elements less than or equal to $a$. As $\mathfrak{q}$ is arbitrary, $\Box_R({\uparrow} f^{-1}(P_{a_1})\cup...\cup {\uparrow} f^{-1}(P_{a_n}))\in\mathfrak{p}$ (note that $\Box_R({\uparrow} f^{-1}(P_{a_1})\cup...\cup {\uparrow} f^{-1}(P_{a_n}))$ is not empty as $\mathfrak{p}$ is a prime filter). 
For any $x\in \Box_R({\uparrow} f^{-1}(P_{a_1})\cup...\cup {\uparrow} f^{-1}(P_{a_n}))$, by definition, $R[x]\subseteq{\uparrow} f^{-1}(P_{a_1})\cup...\cup {\uparrow} f^{-1}(P_{a_n})$, and thus $f[R[x]]\subseteq \beta(a)$. As $f$ satisfies CDC$_\Box$ for $\beta(a)$, \textcolor{black}{we have that}  $R_A[f[x]]\subseteq \beta(a)$, namely $\Box a\in f(x)$, and thus $f(x)=P_{b_i}$ for some join-irreducible element $b_i\leq \Box a$. As $x\leq x$, $x\in {\uparrow} f^{-1}(P_{b_1})\cup...\cup {\uparrow} f^{-1}(P_{b_m})$ where $b_1,...,b_m$ enumerate all join-irreducible elements less than or equal to $\Box a$. As $x\in \Box_R({\uparrow} f^{-1}(P_{a_1})\cup...\cup {\uparrow} f^{-1}(P_{a_n}))$ is arbitrary, \textcolor{black}{this proves that} $\Box_R({\uparrow} f^{-1}(P_{a_1})\cup...\cup {\uparrow} f^{-1}(P_{a_n}))\subseteq {\uparrow} f^{-1}(P_{b_1})\cup...\cup {\uparrow} f^{-1}(P_{b_m})$. Since $\Box_R({\uparrow} f^{-1}(P_{a_1})\cup...\cup {\uparrow} f^{-1}(P_{a_n}))\in \mathfrak{p}$ and $\mathfrak{p}$ is a prime filter, ${\uparrow} f^{-1}(P_{b_1})\cup...\cup {\uparrow} f^{-1}(P_{b_m})\in \mathfrak{p}$, and thus ${\uparrow} f^{-1}(P_{b_j})\in\mathfrak{p}$ for some $1\leq j\leq m$. Therefore, $\Box a\in g(\mathfrak{p})$, and thus $R_A[g(\mathfrak{p})]\subseteq \beta(a)$. This proves that $g$ satisfies CDC$_\Box$ for $\beta(a)$ where $a\in D^\Box$.

Suppose $g({\uparrow} \mathfrak{p})\cap \beta(a)\setminus\beta(b)=\emptyset$ where $(a,b)\in D^\rightarrow$. For any $\mathfrak{p}\subseteq\mathfrak{q}$ in $(\mathcal{X}^*)_*$, we have that $g(\mathfrak{q})\not\in \beta(a)\setminus\beta(b)$. Namely, if $a\in g(\mathfrak{q})$, then $b\in g(\mathfrak{q})$. Thus ${\uparrow} f^{-1}(P_{a_1})\cup...\cup {\uparrow} f^{-1}(P_{a_n})\in \mathfrak{q}$ implies that ${\uparrow} f^{-1}(P_{b_1})\cup...\cup {\uparrow} f^{-1}(P_{b_m})\in \mathfrak{q}$ where $a_1,...,a_n$ enumerate all join-irreducible elements less than or equal to $a$ while $b_1,...,b_m$ enumerate all join-irreducible elements less than or equal to $b$. As $\mathfrak{p}\subseteq\mathfrak{q}$ is arbitrary, \textcolor{black}{this means that} $({\uparrow} f^{-1}(P_{a_1})\cup...\cup {\uparrow} f^{-1}(P_{a_n}))\rightarrow ({\uparrow} f^{-1}(P_{b_1})\cup...\cup {\uparrow} f^{-1}(P_{b_m}))\in \mathfrak{p}$ (note that $({\uparrow} f^{-1}(P_{a_1})\cup...\cup {\uparrow} f^{-1}(P_{a_n}))\rightarrow ({\uparrow} f^{-1}(P_{b_1})\cup...\cup {\uparrow} f^{-1}(P_{b_m}))$ is not empty). Suppose $x\in ({\uparrow} f^{-1}(P_{a_1})\cup...\cup {\uparrow} f^{-1}(P_{a_n}))\rightarrow ({\uparrow} f^{-1}(P_{b_1})\cup...\cup {\uparrow} f^{-1}(P_{b_m}))$, for any $x\leq y$, if $y\in {\uparrow} f^{-1}(P_{a_1})\cup...\cup {\uparrow} f^{-1}(P_{a_n})$, then $y\in {\uparrow} f^{-1}(P_{b_1})\cup...\cup {\uparrow} f^{-1}(P_{b_m})$. Thus $f[{\uparrow} x]\cap \beta(a)\setminus \beta(b)=\emptyset$. As $f$ satisfies CDC$_\rightarrow$ for $\beta(a)\setminus \beta(b)$, \textcolor{black}{we have that} ${\uparrow} f(x)\cap \beta(a)\setminus\beta(b)=\emptyset$. Thus $a\rightarrow b\in f(x)$, and $f(x)=P_{c_j}$ for some join-irreducible element $c_j\leq a\rightarrow b$. As $x\leq x$, $x\in {\uparrow} f^{-1}(P_{c_j})\subseteq {\uparrow} f^{-1}(P_{c_1})\cup...\cup {\uparrow} f^{-1}(P_{c_k})$ where $c_1,...,c_k$ enumerate all join-irreducible elements less than or equal to $a\rightarrow b$. As $x\in ({\uparrow} f^{-1}(P_{a_1})\cup...\cup {\uparrow} f^{-1}(P_{a_n}))\rightarrow ({\uparrow} f^{-1}(P_{b_1})\cup...\cup {\uparrow} f^{-1}(P_{b_m}))$ is arbitrary, \textcolor{black}{this proves that} $({\uparrow} f^{-1}(P_{a_1})\cup...\cup {\uparrow} f^{-1}(P_{a_n}))\rightarrow ({\uparrow} f^{-1}(P_{b_1})\cup...\cup {\uparrow} f^{-1}(P_{b_m}))\subseteq {\uparrow} f^{-1}(P_{c_1})\cup...\cup {\uparrow} f^{-1}(P_{c_k})$. Then ${\uparrow} f^{-1}(P_{c_1})\cup...\cup {\uparrow} f^{-1}(P_{c_k})\in \mathfrak{p}$, and thus $a\rightarrow b\in g(\mathfrak{p})$. Therefore, ${\uparrow} g(\mathfrak{p})\cap \beta(a)\setminus\beta(b)=\emptyset$.  Thus, if ${\uparrow} g(\mathfrak{p})\cap \beta(a)\setminus \beta(b)\not=\emptyset$, then $g[{\uparrow} \mathfrak{p}]\cap \beta(a)\setminus\beta(b)\not=\emptyset$ where $(a,b)\in D^\rightarrow$. This proves that $g$ satisfies CDC$_\rightarrow$ for $\beta(a)\setminus\beta(b)$ where  $(a,b)\in D^\rightarrow$.

Therefore, $g$ is a surjective stable Priestley morphism satisfying CDC$_\Box$ for any $\beta(a)$ where $a\in D^\Box$ and satisfies CDC$_\rightarrow$ for any $\beta(a)\setminus \beta(b)$ where $(a,b)\in D^\rightarrow$. By Proposition \ref{3.3.12}, $(\mathcal{X}^*)_*\not\vDash \rho(\A,D^\rightarrow, D^\Box)$, and thus $\mathcal{X}=(X,\leq,R)\not\vDash\rho(\A,D^\rightarrow, D^\Box)$.

\bibliographystyle{elsarticle-num} 
\bibliography{cas-refs}

\end{document}